\renewcommand{\phi}{\varphi}
\renewcommand{\P}{\mathbb P}
\newcommand{\E}{\mathbb E}
\newcommand{\R}{\mathbb R}
\newcommand{\N}{\mathbb N}
\newcommand{\ind}{1\!\kern-1pt \mathrm{I}}
\newcommand{\rsto}{]\!\kern-1.8pt ]}
\newcommand{\lsto}{[\!\kern-1.7pt [}
\newcommand\F{\mbox{I\kern-2pt F}}
\newcommand{\vl}{\varphi_{\lambda(\xi)}}
\newcommand{\pl}{\psi_{\lambda(\xi)}}
\newcommand{\la}{\lambda}
\newcommand{\linf}{\lambda_1^\infty}
\newcommand{\eps}{\varepsilon}
\newcommand{\vep}{\varepsilon}
\theoremstyle{plain}
\newtheorem{lemma}{Lemma}
\newtheorem{cor}{Corollary}
\newtheorem{proposition}{Proposition}
\newtheorem{theorem}{Theorem}
\newtheorem{challenge}{Challenge}
\theoremstyle{remark}
\newtheorem{definition}{Definition}
\newtheorem{ex}{Example}
\newtheorem{rem}{Remark}
\begin{document}

\title{Spectral analysis and k-spine decomposition of inhomogeneous branching Brownian motions. Genealogies in fully pushed fronts}
\author{Emmanuel Schertzer\thanks{Faculty of Mathematics, University of Vienna,
		Oskar-Morgenstern-Platz 1, 1090 Wien, Austria}, Julie Tourniaire~\thanks{Institute of Science and Technology Austria (ISTA), Klosterneuburg, Austria}}

\maketitle
\begin{abstract}
	We consider a system of particles performing a one-dimensional dyadic branching Brownian motion with space-dependent branching rate, negative drift $-\mu$ and killed upon reaching $0$. More precisely, the particles branch at rate $r(x)=(1+W(x))/2,$ where $W$ is a compactly supported and non-negative smooth function and the drift $\mu$ is chosen in such a way that the system is critical in some sense.

	This particle system can be seen as an analytically tractable model for fluctuating fronts, describing the internal mechanisms driving the invasion of a habitat by a cooperating population. Recent studies from Birzu, Hallatschek and Korolev suggest the existence of three classes of fluctuating fronts: pulled, semipushed and  fully pushed fronts. Here, we focus on the fully pushed regime. We establish a Yaglom law for this branching process and prove that the genealogy of the particles converges to a Brownian Coalescent Point Process using a method of moments.

	In practice, the genealogy of the BBM is seen as a random marked metric measure space and we use spinal decomposition to prove its convergence in the Gromov-weak topology. We also carry out the spectral decomposition of a  differential operator related to the BBM to determine the invariant measure of the spine as well as its mixing time.
\end{abstract}

\tableofcontents

\section{Introduction}
\label{sect:intro}

\subsection{The model and assumptions}\label{sec:model}
We consider a dyadic branching Brownian motion $(\mathbf{X}_t)_{t>0}$ (BBM) with killing at $0$, negative drift $-\mu$ and position-dependent branching rate
\begin{equation}\label{def:r}
	r(x)=\frac{1}{2}W(x)+\frac{1}{2},
\end{equation}
for some function $W:[0,+\infty)\rightarrow \mathbb{R}$. We assume that $W$ satisfies the following assumptions:
\begin{itemize}
	\item[(A1)] the function $W$ is non-negative, continuously differentiable  and compactly supported.
	\item[(A2)] the support of $W$ is included in $[0,1]$.
\end{itemize}
We denote by $\mathcal N_t$ the set of particles in the system at time $t$ and for all $v\in\mathcal N_t$, we denote by {$x_v=x_v(t)$} the position of the particle $v$. Furthermore, we write $Z_t:=|{\cal N}_t|$ for the number of particles in the system at time $t$. {We write $\mathbb{P}_x$ for the law of the process initiated from a single particle at $x\geq 0$ and $\mathbb{E}_x$ for the corresponding expectation. }

\noindent{\bf Critical regime.} We aim  at choosing $\mu$ in such a way that the number of particles in the system  stays roughly constant.

Fix $L>1$ and consider the BBM $(\mathbf{X}^L_t)_{t>0}$ with branching rate $r(x)$, drift $-\mu$ and killed at $0$ and $L$.
Denote by $\mathcal N_t^L$ the set of particles in this system at time $t$ and define $Z^L_t=|\mathcal N_t^L|$. By a slight abuse of notations, we will also denote by $x_v$ the positions of the particles in the BBM $\mathbf{X}^L$.  
Let $(t,x,y)\mapsto p_t(x,y)$ be the fundamental solution of the linear equation
\begin{equation}
	\begin{cases}
		\partial_tu(t,y) = \frac{1}{2}\partial_{yy}u(t,y) + \mu \partial_{y}u(t,y) + r(y)u(t,y), \\
		u(t,0) = u(t,L) = 0.
	\end{cases}\label{PDE:A}\tag{A}
\end{equation}
We say that $p_t \equiv p_t^L$ is the density of particles in $\mathbf{X}^L$ in the sense that for any measurable set $B\subset[0,L]$, the expected number of particles in $B$ at time $t$ starting from a single particle at $x$ is given by $\int_Bp_t(x,y)dy$ (see e.g.~\cite[p.188]{Lawler:2018vn}).
Let us now define
\begin{equation}\label{def:q}
	{g}_t(x,y): =e^{\mu(y-x)}e^{\frac{\mu^2-1}{2}t}p_t(x,y).
\end{equation}
A direct computation shows that $g_t$ is the fundamental solution of the self-adjoint PDE
\begin{equation}
	\begin{cases}
		\partial_{t}u(t,y) = \frac{1}{2}\partial_{yy}u(t,y) +\frac{1}{2}W(y)u(t,y), \\
		u(t,0) = u(t,L) = 0.
	\end{cases}\label{PDE:B}\tag{B}
\end{equation}
Let  $\lambda_1=\lambda_1(L)$ be the maximal eigenvalue \cite[Chapter 4]{zettl10} of the Sturm--Liouville problem
\begin{equation}\tag{SLP}
	\frac{1}{2}v''(x)+\frac{1}{2}W(x)v(x)=\lambda v(x),
	\label{SLP}
\end{equation}
with  boundary conditions
\begin{equation}\label{BC}
	v(0)=v(L)=0.\tag{BC}
\end{equation}
It is known  that $\lambda_1$ is an increasing function of $L$ \cite[Theorem 4.4.1]{pinsky95} and that it converges to a finite limit $\lambda_1^\infty\in(-\infty,+\infty)$ as $L\to\infty$ \cite[Theorem 4.3.2]{pinsky95}.
We now choose $\mu$ in such a way that the expected number of particles is neither increasing nor decreasing exponentially.
According to (\ref{def:q}), we expect that for large $t$
$$
	p_{t}(x,y) \approx e^{\mu(x-y)}e^{\frac{1-\mu^2}{2}t} e^{\linf t} \frac{v_{1}(x)v_1(y)}{||v_1||^2},
$$
where $v_1$ denotes an eigenfunction associated to $\lambda_1$ for the Sturm--Liouville problem (\ref{SLP}) and $\|\cdot\|$ refers to the $\mathrm{L}^2$-norm.
This motivates the following definition.
\begin{definition}[Critical regime]
	\label{def:critical} The BBM is in the critical regime iff
	\begin{equation}
		\label{def:mu}
		\mu=\sqrt{1+2\linf}.
	\end{equation}
\end{definition}

\paragraph*{Pushed and pulled waves} The next definitions are motivated by recent
numerical simulations and heuristics \cite{birzu2018fluctuations, birzu2021genealogical} for the noisy F-KPP equation with Allee effect
\begin{equation}
	u_t = \frac{1}{2} u_{xx} + u(1-u)(1+Bu) + \sqrt{\frac{u}{N}}{\eta}.
	\label{eq:noisyFKPP}
\end{equation}
From a biological standpoint, this equation describes the invasion of a one-dimensional habitat by a cooperating population: $u(t,x)$ stands for the population density, $B>0$ measures the strength of the cooperation between the individuals, $N$ scales the local density of individuals and $\eta$
is a space-time white noise. It is known that \eqref{eq:noisyFKPP} exhibits two phase transitions as the strength of the cooperation $B$ increases. In the corresponding PDE ($N=\infty$), the fronts are said to be pulled if the speed of the limiting travelling wave solutions is equal to $1$ and pushed if it is larger than $1$. The transition between pulled and pushed waves occurs at $B=2$ \cite{hadeler1975travelling}.
The numerical observations made in \cite{birzu2018fluctuations} indicate a second phase transition in \eqref{eq:noisyFKPP} for $N<\infty$. For pulled waves ($B\in(0,2)$), macroscopic fluctuations in the position of the front are observed on a time scale of order $\log(N)^3$. In the pushed regime ($B>2$), they emerge on the time scale $N$ for $B>4$, and  $N^{\tilde \alpha-1}$, for some $\tilde \alpha\in(1,2)$, for $B\in(2,4)$. This leads to the distinction of two classes of pushed waves: semipushed waves for $B\in(2,4)$ and fully pushed waves for $B>4$.   We refer to \cite{tourniaire21} for more details on the biological interpretation of this model and to Section \ref{sec:bib} for a brief overview on rigorous  results in the pulled and semipushed regimes.

\begin{definition}[Pulled, semipushed, fully pushed regimes]
	\label{def:pushed-pulled}

	Consider the BBM $(\mathbf{X}_t)$ in the critical regime. Define
	\begin{equation}
		\label{def:beta}
		\beta:=\sqrt{2\linf}, \quad \text{and} \quad \alpha:=\frac{\mu+\beta}{\mu-\beta}.
	\end{equation}
	\begin{enumerate}
		\item If $\lambda_1^\infty=0$, or equivalently $\alpha=1$,
		      the BBM is said to be \textbf{pulled}.
		\item If $\lambda_1^\infty\in(0,1/16)$ or equivalently
		      $$
			      \alpha\in(1,2) \Longleftrightarrow \mu> 3\beta,
		      $$
		      the BBM  is said to be \textbf{semipushed}.
		\item If $\lambda_1^\infty>1/16$ or equivalently
		      \begin{equation}\tag{$H_{fp}$}
			      \label{hfp}
			      \alpha>2 \Longleftrightarrow \mu< 3\beta,
		      \end{equation}
		      the BBM is said to be \textbf{fully pushed}.
	\end{enumerate}
	We say that the BBM is \textbf{pushed} if it is either semi or fully pushed, that is when
	\begin{equation}\label{hpushed}
		\tag{$H_p$} \lambda_1^\infty >0.
	\end{equation}
\end{definition}

It is conjectured that, up to rescaling, the size and the genealogy at large
times is indistinguishable from those of a continuous-state branching process (CSBP).
More precisely,
\begin{enumerate}
	\item In the pulled regime, the population size should converge to  Neveu's continuous-state
	      branching process and the genealogy of  the BBM  to the Bolthausen--Sznitman coalescent (see \cite{berestycki13} in the case $W\equiv 0$) on the time scale $\log(N)^3$.
	\item In the semipushed regime, the population size should converge to an $\alpha$-stable CSBP on the time scale $N^{\alpha-1}$, for $\alpha$ as in \eqref{def:beta}. In the previous example, this has been proved only in the case where $W=\mathbf{1}_{[0,1]}$ \cite{tourniaire21}. Therein, it is also conjectured that the genealogy should converge to a time-changed $\mbox{Beta}(2-\alpha,\alpha)$-coalescent \cite{pitman_coalescents_1999,sagitov_1999, birkner2005alpha}.
	\item In the fully pushed regime, the rescaled population size should converge to a Feller diffusion on the time scale $N$, and the genealogy should be indistinguishable from the genealogy of a large critical Galton-Watson process
	      with finite second moment. This is the content of the present article.
\end{enumerate}
Note that the time scales over which the demographic fluctuations emerge in the BBM are similar to those observed in fluctuating fronts solution to \eqref{eq:noisyFKPP}. In addition, the transition between the pulled and the pushed regimes occurs precisely when the speed of the killing boundary (0) becomes larger than $1$ (see \eqref{def:mu} and \eqref{hpushed}).

\begin{ex}
	Let $\eps>0$. Let  $\tilde{W}$ be a function satisfying assumption (A1).  Consider the  BBM with inhomogeneous branching rate $r_\eps(x) = \frac{1}{2} + \eps \tilde W(x)$ in the critical regime (see Definition \ref{def:critical}).
	By \cite[Theorem 4.6.4 and Theorem 4.4.3]{pinsky95}, there exists $0<\vep_1<\vep_2$ such that
	\begin{enumerate}
		\item The BBM is pulled if $\vep\in(0,\vep_1)$.
		\item The BBM is semipushed if $\vep\in(\vep_1,\vep_2)$.
		\item The BBM is fully pushed if $\vep>\vep_2$.
	\end{enumerate}
	\label{ex:2}
\end{ex}

\begin{rem}
	We believe that our results could be extended to  a certain class of perturbations $W$ decreasing exponentially fast to $0$. However, this raises technical challenges that we do not tackle in this work.
\end{rem}

\subsection{Main results}
\label{sec:result}

Throughout this paper, we assume that the BBM $\textbf{X}$ is critical (see \eqref{def:mu}).
\begin{proposition}\label{prop:first}
	Let $v_1$ be the eigenfunction associated to the eigenvalue $\lambda_1$ for the Sturm--Liouville problem \eqref{SLP} with boundary conditions \eqref{BC}, normalised  in such a way that $v_1(1)=1$.
	Under \eqref{hpushed}, $v_1$ converges pointwise and in $\mathrm{L}^2$ to a {positive} limiting function $v_1^\infty$ as $L\to\infty$. Furthermore, if in addition (\ref{hfp}) holds, then
	$$
		\int_{\R_+} e^{\mu x} (v_1^\infty)^3(x) dx <\infty.
	$$
\end{proposition}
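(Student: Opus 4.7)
My plan is to exploit the fact that $W$ is supported in $[0,1]$: the eigenequation decouples into a regular Sturm--Liouville problem on $[0,1]$ and a free Laplacian problem on $[1,L]$, matched through $C^1$-continuity at $x=1$. On $[1,L]$ the equation becomes $v''=2\lambda_1(L)v$ with $v(L)=0$, so the normalisation $v_1(1)=1$ gives the explicit formula
\[
v_1^L(x)=\frac{\sinh(\beta_L(L-x))}{\sinh(\beta_L(L-1))},\qquad \beta_L:=\sqrt{2\lambda_1(L)},
\]
which is well-defined under \eqref{hpushed} since $\lambda_1(L)\to\linf>0$. Sending $L\to\infty$ and using $\beta_L\to\beta:=\sqrt{2\linf}$ yields the pointwise limit $v_1^L(x)\to e^{-\beta(x-1)}$ on $[1,\infty)$.

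On $[0,1]$ I would invoke continuous dependence of ODE solutions on parameters. Let $\phi_\lambda$ solve $\tfrac12\phi''+\tfrac12 W\phi=\lambda\phi$ with $\phi(0)=0$, $\phi'(0)=1$. Then $v_1^L=\phi_{\lambda_1(L)}/\phi_{\lambda_1(L)}(1)$ on $[0,1]$, the denominator being positive because the principal eigenfunction is positive on $(0,L)$. To pass to the limit I need $\phi_{\linf}(1)>0$: if this failed, $\phi_{\linf}$ would be non-negative on $(0,1]$ with $\phi_{\linf}(1)=0$, forcing $\phi_{\linf}'(1)\le0$ and hence $\phi_{\linf}'(1)<0$ by Cauchy--Lipschitz, contradicting non-negativity just inside $(0,1)$. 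Setting $v_1^\infty:=\phi_{\linf}/\phi_{\linf}(1)$ on $[0,1]$ and gluing with the formula above delivers pointwise convergence on $[0,\infty)$, together with positivity of $v_1^\infty$; the $C^1$-matching at $x=1$ passes to the limit because $\phi_{\lambda_1(L)}'(1)/\phi_{\lambda_1(L)}(1)=-\beta_L\coth(\beta_L(L-1))\to-\beta$, the left-derivative of $e^{-\beta(x-1)}$ at $x=1$.

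For the $L^2$ statement, I extend $v_1^L$ by $0$ on $(L,\infty)$. On $[0,1]$ uniform convergence handles the contribution. On $[1,L]$ I use the factorisation
\[
v_1^L(x)=e^{-\beta_L(x-1)}\,\frac{1-e^{-2\beta_L(L-x)}}{1-e^{-2\beta_L(L-1)}}\le 2\,e^{-(\beta/2)(x-1)}
\]
for $L$ large, which provides an $L$-independent dominant in $L^2([1,\infty))$; pointwise convergence and dominated convergence then yield the claim, and the tail $\int_L^\infty (v_1^\infty)^2\,dx$ vanishes trivially.

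For the third-moment bound, boundedness of $v_1^\infty$ on $[0,1]$ handles that piece, while on $[1,\infty)$
\[
e^{\mu x}\,(v_1^\infty(x))^3=e^{3\beta}\,e^{(\mu-3\beta)x},
\]
which is integrable precisely when $\mu<3\beta$, i.e.\ under \eqref{hfp}. The main obstacle is the intermediate $[0,1]$-analysis: verifying that $\phi_{\linf}(1)\ne 0$ so the normalisation survives the limit and that $C^1$-matching at $x=1$ propagates — once this is secured, everything else reduces to explicit sinh-based computations.
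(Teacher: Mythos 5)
Your plan follows essentially the same route as the paper (the proof is carried out inside Lemma~\ref{lem:spectral}): split the problem at $x=1$, use the explicit $\sinh$ formula \eqref{eq:v1L} on $[1,L]$, use continuous dependence of ODE solutions on the spectral parameter on $[0,1]$, establish positivity by an ODE-uniqueness argument, obtain $\mathrm{L}^2$ convergence by dominated convergence, and read off the third-moment integrability from $e^{(\mu-3\beta)x}$ under \eqref{hfp}. The paper phrases the continuous dependence via the Pr\"ufer transformation (Lemma~\ref{lem:continuity}) because that machinery is reused later for the higher modes; for the present proposition your direct invocation of continuous dependence is equivalent.

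There is one genuine slip in your positivity argument at the matching point $x=1$. You argue: if $\phi_{\linf}(1)=0$, then $\phi_{\linf}'(1)\le 0$, and by uniqueness $\phi_{\linf}'(1)<0$, ``contradicting non-negativity just inside $(0,1)$.'' But $\phi_{\linf}(1)=0$ together with $\phi_{\linf}'(1)<0$ is perfectly consistent with $\phi_{\linf}>0$ on $(0,1)$ — the function simply decreases to $0$ at the right endpoint — so that contradiction does not exist. The correct way to rule out $\phi_{\linf}(1)=0$ is via the $C^1$-matching that you already set up: the explicit $[1,L]$-formula gives $\phi_{\lambda_1(L)}'(1)/\phi_{\lambda_1(L)}(1)=-\beta_L\coth(\beta_L(L-1))\to-\beta$, a finite limit, while $\phi_{\lambda_1(L)}(1)\to\phi_{\linf}(1)=0$ together with $\phi_{\lambda_1(L)}'(1)\to\phi_{\linf}'(1)<0$ would force this ratio to $-\infty$, a contradiction. (Equivalently: if $\phi_{\linf}(1)=0$ then the right-hand formula forces $\phi_{\linf}\equiv0$ on $[1,\infty)$, so $C^1$-continuity gives $\phi_{\linf}'(1)=0$, and Cauchy–Lipschitz uniqueness then kills the solution, contradicting $\phi_{\linf}'(0)=1$.) Once this is repaired the rest of your argument goes through, including the uniform $\mathrm{L}^2$-dominant $2e^{-(\beta/2)(x-1)}$ on $[1,\infty)$.
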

To see why the latter proposition may hold true, recall that $W\equiv0$ on $[1,\infty)$. Hence, on this interval,
the problem reduces to
\begin{equation*}
	\frac{1}{2} v_1''(x) = \lambda_1 v_1(x),\quad x\in[1,L],\quad   v_1(L)=0.
\end{equation*}
If we impose the condition $v_1(1)=1$, a direct computation
shows that $v_1(x) = \frac{\sinh(\sqrt{2\lambda_1}(L-x) )}{\sinh(\sqrt{2\lambda_1}(L-1))}$ on $[1,L]$
so that,  for all $x\in[1,\infty)$,
$v_1(x)\to v_1^\infty(x) = e^{-\beta(x-1)}$.
The integrability condition then holds under the extra assumption (\ref{hfp}).

\bigskip

In the following, we consider

\begin{equation}\label{def:h}
	\tilde h^{\infty}(x)  :=  \tilde c e^{-\mu x} v_1^\infty(x),\quad  \text{and} \quad  h^{\infty}(x)  :=  \frac{1}{\tilde c \|v_1^\infty\|^2} e^{\mu x} v_1^\infty(x),
\end{equation}
where
\begin{equation*}
	\tilde c:=\left(\int_0^\infty e^{-\mu x}v_1^\infty (x) dx\right)^{-1}.
\end{equation*}
The constant $\tilde c$ is thought as a Perron-Frobenius renormalisation constant (see e.g.~\cite[p.185]{athreya1972}), in the sense that $h^\infty$  (resp.~$\tilde h^\infty$) is  a right (resp.~left) eigenfunction associated to the maximal eigenvalue of the differential operator \begin{equation}\label{eq:mean:matrix}
	\mathcal{L} u=\frac{1}{2}\partial_{xx}u - \mu \partial_{x}u+r(x)u,
\end{equation}
normalised in such a way that
\begin{equation*}
	\int_0^\infty \tilde h^\infty(x)dx=1 \quad \text{and} \quad \int_0^\infty h^\infty(x)\tilde h^\infty(x)dx=1.
\end{equation*}
From this perspective, the function $\tilde h^\infty$ should correspond to the \emph{stable configuration} of the system and the function $h^\infty$ to the \emph{reproductive values} of the individuals as a function of their positions. We will write $\Pi^\infty$ for the probability distribution whose density is given by
\begin{equation}\label{eq:def:pi:inf}
	\Pi^\infty(x):=h^\infty(x)\tilde h^\infty(x) = (v_1^\infty(x)/
	||v_1^\infty||)^2, \quad x\geq 0.
\end{equation}

\begin{theorem}[Kolmogorov estimate]\label{thm:Kolmogorov} Assume that \eqref{hfp} holds.
	As $N\to\infty$, for all $x,t>0$,
	$$
		{N}\P_x\left( Z_{tN}>0 \right) \to \frac{2}{\Sigma^2 t} h^\infty(x), \ \ \mbox{where} \quad   \frac{\Sigma^2}{2} \ := \ \int_{\R_+} r(z) (h^\infty(z))^2 \tilde h^\infty(z)dz.
	$$
\end{theorem}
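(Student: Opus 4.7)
The plan is to derive the nonlinear PDE satisfied by the survival probability, rescale on the macroscopic time scale~$N$, and identify the limit by a spectral projection onto the principal mode of the operator~$\mathcal{L}$ defined in~(\ref{eq:mean:matrix}). By a standard Markov argument (conditioning on the first branching event via the Kolmogorov backward equation), the function $u(t,x):=\P_x(Z_t>0)$ solves the semilinear Fisher--KPP type equation
\[
\partial_t u = \mathcal{L}\,u - r(x)\,u^2,\qquad u(0,x)=\mathbf{1}_{\{x>0\}},\qquad u(t,0)=0.
\]
Setting $v_N(t,x):=N\,u(Nt,x)$, one finds
\[
\partial_t v_N = N\,\mathcal{L}\,v_N - r(x)\,v_N^2,\qquad v_N(0,\cdot)=N\,\mathbf{1}_{\{\cdot>0\}},\qquad v_N(t,0)=0,
\]
and the goal becomes to show $v_N(t,x)\to \tfrac{2}{\Sigma^2 t}\,h^\infty(x)$ pointwise for every $t>0$. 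Observe that the fully pushed assumption~(\ref{hfp}) enters precisely through Proposition~\ref{prop:first}, which guarantees $\Sigma^2<\infty$.

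The core mechanism is a fast/slow splitting. Testing the rescaled PDE against the left eigenfunction $\tilde h^\infty$, which satisfies $\mathcal{L}^\ast \tilde h^\infty=0$, eliminates the singular prefactor $N$ and yields the exact identity
\[
\frac{d}{dt}\int_0^\infty \tilde h^\infty(x)\,v_N(t,x)\,dx = -\int_0^\infty \tilde h^\infty(x)\,r(x)\,v_N(t,x)^2\,dx.
\]
The large factor $N$ in front of $\mathcal{L}$ forces, for every $t>0$, the profile $v_N(t,\cdot)$ to collapse onto the one-dimensional zero eigenspace $\R\,h^\infty$. Writing $v_N(t,x)\simeq a_N(t)\,h^\infty(x)$ with $a_N(t):=\int\tilde h^\infty v_N(t,\cdot)\,dx$, and using the biorthonormality $\int h^\infty\tilde h^\infty=1$, the projected identity collapses to the scalar ODE
\[
a_N'(t)\simeq -\tfrac{\Sigma^2}{2}\,a_N(t)^2,\qquad a_N(0)=N,
\]
whose explicit solution $a_N(t)=\bigl(N^{-1}+\Sigma^2 t/2\bigr)^{-1}$ converges to $2/(\Sigma^2 t)$ for each $t>0$. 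Substituting back in $v_N(t,x)\simeq a_N(t)\,h^\infty(x)$ gives the claimed limit.

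The main obstacle is to turn this formal spectral collapse into a rigorous statement on the half-line $[0,\infty)$, where the spectrum of $\mathcal{L}$ need not have a gap at~$0$. I would first work on the finite box~$[0,L]$, where a genuine spectral gap exists and the projection onto the finite-volume eigenfunctions $h_L,\tilde h_L$ is standard, then pass to $L\to\infty$ using the monotone convergence of the survival probability in the box together with the exponential decay of $v_1^\infty$ from Proposition~\ref{prop:first} to control the tails. Uniform-in-$N$ a~priori bounds of the form $v_N(t,x)\le C(t)\,h^\infty(x)$ can be obtained by a supersolution comparison, taking as input the first-moment estimate $\E_x[Z_t]\to h^\infty(x)$ that follows from the spectral expansion of the kernel~$g_t$ developed earlier in the paper. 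A more direct alternative, closer in spirit to the method-of-moments approach used elsewhere in the paper, is to deduce Theorem~\ref{thm:Kolmogorov} as a corollary of the Yaglom law: once one establishes by a many-to-two computation that $\E_x[Z_t(Z_t-1)]\sim \Sigma^2 t\,h^\infty(x)$ and that $Z_t/t\mid\{Z_t>0\}$ converges to an exponential distribution with mean $\Sigma^2/2$, the identity $\E_x[Z_t]=\E_x[Z_t\mid Z_t>0]\,\P_x(Z_t>0)$ combined with $\E_x[Z_t]\to h^\infty(x)$ immediately yields the Kolmogorov estimate with the correct constant.
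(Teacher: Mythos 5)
Your approach is essentially the paper's: the authors also write the FKPP equation for $u(t,x)=\P_x(Z_t^L>0)$, test it against the left eigenfunction $\tilde h(0,\cdot)$ (Lemma~\ref{lem:equadif}), prove the proportionality $u(tN,\cdot)\approx h(0,\cdot)\,a(tN)$, obtain the Riccati ODE $\dot a(tN)\approx-\tfrac{\Sigma^2}{2}a(tN)^2$ (Lemma~\ref{lem:ineq}), integrate, and finally remove the cutoff~$L$ via Corollary~\ref{cor:pb:survie}. The paper also works at the $N$-dependent cutoff $L=\log N/(\mu-\beta)$ rather than on the whole half-line, exactly as you anticipate.

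Two places in your sketch require more care than the phrasing suggests and are handled differently in the paper. First, the ``collapse onto $\R h^\infty$'' is not a soft consequence of the $N\mathcal L$ prefactor; the paper establishes $u(tN,x)\approx h(0,x)\,a(tN)$ (Corollary~\ref{lem:uvsa}) by letting the process run for a mixing time $L^2=o(N)$, applying the heat-kernel estimate of Proposition~\ref{lem:hk}, and then controlling the error with a Bonferroni (second-moment) bound; the Riccati equation is fed in only afterwards. Second, your proposed supersolution bound $v_N(t,x)\le C(t)h^\infty(x)$ built from the first-moment estimate does not by itself deliver the needed smallness: $\E_x[Z_{tN}]\approx h^\infty(x)$ gives only $u(tN,x)\lesssim h^\infty(x)$, which is off by a factor of~$N$. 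The paper instead gets an a~priori bound $a(tN)=O(N^{-\gamma})$ for some $\gamma>0$ (Lemma~\ref{lem:rest}) from a genuinely different mechanism, namely that surviving to time $tN$ essentially forces the process to reach level $\gamma L$, which happens with probability polynomially small in~$N$ (Corollary~\ref{cor:bulk}). The singular initial condition $a_N(0)=N$ is likewise sidestepped: the ODE is only integrated on $[\eta N,TN]$, and the lower bound $a(\eta N)\gtrsim 1/(\eta N)$ of Lemma~\ref{lem:pbs:lb} lets one send $\eta\to0$ after $N\to\infty$. Finally, your proposed shortcut of deducing Kolmogorov from Yaglom plus the first moment runs into a circularity in this paper's logic: passing from the unconditional moments $\E_x[(Z_{tN}/N)^k]$ to the conditional law requires knowing the survival probability itself (this is precisely Challenge~2 in Section~\ref{sect:lim-moments}), so the paper establishes the Kolmogorov estimate independently in Section~\ref{sect:survival proba} and only then uses it to prove the Yaglom law.
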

This theorem is a continuous analogous of Kolmogorov estimate for multi-type Galton-Watson processes \cite[p.187]{athreya1972}.
\bigskip

We now turn to the description of the genealogy and the Yaglom law.

Intuitively, the next result states that the genealogy is asymptotically identical to the one of a critical
Galton Watson \cite{lambert2018, harris2020, Johnston2019} and that the marks are assigned independently
according to $\tilde h^\infty$. Let us now give a more precise description of our result.

From now on, we condition on the event $\{Z_{tN}>0\}$.
Let $(v_1, \dots, v_k)$ be $k$
individuals chosen uniformly at random from ${\cal N}_{tN}$. Denote by $d_{tN}(v_i,v_j)$ the time
to the most recent common ancestor of $v_i$ and $v_j$. We write  $x_{v_i}$
for the position of the $i^{th}$ individual $v_i$ at time $tN$.
Let $U$ be a uniform r.v. on $[0,t]$ and $\theta>0$. Define $U^\theta$ such that
\begin{align} \label{eq:definition_H_theta}
	\forall s \le t, \qquad	\P( U^{\theta} \le s ) := \frac{(1+\theta) \P(U \le s)}{1+\theta \P(U \le s)}.
\end{align}
Let $(U^{\theta}_{i}; i\in[k])$ be $k$ i.i.d. copies of $U^\theta$ and set
$$
	\forall 1\leq i< j\leq k, \quad U^{\theta}_{i,j} = U^{\theta}_{j,i} :=  \max\{U^\theta_{l}: l\in\{i,\cdots,j-1\}\}.
$$
Define the random distance matrix $(H_{i,j}):= (H_{i,j}; i\neq j \in[k])$ such that for every  bounded and continuous function $\phi :\R^{k^2}\to \R$,
\begin{align} \label{eq:moments-CPP}
	\E\big[ \phi\big( (H_{i,j}) \big) \big]
	= k\int_0^\infty \frac{1}{(1+\theta)^2}
	\Big(\frac{\theta}{1+\theta}\Big)^{k-1}
	\E\big[\phi\big( (U^{\theta}_{i,j}) \big)\big]
	d\theta.
\end{align}
Finally, $(W_i):=(W_i; i\in[k])$ will denote a sequence of i.i.d. copies of a random variable with law $\tilde h ^\infty$.

\begin{theorem}[Yaglom law and limiting genealogies]\label{thm:Yaglom} Assume that \eqref{hfp} holds.
	Let $t>0$. {Suppose that the BBM starts with a single particle at $x>0$.} Conditional on $\{Z_{tN} > 0\}$, { as $N\to\infty$,}
	\begin{enumerate}
		\item[(i)] we have
		      \[
			      \frac{Z_{tN}}{N}
			      \to
			      \frac{\Sigma^2 t}{2} \ \mathcal{E}, \ \ \ \mbox{in distribution,}
		      \]
		      where $\mathcal{E}$ is a standard exponential distribution.
		\item[(ii)] $\left(\big(\tfrac{d_{ tN}(v_i, v_j)}{N}\big),\big(x_{ v_i}\big)\right)$
		      converges to the distribution of $\left((H_{\sigma_i,\sigma_j}),(W_{\sigma_i})\right)$
		      where
		      $\sigma$ is a random uniform permutation of $\{1, \dots, k\}$
		      and $\sigma$, $(H_{i,j})$ and $(W_i)$ are independent. 			
	\end{enumerate}
\end{theorem}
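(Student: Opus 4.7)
The strategy is the method of moments: compute joint $k$-th moments of functionals of $k$ uniformly sampled particles and their pairwise coalescence times, and match them with the moments of the conjectured CPP limit. The engine for these computations is a $k$-spine (or many-to-few) decomposition, in the spirit of Harris--Roberts, adapted to the spatially inhomogeneous branching rate $r$. Such a formula rewrites
$$
\E_x\Bigl[\sum_{(v_1,\dots,v_k)\in(\cN_{tN})_{\ne}^k} F\bigl(\text{marked subtree spanned by }v_1,\dots,v_k\bigr)\Bigr]
$$
as an expectation under a single process describing $k$ coalescing spines, reweighted by a Girsanov-type factor built from the eigenfunction $v_1^\infty$. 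Under the $h^\infty$-transform, each individual spine is a recurrent diffusion on $[0,\infty)$ with invariant density $\Pi^\infty = h^\infty\,\tilde h^\infty$, and coalescence events between pairs of spines occur at a state-dependent rate proportional to $r$.

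I would first carry out the single-spine analysis. Using the spectral decomposition of $\cL$ developed earlier in the paper, one obtains a spectral gap above the principal eigenvalue, and hence exponential convergence of the $h^\infty$-transformed spine to $\Pi^\infty$. On the timescale $tN$ the spine has fully relaxed to equilibrium, so after integrating against $\Pi^\infty$ and removing the reproductive-value factor $h^\infty$, the marginal position of a uniformly sampled particle at time $tN$ converges to $\tilde h^\infty$. This identifies the distribution of the marks $(W_i)$ in (ii).

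Next, using the $k$-spine representation, I would analyse the backwards-in-time coalescence structure. On the diffusive scale $tN$, the instantaneous pairwise coalescence rate concentrates on a constant proportional to $\int_{\R_+} r(z)\,(h^\infty(z))^2\,\tilde h^\infty(z)\,dz=\Sigma^2/2$, so rescaling time by $N$ produces coalescences at an $O(1)$ rate, matching the scale in Theorem \ref{thm:Kolmogorov}. Since the spatial component equilibrates on a timescale of order $1$, decoupling between positions and coalescence times holds in the limit. Computing the density of the $k-1$ successive coalescence times along the $k$-spine backbone yields precisely the mixture \eqref{eq:moments-CPP}: the auxiliary parameter $\theta$ records the depth of the $k$-tree, $U^\theta$ the conditional law of the coalescence times, and the random uniform permutation $\sigma$ enters because the ordering of the $k$ spines in the CPP construction is not intrinsic to the unordered uniformly sampled $k$-tuple. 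Normalising by $\E_x[Z_{tN}^{(k)}]$ and invoking Theorem \ref{thm:Kolmogorov} with $\P_x(Z_{tN}>0)\sim 2h^\infty(x)/(\Sigma^2 t N)$ gives the joint weak convergence in (ii). Specialising to $F\equiv 1$ yields the $k$-th conditional factorial moment of $Z_{tN}/N$, which matches the $k$-th moment of $(\Sigma^2 t/2)\mathcal{E}$; by the method of moments (the exponential law is determined by its moments) one deduces (i).

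The main obstacle will be the uniform control required to make the moment method rigorous: one needs $N$-uniform bounds on all joint $k$-th moments, together with tail estimates ensuring that atypical spine trajectories contribute negligibly. This is precisely where the fully-pushed hypothesis \eqref{hfp} enters, through the integrability
$\int_{\R_+} e^{\mu x}(v_1^\infty)^3(x)\,dx<\infty$
from Proposition \ref{prop:first}: this is exactly the condition that makes the Girsanov weight along the spine, and hence $\Sigma^2$, finite, and that guarantees the limiting CSBP has finite variance (i.e.\ is a Feller diffusion, not an $\alpha$-stable CSBP). Propagating this integrability up the $k$-spine tree, together with uniform-in-$N$ spectral-gap estimates for the finite-$L$ operators converging to the infinite-volume one, will be the technical heart of the argument.
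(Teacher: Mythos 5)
Your sketch follows the same route as the paper---method of moments via a $k$-spine decomposition, spectral analysis of the $1$-spine, an $L$-cutoff, and \eqref{hfp} as an integrability condition making $\Sigma^2$ finite---and the qualitative picture (spine equilibration, $\Sigma^2/2=\int r (h^\infty)^2\tilde h^\infty$, the role of Theorem~\ref{thm:Kolmogorov}) is correct. However, there is a genuine gap in passing from moment convergence to the \emph{joint} weak convergence asserted in (ii). The moment-determinedness you invoke for (i) applies to a single real random variable; the joint law of a random marked distance matrix is not automatically determined by its polynomial moments. The paper resolves this by encoding the genealogy as a random marked metric measure space, invoking Proposition~\ref{lem:convDetermining} (a Gromov-weak convergence-determining criterion for polynomials that requires the moment growth bound \eqref{eq:momentCondition} for the limiting Brownian CPP), and proving the stronger Theorem~\ref{thm:main-theorem}, from which (i) and (ii) are extracted jointly via continuity of $[X,d,\nu]\mapsto |X|$ and $[X,d,\nu]\mapsto [X,d,\nu/|X|]$ in the Gromov-weak topology, together with Proposition~\ref{SAmpling-CPP}. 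Your sketch has no substitute for this step, so ``hence joint weak convergence'' is an assertion rather than a deduction.

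A second, subsidiary gap concerns the test functions. The many-to-few formula reweights leaves by $1/h(tN,\cdot)$, so to extract ordinary polynomials one must take $\tilde\phi_i=\phi_i/h^\infty$, which is unbounded at the killing boundary; and to recover the Yaglom moments one formally needs $\psi_{i,j}\equiv1$, $\tilde\phi_i\equiv 1/h^\infty$, which is neither bounded nor compactly supported. The paper handles this by a second, $\varepsilon$-truncation of the mark space ($E_\varepsilon=[\varepsilon,1/\varepsilon]$, Section~\ref{sect:final-cutoff}), controlling $d_{MGP}$ between the truncated and untruncated spaces through a Prokhorov estimate (Lemma~\ref{cor:pGP}, Corollary~\ref{cor:approx-mmm2}), and then letting $\varepsilon\to 0$. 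This truncation is separate from the $L$-cutoff and needs its own uniformity argument, which your sketch does not address. One minor point of framing: the paper's $k$-spine fixes the tree topology from $k-1$ i.i.d.\ uniforms first and then reweights by $\Delta$; your ``coalescing spines with state-dependent rate'' picture is an equivalent change of measure, but you would still need a recursion of the type in Proposition~\ref{cor:M} (and the uniform integrability Lemma~\ref{lem:UI}) to justify exchanging limits and expectations when computing the moments.
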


\begin{rem}
	The random distance matrix $(H_{i,j})$ is the one obtained from a critical Galton Watson with finite second moment conditioned on surviving up to a large time.
	See \cite{lambert2018, harris2020, Johnston2019}.
\end{rem}

\subsection{Comparison with previous work} \label{sec:bib}

Branching Brownian motions with inhomogeneous branching rates have received quite a lot of attention in the recent past \cite{horton2020stochastic, harris2020stochastic, harris2022yaglom, gonzalez2022asymptotic, foutel22, tourniaire21,roberts2021gaussian, liu2021particle}.

The general approach always  relies
on a spinal decomposition of the BBM. Roughly speaking, the spine is constructed
by conditioning a typical particle to survive. This conditioning is achieved thanks to a Doob-$h$
transform. In our setting, the harmonic function is approximated by $h(x)\propto e^{\mu x}v_1(x)$
and
the resulting $h$-transform is given by
\begin{equation}\label{spine-00}
	dx_t = \frac{v_1'(x_t)}{v_1{(x_t)}} dt \ + \ dB_t,
\end{equation}
where $B_t$ is a standard Brownian motion (see Section \ref{sect:many-to-few}).

A key assumption underlying \cite{powell19,horton2020stochastic, harris2020stochastic, harris2022yaglom, gonzalez2022asymptotic} is that the harmonic function $h$
is bounded. From a technical stand point, we emphasise that this assumption is the one
distinguishing our work from the previous ones.
Indeed, in the pushed regime, we shall see that $v_1$ decreases exponentially at rate $\beta$ so that  the harmonic function $h(x)$ blows up as $x$ tends to $\infty$.

Due to the explosion of the harmonic function, many of the previously developed techniques break down in our case.

At  first sight, this assumption may only seem technical. However,
it is the key assumption which makes possible a transition from the semi to the fully pushed regime. In the pushed regime, the invariant distribution for the spine dynamics (\ref{spine-00})
is given by
\begin{equation}
	\Pi(x) = \frac{v_1^2(x)}{||v_1||^2}.\label{eq:int:invd}
\end{equation}
Hence, for $x$ large enough,
\begin{equation}
	h(x) \Pi(x)\approx  e^{(\mu-3 \beta) x}.\label{cond:int}
\end{equation}
It then becomes clear from Definition \ref{def:pushed-pulled} that, in the fully pushed regime (resp.~semipushed),
the harmonic function is integrable (resp.~non-integrable) with respect to the invariant measure of the spine.
As a consequence, relaxing the assumption under which  $h$ is bounded is crucial
for understanding the transition between these two regimes.

This generalisation raises interesting technical challenges.
A large fraction of the present work (Section \ref{sec:spectral})
is devoted to estimating the speed of convergence of the spine to its invariant
measure $\Pi$
in the pushed regime.

More precisely, we use Sturm--Liouville theory
in order to derive the spectral decomposition of the differential operator (\ref{PDE:B}) and show that the relaxation time of the system is of order $\log(N)$.
The difficulty arises from the fact that the negative part of the spectrum of the Sturm--Liouville problem (\ref{SLP})
becomes continuous as $L\to\infty$ (see Figure \ref{fig:spectrum}).
Not only is this contribution relevant to the fully pushed regime, but also to the semipushed one. This will be the subject of future work.

One of the main contribution
of the present work is the description
of the genealogy spanned by the population at a large time horizon.
Beyond our Kolmogorov estimate and the Yaglom law reminiscent of  \cite{powell19,horton2020stochastic, harris2020stochastic, harris2022yaglom, gonzalez2022asymptotic}, we use $k$-spine decomposition \cite{Harris2017} and the method of moments developed
in  \cite{foutel22} to prove convergence of the genealogy to a continuum random metric space known as the Brownian Coalescent Point Process \cite{popovic2004} in the Gromov weak topology.
This approach will be further explained in the next section.

The demography and the genealogy of the critical BBM $\mathbf{X}$ have already been investigated  in the pulled and semipushed regimes \cite{berestycki13,maillard20,tourniaire21}. For $W\equiv 0$, it was shown \cite{berestycki13,maillard20} that $\linf=0$ (pulled regime) and that the rescaled population size $\left(\tfrac{1}{N}Z_{t\log(N)^3}, t\geq 0\right)$ converges to  Neveu's CSBP. Using this scaling limit, Berestycki, Berestycki and Schweinsberg \cite{berestycki13} established the convergence of the genealogy  to a Bolthausen-Sznitman coalescent on the time scale $\log(N)^3$. In \cite{berestycki13,maillard20}, the limiting spectrum is also continuous but
the spectral analysis of \eqref{SLP} is straightforward: the spectrum is explicit and given by $\lambda_{i,L}=-\frac{i\pi^2}{2L^2}$ and $v_{i,L}(x)=\sin\left(\frac{i\pi x}{L}\right)$.
In \cite{tourniaire21}, it was proved that, if $W$ is a step function and $\linf\in(0,\tfrac{1}{16})$ (semipushed regime), the exponent $\alpha$ defined in  \eqref{def:beta} belongs to $(1,2)$ and the process $(\tfrac{1}{N}Z_{tN^{\alpha-1}},t\geq0)$ converges to an $\alpha$-stable CSBP. This indicates  that the genealogy of the BBM in the semipushed regime is given by a time-changed Beta-coalescent \cite{birkner2005alpha}. Note that, for $\alpha>1$, the time change depends on the size of the population and is thus random.  When $W=\mathbf{1}_{[0,1]}$, the spectrum of \eqref{SLP} is also explicit.

In the present work, we deal with the case $\linf>\frac{1}{16}$ for continuous compactly supported perturbations $W.$
In particular, the spectral decomposition of \eqref{SLP} is not explicit and we use the Prüfer transformation to derive the required estimates on the $(v_{i,L})$ and the $(\lambda_{i,L})$. Moreover, our strategy is conceptually different from that used in \cite{berestycki13,maillard20,tourniaire21}. Extending the method of moments from \cite{foutel22} to branching diffusions allows us to characterise the genealogy of the system without describing its demographic fluctuations. In fact, the joint convergence of the fluctuations, the genealogy and the configuration of the BBM is a consequence of the convergence of the marked metric measure space associated to the BBM in the Gromov weak topology. In addition, this method allows us to describe the limiting genealogy of the BBM on a deterministic time scale that only depends on the parameters of the model.

\begin{ex} \label{ex:spectre}Consider $W=10.\mathbf{1}_{[0,1]}$. It was calculated
	in \cite{tourniaire21} that the negative part of the spectrum of \eqref{SLP}
	with boundary conditions \eqref{BC} consists of the solutions to
	\begin{equation}\label{eq:ex}
		\frac{\tan(\sqrt{9-2\lambda})}{\sqrt{9-2\lambda}}=-\frac{\tan(\sqrt{-2\lambda}(L-1))}{\sqrt{-2\lambda}}.
	\end{equation}
	The solutions of this equation are plotted on Figure \ref{fig:spectrum}
	\begin{figure}[H]
		\centering
		\includegraphics[width=0.9\textwidth]{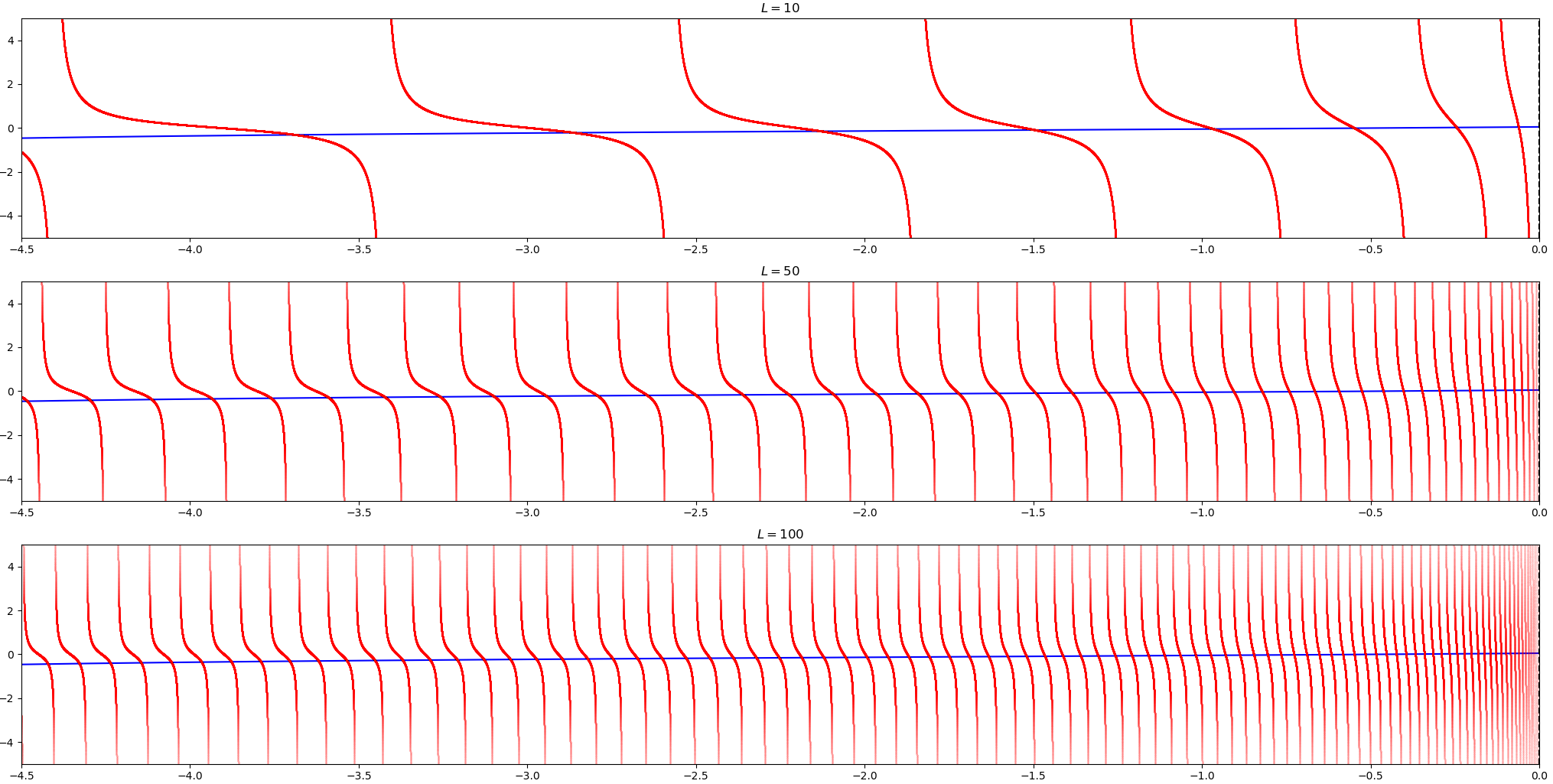}
		\caption{Negative spectrum of the Sturm--Liouville problem \eqref{SLP} with boundary conditions \eqref{BC} for $W$ defined as in Example \ref{ex:spectre} and different values of $L$. The blue line corresponds to RHS of \eqref{eq:ex} and the red line to the LHS of \eqref{eq:ex}.}
		\label{fig:spectrum}
	\end{figure}
\end{ex}

Various models have been suggested to explain the effect of cooperation
(or Allee effect) on the genetic diversity of expanding populations
(see e.g.~\cite{etheridge2022genealogies,birzu2018fluctuations,Roques2012}).
In this work, we consider a toy model for what happens at the tip of the
invasion front. Our BBM can be seen as a moving frame following the particles leading the invasion. At the leading edge, the competition for resources is negligible and, as a first approximation, the particles evolve independently.    In this framework, the killing boundary at $0$ models the beginning of the bulk and the perturbation $W$ encodes the Allee effect.  Despite its simplicity (independence of the particles, compactly supported perturbation),  this system exhibits the same phase transitions and the same macroscopic behaviour as the  model with interaction \eqref{eq:noisyFKPP} (see Section \ref{sec:model} and \cite[Section 1.4]{tourniaire21}), which suggests that the two models belong to the same universality class. In particular, this indicates that the phase diagram introduced in Definition~\ref{def:pushed-pulled} may not depend on the precise form of the interactions between the individuals. In a nutshell,  our model can be seen as a linearisation of \eqref{eq:noisyFKPP} that captures the non-linear signature of Allee effects.

In the present work, we show that the genealogy of fully pushed waves is similar to that  of a neutral unstructured population with finite reproductive variance, as conjectured in \cite{birzu2021genealogical} for fluctuating fronts. We refer to \cite{etheridge2022genealogies} for a similar results on bistable waves in a model with interactions.

We conclude this section by highlighting that our integrability condition already appeared in different forms:

\begin{itemize}
	\item In \cite{birzu2018fluctuations}, the authors derive an explicite formula for the variance in the position of the front. They show that this variance  is given by $2\sigma^2 t$, for some constant $\sigma^2\equiv \sigma^2(N)$ such that (see \cite{birzu2018fluctuations}, Equation [5])
	      \begin{equation*}
		      \sigma^2\propto \frac{1}{N}\int_\mathbb{R}\rho'(x)^2\rho(x)e^{4 v x}dx,
	      \end{equation*}
	      where $v$  is the propagation speed of the determinist dynamic associated to \eqref{eq:noisyFKPP} ($N=\infty$) and $\rho$ refers to the corresponding travelling front.
	      The mass of the above integral is concentrated  where $\rho\approx 0$. The dispersion relation then shows that $\rho(x)\sim e^{-(v-\sqrt{v^2-1})x}$ in this region so that $\sigma^2$ is infinite and the fluctuations are much larger than the one provided by the central limit theorem when $v>3\sqrt{v^2-1}.$ This is precisely our integrability condition~\eqref{hfp}.
	      
	\item The harmonic function $h(x)$ can be interpreted as the \emph{reproductive value} of an individual at $x$, that is, its relative contribution to the travelling wave. Interestingly enough, this quantity also appears in  the context of deterministic bistable fronts, see e.g.~\cite[Equations 8]{Roques2012}.
	      Recalling that $p_t(x,\cdot)$ refers to the density of particles in the BBM starting from a single particle at $x$ (see \eqref{PDE:A}), the quantity
	      \begin{equation}\label{eq:var:intro}
		      \int_0^\infty r(y)h(y)^2 \; p_t(x,y)dy\end{equation}
	      can be interpreted as the reproductive variance in the BBM at time $t$. We will show that, for sufficiently large time $t$, the density of particles in the process $p_t(x,y)$ is roughly proportional to $\tilde{h}(y)\approx e^{-\mu y}v_1(y)$.
	      Hence, \eqref{eq:var:intro} is finite precisely when \eqref{cond:int} is integrable. From this perspective, the Kolmogorov estimate and the Yaglom law established in this article are very similar to those obtained for multi-type Galton-Watson processes with finite variance \cite[p.185]{athreya1972} (see Section \ref{sec:result}).
	\item In \cite{etheridge2022genealogies,birzu2018fluctuations}, the integral $\int h\Pi$ is interpreted as the rate of coalescence of the lineages at the tip of the front. As a first approximation, the lineages coalesce instantaneously at the leading edge when they meet on the same site. Hence, the time to coalescence is roughly given by an exponential random variable of parameter proportional to
	      \begin{equation}\label{eq:parameter}
		      \int_0^\infty \frac{1}{p_t(x,y)}r(y)\Pi(y)^2dy.
	      \end{equation}
	      When this parameter is finite, the genealogy of the sample should converge, after suitable scaling, to Kingman's coalescent.
	      Recalling that  $p_t(x,y)\propto e^{-\mu y}v_1(y)$ for $t\gg1$,  we see that the parameter \eqref{eq:parameter} is finite precisely when \eqref{cond:int} is integrable.
\end{itemize}

\subsection{Notation}\label{notation}
Given two sequences of positive real numbers $(a_N)$ and $(b_N)$, we write $a_N\ll b_n$ if $a_N/b_N\to 0$ as $N\to\infty$. We write $a_N\lesssim b_N$ if $a_N/b_N$ is bounded in absolute value by a positive constant and $a_N\asymp b_N$ if $a_N\lesssim b_N$ and $b_N\lesssim a_N$. We write $O(\cdot)$ to refer to a quantity bounded in absolute value  by a constant times the quantity inside the parentheses. \textbf{Unless otherwise specified}, these constants only depend on $\linf$. For $k\in\mathbb{N}$, we write $[k]$ for the set of natural numbers $\{1,...,k\}$.

\section{Outline of the proof}

Our approach relies on the method of moments devised in \cite{foutel22}. To illustrate
this approach, let us first think about the Yaglom law
of Theorem \ref{thm:Yaglom}. To prove this result, one needs to show that the
moments of $Z_{tN}/N$ converge to the moments of an exponential random variable.
It turns out that this approach can be extended to genealogies.

In Section \ref{sect:mmm},  following  \cite{depperschmidt_marked_2011},
we encode the genealogy at time $tN$
as a random marked metric measure  space.  The moments of this random marked metric measure  space
are obtained by biasing the population by its $k^{\text{th}}$ moment and then picking $k$ individuals uniformly at random (see Remark \ref{rem:moments} below).
In section \ref{sect:CPP}, we introduce a limiting random marked metric measure  space called the marked Coalescent Point Process (CPP) which corresponds to the limiting genealogy of a critical Galton-Watson process \cite{popovic2004}.
The remainder of the section is dedicated to the sketch of the proof for the convergence of the moments of our BBM to the moments of
the marked CPP using the spinal decomposition introduced in \cite{foutel22}.

\subsection{Marked Metric Measure Spaces}\label{sect:mmm}

Let $(E, d_E)$ be a fixed complete separable metric space, referred to as
the \emph{mark space}. In our application, $E = (0, \infty)$ is
endowed with the usual distance on the real line. A \emph{marked metric
	measure space} (mmm-space for short) is a triplet $[X, d, \nu]$, where
$(X,d)$ is a complete separable metric space, and $\nu$ is a finite
measure on $X \times E$. We define
$|X|:=\nu(X\times E)$.
(Note that $\nu$ is
not necessarily a probability distribution.)
To define a topology on the set of mmm-spaces, for each $k \ge 1$, we
consider the map
\[
	R_k \colon
	\begin{cases}
		(X \times E)^k \to \R_+^{k^2} \times E^k \\
		\big( (v_i, x_i);\, i \le k \big) \mapsto
		\big( d(v_i, v_j), x_i ;\, i,j \le k \big)
	\end{cases}
\]
that maps $k$ points in $X\times E$ to the matrix of pairwise distances and
marks. We denote by $\nu_{k, X} = \nu^{\otimes k} \circ R_k^{-1}$, the
\emph{marked distance matrix distribution} of $[X, d, \nu]$, which is the
pushforward of $\nu^{\otimes k}$ by the map $R_k$. Let $k \ge 1$ and consider a
continuous bounded test function $
	\phi \colon \R_+^{k^2} \times E^k \to \R$.
One can define a functional
\begin{equation} \label{eq:polynomials}
	\Phi\big( X, d, \nu \big) = \left<\nu_{k,X}, \phi \right> = \int_{(X\times E)^k} \phi\bigg(d(v_i,v_j), x_i; i\neq j \in[k] \bigg) \prod_{i=1}^k \nu(dv_i \otimes dx_i).
\end{equation}
Functionals of the previous form are called \emph{polynomials}, and the
set of all polynomials, obtained by varying $k$ and $\phi$, is denoted by
$\mathbf{\Pi}$.
Let $\phi$ be of the form
$$
	\phi\bigg(d(v_i,v_j), x_i; i\neq j \in[k] \bigg)   \ = \ \prod_{i,j} \psi_{i,j}(d(v_i,v_j)) \prod_{i} \phi_i(x_i),
$$
where $\psi_{i,j},\phi_i$ are bounded continuous functions. In this case, we say that $\Phi(X,d,\nu)$ is a product polynomial.
We denote by $\tilde {\mathbf \Pi}$ the set of product polynomials.

\begin{definition}
	The marked Gromov-weak  (MGW) topology is the topology on mmm-spaces induced
	by $\mathbf{\Pi}$. A random mmm-space is a r.v.~with values in $\mathbb{M}$ -- the set of
	(equivalence classes of) mmm-spaces -- endowed with the marked  Gromov-weak
	topology and the associated Borel $\sigma$-field.
	The marked Gromov-weak  (MGW) topology is identical to the topology induced
	by the product polynomials $\tilde {\mathbf{\Pi}}$.
\end{definition}

For a random mmm-space $[X, d, \nu]$, the moment of $[X,d,\mu]$ associated to $\Phi$ is defined as $\E[ \Phi\big( X, d, \nu \big) ]$.

\begin{rem}\label{rem:moments}
	Consider a random mmm-space $[X,d,\nu]$.
	Assume that $\E[|X|^k]<\infty$.
	The moments of $[X,d,\nu]$ can be rewritten as
	$$
		\E\left[\Phi\big( X, d, \nu \big)\right] \ = \E[|X|^k]
		\times \frac{1}{\E[|X|^k]} \E\left[|X|^k
			\phi( d(v_i,v_j), x_{v_i}, i\neq j \in[k]   ) \right],
	$$
	where $(v_i,x_{v_i})$ are $k$ points sampled uniformly at random with their marks and $|X|:=\nu(X\times E)$
	is thought as the total size of the population. As a consequence, the moments of a random mmm-space
	are obtained by biasing the population size by its $k^{th}$ moment and then picking $k$ individuals uniformly at random.
\end{rem}

Many properties of the marked Gromov-weak topology are derived in
\cite{depperschmidt_marked_2011} under the further assumption that
$\nu$ is a probability measure. In particular,
the following result shows that $\mathbf{\Pi}$ forms a convergence determining
class only when the limit satisfies a moment condition, which is a
well-known criterion for a real variable to be identified by its moments,
see for instance \cite[Theorem~3.3.25]{durrett_probability_2019}.
This result was already stated for metric measure spaces without marks
in \cite[Lemma~2.7]{depperschmidt2019treevalued} and was proved in \cite{foutel22}.

\begin{proposition} \label{lem:convDetermining}
	Suppose that $[X,d,\nu]$ is a random mmm-space verifying
	\begin{equation} \label{eq:momentCondition}
		\limsup_{p \to \infty} \frac{\E[|X|^p]^{1/p}}{p} < \infty.
	\end{equation}
	Then, for a sequence $[X_n, d_n, \nu_n]$ of random mmm-spaces to
	converge in distribution for the marked Gromov-weak topology to
	$[X,d,\nu]$ it is sufficient that
	\[
		\lim_{n \to \infty} \E\big[ \Phi\big(X_n, d_n, \nu_n\big) \big]
		= \E\big[ \Phi\big(X, d, \nu\big) \big]
	\]
	for all $\Phi \in \mathbf{\Pi}$.
\end{proposition}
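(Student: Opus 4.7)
The plan is to apply the classical method-of-moments strategy in two independent steps: first, establish tightness of $\{[X_n, d_n, \nu_n]\}_n$ in the marked Gromov-weak topology from the convergence of moments; second, show that under the Carleman-type condition \eqref{eq:momentCondition}, the moments of $[X,d,\nu]$ uniquely determine its law. Prokhorov's theorem then upgrades these two ingredients into convergence in distribution.

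For tightness, I would invoke a marked, finite-measure adaptation of the tightness criterion of Depperschmidt--Greven--Pfaffelhuber (\cite{depperschmidt_marked_2011}). Specialising $\Phi \in \mathbf{\Pi}$ to $k = 1$ with $\phi \equiv 1$ yields $\Phi([X_n, d_n, \nu_n]) = |X_n|$, so convergence of first moments already gives tightness of the total mass. For $k \ge 2$, choosing continuous factors $\phi_i$ and $\psi_{i,j}$ that approximate $\mathbf{1}_{\{x > R\}}$ and $\mathbf{1}_{\{d > R\}}$ and letting $R \to \infty$, convergence of the associated moments controls the tail mass of each distance matrix distribution $\nu_{k, X_n}$ uniformly in $n$. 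This yields tightness of $(\nu_{k, X_n})_n$ as random measures on $\R_+^{k^2} \times E^k$ for every $k$, which is the content of the MGW tightness criterion.

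For moment determinacy, let $[Y, d_Y, \nu_Y]$ be any subsequential weak limit. By moment convergence along the subsequence, $\E[\Phi(Y)] = \E[\Phi(X)]$ for every $\Phi \in \mathbf{\Pi}$. The crucial structural fact is that $\mathbf{\Pi}$ is an algebra: the product of two polynomials of degrees $k_1, k_2$ with integrands $\phi^{(1)}, \phi^{(2)}$ is a polynomial of degree $k_1 + k_2$ whose integrand is the tensor $\phi^{(1)} \otimes \phi^{(2)}$. For any product polynomial $\Phi$ of degree $k$ whose factors are bounded by $1$, one has $|\Phi([X,d,\nu])| \le |X|^k$, so
\[
	\E\big[ \Phi([X,d,\nu])^p \big] \le \E\big[ |X|^{kp} \big] \le (Ckp)^{kp}
\]
by \eqref{eq:momentCondition}. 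This Carleman-type bound, combined with a Cram\'er--Wold argument and the algebra structure of $\mathbf{\Pi}$, implies that the joint distribution of any finite tuple $(\Phi_1([X,d,\nu]), \ldots, \Phi_m([X,d,\nu]))$ is determined by its cross-moments. Since $\mathbf{\Pi}$ separates points in $\mathbb{M}$ and generates the MGW Borel $\sigma$-field, this in turn determines the law of $[X,d,\nu]$, whence $[Y] \overset{d}{=} [X]$.

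The main obstacle is the tightness argument in the finite-measure setting: the original DGP framework concerns probability-measure-valued mmm-spaces, and one must verify that the polynomial moments supply enough control to simultaneously bound the total mass $|X_n|$ and prevent escape of mass of the normalised measures in both the mark and the metric directions. A secondary, more routine, subtlety is the passage from one-dimensional to joint moment determinacy, handled via Cram\'er--Wold once the algebra property of $\mathbf{\Pi}$ is used.
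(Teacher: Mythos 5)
The paper does not actually prove this proposition: it is cited as having been stated in \cite[Lemma~2.7]{depperschmidt2019treevalued} and proved in \cite{foutel22}, so there is no in-text proof to compare against. That said, your two-step template (tightness of the sequence, then uniqueness of subsequential limits via moment determinacy) is almost certainly the structure of the argument in \cite{foutel22}, and your tightness step is the standard one. The gap is in the moment-determinacy step.

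You argue that for a degree-$k$ polynomial $\Phi$ with factors bounded by $1$ one has $\E[\Phi(X)^{2p}]\le \E[|X|^{2kp}]\le (2Ckp)^{2kp}$, and then invoke a ``Carleman-type bound'' plus Cram\'er--Wold. But the Carleman criterion (Hamburger form) for a random variable $Y$ requires $\sum_p \E[Y^{2p}]^{-1/(2p)}=\infty$; with the bound above, $\E[\Phi(X)^{2p}]^{1/(2p)}\lesssim p^k$, so the Carleman sum is comparable to $\sum_p p^{-k}$, which \emph{converges} for every $k\ge 2$. Thus, for any polynomial of degree two or higher (and these are exactly the ones that carry genealogical information), the moments of $\Phi(X)$, or of a finite linear combination $\sum_j t_j\Phi_j(X)$, do not satisfy Carleman and you cannot conclude one-dimensional moment determinacy. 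Cram\'er--Wold then has nothing to bite on. The condition \eqref{eq:momentCondition} gives $|X|$ exponential moments, which is borderline: $|X|$ itself is moment-determinate, but $|X|^k$ for $k\ge 2$ need not be, and your bound $|\Phi|\le|X|^k$ only gives you the worse object.

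The correct way to exploit \eqref{eq:momentCondition} is not variable-by-variable Carleman but a global Laplace-transform (exponential-tilting) argument using the algebra structure of $\mathbf{\Pi}$ in a different way than you do. Because $|X|^j\Phi$ is again a polynomial (of degree $j+k$), the moments of $[X,d,\nu]$ determine $\E[|X|^j\Phi(X)]$ for all $j$, and the bound $\E[|X|^{j+k}]\le (C(j+k))^{j+k}$ shows that the series $\sum_j \frac{(-\lambda)^j}{j!}\E[|X|^j\Phi(X)]$ converges absolutely for $\lambda$ in a neighbourhood of $0$; hence the moments determine $\E[e^{-\lambda|X|}\Phi(X)]$ for small $\lambda$, and, by analytic continuation in $\lambda$, for all $\lambda>0$. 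The functionals $[X,d,\nu]\mapsto e^{-\lambda|X|}\Phi([X,d,\nu])$ are now bounded and (because $\mathbf{\Pi}$ separates points of $\mathbb{M}$) form a separating family of bounded functionals, so their expectations determine the law of $[X,d,\nu]$. This is the step your proof is missing. Your tightness step can be made to work along the lines you sketch, though in the finite-measure setting you also need to control marks escaping to both ends of $E=(0,\infty)$ and to verify that control of $\nu_{k,X_n}$ for all $k$ indeed gives tightness in the marked Gromov-weak topology (this is the part you flag as the ``main obstacle''); but the genuinely fatal issue is the Carleman/Cram\'er--Wold claim.
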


\subsection{Marked Brownian Coalescent Point Process (CPP)}
\label{sect:CPP}
Let $T>0$ and $m$ be a measure on $\R_+$.
Assume that $|m|:=m(\R_+)>0$.
Consider ${\cal P}$ a $PPP\left(\frac{dt}{t^2} \otimes d\ell\right)$.
Define
$$
	Y_T \ = \ \inf\left\{y : (t,y)\in {\cal P}, t\geq T\right\},
$$
and
$$
	d_T(x,y) \ = \ \sup\{t  :  (t,z) \in {\cal P}\ \ \mbox{and} \ \ x \leq z \leq y\}, \quad  0<x<y<Y_T.
$$
The marked Brownian Coalescent Point Process (CPP) is defined as
$$
	M_{\text{CPP}_T} \ :=  \bigg([0,Y_T],d_T, \text{Leb} \otimes m(dx) \bigg),
$$
where Leb refers to the Lebesgue measure on $[0,Y_T]$. This definition is illustrated in Figure~\ref{fig:cpp_simulation}.
This object is a natural extension of the standard Brownian CPP \cite{popovic2004}.

\begin{rem}\label{rem:exp}
	A direct computation shows that $Y_T m(\R_+)$ (which can be thought as the population size at time $T$) is distributed as an exponential random variable
	with mean $T |m|$. If the CPP encodes the size and the genealogy of critical
	branching processes, this is consistent with Yaglom's law for such processes.
\end{rem}

\begin{figure}[H]
	\centering
	\includegraphics[width=.55\textwidth, angle=180]{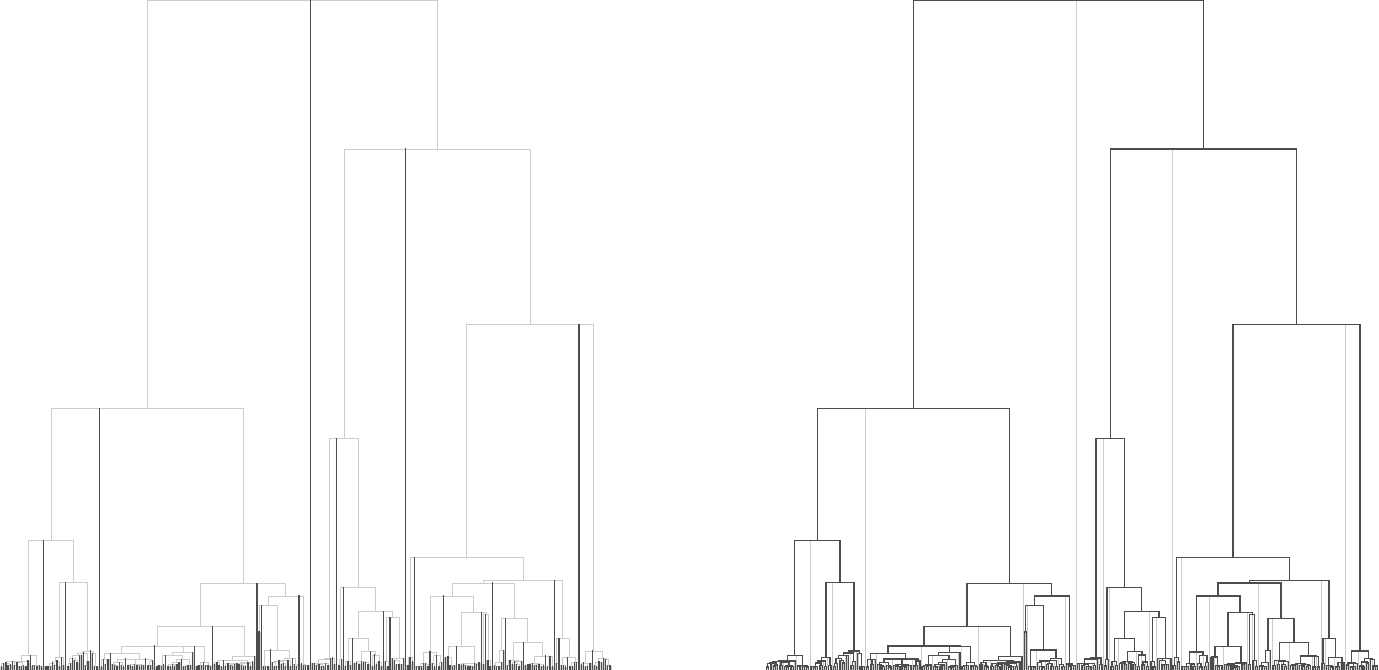}
	\caption{ Simulation of the unmarked Brownian CPP. On the RHS, a vertical line of height $t$ at location
		$y$ represents an atom $(t,y)$ of $\cal{P}$. On the LHS, the tree corresponding
		to the right CPP; the distance $d_T$ is the tree distance of the
		leaves.}
	\label{fig:cpp_simulation}
\end{figure}

\begin{proposition}\label{moments:CPP}
	Let $k\in{\mathbb N}$.
	Let $(\phi_i; i\in[k])$ and $(\psi_{i,j}; i,j \in[k])$ be  continuous bounded functions.
	Consider a product polynomial of the form
	$$
		\forall M=[X,d,\nu], \ \ \Psi(M) \ := \ \int \prod_{i,j=1}^k \psi_{i,j}(  d({v_i, v_j}) ) \prod_{i=1}^k \phi_{i} (x_i) \nu(dv_i \otimes dx_i).
	$$
	Then
	$$
		\E\left[\Psi(M_{\text{\textnormal{CPP}}_T}) \right] \ = \ k ! T^{k} \E\left[ \prod_{i,j=1}^k \psi_{i,j}( U_{\sigma_i,\sigma_j } )   \right] \left[\prod_{i=1}^k \int  m(dx) \phi_i(x) \right],
	$$
	where
	$(U_{i}; i\in[k-1])$ is a vector of uniform i.i.d. random variables on $[0,T]$,
	$$U_{i,j}= U_{j,i} = \max\{ U_{l}: l=i,\cdots, j-1\},$$ $\sigma$ is a random uniform permutation of $\{1, \dots, k\}$, and $\sigma$ and $(U_i)$ are independent.
\end{proposition}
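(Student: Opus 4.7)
The plan is a direct moment computation exploiting the explicit Poisson structure of $\mathcal{P}$. Since $\nu = \mathrm{Leb}\otimes m$ factorizes, the mark integrals separate immediately,
\[
\Psi(M_{\text{CPP}_T}) \;=\; \Bigl(\prod_{i=1}^k \int\phi_i\,dm\Bigr)\cdot I_T, \qquad I_T := \int_{[0,Y_T]^k}\prod_{i,j}\psi_{i,j}\bigl(d_T(v_i,v_j)\bigr)\,dv_1\cdots dv_k,
\]
reducing the proposition to the identity $\E[I_T] = k!\,T^k\,\E\bigl[\prod_{i,j}\psi_{i,j}(U_{\sigma_i,\sigma_j})\bigr]$. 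To surface the uniform permutation $\sigma$, I would split $I_T$ over the $k!$ orderings of $v_1,\ldots,v_k$: for each $\sigma\in S_k$, set $u_l := v_{\sigma(l)}$ and restrict to the simplex $0 < u_1 < \cdots < u_k < Y_T$. Writing $D_l := d_T(u_l,u_{l+1})$ for $l=1,\ldots,k-1$, one has $d_T(u_a,u_b) = \max_{a\le l < b} D_l$ for $a<b$, so the $\sigma$-th integrand is $\prod_{i,j}\psi_{i,j}(\max_{\sigma_i\wedge\sigma_j\le l<\sigma_i\vee\sigma_j} D_l)$, which matches the combinatorial definition of $U_{\sigma_i,\sigma_j}$ from the statement.

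Two Poisson facts then drive the computation. First, $\{u_k < Y_T\}$ is the event ``no atom of $\mathcal{P}$ in $[0,u_k]\times[T,\infty)$'', which factorizes over disjoint spatial strips as the intersection of the independent events $\{\text{no atom in }[0,u_1]\times[T,\infty)\}$ (of probability $e^{-u_1/T}$) and $\{D_l < T\}$ for $l\ge 1$. Second, the $D_l$'s are mutually independent with Fr\'echet marginals $\P(D_l\le s) = \exp(-(u_{l+1}-u_l)/s)$, since they depend on atoms of $\mathcal{P}$ in disjoint strips. Parametrizing the simplex by the free variables $R_0 := u_1$ and $R_l := u_{l+1}-u_l$ and writing $D_l = R_l/E_l$ with $E_l\stackrel{\text{i.i.d.}}{\sim}\mathrm{Exp}(1)$ independent of the $R$'s turns the constraint $D_l < T$ into the coupled constraint $R_l < TE_l$.

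From here the calculation unwinds. The $R_0$-integral yields $\int_0^\infty e^{-R_0/T}\,dR_0 = T$. For the remaining coordinates, I would substitute $R_l = TE_l\rho_l$ with $\rho_l\in(0,1)$: this gives $D_l = T\rho_l$ (independent of $E_l$), a Jacobian $TE_l$ per coordinate, and converts the coupled constraint into the box $\rho_l\in(0,1)$. Taking expectation over the $E_l$'s using $\E[\prod_l E_l]=1$ and rescaling $u_l = T\rho_l$, the powers of $T$ combine to give $T^k$ and the $\sigma$-th summand reduces to $T^k\,\E[\prod_{i,j}\psi_{i,j}(U_{\sigma_i,\sigma_j})]$ with $U_l$ i.i.d.\ uniform on $(0,T)$. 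Summing over $\sigma\in S_k$ produces the $k!$ factor and reinterprets the sum as $k!$ times the expectation over a uniform random permutation, yielding the claimed formula. The only mildly non-routine step is the substitution $R_l = TE_l\rho_l$, which simultaneously decouples the constraint $R_l < TE_l$, flattens the conditional law of $D_l$ to uniform on $(0,T)$, and produces exactly the Jacobian for $\E[\prod_l E_l]=1$ to close the computation cleanly.
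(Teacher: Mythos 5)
Your proof is correct and self-contained. The paper itself gives no argument here---it simply defers to Proposition~4 of \cite{foutel22}---so a literal comparison is not possible; nonetheless, the ingredients you use (separating the $m$-integral thanks to the product form of $\nu$, splitting $[0,Y_T]^k$ into ordered simplices, reading off the Fr\'echet law $\P(D_l \le s) = e^{-(u_{l+1}-u_l)/s}$ and independence of the gaps from the disjoint strips of $\mathcal{P}$, then the substitution $R_l = T E_l \rho_l$ that simultaneously flattens $D_l$ to $T\rho_l$, turns the constraint into $\rho_l\in(0,1)$, and uses $\E[\prod_l E_l]=1$) are the natural Poisson route for a Brownian CPP moment identity, and almost certainly mirror that reference. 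Two notational slips, neither of which affects the outcome: after setting $u_l := v_{\sigma(l)}$, the integrand should carry $\sigma^{-1}$ rather than $\sigma$ (one has $v_i = u_{\sigma^{-1}(i)}$), which is immaterial because you sum over all of $S_k$; and the phrase ``no atom of $\mathcal{P}$ in $[0,u_k]\times[T,\infty)$'' reverses the coordinate order of $\mathcal{P}\sim\mathrm{PPP}(dt/t^2\otimes d\ell)$, whose atoms are $(t,y)$ with $t$ first. You might also make explicit the Fubini swap that puts $\E_E$ outside the $R$-integral before the change of variables, but for bounded $\psi_{i,j}$ this is routine.
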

\begin{proof}
	The proof is identical to Proposition 4 in \cite{foutel22}.
\end{proof}

{
\begin{proposition}[Sampling from the CPP]\label{SAmpling-CPP}
	Let $k\in\N$ and
	sample $k$ points, denoted by $(v_1,x_1),\cdots,(v_k,x_k)$, uniformly at random from the CPP.
	Then $(\left(d_T(v_i,v_j)\right)_{i,j}, (x_{i})_i )$ is identical in law to  $\left((H_{\sigma_i,\sigma_j}),(w_{\sigma_i})\right)$
	where $(H_{i,j})$ is defined as in Theorem \ref{thm:Yaglom} (ii), the $(w_i)$ are independent random variables with law $\frac{m}{|m|}$ and $\sigma$ is a random uniform permutation of $\{1,\cdots,k\}$, independent of $(H_{i,j})_{i,j}$ and $(w_i)_i$.
\end{proposition}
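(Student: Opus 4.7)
The plan is to compute, for bounded continuous test functions $(\psi_{i,j})$ and $(\phi_i)$, the joint moment
\[
\E\!\left[\prod_{i,j}\psi_{i,j}(d_T(v_i,v_j))\prod_i\phi_i(x_i)\right]
\]
under the sampling law $(v_i,x_i)\sim \nu/|\nu|$, and to match it with the corresponding moment of $((H_{\sigma_i,\sigma_j}),(w_{\sigma_i}))$. Since $\nu=\mathrm{Leb}|_{[0,Y_T]}\otimes m$ factors, the marks $x_i$ are i.i.d.\ with law $m/|m|$ and independent of the positions, which accounts for the mark component of the target (the $w_{\sigma_i}$ being exchangeable under the uniform $\sigma$). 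Thus only the law of the distance matrix needs to be identified. I would decompose it via the order statistics: let $v_{(1)}<\cdots<v_{(k)}$ be the sorted positions, let $\pi\in S_k$ be the random permutation with $\pi(i)$ the rank of $v_i$, and set $U_l:=d_T(v_{(l)},v_{(l+1)})$ for $l=1,\dots,k-1$. The ultrametric structure of $d_T$ gives $d_T(v_i,v_j)=\max\{U_l : l\in[\min(\pi(i),\pi(j)),\max(\pi(i),\pi(j))-1]\}$, which I abbreviate $U_{\pi(i),\pi(j)}$. Exchangeability of i.i.d.\ samples makes $\pi$ uniform on $S_k$ and independent of $(v_{(\cdot)})$, hence of $(U_l)$, so it suffices to establish $(U_{i,j})\stackrel{d}{=}(H_{i,j})$, which in turn reduces to showing that $(U_1,\dots,U_{k-1})$ is a $\theta$-mixture of i.i.d.\ copies of $U^\theta$ with mixing density $k\theta^{k-1}/(1+\theta)^{k+1}$ on $\R_+$.

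To prove this mixture representation, I would compute $\E[\prod_l\psi_l(U_l)]$ explicitly. Conditionally on the CPP and on $Y_T$, the spacings $(g_0,\ldots,g_k)$ are uniform on $\{g_i\ge 0,\ \sum g_i=Y_T\}$ with density $k!/Y_T^k$, and conditionally on the internal spacings $(g_1,\dots,g_{k-1})$ the $U_l$ are independent with $\P(U_l\le s\mid g_l)=\exp(-g_l(1/s-1/T))$ on $(0,T]$. Combined with $Y_T\sim\mathrm{Exp}(1/T)$, this yields
\[
\E\!\left[\prod_l\psi_l(U_l)\right]=\int_{\R_+^{k+1}}\frac{k!}{T}\,e^{-(\sum g_i)/T}\Big(\sum g_i\Big)^{-k}\prod_{l=1}^{k-1}\tilde\psi_l(g_l)\,dg_0\cdots dg_k,
\]
where $\tilde\psi_l(g):=\E[\psi_l(U_l)\mid G_l=g]$. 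I would linearize the $(\sum g_i)^{-k}$ factor through $y^{-k}=\frac{1}{(k-1)!}\int_0^\infty\lambda^{k-1}e^{-\lambda y}\,d\lambda$; the multi-spacing integral then factors as $\eta^{-2}\prod_l\int_0^\infty e^{-\eta g_l}\tilde\psi_l(g_l)\,dg_l$ with $\eta:=\lambda+1/T$. After the change of variables $\theta:=\eta T-1$ and the direct evaluation $\int_0^\infty e^{-\eta g}\tilde\psi_l(g)\,dg=\frac{T}{1+\theta}\E[\psi_l(U^\theta)]$ (a Fubini computation against the explicit density of $U^\theta$), one arrives at
\[
\E\!\left[\prod_l\psi_l(U_l)\right]=k\int_0^\infty\frac{\theta^{k-1}}{(1+\theta)^{k+1}}\prod_l\E[\psi_l(U^\theta)]\,d\theta,
\]
which is precisely the announced mixture of i.i.d.\ sequences.

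The main technical obstacle I anticipate is this Laplace/change-of-variables calculation: recognising the single-gap Laplace transform of $\tilde\psi_l$ as $\E[\psi_l(U^\theta)]/(1+\theta)$, and producing the correct Beta-type mixing density $k\theta^{k-1}/(1+\theta)^{k+1}$. Once the mixture representation is in hand, applying the max-of-adjacent map gives $(U_{i,j})\stackrel{d}{=}(H_{i,j})$ by the very definition \eqref{eq:moments-CPP}; composing with the independent uniform permutation $\pi\stackrel{d}{=}\sigma$ yields the full claim $((d_T(v_i,v_j))_{i,j},(x_i)_i)\stackrel{d}{=}((H_{\sigma_i,\sigma_j}),(w_{\sigma_i}))$.
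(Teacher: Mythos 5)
Your proof is correct. The paper itself does not carry out the argument but defers to \cite[Proposition 4.3]{boenkost2022genealogy} for the unmarked CPP, observing only that the marks decouple because $\nu=\mathrm{Leb}|_{[0,Y_T]}\otimes m$ is a product; you have reconstructed that deferred computation in full. The structure is the right one: the product form of $\nu$ makes the marks i.i.d.\ $m/|m|$ and independent of positions; exchangeability of the $k$ i.i.d.\ uniform points on $[0,Y_T]$ makes the rank permutation $\pi$ uniform on $S_k$ and independent of the order statistics (hence of the CPP and of the gaps); the ultrametric identity reduces the distance matrix to the adjacent gaps $(U_1,\ldots,U_{k-1})$; and the conditional independence of the $U_l$ given the spacings, with $\P(U_l\le s\mid g_l)=e^{-g_l(1/s-1/T)}$ on $(0,T]$, is correct because the PPP restricted to $t<T$ is unaffected by conditioning on $Y_T$. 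I checked the Laplace step: with gap density $f_g(s)=\tfrac{g}{s^2}e^{-g(1/s-1/T)}$, Fubini gives $\int_0^\infty e^{-\eta g}\tilde\psi_l(g)\,dg=\int_0^T\psi_l(s)\tfrac{T^2}{(T+\theta s)^2}\,ds=\tfrac{T}{1+\theta}\E[\psi_l(U^\theta)]$ with $\theta=\eta T-1$, the powers of $T$ cancel, and the mixing density $k\theta^{k-1}(1+\theta)^{-(k+1)}$ is exactly the one in \eqref{eq:moments-CPP}. One point worth making explicit in the write-up: you need the \emph{joint} law $((H_{\sigma_i,\sigma_j}),(w_{\sigma_i}))$ to match, not just the marginals. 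Since the $w_i$ are i.i.d.\ and $\sigma\perp(w_i)$, the vector $(w_{\sigma_i})_i$ is in fact independent of $\sigma$ (its conditional law given $\sigma$ is constant), hence also independent of $(H_{\sigma_i,\sigma_j})_{i,j}$; on your side, the marks are independent of the CPP and of $\pi$ for the same reason, so the product structure matches on both sides. You allude to this via ``exchangeability,'' but a one-line argument would make it airtight.
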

\begin{proof}
	The proof is identical to the one in the case of the unmarked CPP. See \cite[Proposition 4.3]{boenkost2022genealogy}.
\end{proof}
}

\subsection{Convergence of mmm}\label{sec:MMM:BBM}
{ Fix $t>0$. Recall that ${\cal N}_{t}$ refers the set of particles alive at time $t$ in the BBM $\mathbf{X}$. Set

\begin{equation*}
	\nu_{t}  :=  \sum_{v\in {\cal N}_{t}} \delta_{v, x_v}, \quad \text{and} \quad  \forall v,v'\in{\cal N}_{t}, \quad  d_t(v,v') =  t - |v\wedge v'|,
\end{equation*}
where $|v\wedge v'|$ denotes the most recent time at which $v$ and $v'$ had a common ancestor.
Let $M_{t} := [{\cal N}_{t},d_t ,\nu_{t}]$ be the resulting random mmm-space.
Finally, set
\begin{equation*}
	\bar \nu_{t}  :=  \frac{1}{N} \sum_{v\in {\cal N}_{tN}} \delta_{v, x_v},\quad \text{and} \quad  \forall v,v'\in{\cal N}_{tN},    \quad \bar d_t(v,v') = \ \left(t - \frac{1}{N}|v\wedge v'|\right),
\end{equation*}
and define the rescaled metric space $\bar M_t \ := \ [{\cal N}_{tN},\bar d_t ,\bar \nu_{t}]$. The main idea underlying Theorem \ref{thm:Yaglom} is to prove the convergence of $\bar M_{t}$ to a limiting CPP.

\begin{theorem}\label{thm:main-theorem}
	Conditional on the event $\{Z_{tN}>0 \}$, $(\bar M_{t}; N\in\N)$ converges in distribution to a marked Brownian CPP with parameters $(t,\frac{\Sigma^2}2 \tilde h^\infty)$.
\end{theorem}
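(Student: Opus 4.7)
The plan is to apply Proposition \ref{lem:convDetermining}. First, the limiting marked Brownian CPP with parameters $(t,\frac{\Sigma^2}{2}\tilde h^\infty)$ satisfies the moment condition \eqref{eq:momentCondition} because, by Remark \ref{rem:exp}, its total mass is exponentially distributed (with mean $t\Sigma^2/2$), so $\E[|X|^p]^{1/p}/p \to 0$. It thus suffices to establish the convergence of moments
\[
\E_x[\Phi(\bar M_t) \mid Z_{tN}>0]\longrightarrow \E[\Phi(M_{\mathrm{CPP}_t})]
\]
as $N\to\infty$, for every product polynomial $\Phi\in\tilde{\mathbf\Pi}$ with test functions $(\psi_{i,j})$ and $(\phi_i)$.

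The core of the argument is a $k$-spine representation of the left-hand side, in the spirit of \cite{Harris2017, foutel22}. After a Doob transform with approximate harmonic function $h(x)\propto e^{\mu x}v_1(x)$, the unconditional moment $\E_x[\Phi(\bar M_t)]$ should be rewritten as an expectation over a random tree with $k$ leaves: the combinatorial shape is encoded by a uniform ordering of the leaves together with $k-1$ branch heights drawn with an intensity proportional to $r(x) h(x)$, while each of the spines between branch events follows the $h$-biased diffusion \eqref{spine-00}. Criticality \eqref{def:mu} is precisely the condition under which the Girsanov weight remains of the right order as $N\to\infty$.

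The next step is to pass to the limit after rescaling time by $N$. The key input is the spectral analysis of \eqref{PDE:B} carried out in Section \ref{sec:spectral}, which shows that \eqref{spine-00} has a spectral gap and relaxes to its stationary distribution $\Pi^\infty = h^\infty\tilde h^\infty$ on a time scale of order $\log N \ll N$. This decorrelation yields, in the limit: (a) the $k-1$ rescaled branch heights become i.i.d.~uniform on $[0,t]$ with overall intensity
\[
\int_0^\infty r(z)\, h^\infty(z)^2\, \tilde h^\infty(z)\, dz=\frac{\Sigma^2}{2},
\]
which is finite precisely under \eqref{hfp} via \eqref{cond:int}; and (b) the $k$ leaf positions become asymptotically i.i.d.~with law $\tilde h^\infty$ (one factor of $h^\infty$ is absorbed into the branching intensity above, leaving the stable configuration density $\tilde h^\infty = \Pi^\infty/h^\infty$ at the leaves). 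Together with Proposition \ref{moments:CPP} applied to $m = \frac{\Sigma^2}{2}\tilde h^\infty$, this identifies the limit with $\E[\Phi(M_{\mathrm{CPP}_t})]$, up to a residual prefactor $h^\infty(x)/N$ coming from the many-to-few weight.

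The conditioning on survival is then removed using the Kolmogorov estimate of Theorem \ref{thm:Kolmogorov}: dividing by $\P_x(Z_{tN}>0)\sim 2h^\infty(x)/(\Sigma^2 t N)$ exactly cancels the residual factor $h^\infty(x)/N$. The main obstacle lies in the $k$-spine step above: because the harmonic function $h$ is unbounded in the pushed regime, the usual bounded-$h$ techniques of \cite{horton2020stochastic, harris2022yaglom, gonzalez2022asymptotic} do not apply. Controlling the $k$-spine expression therefore requires sharp quantitative mixing estimates for the spine diffusion, which in turn demand the careful Sturm--Liouville analysis of Section \ref{sec:spectral} — the real technical difficulty being the continuous spectrum emerging as $L\to\infty$ (Figure \ref{fig:spectrum}) and the resulting need for uniform-in-$L$ control of the eigenfunctions $v_{i,L}$ via the Prüfer transformation.
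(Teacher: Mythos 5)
Your sketch correctly reproduces the heuristic outline from Section~\ref{sect:lim-moments} of the paper (method of moments via Proposition~\ref{lem:convDetermining}, $k$-spine representation, spectral mixing, Kolmogorov estimate to lift the conditioning, identification via Proposition~\ref{moments:CPP}), but it stops exactly where the paper's outline stops and does not supply the two truncation arguments that are the actual content of the proof. These are not cosmetic: without them the chain of reasoning you describe does not close.

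First, the $L$-cutoff. You propose to apply the many-to-few/$k$-spine representation directly to $\bar M_t$. But the many-to-few formula (Proposition~\ref{th:many-to-few2}) and the spine generator in Lemma~\ref{lem:check1} are built from the Sturm--Liouville problem \eqref{SLP}--\eqref{BC} on the \emph{bounded} interval $[0,L]$, and all of the spectral machinery (discrete spectrum, Prüfer transform, heat-kernel estimate in Proposition~\ref{lem:hk}) lives there. On $[0,\infty)$ the negative spectrum is continuous (Figure~\ref{fig:spectrum}) and the whole decomposition \eqref{def:qt1} is unavailable. The paper therefore proves convergence for $\bar M_t^L$ with $L=\tfrac1{\mu-\beta}\log N$ and then shows $\bar M_t$ and $\bar M_t^L$ coincide with probability $\to1$ under the conditioning, via the flux estimate $N\E_x[R^1([0,tN])]\to0$ (Corollary~\ref{cor:pb:survie}) and Lemma~\ref{cor:pGP}. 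That comparison step is precisely the content of the paper's proof of Theorem~\ref{thm:main-theorem}; your proposal never mentions it.

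Second, the $\eps$-truncation of test functions. Theorem~\ref{thm:k-spine-cv} only handles test functions $\tilde\phi_i$ that are bounded and \emph{compactly supported} in $(0,\infty)$. To obtain moment convergence for a general product polynomial $\Phi$, one needs to take $\tilde\phi_i=\phi_i/h^\infty$, which is neither bounded nor compactly supported (Challenge~\ref{ref:challeng-explosion}). The paper resolves this by first proving convergence of the thinned space $\bar M_t^{L,\eps}$ obtained by discarding leaves whose mark falls outside $E_\eps=[\eps,1/\eps]$ (Proposition~\ref{prop:conv-truncation}), and then letting $\eps\to0$ using Corollary~\ref{cor:approx-mmm2} together with a first-moment estimate on the mass near the boundaries (Proposition~\ref{prop:cv-truncated-mm}). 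Your proposal asserts the leaf marks are asymptotically $\tilde h^\infty$-distributed without explaining how to legitimately plug non-compactly-supported test functions into the $k$-spine limit. A related omission: the many-to-few gives moments over distinct tuples $\mathbf{1}_{\{v_i\neq v_j\}}$, whereas the Gromov-weak polynomials sample with replacement; the paper closes that gap inductively inside the proof of Proposition~\ref{prop:conv-truncation}, and you do not address it.

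One small inaccuracy: for the CPP, $|X|$ is exponential with mean $\mu=t\Sigma^2/2$, so $\E[|X|^p]^{1/p}/p=(p!)^{1/p}\mu/p\to\mu/e\neq0$ by Stirling; the moment condition \eqref{eq:momentCondition} requires only a finite $\limsup$, which holds, but your claim that the ratio tends to $0$ is wrong.
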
}

The proof of the theorem relies on a cut-off procedure. Let
\begin{equation}\label{def:L}
	L := \frac{1}{\mu-\beta} \log(N).
\end{equation}
Recall that $\mathbf{X}^L$ refers to the BMM killed at $0$ and $L$. Let $\nu_t^L$ (resp.~$\bar\nu_t^L$) be the  empirical measure obtained  by replacing ${\cal N}_t$ by ${\cal N}^L_t$ in the definition of $\nu_t$ (resp.~$\bar \nu_t$).
Let $M_{t}^L$  be the mmm-space obtained from $\mathbf{X}^L$, that is $M_{t}^L=[{\cal N}_{t}^L,d_t,\nu_{t}^L]$.
$\bar M_t^L$ is defined analogously to $\bar M_t$ (i.e.~accelerating time by $N$ and rescaling the empirical measure by $1/N)$. { Finally, define for all $(t,x)\in[0,\infty)\times [0,L]$,
\begin{equation}\label{def:hL}
	h(t,x):=\frac{1}{\tilde c \|v_1\|^2}e^{(\linf-\lambda_1)t}e^{\mu x}v_1(x), \quad \text{and} \quad \tilde h(t,x):=\tilde ce^{(\lambda_1-\linf)t}e^{-\mu x}v_1(x),
\end{equation}
where  $\tilde c$ is as in \eqref{def:h}.}

We will proceed in two steps. For our choice of $L$,  we will show that
\begin{enumerate}
	\item $\bar M_{t}^L$ converges to the limit described in Theorem \ref{thm:main-theorem}.
	\item $\bar M_{t}^L$ and $\bar M_{t}$ converge to the same limit.
\end{enumerate}
The choice for $L$ will be motivated in Section \ref{sect:choice-of-L}.
We start by motivating the fact that   $\bar M_{t}^L$ converges to the desired limit using a spinal decomposition introduced in \cite{foutel22} in a discrete time setting.

\subsection{The $k$-spine}\label{sect:k-spine-CV}

\begin{definition}\label{def:1-spine}
	The $1$-spine process is the stochastic process on $[0,L]$ with generator
	$$
		\frac{1}{2} \partial_{xx}u \ + \frac{v_1'(x)}{v_1(x)} \partial_x u, \ \ u(0)=u(L)=0.
	$$
	In the following, $q_t(x,y) \equiv q_t^L(x,y)$ will denote the probability kernel of the $1$-spine.
\end{definition}

\begin{lemma}[Many-to-one]\label{lem:many-to-one0}
	For every bounded measurable function $f:\mathbb{R}^+\to\mathbb{R}$
	\begin{equation*}
		\mathbb{E}_{x}\left[\sum_{v\in{\cal N}^L_t} f({x}_{v})\right] \ = \ \int_0^L f(y)  q_{t}(x,y)  \frac{h(0,x)}{h(t,y)} dy.
	\end{equation*}
\end{lemma}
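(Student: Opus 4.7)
The plan is to combine the classical Feynman--Kac representation of the first moment with a Girsanov change of measure driven by the space-time harmonic function $h$. First, I would invoke the standard many-to-one formula for branching diffusions: writing $(X_s)_{s\ge 0}$ for a Brownian motion with drift $-\mu$ under $\mathbb{P}_x$ and $\tau$ for its exit time from $(0,L)$,
$$\mathbb{E}_{x}\left[\sum_{v\in\cN^L_t}f(x_v)\right] \;=\; \mathbb{E}_x\!\left[f(X_t)\exp\!\left(\int_0^t r(X_s)\,ds\right)\mathbf{1}_{\{\tau>t\}}\right].$$
This is classical: both sides solve the forward PDE \eqref{PDE:A} with initial condition $\delta_x$, and it can also be proved by induction on the number of branching events.

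The key algebraic observation is that $h(t,\cdot)$ defined in \eqref{def:hL} is space-time harmonic for the mean operator: $(\partial_t + \mathcal{L})h \equiv 0$, with $\mathcal{L}$ as in \eqref{eq:mean:matrix}. Indeed, a direct computation using the Sturm--Liouville equation \eqref{SLP} yields $\mathcal{L}\bigl(e^{\mu x}v_1(x)\bigr) = \bigl(\lambda_1 + \tfrac{1-\mu^2}{2}\bigr)e^{\mu x}v_1(x)$, and the criticality relation $\mu^2 = 1 + 2\linf$ from \eqref{def:mu} combined with the prefactor $e^{(\linf-\lambda_1)t}$ in $h$ cancels the remainder.

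Next I apply It\^o's formula to $\log h(s,X_s)$. The harmonicity identity makes the non-stochastic terms collapse to $-r(X_s)\,ds$, so one arrives at the clean identity
$$M_t \;:=\; \frac{h(t,X_t)}{h(0,X_0)}\exp\!\left(\int_0^t r(X_s)\,ds\right) \;=\; \exp\!\left(\int_0^t\!\Big(\mu + \tfrac{v_1'(X_s)}{v_1(X_s)}\Big)dB_s \;-\; \tfrac12\!\int_0^t\!\Big(\mu + \tfrac{v_1'(X_s)}{v_1(X_s)}\Big)^{\!2}\!ds\right),$$
where $B$ is the Brownian motion driving $X$. The right-hand side is a Girsanov exponential, so setting $d\mathbb{Q}/d\mathbb{P}|_{\mathcal{F}_t} = M_t$ turns $X$ into the process with generator $\tfrac12\partial_{xx} + (v_1'/v_1)\partial_x$ on $(0,L)$, i.e.\ precisely the $1$-spine of Definition~\ref{def:1-spine}. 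Substituting the two expressions for $M_t$ into the Feynman--Kac formula gives
$$\mathbb{E}_x\!\left[f(X_t)e^{\int_0^t r(X_s)ds}\mathbf{1}_{\{\tau>t\}}\right]
= \mathbb{E}_x^{\mathbb{Q}}\!\left[\frac{f(X_t)}{M_t}e^{\int_0^t r(X_s)ds}\mathbf{1}_{\{\tau>t\}}\right]
= h(0,x)\,\mathbb{E}_x^{\mathbb{Q}}\!\left[\frac{f(X_t)}{h(t,X_t)}\mathbf{1}_{\{\tau>t\}}\right],$$
which, after rewriting as an integral against the spine density $q_t$, is the identity claimed.

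The only genuine obstacle is the Girsanov step itself, because the logarithmic drift $v_1'/v_1$ need not be bounded (it can blow up near the boundary). This is handled by a routine localisation at stopping times of the form $\tau_\varepsilon = \inf\{s:\,X_s\notin[\varepsilon,L-\varepsilon]\}$: on each stopped interval the Novikov condition holds trivially, and one passes to the limit $\varepsilon\to 0$ using $\mathbf{1}_{\{\tau>t\}}$ to kill any contribution from the boundary. Once this technicality is in place, the rest of the argument is purely symbolic.
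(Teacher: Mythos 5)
Your proof is correct and takes a genuinely different route from the paper's. The paper's argument is a short PDE-uniqueness argument: it invokes the classical first-moment identity $\mathbb{E}_x[\sum_{v} f(x_v)] = \int_0^L f(y)p_t(x,y)\,dy$ with $p_t$ the fundamental solution of \eqref{PDE:A}, then checks by a direct computation that $\frac{h(0,x)}{h(t,y)}q_t(x,y)$ is also a fundamental solution of \eqref{PDE:A}, and concludes by uniqueness. You instead go through the probabilistic chain Feynman--Kac $\to$ space-time harmonicity of $h$ $\to$ Girsanov change of measure, identifying the spine as the $h$-transformed process. Both arguments ultimately rest on the same algebraic identity $(\partial_t + \mathcal{L})h = 0$ (your computation of $\mathcal{L}(e^{\mu x}v_1)=(\lambda_1-\linf)e^{\mu x}v_1$ is exactly what is needed to verify that the paper's candidate kernel solves \eqref{PDE:A}), so neither is ``harder'' at the algebraic core. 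What your route buys is conceptual clarity: it makes manifest why the spine generator is $\tfrac12\partial_{xx}+(v_1'/v_1)\partial_x$, whereas in the paper this is simply posited in Definition~\ref{def:1-spine}. The price you pay is the Girsanov technicalities: the logarithmic drift $v_1'/v_1$ explodes at $0$ and $L$, so the exponential local martingale $M_t$ is only shown to be a true martingale after a localisation argument, and one must also justify that the $1$-spine under $\mathbb{Q}$ never reaches the boundary so that the killing indicator may be dropped. You flag these issues and sketch the right fix (localise at $\tau_\varepsilon$, pass to the limit), which I consider adequate at this level of detail, but it is worth noting that the paper's route sidesteps them entirely by appealing to uniqueness of the fundamental solution of a parabolic PDE with bounded coefficients.
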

\begin{proof}
	It is known (see e.g.~\cite[p.188]{Lawler:2018vn}) that for every bounded measurable function $f:\mathbb{R}^+\to\mathbb{R}$
	\begin{equation*}
		\mathbb{E}_{x}\left[\sum_{v\in{\cal N}^L_t} f({x}_{v})\right] \ = \ \int_0^L f(y)  p_{t}(x,y)  dy,
	\end{equation*} where $p_t$ is the fundamental solution of \eqref{PDE:A}. A direct calculation shows that $\frac{h(0,x)}{h(t,y)}q_t(x,y)$ is also a fundamental solution of \eqref{PDE:A}. The result follows by recalling  that the fundamental solution of  \eqref{PDE:A} is unique.
\end{proof}

The next result is standard.

\begin{proposition}
	\label{prop:invariant-1-spine}
	The $1$-spine has a unique invariant probability measure given by
	$$\Pi(dx)  =  h(t,x)\tilde h(t,x) dx =  \left(\frac{v_1(x)}{||v_1||}\right)^2dx.$$
\end{proposition}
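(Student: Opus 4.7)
The plan is to verify invariance of $\Pi$ by a direct integration-by-parts computation, and to obtain uniqueness from the one-dimensional diffusion theory.

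First, I would put the generator into Sturm--Liouville (divergence) form. A direct calculation gives
\[
\frac{1}{2}\partial_{xx}u + \frac{v_1'(x)}{v_1(x)}\partial_x u \; = \; \frac{1}{2 v_1^2(x)} \bigl( v_1^2(x)\, u'(x) \bigr)',
\]
which exhibits the 1-spine as a diffusion symmetric with respect to $v_1^2(x)\,dx$. Consequently, the natural candidate for the invariant probability measure is $\Pi(dx) = v_1^2(x)/\|v_1\|^2\, dx$, and $\Pi$ is in fact the reversible measure of the spine.

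Second, I would confirm stationarity by checking that $\int_0^L \mathcal{G}u\,d\Pi = 0$ for every smooth test function $u$ vanishing at $0$ and $L$. Using the divergence form,
\[
\int_0^L \Bigl[\tfrac{1}{2} u''(x) + \tfrac{v_1'(x)}{v_1(x)} u'(x)\Bigr]\, v_1^2(x)\, dx \; = \; \frac{1}{2}\int_0^L \bigl(v_1^2 u'\bigr)'(x)\,dx \; = \; \frac{1}{2}\bigl[v_1^2 u'\bigr]_0^L \; = \; 0,
\]
since $v_1(0)=v_1(L)=0$. I would then check the identity $\Pi(dx) = h(t,x)\tilde h(t,x)\, dx$: substituting the definitions \eqref{def:hL}, the exponential factors $e^{(\linf-\lambda_1)t}$, $e^{(\lambda_1-\linf)t}$ cancel, as do the factors $e^{\pm\mu x}$ and the constants $\tilde c$, $1/\tilde c$, leaving exactly $v_1(x)^2/\|v_1\|^2$. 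As a by-product, $h\tilde h$ is independent of $t$, as it must be for a stationary measure.

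Finally, for uniqueness I would invoke the boundary classification of one-dimensional diffusions (see e.g.\ \cite[Chap.~5]{pinsky95}). Since $v_1$ is a positive $C^2$ function on $(0,L)$ vanishing simply at the endpoints, the drift $v_1'/v_1$ is locally bounded on compact subintervals of $(0,L)$ and diverges like $1/x$ near $0$ (resp.\ $1/(L-x)$ near $L$). Writing the scale and speed functions explicitly in terms of $v_1^2$, one checks that both endpoints are entrance (inaccessible) boundaries, so the spine is a non-degenerate, irreducible diffusion on $(0,L)$. Standard results then ensure that the finite invariant measure is unique up to a multiplicative constant, so $\Pi$ is the unique invariant probability measure.

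The only mildly technical point is the boundary classification at $0$ and $L$; everything else is a direct computation. Because the proof relies on classical Sturm--Liouville and Feller diffusion theory and on elementary algebra with the definition of $h,\tilde h$, the paper's remark that ``the next result is standard'' is justified.
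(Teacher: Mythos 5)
Your proposal is correct and fills in precisely what the paper elides by declaring the result ``standard'': the divergence-form rewriting $\frac12\partial_{xx}u+\frac{v_1'}{v_1}\partial_x u=\frac{1}{2v_1^2}(v_1^2 u')'$ identifies $v_1^2\,dx$ as the speed measure (hence the reversible and invariant measure), the integration by parts with $v_1(0)=v_1(L)=0$ verifies stationarity, and the cancellation of the $e^{\pm\mu x}$, $e^{\pm(\linf-\lambda_1)t}$ and $\tilde c$ factors gives $h(t,x)\tilde h(t,x)=v_1(x)^2/\|v_1\|^2$. The boundary-classification step for uniqueness is the right closing argument: since $v_1(x)\asymp x\wedge(L-x)$ near the endpoints (cf.\ \eqref{exp:d:v1}), the scale function $s(x)=\int^x v_1^{-2}$ diverges at both ends while the speed measure $v_1^2\,dx$ is finite, so both endpoints are entrance boundaries and the diffusion is positive recurrent on $(0,L)$ with a unique invariant probability measure.
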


We now define the $k$-spine tree. Let $(U_1,...,U_{k-1})$ be i.i.d.~random variables uniformly distributed on $[0,t]$. Define
\begin{equation}
	\label{eq:UPP}
	\forall \; 1\leq i < j\leq k-1, \quad U_{i,j}=U_{j,i} =\;\max\{U_i,...,U_{j-1}\}.
\end{equation}
Let ${\mathbb T}$ be the tree of depth $t$ with $k$ leaves such that the tree
distance between the $i^{th}$ and $j^{th}$ leaves is given by $U_{i,j}$.
This tree is \textit{ultrametric} and {\it planar} in the sense that
$$
	\forall i,j,\ell\in[k], \ \ U_{i,j} \leq  U_{i,\ell} \vee U_{{\ell},j}
$$
(ultrametric) and the inequality becomes an equality if $i<\ell < j$ (planar). The depth $\tau$ of the first branching point is thus given by
\begin{equation*}
	\tau=\max_{i\in[k-1]}U_i.
\end{equation*}
Marks are then assigned as follows. On each branch of the tree, the marks evolve according to the $1$-spine process (on $[0,L]$) and branch into independent particles at the branching points of ${\mathbb T}$. The resulting planar marked ultrametric tree will be referred to as the $k$-spine tree and denoted by $\mathcal{T}$. We write $\mathcal{T}_0$ (resp.~$\mathcal{T}_1$)
for the left (resp.~right) marked subtree  attached to the first branching point of the tree. Note that these two subtrees are also planar marked ultrametric trees and that they are both rooted at $x_{MRCA}(\mathcal{T})$, the mark of the first branching point.

In the following, $\cal B$ will denote the set of  $k-1$ branching points of the $k$-spine tree $\mathcal{T}$ and $\cal L$ will denote the set of $k$ leaves.
We will write $\zeta_v$ for the mark (or the position) of the spine at a node $v\in \cal B\cup \cal L$. For $v\in{\cal B}$, $|v|$ will denote the time component of the branching point.
Finally, $(V_i; i\in[k])$ is the enumeration of the leaves from left to right in the $k$-spine tree (i.e., $V_i$ is the leaf with label $i$).
See Figure \ref{fig:k-spine} for an illustration of these definitions. We refer to \cite[Section 3.1.3]{foutel22} for a more formal construction of the $k$-spine tree.

\begin{figure}[t]
	\centering
	\includegraphics[width=.6\textwidth]{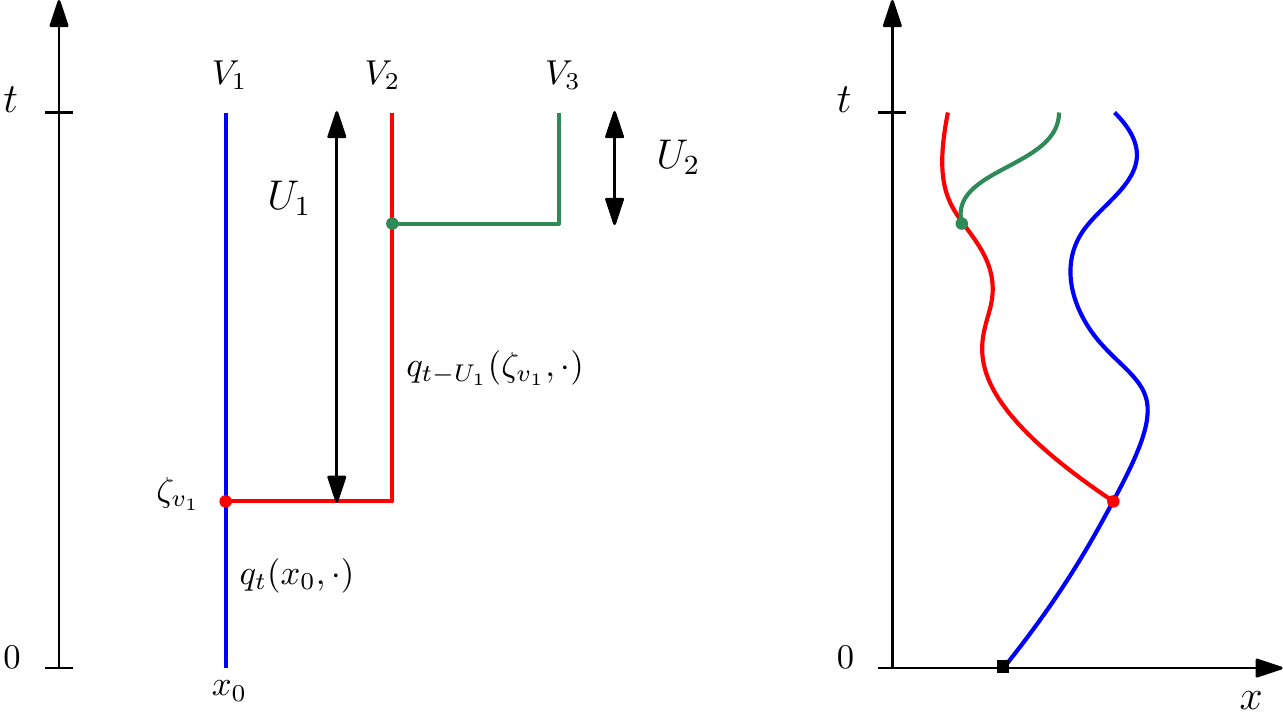}
	\caption{ The $k$-spine tree ($k=3$). Left panel: planar tree $\mathbb{T}$ generated from $2$ i.i.d. uniform random variables $(U_1,U_2)$.  In this tree, we have $\tau=U_1$. The corresponding node in $\mathcal{B}$ is denoted by $v_1$. The left and right subtrees $\mathcal{T}_0$ and $\mathcal{T}_1$ rooted at $\zeta_{v_1}$ both have depth $\tau$ , $\mathcal{T}_0$ has a single leaf, $V_1$, and $\mathcal{T}_1$ has two leaves, $V_2$ and $V_3$.
		Right panel: branching $1$-spines running along the branches of the tree ${\mathbb T}$.}
	\label{fig:k-spine}
\end{figure}

We will denote by $Q^{k,t}_x$ the distribution of the  $k$-spine rooted at $x$.
The $t$ superscript refers to the depth of the underlying genealogy.
Note that $Q^{k,t}_x$ has an implicit dependence on $N$ by our choice of $L$ -- see (\ref{def:L}).
To ease the notation, this dependence will be dropped in the notation.
We will also need to define  the accelerated version of the $k$-spine.

\begin{definition}[Accelerated $k$-spine]
	Consider the $1$-spine accelerated by $N$, i.e. the  transition kernel of the $1$-spine
	is now given by $ q_{tN}(x,y)\equiv q^L_{tN}(x,y)$. {We denote this kernel by $\bar q_t(x,y)$}.
	Consider the same planar structure as before: the depth of the tree is $t$
	and the distance between the leaves is given by (\ref{eq:UPP}).
	We denote by $\bar Q_{x}^{k,t}$ the distribution of the  $k$-spine obtained by running accelerated  spines along the branches.
	For any vertex $v$ in the accelerated $k$-spine, $\bar \zeta_v$ will denote the mark of the vertex $v$.
\end{definition}

\begin{proposition}[Rescaled many-to-few]\label{th:many-to-few2}
	Let $t>0$.
	Let $(\phi_i; i\in[k])$ and $(\psi_{i,j}; i,j \in[k])$ be measurable bounded functions and define
	$$
		\forall M=[X,d,\nu], \ \ \Psi(M) \ = \ \int \prod_{i,j} \mathbf{1}_{\{v_i\neq v_j\}} \psi_{i,j}(d(v_i,v_j)) \prod_{i} \phi(x_i) \nu(dv_i\otimes d x_i).
	$$
	{  Let
			$
				h$} be defined as in (\ref{def:hL}).
	Then
	$$
		{\mathbb E}_x\left[ \Psi(\bar M^L_{t}) \right] \ = \ \frac{1}{N} k! \  h(0,x) \ t^{k-1} \bar Q_{x}^{k,t}\left(  \bar \Delta \prod_{i,j} \psi_{i,j}( U_{\sigma_i,\sigma_j})  \prod_{i} \phi_{i} (\bar {\zeta}_{V_{\sigma_i}})    \right),
	$$
	where $(U_{i,j})$ is as in \eqref{eq:UPP}, $\sigma$ is an independent uniform permutation of $\{1,...,k\}$,  and
	$$
		\bar \Delta := \prod_{v\in {\cal B}} r({ \bar{\zeta}_v}) h(|v|N, \bar {\zeta}_{v}) \prod_{v\in {\cal L}} \frac{1}{h(tN,\bar{\zeta}_{v })}.
	$$
\end{proposition}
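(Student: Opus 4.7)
The plan is to first prove the analogous \emph{non-rescaled} many-to-few identity for the mmm-space $M^L_s$ at an arbitrary horizon $s>0$,
\[
\mathbb{E}_x\bigl[\Psi(M^L_s)\bigr] \;=\; k!\, h(0,x)\, s^{k-1}\, Q^{k,s}_x\!\left[\Delta \prod_{i,j}\psi_{i,j}(U_{\sigma_i,\sigma_j})\prod_i \phi_i(\zeta_{V_{\sigma_i}})\right]
\]
with $\Delta = \prod_{v\in\mathcal{B}} r(\zeta_v)\, h(|v|,\zeta_v) \prod_{v\in\mathcal{L}} 1/h(s,\zeta_v)$, and then to specialize $s=tN$ and apply a time rescaling that identifies $Q^{k,tN}_x$ with the accelerated $k$-spine $\bar Q^{k,t}_x$.

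For the non-rescaled identity I would expand $\Psi(M^L_s)$ as a sum over ordered $k$-tuples of distinct BBM particles in $\mathcal{N}^L_s$ and, for each such tuple, condition on the times and positions of its $k-1$ ancestral branching events. By the Markov property of the BBM at each branching, the expectation factors over the edges of the genealogical tree; Lemma~\ref{lem:many-to-one0} then converts each BBM density $p_\cdot(a,b)$ appearing along an edge into the $1$-spine kernel $q_\cdot(a,b)$ multiplied by an $h$-ratio $h(\cdot,a)/h(\cdot',b)$. At each internal node $v$, two outgoing $h$-numerators and one incoming $h$-denominator telescope to leave a single factor $h(|v|,\zeta_v)$ which, combined with the branching rate $r(\zeta_v)$, yields the per-node weight entering $\Delta$. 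At each leaf, the terminal denominator contributes $1/h(s,\zeta_v)$; the initial numerator $h(0,x)$ from the root edge factors out as the prefactor. The combinatorial factor $k!$ and the random permutation $\sigma$ then emerge from identifying the BBM sum over ordered $k$-tuples -- whose genealogical trees are unordered and whose most recent common ancestor may split the labels according to any partition of $[k]$ -- with the planar $k$-spine's natural enumeration of contiguous splits after $\sigma$-symmetrization, in the spirit of the discrete-time many-to-few identity of \cite[Proposition 6]{foutel22} adapted to branching diffusions.

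For the rescaling step I specialize $s=tN$, divide both sides by $N^k$ (from the $\tfrac{1}{N}$ prefactor in $\bar\nu^L_t=\tfrac{1}{N}\sum_v\delta_{v,x_v}$), and perform the change of variables $U_i\mapsto U_i/N$ in the $k$-spine integrals. This change sends $U_i\in[0,tN]$ to $\bar U_i\in[0,t]$; since the accelerated spine kernel satisfies $\bar q_t = q_{tN}$ and branching times rescale as $|v|_{\text{non-acc}}=|v|\,N$, it identifies $Q^{k,tN}_x$ with $\bar Q^{k,t}_x$ and transforms $\Delta$ into $\bar\Delta$. The Jacobian $N^{k-1}$ combined with $(tN)^{k-1}/N^k = t^{k-1}/N$ produces the claimed $\tfrac{1}{N}\,k!\,h(0,x)\,t^{k-1}$ prefactor.

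The main obstacle is the combinatorial bookkeeping in the non-rescaled identity, in particular the emergence of $k!$ rather than $2^{k-1}$. Because the BBM tree is unordered, its MRCA can split the $k$-tuple along any partition of $[k]$, whereas the planar $k$-spine naturally produces only contiguous splits of the form $\{1,\ldots,i\}\,|\,\{i{+}1,\ldots,k\}$. Verifying that the $\sigma$-average faithfully reproduces every partition with the correct multiplicity -- so that the factors of $2$ at each binary branching (identifying left vs.\ right daughter) combine with the inductive weights $i!(k-i)!$ to total exactly $k!$ -- is the heart of the argument and is where I expect the calculation to be most delicate.
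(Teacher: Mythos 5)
Your two-step plan mirrors the paper exactly: the unrescaled identity you state first is precisely the paper's Corollary~\ref{th:many-to-few} (itself a specialization of Theorem~\ref{many-to-few00}), and the paper's entire proof of Proposition~\ref{th:many-to-few2} is the one-line rescaling you describe. Your rescaling bookkeeping is correct -- $\bar\nu_t^L=\tfrac1N\nu_{tN}^L$ contributes $N^{-k}$, the biased-measure normalisation contributes $(tN)^{k-1}$, and the identification $Q^{k,tN}_x(\Delta\cdots)=\bar Q^{k,t}_x(\bar\Delta\cdots)$ carries no additional Jacobian since both are probability laws; the net prefactor is $(tN)^{k-1}/N^k=t^{k-1}/N$ as required. (Your mention of an extra ``Jacobian $N^{k-1}$'' is a bookkeeping slip -- that factor already cancels inside the passage from $Q^{k,tN}_x$ to $\bar Q^{k,t}_x$ and should not be compounded with $(tN)^{k-1}/N^k$.)

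On the unrescaled step: your ``condition on the ancestral tree, apply many-to-one along each edge, telescope the $h$-ratios'' sketch is sound and is essentially what the paper's Proposition~\ref{lem:23455} does by induction on the MRCA split. However, the ``tension'' you anticipate -- that $2$'s per binary branch and $i!(k-i)!$ inductive weights must conspire to produce $k!$ -- is a red herring once one adopts the paper's device of a \emph{uniformly random planarisation} of the BBM. Each branching is given an L/R label uniformly at random; because the labelling is uniform, it introduces no net factor of $2$, while it does linearly order $\mathcal N_t$ so that the MRCA of any \emph{increasing} $k$-tuple gives a contiguous split $\{1,\dots,n\}\,|\,\{n+1,\dots,k\}$, exactly matching the planar $k$-spine. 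The $k!$ then arises at a single stroke: the sum over all distinct ordered $k$-tuples equals $k!$ times the sum over planar-increasing $k$-tuples applied to the $\sigma$-symmetrised test function $F(U)=\tfrac1{k!}\sum_\sigma\prod\psi_{i,j}(d(V_{\sigma_i},V_{\sigma_j}))\prod\phi_i(x_{V_{\sigma_i}})$. With that device, there is no remaining multiplicity count to verify -- the combinatorics reduce to the trivial bijection between ordered and increasing tuples. You would need to incorporate this planarisation to close your sketch; as written, the final combinatorial accounting is left open.
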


The proof of Proposition \ref{th:many-to-few2} will be the object of Section \ref{sect:many-to-few}. Our second crucial result is the following convergence theorem.

\begin{theorem}\label{thm:k-spine-cv}
	Let $(\tilde \phi_i; i\in[k])$ and $(\psi_{i,j}; i,j \in[k])$ be continuous bounded functions.
	Assume further that the $\tilde \phi_i$'s are compactly supported in $(0,\infty)$.
	As $N\to\infty$,
	\begin{multline*}
		\bar Q_{x}^{k,t}\left(  \bar \Delta \prod_{i,j} \psi_{i,j}( U_{\sigma_i,\sigma_j})  \prod_{i}  \tilde \phi_{i} (\bar {\zeta}_{V_{\sigma_i}}) { h^\infty(\bar {\zeta}_{V_{\sigma_i}})}   \right) \longrightarrow   \left(\frac{\Sigma^2}{2}\right)^{k-1} {\mathbb E}\left( \prod_{i,j} \psi_{i,j}( U_{\sigma_i,\sigma_j}) \right)
		\prod_{i} \int_{{\mathbb R}_+}  \tilde \phi_i(x)  \Pi^\infty(dx),
	\end{multline*} where $\Pi^\infty$ is as in \eqref{eq:def:pi:inf}.
\end{theorem}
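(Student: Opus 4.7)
The plan is to expand the LHS as a nested integral along the planar $k$-spine tree $\mathbb{T}$ and then combine two ingredients: an exponential Sturm--Liouville estimate showing that $(\linf-\lambda_1)N\to 0$ (which neutralises all the time-dependent factors inside $\bar\Delta$), and the rapid mixing of the accelerated $1$-spine kernel $\bar q_s=q_{sN}$ to its invariant measure $\Pi^\infty$. First I would condition on the branching times $(U_i)_{i=1}^{k-1}$, which are i.i.d.~uniform on $[0,t]$, and use Fubini over the $2k-1$ edges of $\mathbb{T}$ to express $\bar Q_x^{k,t}$ as a nested integral involving a product of kernels $\bar q_s(\bar\zeta_u,\bar\zeta_v)$ along edges and the weights $\bar\Delta$ at vertices.

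Substituting $h(sN,x)=(\tilde c\|v_1\|^2)^{-1}e^{(\linf-\lambda_1)sN}e^{\mu x}v_1(x)$ into $\bar\Delta$ pulls out a single scalar prefactor $\exp\bigl((\linf-\lambda_1)N(\sum_{v\in\cal B}|v|-kt)\bigr)$ together with the position-dependent pieces $e^{\mu\cdot}v_1(\cdot)$. The Sturm--Liouville asymptotics developed in Section \ref{sec:spectral} yield $\linf-\lambda_1\asymp e^{-2\beta L}=N^{-2\beta/(\mu-\beta)}$, and the fully pushed condition \eqref{hfp} rewrites as $2\beta/(\mu-\beta)>1$, so $(\linf-\lambda_1)N\to 0$ and the scalar prefactor tends to $1$. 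Combined with the pointwise convergence $v_1\to v_1^\infty$ and $\|v_1\|\to \|v_1^\infty\|$ from Proposition \ref{prop:first}, this gives $h(sN,\cdot)\to h^\infty(\cdot)$ for every fixed $s>0$; in particular the ratio $h^\infty/h(tN,\cdot)$ appearing at each leaf converges to $1$ uniformly on compacts.

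The next step is to use the mixing estimate: for every fixed $s>0$, $\bar q_s(x,y)\to \Pi^\infty(y)$ in $L^1(dy)$, uniformly for $x$ in compacts, which is the output of the spectral decomposition of the operator in \eqref{PDE:B} developed in Section \ref{sec:spectral} (the relaxation time of the $1$-spine is of order $\log N$, hence negligible compared to each branch length $sN$). This implies that, in the limit, the positions $(\bar\zeta_v)_{v\in\cal B\cup\cal L}$ become jointly independent with common marginal $\Pi^\infty$. Integrating at each of the $k-1$ branching points yields $\int r(y)h^\infty(y)\Pi^\infty(dy)=\Sigma^2/2$ by the definition of $\Sigma^2$ in Theorem \ref{thm:Kolmogorov}, while each leaf contributes $\int\tilde\phi_i\,d\Pi^\infty$; the remaining $\psi$-factor depends only on $(U_i)$ and $\sigma$, and averaging it produces exactly $\mathbb{E}[\prod_{i,j}\psi_{i,j}(U_{\sigma_i,\sigma_j})]$, giving the stated limit.

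The technical crux is the uniform integrability needed to interchange limit and integral. The compact support of $\tilde\phi_i$ in $(0,\infty)$ confines the leaf positions, and the essential non-trivial integrability at each branching point is $\int r h^\infty d\Pi^\infty<\infty$, which is precisely Proposition \ref{prop:first} combined with \eqref{hfp} --- the integrability condition that characterises the fully pushed regime. Uniformity in $N$, however, requires a quantitative pointwise envelope of the form $\bar q_s(x,y)\lesssim\Pi(y)$ for $s$ bounded below, uniform in $N$ and in $x$ on compacts. Producing such a bound is where the difficulty lies: it requires controlling the full spectrum of \eqref{SLP} as $L\to\infty$, notably the continuous negative component that emerges in the limit (see Figure \ref{fig:spectrum}). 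This is the main technical obstacle and is the reason Section \ref{sec:spectral} is devoted to a careful spectral analysis of the Sturm--Liouville problem.
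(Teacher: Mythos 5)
Your outline for the convergence-in-law part matches the paper: you condition on the branching times $(U_i)$, use the spectral gap estimate $(\linf-\lambda_1)N\to 0$ to neutralise the time-dependent prefactors in $\bar\Delta$, and exploit the fact that the accelerated $1$-spine mixes to $\Pi^\infty$ on the time scale $\log N \ll N$ (Proposition~\ref{lem:hk}), so the vertex positions decouple into an i.i.d.\ sequence with marginal $\Pi^\infty$. That is exactly Proposition~\ref{prop:cvloi}.

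The gap is in the uniform-integrability step. You propose to pass to the limit via a pointwise envelope $\bar q_s(x,y)\lesssim\Pi(y)$ ``for $s$ bounded below, uniform in $N$.'' Such a kernel bound does hold (Proposition~\ref{lem:hk}), but it cannot close the argument, for two reasons. First, the branching time gaps are not bounded away from zero: with probability $O(L/N)$ two consecutive $U_i$'s are within $L/N$ of each other, and on that event the kernel along the corresponding edge is close to a delta, not to $\Pi$. In that regime the product over branching points develops factors of the form $r(y)\,h(y)^2$, and since $h^\infty(y)\Pi^\infty(y)\sim e^{(\mu-3\beta)y}$ one sees that $\int r\,(h^\infty)^2\,\Pi^\infty$ is finite only if $\mu<2\beta$ --- a strictly stronger condition than $(H_{fp})$ ($\mu<3\beta$). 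So a crude domination fails. Second, a time-uniform kernel envelope does not capture the fact that the repeated integration of $r(y)h(y)$ against $\bar q_s$ accumulates an exponential weight in $x$. The paper handles both problems at once via Green's function (resolvent) estimates rather than heat-kernel envelopes: Lemma~\ref{lem:I} controls the \emph{time-integrated} quantity $\int_0^t\int_0^L h(sN,y)^{1+\vep}e^{\vep(n-1)(\mu-\beta)y}\bar q_s(x,y)\,dy\,ds$ using the resolvent bound $\int_0^tp_s(x,y)\,ds\le eG_{1/t}(x,y)$ (Remark~\ref{rem:green}) and explicit ODE estimates for the fundamental solutions (Lemmas~\ref{lem:phipsi},~\ref{lem:wronsk}). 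These are fed into the iterative $(1+\vep)$-moment bound of Lemma~\ref{lem:UI}, where the admissible $\vep$ shrinks as $(3\beta-\mu)/((k-1)(\mu-\beta))$ precisely to absorb the exponential growth accumulated over $k-1$ branchings. You have also slightly misattributed the source of the UI difficulty: the negative/continuous part of the spectrum (your last paragraph) is what is needed for the mixing estimate (Proposition~\ref{lem:hk}, used in the weak convergence step), not for UI; the UI step bypasses the spectral decomposition entirely and works directly with Sturm--Liouville fundamental solutions and the Wronskian.
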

Let us now give  brief heuristics for the previous result. By definition
\begin{multline*}
	\bar Q_{x}^{k,t}\left(  \bar \Delta \prod_{i,j} \psi_{i,j}( U_{\sigma_i,\sigma_j})  \prod_{i} { \tilde \phi_{i}} (\bar {\zeta}_{V_{\sigma_i}}) {h^\infty(\bar {\zeta}_{V_{\sigma_i}})}   \right)\\ = \
	\bar Q_{x}^{k,t}\left(   \prod_{v\in \cal B} r(\bar {\zeta}_v) {h(|v|N,\bar {\zeta}_{v})} \prod_{i,j}   \psi_{i,j}( U_{\sigma_i,\sigma_j}) \prod_{ i} \tilde \phi_{i} (\bar {\zeta}_{V_{\sigma_i}} ){\frac{h^\infty(\bar \zeta_{V_{\sigma_i}})}{h(tN, \bar \zeta_{V_{\sigma_i}})}}
	\right).
\end{multline*}

The branching structure for the $k$-spine is binary a.s.~and the spines running along the branches are accelerated by $N$.
We will show later on that under \eqref{hfp}, $h(tN,x)\approx h^\infty (x)$ and $ \tilde h(tN,x)\approx \tilde h^\infty (x)$  for $N$ large.
In addition, we see fom Proposition \ref{prop:invariant-1-spine}, as $N\to\infty$,
$$
	\Pi(dx) \approx \Pi^\infty(dx) =  h^\infty(x) \tilde h^\infty(x) dx \ = \ \frac{v_1^\infty(x)^2 }{||v_1^\infty||^2} dx.
$$
It is now reasonable to believe that, provided enough mixing, the RHS can be approximated by
$$
	\left(\int_0^\infty h^\infty(x) r(x) \Pi^\infty(dx) \right)^{k-1}  \E\left(  \prod_{i,j}   \psi_{i,j}( U_{\sigma_i,\sigma_j})  \right)  \prod_i \int \tilde \phi_i(x)  \Pi^\infty(dx),
$$
assuming the values of the spine at the branching points and the leaves converge to a sequence of i.i.d. random variables with law $\Pi^\infty$.
This yields the content of Theorem \ref{thm:k-spine-cv}.

\begin{challenge}
	The previous argument relies on a $k$-mixing property of the $1$-spine.
	This analysis will be carried out in Section \ref{sect:preliminaries} using Sturm--Liouville theory.
\end{challenge}

\subsection{Limiting moments}
\label{sect:lim-moments}
Let us now demonstrate the importance of Theorem \ref{thm:k-spine-cv}.
Let
$$
	\forall M = [X,d,\nu],  \ \  \tilde \Psi(M) \ := \ \int_{(X\times E)^k}  \prod_{i,j} \psi_{i,j}( d({v_i, v_j}) )  \prod_{i} \tilde\phi_{i}(x_i) h^\infty(x_i) \ \nu(dv_i \otimes dx_i),
$$
where $ \tilde \phi_i,\psi_{i,j}$  are bounded continuous functions  such that the $\tilde \phi_i$ are also compactly supported.
From the many-to-few formula, Theorem \ref{thm:k-spine-cv} entails

\begin{eqnarray*}
	{\mathbb E}_x\left[ \tilde \Psi(\bar M^L_{t}) \right]
	& = & \ \E_x\left[ \int\prod_{i,j}  \psi_{i,j}( \bar d_t({v_i, v_j}) )  \prod_{i} \tilde\phi_{i}(x_i) h^\infty(x_i) \ \bar   \nu_t^L(dv_i \otimes dx_i) \right] \\
	& \approx & \ \E_x\left[ \int\prod_{i,j} \mathbf{1}_{\{v_i\neq v_j\}} \psi_{i,j}( \bar d_t({v_i, v_j}) )  \prod_{i} \tilde\phi_{i}(x_i) h^\infty(x_i) \ \bar   \nu_t^L(dv_i \otimes dx_i) \right] \\
	&  \approx &  \frac{2 h^\infty(x)}{ N \Sigma^2 t }  \ \times \  k ! \left( t \frac{\Sigma^2}{2}\right)^{k} {\mathbb E}\left[ \prod_{i,j} \psi_{i,j}( U_{\sigma_i,\sigma_j}) \right]
	\prod_{i} \int_{{\mathbb R}_+} \tilde \phi_i(x) \Pi^\infty(dx).
\end{eqnarray*}
Let us formally take $\psi_{i,j}\equiv 1$ and $\tilde \phi_i \equiv 1/h^\infty$ in the previous expression (note that this is problematic since $\tilde \phi_i$ is neither bounded nor compactly supported, see Challenge \ref{thm:k-spine-cv} below). Then for large $N$,
$$
	{\mathbb E}_x\left[ \left(\frac{1}{N} Z_{tN}^L\right)^k \right]  \ \approx \ \  \frac{2 h^{\infty}(x)}{ N \Sigma^2 t } \  \ \times \ \ \underbrace{ k ! \left( t \frac{\Sigma^2}{2}\right)^{k}}_{\mbox{exponential moments}} \times \quad { \underbrace{\left(\int_0^\infty \tilde h^\infty(x) dx\right)^k}_{=1}},
$$
where we used the fact that $\tilde h^\infty = \Pi^\infty/h^\infty$.
Now, the RHS coincides with the moments of a r.v. with law
$$
	\left( 1-\frac{2 h^\infty(x)}{ N \Sigma^2 t }\right) \delta_0(dx) + \frac{2 h^\infty(x)}{ N \Sigma^2 t } \exp\left(- \frac{2 x}{\Sigma^2 t}  \right) \frac{2 dx}{\Sigma^2 t}.
$$
If we identify the Dirac measure at $0$ with the extinction probability,
this suggests the Kolmogorov estimate and the Yaglom law exposed in Theorem \ref{thm:Kolmogorov} and Theorem \ref{thm:Yaglom}.
Furthermore, if we replace $\tilde \phi_i$ by $\phi_i/h^\infty$ in Theorem \ref{thm:k-spine-cv} (again a problematic step), the previous estimates entail
$$
	{\mathbb E}_x\left[ \Psi(\bar M^L_{t}) \big| Z_{tN}^L>0  \right] \approx   k !  {\mathbb E}\left[ \prod_{i,j} \psi_{i,j}( U_{\sigma_i,\sigma_j}) \right]
	\prod_{i} \int_{{\mathbb R}_+}  \phi_i(x) \left( t \frac{\Sigma^2}{2}\right){ \tilde h^\infty(x) dx.}
$$
where $\Psi(M)$ is now an arbitrary product polynomial of the form
$$
	\Psi(M) \ := \ \int \prod_{i,j} \psi_{i,j}( {d(v_i,v_j)} ) \prod_{i} \phi_{i} (x_i) \nu(dv_i \otimes dx_i).
$$
According to Proposition \ref{moments:CPP}, this coincides with the moments of the Brownian CPP described in Theorem \ref{thm:main-theorem}.

\begin{challenge}
	The previous computation only suggests that the probability for the population size to be $o(\frac{1}{N})$ is given by the Kolmogorov estimate.
	Intuitively, the Dirac mass above corresponds to a population whose size becomes invisible at the limit after rescaling the population by $N$.
	It thus remains to show that if the population is small compared to $N$ then it must be extinct. This will be carried out in Section \ref{sect:survival proba}.
\end{challenge}
\begin{challenge}\label{ref:challeng-explosion}
	Going from Theorem \ref{thm:k-spine-cv} to the convergence of the $\bar M_{t}^L$ requires to use test functions exploding at the boundary.
	To overcome this technical difficultly, we will impose an extra thinning of the population by killing all the particles close to the boundaries at time $tN$. This final technical step will be carried out in Section \ref{sect:final-cutoff} using some general properties of the Gromov-weak topology.
\end{challenge}

\subsection{Choosing the cutoff $L$}
\label{sect:choice-of-L}

We now motivate our choice for $L$.
According to the previous arguments, we want to choose $L$ large enough such that, on a time scale $O(N)$,
\begin{itemize}
	\item [(i)]
	      The particles do not reach $L$ with high probability.
	      This will imply that $\bar M_{t}^L$ and $\bar M_{t}$ coincide with high probability.
	\item[(ii)] The $1$-spine reaches equilibrium in a time $o(N)$ {\it regardless of its initial position} in $[0,L]$. This is needed to justify the calculations
	      of Section \ref{sect:k-spine-CV}.
\end{itemize}

{\bf(i) Hitting the right boundary.}
Let $E$ be a compact set in the vicinity of the boundary $L$ (say $[L-2,L-1]$).
Recall from the discussion after Proposition \ref{prop:first} that for $y\geq 1$,
$$
	h^\infty(y)  { =} \frac{1}{\tilde c \|v_1^\infty\|^2}\, e^{\beta} e^{(\mu -\beta)y} \quad \text{and} \quad \tilde h ^\infty(y){=}\tilde c e^{\beta}e^{-(\mu+\beta)y}.
$$
A direct application of Lemma \ref{lem:many-to-one} to $f(y) = \mathbf{1}_{y\in E}$ implies that
\begin{align*}
	\mathbb{E}_x\left[\sum_{v\in \mathcal N^L_{tN}}\mathbf{1}_{x_v\in E} \right] & =  \int_{E} \frac{h(0,x)}{h(tN,y)} q_{tN}(x,y) dy  \approx  \int_{E} \frac{h^\infty(x)}{h^\infty(y)} \Pi^\infty(dy)                                \\
	                                                                             & \approx h^\infty(x) \int_E \tilde h^{\infty}(y)dy \approx   h^\infty(x) O ( \overbrace{e^{-\frac{\mu+\beta}{\mu-\beta} \log(N)}}^{=N^{-\alpha}} ),
\end{align*}
where we used that $\tilde h(tN,y)\approx \tilde h^\infty(y)$ for $N$ large. The last approximation holds under the assumption that $E$ is a compact set close to $L$ and follows from \eqref{def:L}.
Integrating this  on $[0,tN]$ shows that the occupation time of the set $E$ on the time interval $[0,tN]$ is $O(N^{1-\alpha})$.
We then recall that the probability of survival is of order $1/N$ so that the occupation time of the conditioned process is $O(N^{2-\alpha})$.
Using that $\alpha>2$ under \eqref{hfp}, this yields the desired estimate.

	{\bf (ii) Mixing time.}
Recall from the discussion after Proposition \ref{prop:first} that $v_1(x) \approx e^{-\beta(x-1)}$ for $x\geq 1$ and $L$ large enough.
As a consequence, the $1$-spine (see Definition \ref{def:1-spine}) is well approximated on $[1,\infty)$ by the diffusion
$$
	dz_t = -\beta dt + d B_t.
$$
A good proxy for the mixing time is the first returning time at $1$ which is of the order $\log(N)=o(N)$
for every $x\in[1,L]$, as desired. A more refined analysis will be carried out in Section \ref{sect:preliminaries}.

\section{The many-to-few theorem}
\label{sect:many-to-few}

\subsection{The general case}
In this section, we
consider a  general branching diffusion killed at the boundary of a regular domain $\Omega \subset \mathbb{R}^d$.
Unless otherwise specified, we use the same notation as in the previous sections. We will assume that
\begin{enumerate}
	\item The generator of a single particle is given by a differential operator (in the divergence form)
	      \begin{align}\label{generator}
		      \mathcal{G}f(x) \  & = \  \frac{1}{2} \sum_{i,j}\partial_{x_i}\left( a_{ij}(x) \partial_{x_j} f(x)\right) \ + \ \sum_{i} b_i(x) \partial_{x_i} f(x),\quad x\in\Omega, \\ f(x)&=0,\quad x\in\partial \Omega. \nonumber
	      \end{align}
	      We assume that $(a_{ij})$ is uniformly elliptic, which means that there exists a constant $\theta>0$ such that for all  $\xi \in \mathbb{R}^d$ and a.e.~$x\in\Omega$, $\sum_{i,j=1}^{d}a_{ij}(x)\xi_i,\xi_j\geq \theta\|\xi\|^2$ (see \cite[§6.1]{evans2010partial}). In addition, we assume that $a_{i,j}\in\mathcal{C}^1(\Omega)$ and $\sup_{x\in\Omega}|b_i(x)|<\infty$ for all $1\leq i,j\leq d$.
	\item A particle at location $x$ branches into two particles at rate $r(x)$ (we only consider binary branching).
\end{enumerate}
We denote by ${\cal N}_t$  the set of particles alive at time $t$.
For any pair of particles $v,w\in{\cal N}_t$,  we write $|u\wedge v|$ for the most recent time at which $v$ and $u$ had a common ancestor and their genealogical distance is defined as
\begin{equation*}
	d_t(u,v)=t-|u\wedge v|.
\end{equation*}
Finally, we define the random mmm space
\begin{equation}\label{eq:mmm_bbm}
	M_t \ = \ [{\cal N}_t, d_t,\nu_t], \ \ \mbox{where $\nu_t = \sum_{v\in{\cal N}_t} \delta_{v,x_v}$}.
\end{equation}

We say that a function $h(x)$ is harmonic if and only $h$ is positive on $\Omega$ and satisfies the Dirichlet problem
\begin{align}\label{eq:def_A}
	\mathcal{A}h(x)  := {\cal G}h(x) + r(x) h(x) & = 0, \ \ \mbox{for  $x\in \Omega$,}                 \\
	h(x)                                         & =0, \ \ \mbox{for $x\in\partial \Omega$} .\nonumber
\end{align}
In our application, we will consider the domain  $\Omega=[0,\infty)\times [0,L]$ where the first coordinate will correspond to the time variable seen as a mark (see Section \ref{sec:subc:bbm}).
In this case, there exists a unique harmonic function (up to constant multiplies) given by \eqref{def:hL}. In the general case, if $\mathcal{A}$ has multiple harmonic function, we choose one and define a $1$-spine process as follows.

\begin{definition}\label{def:spine:g}
	Let  $h$ be a harmonic function.
	The $1$-spine process (associated to $h$) is the process whose generator is given by the Doob $h$-transform
	of the differential operator ${\cal A}$
	\begin{align*}
		\frac{{\cal A}(hf)}{h}(x) & = \mathcal{G}f(x)+\sum_{i,j}a_{i,j}(x)\frac{\partial_{x_j}h(x)}{h(x)}\partial_{x_i} f(x), \\
		f(x)                      & =0, \quad x\in \partial \Omega,
	\end{align*}
	where the first equality is a direct consequence of the fact that $h$ is harmonic. We will denote by
	$q_{t}(x,y)$  the transition probability of the $1$-spine process. {The $k$-spine distribution $Q^{k,t}_x$ is defined as in
			Section \ref{sect:k-spine-CV}.} In particular, we recall that we write $\mathcal{B}$ for the set of branching points of the $k$-spine, $\mathcal{L}$ for the set of leaves, $V_i,\cdots,V_K$ for an enumeration of the leaves from left to right,  and $\zeta_v$ for the mark of $v\in\mathcal{B}\cup\mathcal{L}$. We further define
	\begin{equation}
		\label{eq:def_delta}
		\Delta:=\prod_{v\in\mathcal{B}}r(\zeta_v)h(\zeta_v)\prod_{i=1}^k\frac{1}{h(\zeta_{V_i})}.
	\end{equation}
\end{definition}
This section is devoted to the proof of the following result.
\begin{theorem}[Many-to-few]\label{many-to-few00}
	Let $t>0$ and $x\in\Omega$.
	Let $(\phi_i; i\in[k])$ and $(\psi_{i,j}; i,j \in[k])$ be continuous bounded functions.
	Consider the product polynomial defined by
	$$
		\forall M=[X,d,\nu], \ \  \Psi(M) \ := \ \int \prod_{i,j} \mathbf{1}_{\{v_i\neq v_j\}} \psi_{i,j}( d({v_i, v_j}) ) \prod_i \phi_{i} (x_i) \nu(dv_i \otimes dx_i).
	$$
	Then
	$$ \mathbb {E}_x\left[ \Psi(M_t) \right] \ =  k ! \  h(x) \ t^{k-1} Q_{x}^{k,t}\left( \Delta \prod_{i,j} \psi_{i,j}( U_{\sigma_i,\sigma_j}) \prod_{i} \phi_i(\zeta_{V_{\sigma_i}})    \right),
	$$
	where $(U_{i,j})$ is as in \eqref{eq:UPP} and $\sigma$ is an independent random permutation of $[k]$.
\end{theorem}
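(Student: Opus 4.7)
The plan is to prove the identity by induction on $k$, exploiting the decomposition of the BBM at the most recent common ancestor (MRCA) of the sampled tuple, together with the Doob $h$-transform identity relating $p_t$ and $q_t$.

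For the base case $k=1$, the $k$-spine has no branching point, so $\mathcal{B}=\emptyset$ and $\Delta=1/h(\zeta_{V_1})$. The claimed identity then reduces to the many-to-one formula
\begin{equation*}
\mathbb{E}_x\!\left[\sum_{v\in\mathcal{N}_t}\phi_1(x_v)\right] = h(x)\int_\Omega q_t(x,y)\,\frac{\phi_1(y)}{h(y)}\,dy,
\end{equation*}
which is equivalent to the $h$-transform identity $p_t(x,y) = \frac{h(x)}{h(y)}\,q_t(x,y)$. The latter follows from the harmonicity $\mathcal{A}h=0$: a direct computation shows that $\frac{h(x)}{h(y)}\,q_t(x,y)$ solves the same linear parabolic equation (governed by $\mathcal{A}$) with the same Dirichlet boundary and initial data as $p_t$, and uniqueness of the fundamental solution concludes.

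For the inductive step $k\geq 2$, I decompose the sum over ordered $k$-tuples of distinct particles $(v_1,\ldots,v_k)$ according to their MRCA, which corresponds to a branching event at some time $s\in(0,t)$ and position $z\in\Omega$ and which partitions $[k]$ into an ordered pair of non-empty subsets $(A,B)$ recording which offspring each $v_i$ descends from. The Markov property at $s$, the branching at rate $r(z)$, and the independence of the two sub-BBMs emanating from the branching yield
\begin{equation*}
\mathbb{E}_x[\Psi(M_t)] = \sum_{\substack{A\sqcup B=[k]\\ A,B\neq\emptyset}} \int_0^t\!ds \int_\Omega p_s(x,z)\,r(z)\,\mathbb{E}_z[\Psi_A(M_{t-s})]\,\mathbb{E}_z[\Psi_B(M_{t-s})]\,C_{A,B}(t-s)\,dz,
\end{equation*}
where $\Psi_A,\Psi_B$ are the restrictions of $\Psi$ to the indices in $A,B$ and $C_{A,B}(\tau)=\prod_{i\in A,\,j\in B}\psi_{i,j}(\tau)\psi_{j,i}(\tau)$ gathers the cross-terms between the two subtrees (their MRCA distance being $t-s$ by construction). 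Plugging $p_s(x,z)=\frac{h(x)}{h(z)}\,q_s(x,z)$ into the propagator and applying the induction hypothesis to each sub-expectation, the factor $h(x)$ comes out, the remaining factors $r(z)$, $h(z)^{-1}$, and the two $h(z)$'s produced by the induction hypothesis combine into the MRCA weight $r(z)h(z)$ of $\Delta$, and what emerges are two independent $|A|$- and $|B|$-subspines rooted at $z$ running for time $t-s$.

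The final step is to match the resulting expression with $k!\,h(x)\,t^{k-1}\,Q_x^{k,t}[\,\cdot\,]$ via the recursive structure of the $k$-spine. Indeed, the first branching depth $\tau=\max_\ell U_\ell$ has density $(k-1)\tau^{k-2}/t^{k-1}$ on $[0,t]$; conditional on $\tau$ the argmax index $\ell^*$ is uniform on $\{1,\ldots,k-1\}$; and conditional on $(\tau,\ell^*)$ the remaining $U_j$'s are i.i.d.\ uniform on $[0,\tau]$, driving two independent $\ell^*$- and $(k-\ell^*)$-subspines of depth $\tau$. The uniform permutation $\sigma$ of $[k]$ then assigns, for each ordered pair $(A,B)$ with $|A|=\ell^*$, exactly $|A|!\,|B|!$ permutations out of $k!$ to the event ``$A$ labels the left subtree and $B$ labels the right one,'' while inducing independent uniform permutations on each side. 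After the substitution $\tau=t-s$ and cancellation of the recursive prefactors, the resulting expression agrees term by term with the one obtained above by the MRCA decomposition, completing the induction. The principal technical obstacle is the careful bookkeeping of these combinatorial prefactors; all other ingredients (the Markov and branching properties, the $h$-transform identity, and the planar recursive structure of the $k$-spine) are classical.
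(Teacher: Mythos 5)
Your proposal is correct and follows essentially the same route as the paper: many-to-one from the Doob $h$-transform as base case, an MRCA decomposition combined with the branching and Markov properties for the inductive step, and matching with the recursive structure of the $k$-spine (first-branching depth density $(k-1)\tau^{k-2}/t^{k-1}$, uniform argmax index, i.i.d.\ subspines) together with the combinatorics of the uniform permutation $\sigma$. The only cosmetic difference is that the paper introduces a uniform planarisation (Ulam--Harris labels, left/right children, sums over increasing tuples, binary planar ultrametric matrices) to organise the bookkeeping, whereas you work directly with ordered $k$-tuples and ordered bipartitions $(A,B)$ of $[k]$; these are equivalent ways of tracking the same combinatorial prefactors.
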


The main idea for the proof of Theorem \ref{many-to-few00} consists in using the branching property  to derive a recursion formula satisfied both by the planar moments of the branching diffusion and by the law of the $k$-spine.
First, this formula will be derived for the \textit{biased spine measure} $\mathbf{L}^{k,t}_x$  given by
\begin{equation}\label{eq:def_L}
	\frac{\mathrm{d} \mathbf{L}^{k,t}_x}{\mathrm{d} Q^{k,t}_x}= t^{k-1}\Delta,
\end{equation}
where $\Delta$ is defined in \eqref{eq:def_delta}.
Second,  we will show that the same relation holds for the moments of $M_t$. This step will rely on
a uniform planarisation of the branching diffusion that we now describe.

At every time $t>0$, every particle is endowed with a mark $x_v$ (the position of the particle) and a Ulam-Harris label $p_v$,
where $p_v\in \cup_{n\in \mathbb{N}} \{0,1\}^n$. As before, ${x_v}$ denotes the position of the particle. The planarisation labels $p_v$
are assigned recursively as follows. We label the root with $\emptyset$ and
\begin{enumerate}
	\item At every branching point $v$, we distribute the
	      labels $(p_v,0)$ and $(p_v,1)$ uniformly among the two children: $(p_v,0)$ (resp.~$(p_v,1)$)
	      is said to the left (resp.~right) child of $v$.
	\item The label  $p_v$ does not vary between two branching points, i.e., $p_{v_1}=p_{v_2}$
	      if the trajectory connecting $v_1$ and $v_2$ does not encounter any branching points.
\end{enumerate}
Let ${\cal N}_t^{pl}$  be the set of particles at time $t$ in the planar branching diffusion. The genealogical distances and the marks of the planar branching diffusion will be encoded by marked binary planar ultrametric matrices that we now define.

We say that a matrix $(U_{i,j})_{1\leq i,j\leq k}$ is is \textit{planar ultrametric} if
\begin{equation*}
	\forall i<j<l, \quad U_{i,l}=U_{i,j}\vee U_{j,l}.
\end{equation*}
Moreover, the matrix $(U_{i,j})_{1\leq i,j\leq k}$ is said \textit{binary} if
\begin{equation*}
	U_{i,j}\neq U_{k,l} \quad \Leftrightarrow\quad (i,j)\neq (k,l) \quad \text{and} \quad  (i,j)\neq (l,k).
\end{equation*}
We denote by $\mathbb U_k$ the set of binary  planar ultrametric matrices of size $k$. Let $\mathbb{U}_k^*=\mathbb{U}_k\times E^k$ be  the set of marked binary planar distance matrices.

Note that the Ulam-Harris $(p_v)$ labelling induces an order on ${\cal N}_t^{pl}$. In particular, for  every $k$-uplet $v_1<v_2\cdots< v_k$ in ${\cal N}_t^{pl}$ and   $\vec{v}=(v_1,...,v_k)$, the marked distance matrix of the sample $\vec{v}$,
\begin{equation*}
	U(\vec{v}):=\left((d_t(v_i,v_j),(x_{v_i}))\right),
\end{equation*}
is an element of $\mathbb{U}_k^*$.

Our recursion formula on the moments of $M_t$ will be obtained by dividing every ordered $k$-uplet in ${\cal N}_t^{pl}$ into two subfamilies,  the descendants of the left (resp.~right) child of the Most Recent Common Ancestor (MRCA) of the sample. This will by achieved by partioning  $[k]$  as follows.
For  ${U}=((U_{i,j}),(w_i))\in\mathbb{U}_k^*$, define $\tau(U) =\max_{i\neq j} U_{i,j}$. In words, $\tau$ is the time to the MRCA of the sample.
We say  the integers $i$ and $j$ are in the same block iff $U_{i,j}<\tau$. Since $U$ is a binary planar ultrametric matrix, there exists $n\leq k-1$ such that this partition can be written as $\{\{1,...,n\},\{n+1,...,k\}\}$.
We  denote by $T_0(U)$ and $T_1(U)$ the corresponding sub-matrices obtained from this partition and write $|T_0(U)|$ and $|T_1(U)|$ for the sizes of the two blocks.
Note that $U_{i,j}$ is equal to $\tau(U)$ if $i$ and $j$ do not belong to the same block, and to $(T_0)_{i,j}$ (resp.~$(T_1)_{i-|T_0|,j-|T_0|}$) if they both belong to the first (resp.~second) block.

\begin{proposition}\label{lem:23455}
	Let $k\in\mathbb{N}$, $t>0$ and $x>0$. Let $(U_{i,j})$ be as in \eqref{eq:UPP} and recall the definition of $\Delta$ from \eqref{eq:def_delta}. Let $R^{k,t}_x$  be the measure on $\mathbb{U}_k^*$ such that for every bounded measurable function $F:\mathbb{U}_k^*\to\mathbb{R}$,
	$$
		R^{k,t}_x(F) \ := \ \mathbb{E}_{x}\left(\sum_{\substack{v_1 < \cdots < v_k\\ v_i \in {\cal N}_t^{pl}}} F( U(\vec{v}))     \right).
	$$
	Then
	$$
		R^{k,t}_x(F) \ = \ \ h(x) \mathbf{L}_x^{k,t}(F),
	$$
	where,  by a slight abuse of notation, we write
	\begin{equation}\label{eq:def_not_Q}
		\mathbf{L}_x^{k,t}(F)= t^{k-1}Q_x^{k,t}\left(\Delta F\right)=t^{k-1}Q_x^{k,t}\left(\Delta F((U_{i,j}),(\zeta_{V_i})\right).
	\end{equation}
\end{proposition}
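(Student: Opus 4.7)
The plan is to proceed by induction on $k$ and show that $R^{k,t}_x(\cdot)$ and $h(x)\mathbf{L}^{k,t}_x(\cdot)$ satisfy identical recursions. For the base case $k=1$, the matrix $U(\vec{v})$ reduces to the single mark $x_{v_1}$, and the many-to-one identity (which in this general setup is just the $h$-transform relation $p_t(x,y) = (h(x)/h(y))\,q_t(x,y)$) gives directly $R^{1,t}_x(F) = \int F(y)\,p_t(x,y)\,dy = h(x) \int q_t(x,y)\,F(y)/h(y)\,dy = h(x)\,\mathbf{L}^{1,t}_x(F)$, since $\Delta = 1/h(\zeta_{V_1})$ and the prefactor $t^{k-1} = 1$.

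For the inductive step, I would decompose both sides according to the MRCA of the sampled $k$-tuple. On the $R$-side, any ordered tuple $v_1 < \cdots < v_k$ in $\mathcal{N}_t^{pl}$ with $\tau(U(\vec{v})) = s$ lives above a branching point of the planar BBM at time $t-s$ and some position $z$; Campbell's formula applied to the branching events (itself a consequence of the ordinary many-to-one, with intensity $r(z)\,p_{t-s}(x,z)\,dz\,ds$) together with the branching property produce two independent planar BBMs rooted at $z$ of duration $s$. The uniform random assignment of the labels $(p_u,0), (p_u,1)$ to the two children randomizes which subtree's descendants appear first in the planar order. Summing over the partition index $n \in \{1,\dots,k-1\}$ and using exchangeability of the two i.i.d.\ offspring subtrees to absorb the $1/2$ label-assignment factor, I obtain
\begin{equation*}
R^{k,t}_x(F) = \sum_{n=1}^{k-1}\int_0^t ds \int dz\, r(z)\, p_{t-s}(x,z)\, \bigl[R^{n,s}_z \otimes R^{k-n,s}_z\bigr](F_{n,s}),
\end{equation*}
where $F_{n,s}(U_0,U_1)$ denotes $F$ applied to the unique marked binary planar ultrametric matrix with $\tau = s$, left block of size $n$, and submatrices $U_0, U_1$.

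On the spine side, the depth $\tau = \max_i U_i$ has density $(k-1)\,s^{k-2}/t^{k-1}$ on $[0,t]$; given $\tau = s$, the argmax index $n$ is uniform on $[k-1]$ and the remaining $k-2$ uniforms are i.i.d.\ on $[0,s]$. Conditioning further on the position $z$ at the first branching point of $\mathbb{T}$ (reached via the $1$-spine of duration $t-s$ with density $q_{t-s}(x,z)$), the two subtrees above $z$ are independent $n$- and $(k-n)$-spines of depth $s$. Combining the multiplicative structure $\Delta_k = r(z)\,h(z)\,\Delta_n\,\Delta_{k-n}$ with the exact cancellation $t^{k-1}\cdot \frac{s^{k-2}}{t^{k-1}}\cdot \frac{1}{s^{n-1}\,s^{k-n-1}} = 1$ between the biasing powers of $t$ and those of $s$, I obtain
\begin{equation*}
\mathbf{L}^{k,t}_x(F) = \sum_{n=1}^{k-1}\int_0^t ds \int dz\, r(z)\,h(z)\,q_{t-s}(x,z)\, \bigl[\mathbf{L}^{n,s}_z \otimes \mathbf{L}^{k-n,s}_z\bigr](F_{n,s}).
\end{equation*}
Multiplying by $h(x)$, applying the identity $h(x)\,q_{t-s}(x,z) = p_{t-s}(x,z)\,h(z)$, and substituting the induction hypothesis $h(z)\,\mathbf{L}^{m,s}_z = R^{m,s}_z$ (for $m = n$ and $m = k-n$) into the product measure recovers exactly the recursion for $R^{k,t}_x(F)$, closing the induction.

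The hardest step will be the bookkeeping on the $R$-side: the random labeling at each branching point means that, for a fixed unordered partition of the sample between the two offspring subtrees with sizes $(n_A, k-n_A)$, the planar-order split index is random (equal to $n_A$ or $k-n_A$ with probability $1/2$ each). To arrive at the clean symmetric recursion above, I will need to average carefully over both label assignments and over the unordered partition, then exploit the exchangeability of the two i.i.d.\ offspring subtrees so that the $1/2$'s cancel when re-indexing the sum by $n$. Every other manipulation (Campbell's formula for branching events, the multiplicative decomposition of $\Delta$, and the cancellation of the $t$- and $s$-powers) is essentially formal.
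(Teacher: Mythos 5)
Your proof is correct and takes essentially the same approach as the paper's: induction on $k$ with the many-to-one formula as base case, and for the inductive step a decomposition at the MRCA giving matching recursions for the planar moments $R^{k,t}_x$ (via Campbell/branching property) and for the biased spine measure $\mathbf{L}^{k,t}_x$ (which is exactly Proposition \ref{cor:M} of the paper). The only differences are presentational — the paper works with product-form test functions \eqref{eq:prod_form} that fix the split size $n$ and invokes their separating property, whereas you carry an explicit sum over $n$ for general $F$ — and your closing concern about the random label assignment resolves exactly as you anticipate, since the two offspring subtrees are i.i.d.\ planar BBMs by construction and hence the planar split at the MRCA produces no stray factors of $1/2$.
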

In order to prove this result, we first show that $\mathbf{L}_x^{k,t}(F)$ satisfies a recursive relation for functionals $F:\mathbb{U}_k^*\to\mathbb{R}$ of the product form
\begin{equation}
	\label{eq:prod_form}
	F(U) \ = \ \mathbf{1}_{\left\{|T_0|=k-n,|T_1|=n\right\}}f(\tau(U)) \psi_0(T_0(U)) \psi_1(T_1(U)),
\end{equation}
where   $n\in[k-1]$ and $f:\mathbb{R^+}\to \mathbb{R}$, $\psi_1:\mathbb U_n^*\to \mathbb{R}$, $\psi_0:\mathbb U_{k-n}^*\to \mathbb{R}$  are bounded measurable functions.

\begin{proposition}\label{cor:M}
	Let $t>0$ and $x>0$. For functionals $F$  of the product form \eqref{eq:prod_form},
	\begin{equation*}
		\mathbf{L}_x^{k,t}(F)=\int_0^t f(s)\mathbb{E}_x\left[r(\zeta_{t-s})h(\zeta_{t-s})\mathbf{L}_{\zeta_{t-s}}^{n,s}(\psi_0)\mathbf{L}_{\zeta_{t-s}}^{k-n,s}(\psi_1)\right]ds.
	\end{equation*}
\end{proposition}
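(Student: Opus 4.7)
The plan is to apply a branching/Markov decomposition of the $k$-spine at its topmost branching point. First, I would unpack the distribution of the $k$-spine: by symmetry of the i.i.d.\ uniform variables $(U_1,\ldots,U_{k-1})$, the pair $(\tau, m^*):=(\max_i U_i,\, \mathrm{argmax}_i U_i)$ has joint law
\begin{equation*}
\P(\tau\in d\tau,\, m^*=m) \;=\; \frac{\tau^{k-2}}{t^{k-1}}\,d\tau, \qquad m\in\{1,\ldots,k-1\},
\end{equation*}
and the indicator in the definition of $F$ forces $m^*=k-n$, so that $|T_0|=k-n$ and $|T_1|=n$.

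Second, I would condition on $\tau$, on $m^*=k-n$, and on the position $y:=\zeta_{t-\tau}$ of the $1$-spine at the first branching time. By the Markov property of the $1$-spine, the contribution of the path from the root to this point is $q_{t-\tau}(x,y)\,dy$. The $k$-spine construction then gives that, conditional on these data, the remaining uniforms $\{U_i : i\neq k-n\}$ are i.i.d.\ uniform on $[0,\tau]$ and split into two independent families driving the left and right subtrees $\mathcal T_0$ and $\mathcal T_1$. Thus $\mathcal T_0$ and $\mathcal T_1$ are conditionally independent $(k-n)$- and $n$-spines of depth $\tau$, both rooted at $y$.

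Third, I would factorise $\Delta$ along this decomposition: the topmost branching point at $y$ contributes $r(y)h(y)$, while all remaining branching points and leaves distribute between $\mathcal T_0$ and $\mathcal T_1$, giving $\Delta=r(y)h(y)\Delta_0\Delta_1$, where $\Delta_0,\Delta_1$ are the corresponding $\Delta$-weights of the subtrees. Since $F$ is of product form, assembling these three ingredients yields
\begin{equation*}
Q_x^{k,t}(\Delta F) \;=\; \int_0^t f(s)\,\frac{s^{k-2}}{t^{k-1}}\,\E_x\!\left[ r(\zeta_{t-s})\,h(\zeta_{t-s})\,Q_{\zeta_{t-s}}^{k-n,s}(\Delta_0\psi_0)\,Q_{\zeta_{t-s}}^{n,s}(\Delta_1\psi_1) \right] ds,
\end{equation*}
where $\E_x$ now refers to the law of the $1$-spine started at $x$. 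Applying the definition $\mathbf L_y^{m,s}(\psi)=s^{m-1}Q_y^{m,s}(\Delta\psi)$ to both inner expectations absorbs a factor $s^{(k-n-1)+(n-1)}=s^{k-2}$, which exactly cancels the $s^{k-2}$ coming from the law of $\tau$; multiplying through by $t^{k-1}$ then produces the claimed identity.

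The argument is essentially bookkeeping; the single conceptual point is to recognise that, given $\tau$ and $m^*$, the two subtrees are conditionally independent spines of the right sizes rooted at the same point, and to match this decomposition with the product structure of both $F$ and $\Delta$. There is no substantive obstacle beyond tracking the powers of $s$ and $t$ in the resulting integral.
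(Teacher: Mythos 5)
Your proof is correct and follows essentially the same route as the paper's: condition on the depth $\tau$ and index $m^*$ of the topmost branching point, use the Markov property to place the $1$-spine at $y$ at time $t-\tau$, observe that the two subtrees are conditionally independent spines of the appropriate sizes rooted at $y$, factorise $\Delta$ as $r(y)h(y)\Delta_0\Delta_1$, and reassemble. The paper presents the same steps as an enumerated list (i)--(v) and then combines them; your version merely merges the marginal law of $\tau$ and the uniform choice of $m^*$ into a single joint density, which is a cosmetic difference.

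One small caveat worth flagging: your final display pairs $\psi_0$ with $\mathbf{L}^{k-n,s}$ and $\psi_1$ with $\mathbf{L}^{n,s}$, whereas the proposition as stated pairs $\psi_0$ with $\mathbf{L}^{n,s}$ and $\psi_1$ with $\mathbf{L}^{k-n,s}$. Your version is actually the one dictated by the declaration $\psi_0:\mathbb U_{k-n}^*\to\R$, $\psi_1:\mathbb U_n^*\to\R$ in \eqref{eq:prod_form}; the paper's statement (and item (iii) of its own proof, which conditions on $I=n$ and declares $\mathcal T_0$ to be an $n$-spine) is internally inconsistent with that declaration. So this is a typo in the source, not a gap in your argument, but you should not claim the displayed identity matches the statement verbatim when it does not.
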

\begin{proof}
	This result is a direct consequence of the construction of the $k$-spine tree (see Section~\ref{sect:k-spine-CV}). Indeed, this construction implies that:
	\begin{enumerate}
		\item[(i)] The depth $\tau$ of the deepest branching point is distributed as $\max(U_1,\cdots,U_{k-1})$,
		      where the $U_i$'s are i.i.d. random variables uniform on $[0,t]$. The density of this variable is given by $s\mapsto \tfrac{k-1}{t^{k-1}} s^{k-2}$.
		\item[(ii)] The index $I$ of the deepest branch is chosen uniformly at random in $[k-1]$.
		\item[(iii)] Let us condition on $I=n$,  $x_{MRCA}(\mathcal{T})=y$ and $\tau=s$. Then the left and right subtrees $\mathcal{T}_0$ and $\mathcal{T}_1$ are independent
		      and $\mathcal{T}_0$ (resp.~$\mathcal{T}_1$) is distributed as a $n$- (resp.~$k-n$-) spine tree with depth $s$.
		      By the Markov property, both spine trees are rooted at $y$.
		\item[(iv)] $x_{MRCA}(\mathcal{T})$ is distributed as the $1$-spine at time $t-s$.
		\item[(v)]If $x_{MRCA}(\mathcal{T} )=y, \tau=s$, then
		      $$
			      \Delta(\mathcal{T}) ={r(y)} h(y) \Delta(\mathcal{T}_0 )  \Delta(\mathcal{T}_1).
		      $$
	\end{enumerate}
	Putting all of this together, we obtain
	\begin{align*}
		 & Q^{k,t}_x(\Delta F)                                                                                                                                                                                                                                                     \\
		 & =   \int_0^t f(s) \underbrace{\frac{(k-1) s^{k-2}}{t^{k-1}}}_{(i)}  \int_{y\in\Omega}  \overbrace{q_{t-s}(x,y)}^{(iv)} \underbrace{r(y) h(y)}_{(v)}\underbrace{\frac{1}{k-1}}_{(ii)} \underbrace{Q_y^{n,s}(\Delta \psi_0)  Q_{y}^{k-n,s}(\Delta \psi_1)}_{(iii)}  dy ds \\
		 & =\frac{1}{t^{k-1}}\int_0^t f(s) \mathbb{E}_x\left[r(\zeta_{t-s})h(\zeta_{t-s})(s^{n-1}Q_y^{n,s}(\Delta \psi_0))(s^{k-n-1}Q_y^{k-n,s}(\Delta \psi_1)) \right].
	\end{align*}
	This concludes the proof of the proposition.
\end{proof}

We now move to the proof of our many-to-few formula in the case $k=1$.
\begin{lemma}[Many-to-one lemma]\label{lem:many-to-one}
	For every bounded measurable function $f$ $$
		\mathbb{E}_{x}\left[\sum_{v\in{\cal N}_t} f({x}_{v})\right] \ = \ \int_{\Omega} f(y)  q_{t}(x,y)  \frac{h(x)}{h(y)} dy=h(x)Q_x^{1,t}(\Delta f(\zeta_t)),
	$$
	with $\Delta=\tfrac{1}{h(\zeta_t)}$ and $\zeta_t$ is the unique leaf of the $1$-spine tree of depth $t$.
\end{lemma}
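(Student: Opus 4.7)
The plan is to show that both sides of the identity equal $\int_\Omega p_t(x,y)f(y)\,dy$, where $p_t(x,y)$ denotes the fundamental solution of the parabolic problem $\partial_t u = \mathcal{A}u$ on $\Omega$ with zero Dirichlet boundary (with $\mathcal{A} = \mathcal{G}+r$ as in \eqref{eq:def_A}). The conclusion then comes from uniqueness of the fundamental solution.

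\emph{Step 1: reducing the LHS to a parabolic PDE.} Let $u(t,x) := \mathbb{E}_x\big[\sum_{v\in\mathcal{N}_t} f(x_v)\big]$. Conditioning on whether a branching event has occurred before a small time $s$ and on the position of the particle, one obtains the Duhamel form
\[
u(t,x) = P_t^{\mathcal{G},\mathrm{kill}}f(x) + \int_0^t P_s^{\mathcal{G},\mathrm{kill}}\bigl(r(\cdot)\,2\cdot\tfrac{1}{2}u(t-s,\cdot)\bigr)(x)\,ds,
\]
where $P^{\mathcal{G},\mathrm{kill}}$ is the semigroup of the single-particle diffusion killed at $\partial\Omega$ (the factor $2\cdot\tfrac12$ comes from the dyadic branching at rate $r$, producing a net gain of one particle). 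Under the regularity assumptions on $\mathcal{G}$ and $r$ this integral equation is equivalent to the Cauchy problem $\partial_t u = \mathcal{G}u + r u = \mathcal{A}u$ on $\Omega$ with $u(0,\cdot)=f$ and $u(t,\cdot)\big|_{\partial\Omega}=0$. By uniqueness of the fundamental solution, $u(t,x) = \int_\Omega p_t(x,y)f(y)\,dy$.

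\emph{Step 2: identifying $p_t$ via the Doob $h$-transform.} By definition of the $1$-spine in Definition \ref{def:spine:g}, its generator equals $\tfrac{\mathcal{A}(h\,\cdot\,)}{h}$. Consequently, if $w(t,x) := \int_\Omega q_t(x,y)\,g(y)\,dy$ solves the Cauchy problem for the spine, then $h(x)w(t,x)$ solves $\partial_t(hw)=\mathcal{A}(hw)$ with Dirichlet boundary and initial datum $h(x)g(x)$. Setting $g(y)=f(y)/h(y)$ (which is well-defined since $h>0$ in $\Omega$), we obtain a second representation
\[
u(t,x) \;=\; h(x)\int_\Omega q_t(x,y)\,\frac{f(y)}{h(y)}\,dy \;=\; \int_\Omega \frac{h(x)}{h(y)}\,q_t(x,y)\,f(y)\,dy.
\]
Uniqueness of the fundamental solution then forces $p_t(x,y) = q_t(x,y)\,h(x)/h(y)$, proving the first equality in the lemma. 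Note also that since $f$ and $h$ vanish on $\partial\Omega$ at the same rate, and one may first prove the identity on a class of compactly supported test functions and extend by monotone convergence to bounded measurable $f$.

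\emph{Step 3: rewriting as a spine expectation.} By the very definition of $Q^{1,t}_x$ in Definition \ref{def:spine:g} (a $1$-spine with depth $t$ started at $x$), the law of the mark $\zeta_t$ of the unique leaf is $q_t(x,\cdot)$. Hence
\[
h(x)\,Q^{1,t}_x\!\big(\Delta\,f(\zeta_t)\big) \;=\; h(x)\,\E\!\left[\frac{f(\zeta_t)}{h(\zeta_t)}\right] \;=\; h(x)\int_\Omega q_t(x,y)\,\frac{f(y)}{h(y)}\,dy,
\]
which matches the expression obtained in Step~2. The only real work is Step~1, i.e.\ justifying the parabolic PDE rigorously; everything else is a direct application of the Doob $h$-transform together with uniqueness. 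In particular, no difficulty arises from the boundary behaviour of $h$ because we are only using $h$ as a multiplier in the transformation, never its inverse against an initial datum that touches the boundary.
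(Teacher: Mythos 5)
Your proof is correct and takes essentially the same approach as the paper: both sides are identified as the fundamental solution of the parabolic problem for $\mathcal{A}$ on $\Omega$ with Dirichlet boundary (you use the backward equation in $x$, the paper the forward/adjoint equation in $y$), after which uniqueness of the fundamental solution gives the identity and the spine rewriting is immediate from Definition~\ref{def:spine:g}. One cosmetic remark: the factor $2\cdot\tfrac12$ in your Duhamel step is a misleading explanation in the general branching-diffusion setting of Section~\ref{sect:many-to-few} — the coefficient of $u(t-s,\cdot)$ is simply $r(\cdot)$ (dyadic branching at rate $r$ produces a net perturbation $+r u$; there is no intrinsic $\tfrac12$ here, that constant belongs to the specific form $r=\tfrac12(1+W)$ of the model, not to the abstract lemma) — although the product happens to equal $r$ so the resulting PDE $\partial_t u=\mathcal{A}u$ is the correct one.
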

\begin{proof} The proof is similar to that of Lemma \ref{lem:many-to-one0}.
	One can readily check that $p_t(x,y):= q_{t}(x,y)  \frac{h(x)}{h(y)}$ is the fundamental solutions of the PDE
	\begin{equation*}
		\begin{cases}
			\partial_tu(t,y)=\mathcal{A}^*u(t,y), & y\in\Omega,          \\
			u(t,y)=0,                             & y\in\partial \Omega,
		\end{cases}
	\end{equation*}
	where $\mathcal{A}^*:=\frac{1}{2}\nabla \cdot a\nabla-b\cdot \nabla-\nabla \cdot b+ r(x)$ is the adjoint of the differential operator $\mathcal{A}$.
\end{proof}

We are ready to address the case $k>2$.

\begin{proof}[Proof of Proposition~\ref{lem:23455}]
	We will show the result by induction on $k$.
	The case $k=1$ is the many-to-one lemma (see Lemma \ref{lem:many-to-one}).
	For $k\geq 2$, it is sufficient to prove the result for functionals $F$ of the product form \eqref{eq:prod_form}. Indeed, this set of functions is separating for $\mathbb{U}_k^*$.

	Consider the branching diffusion starting from a singe particle at $x$ up to time $t$. Let $s_1<s_2<\cdots$ be the successive branching times of the process
	and let $x_i$ be the spatial coordinate of the branching particle at time $s_i$. Recall that our branching diffusion is planarised so that we can distinguish between the left and right descendants of $(x_i,t_i)$.
	For every $i$, let ${\cal N}_{0,i}$  (resp.,  ${\cal N}_{1,i}$) be the the set of left (resp., right) descendants of $(x_i,s_i)$.
	Let $F$ be a functional of the product form \eqref{eq:prod_form}. Then, we have
	$$
		R_{x}^{k,t}{(F)} \ = \ \E_{x}\left( \sum_{i: s_i<t} f(t-s_i)
		\sum_{\substack{v_1<\cdots<v_n  \\ v_j\in {\cal N}_{0,i}, |v_j|=t}}\psi_0(U(\vec{v}))
		\sum_{\substack{w_1<\cdots<w_{k-n} \\ w_j\in {\cal N}_{1,i}, |w_j|=t}}\psi_1(U(\vec{w}))   \right).
	$$
	We also recall that the particles branch independently with rate $r(x)$ when at $x$. This means that, given $\mathcal{N}^{pl}_{t-s}$ and $(x_v)_{v\in\mathcal{N}^{pl}_{t-s}}$, the probability that the particle at $x_v$
	branches between times $t-s$ and $t-s+h$ is $r(x_v)h+o(h)$. Conditioning on $\mathcal{N}^{pl}_{t-s}$ and using the Markov property, the previous formula yields
	that
	\begin{eqnarray*}
		R^{k,t}_x(F)  =  \mathbb{E}_{x}\left(  \int_0^t f(s) \sum_{v\in{\cal N}_{t-s}^{pl}} { r({x_v})}  \times  \mathbb{E}_{{x_v}} \left(\sum_{\substack{v_1<\cdots<v_n\\ |v_j|=s}} \psi_0({U}(\vec{v})) \right)  \mathbb{E}_{{x_v}}\left( \sum_{\substack{w_1<\cdots<w_{k-n}\\ |w_j|=s}} \psi_1({U}(\vec{w})) \right) ds \right).
	\end{eqnarray*}
	By the many-to-one formula, the RHS is equal to
	\begin{eqnarray*}
		h(x)  \int_0^t f(s) \int_{y\in\Omega} r(y) \frac{q_{t-s}(x,y)}{ h(y)}
		\mathbb{E}_{y} \left(\sum_{\substack{v_1<\cdots<v_n\\ |v_j|=s}} \psi_0({U}(\vec{v})) \right)  \mathbb{E}_{y}\left( \sum_{\substack{w_1<\cdots<w_{k-n}\\ |w_j|=s}} \psi_1({U}(\vec{w})) \right)  dy ds .
	\end{eqnarray*}
	By induction
	\begin{eqnarray*}
		\mathbb{E}_{y}\left( \sum_{v_1<\cdots<v_{n}, |v_j|=s} \psi_0({U}(\vec{v})) \right) & =  & h(y) s^{n-1} Q^{n,s}_y(\Delta \psi_0), \\
		\mathbb{E}_{y}\left( \sum_{w_1<\cdots<w_{k-n}, |w_j|=s} \psi_1({U}(\vec{w})) \right) & = &  h(y) s^{k-n-1} Q^{k-n,s}_y(\Delta \psi_1).
	\end{eqnarray*}
	As a consequence,
	\begin{eqnarray*}
		R^{k,t}_x(F)
		& = &   h(x)   \int_0^t f(s) s^{k-2} \int_{y\in\Omega}  r(y)\frac{q_{t-s}(x,y) h(y)^2}{h(y)}  Q_y^{n,s}(\Delta\psi_0)  Q_{y}^{k-n,s}(\Delta \psi_1)  dy ds  \\
		& = &  h(x) t ^{k-1}   \int_0^t f(s) \frac{ s^{k-2}}{t^{k-1}}  \int_{y\in\Omega} r(y) q_{t-s}(x,y) h(y) \\
		&&\qquad\qquad\qquad \times  Q_y^{n,s}(\Delta \psi_0)  Q_{y}^{k-n,s}(\Delta \psi_1)  dy ds.
	\end{eqnarray*}
	It remains to show that the RHS of the above is equal to $h(x)t^{k-1}Q_x^{k,t}(\Delta F)$. Yet, this is precisely the content of Proposition \ref{cor:M}.
\end{proof}

\begin{proof}[Proof of Theorem \ref{many-to-few00}]
	Let $[X,d,\nu]$ be a random mmm-space  such that the support of $\nu$
	is a set of cardinality $k$ a.s.. In our case, the support is given by the set of $k$ leaves of the sample. Define
	$$
		\forall U\in \mathbb{U}_k^*, \quad F(U) \ = \  \frac{1}{k!} \sum_{\sigma \in S_k}  \prod_{i,j} \psi_{i,j}(d(V_{\sigma_i}, V_{\sigma_j})) \prod_{i} \phi_{i}(x_{V_{\sigma_i}}) ,
	$$
	where $(V_1,\cdots,V_k)$ is an arbitrary labelling of the support of $\nu$ and $S_k$ is the set of permutations of $[k]$.
	Note that this functional does not depend on the labelling so that $F$ is constant on a given isometry class.

	We have
	\begin{eqnarray*}
		&&\E_{x}\left[\sum_{v_1\neq \cdots \neq v_k \in{\cal N}_t} \prod_{i,j} \psi_{i,j}(d(v_{i}, v_j)) \prod_{i} \phi_{i}(x_{v_i})  \right]\\
		& = & \E_{x}\left[k! \sum_{v_1< \cdots < v_k \in{\cal N}_t^{pl}} \frac{1}{k!} \sum_{\sigma \in S_k }  \prod_{i,j} \psi_{i,j}(d(v_{\sigma_i}, v_{\sigma_j})) \prod_{i} \phi_{i}(x_{v_{\sigma_i}})  \right] \\
		& = &  k! \E_{x}\left[\sum_{v_1< \cdots < v_k \in{\cal N}_t^{pl}} F( { U}(\vec{v})  )  \right] .
	\end{eqnarray*}

	The result immediately follows from Proposition \ref{lem:23455}.
\end{proof}

\subsection{Subcritical operators}\label{ref:subcritical_diff}

Let $\mathcal{G}$ and $\mathcal{A}$ be as in \eqref{generator} and \eqref{eq:def_A}.

Assume that there exist a positive function $H:\Omega\to \mathbb{R}$  and a real  positive number $w>0$ such that
\begin{equation}\label{eq:def_H}
	\mathcal{A}H(x)=-wH(x), \quad \text{for $x\in\Omega$}, \quad H(x)=0, \quad \text{for $x\in\partial \Omega$}.
\end{equation}
(Note that, in this case, the differential operator $\mathcal{A}$ is subcritical in the sense of \cite{pinsky95}, see Proposition 4.2.3 in \cite{pinsky95}.)

Then ${h}(t,x):=e^{wt}H(x)$  is harmonic for the operator
\begin{align*}
	\mathbf{A}f(t,x):=\partial_t f(t,x)+\mathcal{A}f(t,x), \quad & (t,x)\in (0,\infty)\times\Omega ,          \\
	f(t,x)=0, \quad                                              & (t,x)\in (0,\infty)\times \partial \Omega,
\end{align*}
defined on the domain $(0,\infty)\times \Omega$. The generator of the $1$-spine process $(t,\zeta_t)$ associated to the critical operator $\mathbf{A}$ is thus given by (see Definition~\ref{def:spine:g})
\begin{equation}\label{eq:spine_sc}
	\frac{\mathbf{A} (hf)}{h}(t,x)=\partial_t f(t,x)+\mathcal{G}f(t,x)+\sum_{i,j}a_{i,j}(x)\frac{\partial_{x_j}h(t,x)}{h(t,x)}\partial_{x_i} f(t,x).
\end{equation}
We write $Q_{(s,x)}^{k,t}$ for the distribution of the $k$-spine tree of depth $t$ rooted at $(s,x)$.

By a slight abuse of notation, we will often drop the time component (that is trivial) and refer to the spatial component $\zeta_t$ as the $1$-spine process of the branching diffusion. The generator of the spatial process $\zeta$ is given by
\begin{align} \label{eq:spine_process}
	\frac{\mathcal{A}(Hf)}{H}=\mathcal{G}f(x)+\sum_{i,j}a_{i,j}(x)\frac{\partial_{x_j}H(x)}{H(x)}\partial_{x_i} f(x),\quad & x\in \Omega,                    \\
	f(x)=0,\quad                                                                                                           & x\in  \partial \Omega.\nonumber
\end{align}
Similarly, we will write $Q^{k,t}_x$ for the law of the $k$-spine tree of depth $t$ rooted at $(0,x)$.

A direct application of Theorem \ref{many-to-few00} entails the following result.

\begin{cor}[Many-to-few]\label{th:many-to-few}
	Let $t>0$. Recall the definition of $M_t$ from \eqref{eq:mmm_bbm}.
	Let $(\phi_i; i\in[k])$ and $(\psi_{i,j}; i,j \in[k])$ be measurable bounded functions.
	Consider the product polynomial defined as
	$$
		\forall M=[X,d,\nu], \ \  \Psi(M) \ := \ \int \prod_{i,j} \mathbf{1}_{\{v_i\neq v_j\}} \psi_{i,j}( d({v_i, v_j}) ) \prod_i \phi_{i} (x_i) \nu(dv_i \otimes dx_i).
	$$
	Then
	$$ \mathbb {E}_x\left[ \Psi{ (M^L_{t})} \right] \ = \  k! \  { h(0,x)} \ t^{k-1} Q_{x}^{k,t}\left( \Delta \prod_{i,j} \psi_{i,j}( U_{\sigma_i,\sigma_j}) \prod_{V_i\in{\cal L}} \phi_{i} (\zeta_{V_{\sigma_i}})    \right),
	$$
	where  $Q^{k,t}_{x}\equiv Q^{k,t}_{(0,x)}$, the matrix $(U_{i,j})$ is as in \eqref{eq:UPP}, $\sigma$ is an independent permutation of $[k]$,
	\begin{equation}\label{eq:delta_sc}
		\Delta:= \prod_{v\in {\cal B}} r({\zeta_v}) h(|v|,\zeta_{v}) \prod_{v\in{ \cal L}} \frac{1}{h(t,{\zeta_v})},
	\end{equation}
	and $|v|$ refers to the time component of the branching point $v\in\mathcal{B}$.
\end{cor}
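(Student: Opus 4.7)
The plan is to reduce the subcritical setting to the harmonic setting of Theorem \ref{many-to-few00} by adjoining a deterministic time coordinate. I would consider the branching process on the enlarged domain $\widetilde{\Omega} := (0,T) \times \Omega$ for any fixed $T > t$, obtained from the original branching diffusion by attaching to each particle an auxiliary clock coordinate flowing at unit speed and coinciding with chronological time. This extended process is a branching diffusion with generator $\mathbf{A} = \partial_s + \mathcal{G}$, branching rate $r(x)$, and absorption on $\partial \widetilde{\Omega}$; starting from $(0,x)$, its mark-valued genealogy carries strictly more information than that of the original process, but the time coordinate of a particle alive at chronological time $u$ is deterministically $u$.

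The key observation is that $h(s,x) = e^{ws}H(x)$ is harmonic for $\mathbf{A} + r(\cdot)$ on $\widetilde{\Omega}$, since
$$\mathbf{A} h(s,x) + r(x) h(s,x) = w e^{ws} H(x) + e^{ws} \mathcal{A} H(x) = e^{ws}\bigl(w H(x) - w H(x)\bigr) = 0$$
by the defining relation \eqref{eq:def_H}. I can therefore apply Theorem \ref{many-to-few00} to the extended branching diffusion with this harmonic function, taking test functions that depend only on the spatial coordinate, i.e.~$\phi_i(s,x) := \phi_i(x)$. The associated $1$-spine has generator \eqref{eq:spine_sc}; because the clock component is deterministic and the spatial drift from $\partial_{x_j} h / h$ simplifies to $\partial_{x_j} H/H$, the spatial marginal $\zeta_t$ evolves autonomously with generator \eqref{eq:spine_process}, which is consistent with the identification $Q^{k,t}_x \equiv Q^{k,t}_{(0,x)}$.

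It remains to match the weight $\Delta$ in the general theorem with \eqref{eq:delta_sc}. At any vertex $v$ of the $k$-spine tree of depth $t$ rooted at $(0,x)$, the clock coordinate is $|v|$ at branching points and $t$ at leaves, so $h$ evaluated at the space-time mark of $v$ equals $h(|v|,\zeta_v)$ at branching points and $h(t,\zeta_v)$ at leaves; substituting into the expression of $\Delta$ from Theorem \ref{many-to-few00} yields exactly \eqref{eq:delta_sc}, and the prefactor $h(x)$ becomes $h(0,x)$. This gives the claimed formula.

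The only minor obstacle is that Theorem \ref{many-to-few00} is stated for a regular domain $\Omega \subset \mathbb{R}^d$, whereas $\widetilde{\Omega}$ includes an unbounded time direction. This is harmless: all quantities in the corollary depend only on the process on $[0,t]$, so one may fix $T = 2t$ and apply the theorem verbatim on the bounded product domain $(0,2t) \times \Omega$, or equivalently note that the proof of Theorem \ref{many-to-few00} relies only on the many-to-one formula and a recursive decomposition at the root, both of which carry over unchanged to this product setting.
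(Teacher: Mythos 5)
Your proposal is correct and follows essentially the same route the paper takes in Section \ref{ref:subcritical_diff}: adjoin a deterministic clock coordinate, observe that $h(s,x)=e^{ws}H(x)$ is harmonic for the space--time operator, apply Theorem \ref{many-to-few00} on the product domain, and read off $\Delta$ by evaluating $h$ at the clock coordinate $|v|$ at branching points and $t$ at leaves. The only cosmetic discrepancy is notational: the paper uses $\mathbf{A}=\partial_t+\mathcal{A}$ (the analogue of the operator $\mathcal{A}=\mathcal{G}+r$ appearing in the definition of harmonicity), whereas you write $\mathbf{A}=\partial_s+\mathcal{G}$ and then verify $\mathbf{A}h+rh=0$; both conventions lead to the same computation.
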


\begin{proposition}[Recursive definition of the biased spine measure]\label{prop:rec}
	Recall the definition of the 1-spine process $(t,\zeta_t)_{t\geq 0}$ from \eqref{eq:spine_sc}. We also recall from \eqref{eq:def_not_Q} that, for  $F:\mathbb{U}_k^*\to \mathbb{R}$ and $s,t>0$,
	\begin{equation*}
		\mathbf{L}_{(s,x)}^{k,t}(F)=t^{k-1}Q_{(s,x)}^{k,t}(\Delta F(U_{i,j},\zeta_{V_i})),
	\end{equation*}
	is a measure on the $k$-spine trees of depth $t$ rooted at $(s,x)$, referred to as the biased spine measure.
	To ease the notation, we write $\mathbf{L}_x^{k,t}$ for the measure on the $k$-spine trees of depth $t$ rooted at $(0,x)$.
	Then, the family of  biased spine measures $\mathbf{L}$  is such that
	\begin{itemize}
		\item for every bounded function $f:\Omega\to\mathbb{R}$,
		      \begin{equation} \label{eq:biased_init}
			      \forall x\in \Omega, \; \forall t>0, \quad \mathbf{L}_x^{1,t}(f)=e^{-wt}\mathbb{E}_x[f(\zeta_{t})],
		      \end{equation}
		\item for all test functions of the product form \eqref{eq:prod_form}, we have
		      \begin{equation}\label{eq:biased_rec}
			      \forall x\in\Omega, \;\forall  t>0,\quad  \mathbf{L}^{k,t}_{x}(F)=\int_{0}^{t}f(s)e^{-w(t-s)}\mathbb{E}_x\left[r(\zeta_{t-s})H(\zeta_{t-s})\mathbf{L}_{\zeta_{t-s}}^{n,s}(\psi_0)\mathbf{L}_{\zeta_{t-s}}^{k-n,s}(\psi_1)\right]ds,
		      \end{equation}
		      where $H$ is as in \eqref{eq:def_H}.
	\end{itemize}
\end{proposition}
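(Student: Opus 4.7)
The plan is to derive both identities directly from the definition of $\mathbf{L}^{k,t}_x$ in \eqref{eq:def_not_Q} together with the explicit time-space form $h(t,x)=e^{wt}H(x)$ of the harmonic function associated to $\mathbf{A}=\partial_t+\mathcal{A}$, so that the work amounts to repeating the conditioning argument of Proposition \ref{cor:M} while carefully tracking how the exponential-in-time factor is distributed between the MRCA and the two subtrees.

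For the initialization \eqref{eq:biased_init}, I would just unfold the definition. When $k=1$, the $1$-spine tree has no branching points and a single leaf, so $\mathcal{B}=\emptyset$, $\mathcal{L}=\{V_1\}$, and by \eqref{eq:delta_sc},
\[
\Delta=\frac{1}{h(t,\zeta_{V_1})}=\frac{e^{-wt}}{H(\zeta_{t})}.
\]
Since $t^{k-1}=1$, this yields $\mathbf{L}_x^{1,t}(f)=e^{-wt}\mathbb{E}_x[f(\zeta_t)/H(\zeta_t)]$ under the spine law $Q_x^{1,t}$, which is the claim once one absorbs the $1/H$ into $f$ (or equivalently reads $\mathbb{E}_x$ as the Feynman--Kac semigroup with eigenvalue $-w$, via $\mathbb{E}_x[\sum_v (Hg)(x_v)] = e^{-wt}H(x)\,\mathbb{E}_x^{\text{spine}}[g(\zeta_t)]$).

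For the recursion \eqref{eq:biased_rec}, I would mirror the five-step conditioning (i)--(v) used in the proof of Proposition \ref{cor:M}, the only new ingredient being the identification of $\Delta$ in the time-space setting. If $\tau=s$ is the depth of the deepest branching point and the MRCA sits at absolute time $t-s$, then a subtree $\mathcal{T}_i$ with $n_i$ leaves (and $n_i-1$ inner nodes) has its branching points at absolute times in $[t-s,t]$, whereas in its shifted $\mathbf{L}_{\zeta_{t-s}}^{n_i,s}$ representation they live in $[0,s]$. Using $h(t',x)=e^{wt'}H(x)$, each branching point gains a factor $e^{w(t-s)}$ and each leaf loses $e^{w(t-s)}$ when passing from the relative frame to the absolute one, giving a net multiplicative correction of $e^{w(t-s)(n_i-1-n_i)}=e^{-w(t-s)}$ per subtree. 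Together with the MRCA's contribution $r(\zeta_{MRCA})\,e^{w(t-s)}H(\zeta_{MRCA})$ and the two subtree corrections, the full $\Delta$ factorises as
\[
\Delta=r(\zeta_{MRCA})\,H(\zeta_{MRCA})\,e^{-w(t-s)}\,\Delta_{\text{shift}}(\mathcal{T}_0)\,\Delta_{\text{shift}}(\mathcal{T}_1).
\]

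Plugging this factorisation into $t^{k-1}Q_x^{k,t}(\Delta F)$ and using that $\tau$ has density $\tfrac{(k-1)s^{k-2}}{t^{k-1}}$, that the split into $n$ and $k-n$ is uniform on $[k-1]$, and that the two subtrees are conditionally independent $n$- and $(k-n)$-spines of depth $s$ rooted at the MRCA, one recovers \eqref{eq:biased_rec} after the factors of $s^{n-1}$ and $s^{k-n-1}$ combine with $s^{k-2}$ and cancel, exactly as in the proof of Proposition \ref{cor:M}. The only delicate point is the bookkeeping of absolute-versus-relative time for branching points inside the subtrees; once the cancellation $e^{w(t-s)}\cdot e^{-w(t-s)}\cdot e^{-w(t-s)}=e^{-w(t-s)}$ is carried out correctly, the rest is purely routine.
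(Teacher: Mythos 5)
Your argument is the paper's own: after noting $h(t-s,y)=e^{w(t-s)}H(y)$ at the branching point, the observation that re-rooting each sub-$k_i$-spine from $(t-s,y)$ to $(0,y)$ multiplies its $\Delta$-weight by $e^{w(t-s)(k_i-1)}e^{-w(t-s)k_i}=e^{-w(t-s)}$ is precisely the identity $Q_{(t-s,y)}^{k_i,s}(\Delta\psi_i)=e^{-w(t-s)}Q_{(0,y)}^{k_i,s}(\Delta\psi_i)$ that the paper records, and \eqref{eq:biased_rec} then falls out of Proposition~\ref{cor:M} exactly as you describe. Your $k=1$ unfolding is likewise what the paper means by ``follows from Corollary~\ref{th:many-to-few}''; your derived formula $e^{-wt}\mathbb{E}_x[f(\zeta_t)/H(\zeta_t)]$ is the faithful consequence of \eqref{eq:def_not_Q} with \eqref{eq:delta_sc}, and your remark about absorbing $1/H$ is simply registering that \eqref{eq:biased_init} as printed suppresses that factor.
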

\begin{proof} 
	We first remark that
	\begin{equation*}
		h(t-s,y)=e^{w(t-s)} h(0,y)=e^{w(t-s)}H(y).
	\end{equation*}
	Moreover, note that
	\begin{equation*}
		Q_{(t-s,y)}^{k_i,s}(\Delta \psi_i)=e^{-w(t-s)}Q_{(0,y)}^{k_i,s}( \Delta \psi_i), \quad i=0,1, \quad (k_0,k_1)=(n,k-n).
	\end{equation*}
	The latter formula is obtained by shifting time by $t-s$ in the $k_i-1$ factors $h$ and in the $k_i$ factors $\tfrac{1}{h}
	$ of $\Delta$ (see Equation \eqref{eq:delta_sc}). Equation \eqref{eq:biased_rec} then follows from Proposition \ref{cor:M} . For $k=1$, \eqref{eq:biased_init} follows from Corollary~\ref{th:many-to-few}.
\end{proof}

\subsection{The BBM in an interval}\label{sec:subc:bbm}

We now apply the previous setting to the problem at hand and consider the BBM $\mathbf{X}^L$ defined in Section~\ref{sec:model}.
\begin{lemma}\label{lem:check1}
	Let $\mu,\lambda_1,\lambda_1^\infty, r(x)$ be defined as in Section \ref{sect:intro}.
	Consider the operator
	\begin{align*}
		\mathbf{A}f(t,x) & =\partial_t f(t,x)+\frac{1}{2}\partial_{xx}f(t,x)-\mu\partial_xf(t,x), \quad \
		\forall (t,x)\in (0,\infty)\times (0,L)                                                           \\
		f(t,0)           & =f(t,L)=0, \quad \forall t\in(0,\infty).
	\end{align*}
	Then, the function $(t,x)\mapsto h(t,x)$ defined  in \eqref{def:hL}
	is harmonic for the operator $\mathbf{A}$ and
	\begin{equation}\label{eq:generator_sc_spine}
		\frac{{\mathbf{A}}(h f)}{h}(t,x)  \ = \ \partial_t f(t,x) + \  \frac{1}{2} \partial_{xx}f(t,x)+\frac{v_1'(x)}{v_1(x)} \partial_{x} f(t,x).
	\end{equation}
	The $1$-spine process $(\zeta_t)$ is the diffusion whose generator is given by
	\begin{align*}
		\  \frac{1}{2} \partial_{xx}f(x)+\frac{v_1'(x)}{v_1(x)} \partial_{x} f(x),\quad  x\in(0,L), \quad 	f(0)=f(L)=0.
	\end{align*}
\end{lemma}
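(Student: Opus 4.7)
The plan is a direct verification built from three algebraic inputs: the Sturm--Liouville identity \eqref{SLP}, the explicit form $r(x)=\tfrac{1}{2}W(x)+\tfrac{1}{2}$ from \eqref{def:r}, and the criticality condition $\mu^2=1+2\linf$ from \eqref{def:mu}. Following the general setup of Section~\ref{ref:subcritical_diff}, I read the operator in the lemma as $\mathbf{A}f=\partial_t f+\tfrac12\partial_{xx}f-\mu\partial_x f + r(x)f$; the $r(x)f$ contribution is exactly what makes the product $C(t)e^{\mu x}v_1(x)$ harmonic.

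Write $h(t,x)=C(t)\,e^{\mu x}v_1(x)$ with $C(t):=(\tilde c\,\|v_1\|^2)^{-1}e^{(\linf-\lambda_1)t}$ and compute
\begin{align*}
\partial_t h &= (\linf-\lambda_1)\,h, \\
\partial_x h &= \mu\,h + C(t)\,e^{\mu x}v_1'(x), \\
\tfrac{1}{2}\partial_{xx}h &= \tfrac{\mu^2}{2}\,h + \mu\,C(t)\,e^{\mu x}v_1'(x) + \tfrac{1}{2}C(t)\,e^{\mu x}v_1''(x).
\end{align*}
Inserting these into $\mathbf{A}h$, the cross-terms proportional to $v_1'$ cancel, and using \eqref{SLP} to replace $\tfrac12 v_1''$ by $(\lambda_1-\tfrac12 W)v_1$ collapses the expression to
$$
\mathbf{A}h \;=\; \Bigl(\linf - \tfrac{\mu^2}{2} - \tfrac{1}{2}W(x) + r(x)\Bigr)h \;=\; \Bigl(\linf - \tfrac{\mu^2-1}{2}\Bigr)h,
$$
which vanishes by \eqref{def:mu}. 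This settles harmonicity.

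For the Doob transform, expanding $\mathbf{A}(hf)$ by the product rule, the contributions in which no derivative falls on $f$ sum to $f\cdot\mathbf{A}h$ and disappear by the previous step, leaving
$$
\mathbf{A}(hf) \;=\; h\bigl(\partial_t f+\tfrac{1}{2}\partial_{xx}f-\mu\partial_x f\bigr) + (\partial_x h)(\partial_x f).
$$
Dividing by $h$ and using $\partial_x h/h = \mu + v_1'(x)/v_1(x)$ cancels the drift $-\mu\partial_x f$ and yields precisely \eqref{eq:generator_sc_spine}. The generator of the $1$-spine process is then extracted as the spatial part of this expression, the $\partial_t$ factor acting trivially in a time-homogeneous diffusion. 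There is no real obstacle here: the argument is pure bookkeeping, and the only care required is to keep the Sturm--Liouville relation, the formula for $r$, and the critical value of $\mu$ all in play simultaneously so that they can collide at the end.
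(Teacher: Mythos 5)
Your proof is correct, and the calculations check out: harmonicity reduces to the chain of identities $\mathbf{A}h = (\linf - \tfrac{\mu^2}{2} + r - \tfrac12 W)h = (\linf - \tfrac{\mu^2-1}{2})h = 0$, using \eqref{SLP}, \eqref{def:r}, and \eqref{def:mu} exactly as you lay out, and the product-rule expansion for the Doob transform is standard. You also correctly diagnose that the operator $\mathbf{A}$ as printed in the lemma is missing the zeroth-order term $+r(x)f(t,x)$: without it $\mathbf{A}h = -r(x)h \neq 0$, so this is indeed a typo in the statement, and the intended operator is the one matching \eqref{eq:def_A} and the general $\mathbf{A}$ of Section~\ref{ref:subcritical_diff}.

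The paper's proof is a one-line reference to the general subcritical machinery of Section~\ref{ref:subcritical_diff}: one observes that $H(x):=e^{\mu x}v_1(x)$ satisfies $\mathcal{A}H=-(\linf-\lambda_1)H$ (so $w=\linf-\lambda_1>0$), writes $h(t,x)\propto e^{wt}H(x)$, and reads off \eqref{eq:generator_sc_spine} from \eqref{eq:spine_sc} and \eqref{eq:spine_process}, where in one dimension the formula $\mathcal{G}f + a\tfrac{H'}{H}f'$ specialises to $\tfrac12 f'' - \mu f' + (\mu + \tfrac{v_1'}{v_1})f'$ and the $\mu$'s cancel. Your route bypasses those general lemmas and verifies the two identities directly from scratch. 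The content is the same -- the same three inputs collide in the same place -- but your version is self-contained and makes the cancellation of the drift term visible, whereas the paper's version economises by delegating to the already-proved general-dimension formulas. Either is acceptable; yours costs a few more lines but requires no back-reference.
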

\begin{proof}
	{This is a straightforward consequence of \eqref{eq:spine_sc} and \eqref{eq:spine_process}.}
\end{proof}

\begin{proof}[Proof of Proposition \ref{th:many-to-few2}]
	This is a direct consequence Corollary \ref{th:many-to-few} after rescaling the measure by $N$ and time by $N$.
\end{proof}

\section{Spectral theory}
\label{sect:preliminaries}
{
	In this section, we examine the density of particles in $\mathbf{X}^L$. In Section \ref{sec:spectral} and Section \ref{sec:heat:kernel}, we give precise estimates on the heat kernel $p_t$ associated to \eqref{PDE:A} and compute the \textit{relaxation time} of the system. All the lemmas in these sections hold under \eqref{hpushed} and do not require the additional assumption \eqref{hfp}. Sections \ref{sec:green} and \ref{sec:bulk} are aimed at quantifying the fluctuations in $\mathbf{X}^L$ and are specific to the fully pushed regime.}
\subsection{Preliminaries}
\label{sec:spectral}

Consider the Sturm--Liouville problem \eqref{SLP} together with boundary conditions \eqref{BC}.
Let us first recall some well-known facts about Sturm--Liouville theory following \cite[Section 4.6]{zettl10}: \begin{itemize}
	\item[(i)] A solution of \eqref{SLP} is defined as a function $v:[0,L]\to \mathbb{R}$ such that $v$ and $v'$ are absolutely continuous on $[0,L]$ and satisfies \eqref{SLP} a.e.~on $(0,L)$. In particular, any solution $v$ is continuously differentiable on $[0,L]$. Since $W$ is continuous on $[0,L]$, the solutions are also twice differentiable on $[0,L]$ and (\ref{SLP}) holds for all $x\in(0,L)$.
	\item[(ii)] A complex number $\lambda$ is an eigenvalue of the Sturm--Liouville problem \eqref{SLP} with boundary conditions \eqref{BC}  if Equation \eqref{SLP} has a solution $v$ which is not identically zero on $[0,L]$ and that satisfies \eqref{BC}. This set of eigenvalues will be referred to as the spectrum.
	\item[(iii)] It is known  that this set of eigenvalues is infinite, countable and it has no finite accumulation point. Besides, it is upper bounded and all the eigenvalues are simple and real so that they can be numbered
	      \begin{equation*}
		      \lambda_1>\lambda_2>...> \lambda_k>...
	      \end{equation*}
	      where
	      \begin{equation*}
		      \lambda_k\rightarrow -\infty \quad \textnormal{ as } \quad  k\rightarrow+\infty.
	      \end{equation*}
	      We will denote by $K$ the largest integer such that
	      \begin{equation*}
		      \lambda_k>0.
	      \end{equation*}
	\item[(iv)] As a consequence, the eigenvector $v_k$ associated to $\lambda_k$ is unique up to constant multiplies. Furthermore, the sequence of eigenfunctions can be normalised to be an orthonormal sequence of $\mathrm{L}^2([0,L])$. This orthonormal sequence is complete in $\mathrm{L}^2([0,L])$ so that the fundamental solution of PDE \eqref{PDE:B} can be written as
	      \begin{equation}
		      g_t(x,y)=\sum_{k=1}^\infty e^{\lambda_k t}\frac{v_k(x)v_k(y)}{\|v_k\|^2}.\label{def:qt1}
	      \end{equation}
	\item[(v)] The function $v_1$ does not change sign in $(0,L)$.
		      {More generally,  the eigenfunction $v_k$ has exactly $k-1$ zeros on $(0,L)$.}
	\item[(vi)] The eigenvalues and the eigenvectors of \eqref{SLP} with boundary conditions \eqref{BC} can be characterised through the \textit{Prüfer transformation} (see \cite[Section 4.5]{zettl10}).
	      For all $\lambda\in\mathbb{R}$, consider the Cauchy problems
	      \begin{equation}\label{cp:theta}
		      \dot \theta_\lambda (x)=\cos^2(\theta_\lambda(x))+(W(x)-2\lambda)\sin^2(\theta_\lambda(x)), \quad \theta_\lambda (0)=0,
	      \end{equation}
	      and
	      \begin{equation}
		      \label{cp:rho}
		      \dot \rho_\lambda (x)=\left(\frac{1-W(x)}{2}+\lambda\right)\sin(2\theta_\lambda(x))\rho_\lambda(x), \quad \rho_\lambda(0)=1.
	      \end{equation}
	      Note that \eqref{cp:theta} and \eqref{cp:rho} have a unique solution defined on $[0,+\infty)$ for each $\lambda\in\mathbb{R}$. The eigenvalue $\lambda_k$ is characterised as the unique solution of $\theta_\lambda(L)=k\pi$.

		      {Note that for all $\lambda^*<\lambda$, }
	      \begin{equation}\label{eq:crth}
		      0\leq \theta_{\lambda}(x)\leq\theta_{\lambda*}(x), \quad \forall x\in[0,L].
	      \end{equation}
	      If $u_k$ is the eigenvector associated to $\lambda_k$ such that $u_k'(0)=1$, then
	      \begin{equation}\label{def:uk}
		      u_{k}(x)=\rho_{\lambda_k}(x)\sin(\theta_{\lambda_k}(x)), \quad
		      u_{k}'(x)=\rho_{\lambda_k}(x)\cos(\theta_{\lambda_k}(x)),\quad \forall x\in[0,L].
	      \end{equation}
	\item[(vii)]
	      Denote by $\bar \lambda_1>\bar \lambda_2>...$ the eigenvalues of the Sturm--Liouville problem
	      \begin{equation*}
		      \frac{1}{2} v''(x)+\frac{1}{2} \|W\|_\infty\mathbf{1}_{[0,1]}(x)v(x)=\lambda v(x), \quad v(0)=v(L)=0.
	      \end{equation*}
	      and by $\underline \lambda_1>\underline \lambda_2>...$ the eigenvalues of the Laplacian with homogeneous Dirichlet boundary conditions (at $0$ and $L$). Then, for all $k\in\mathbb{N}$,
	      \begin{equation}\label{eq:lambda:bounds}
		      \underline \lambda_k \leqslant \lambda_k\leqslant \bar \lambda_k.
	      \end{equation}
	      See \cite[Theorem 4.9.1]{zettl10} for a proof of this comparison principle.
	      Recall that
	      \begin{equation}\label{eq:Laplacian}
		      \underline \lambda _k =-\frac{k^2\pi^2}{2L^2},
	      \end{equation}
	      and that the eigenvalues $(\bar \lambda _k)$ have been fully characterised in \cite[Section 2.1]{tourniaire21}.   In particular, we know that there exists $\bar K\geq K$  (that does not depend on $L$) such that, for $L$ large enough, $\bar \lambda_k>0$ for all $k\leq \bar K$ and $\bar \lambda_k<0$ for all $k> \bar K$ . See \eqref{eq:ex} for a characterisation of $(\bar{\lambda}_k)_{k>K}$ when $\|W\|_\infty=10$.
	\item[(viii)] For fixed $k\in\mathbb{N}$, the eigenfunction $\lambda_k$ is an increasing function of $L$ (see e.g.~\cite[Theorem 4.4.4]{zettl10}).
	      Since $\lambda_1$ converges, the $k$-th eigenvalue $\lambda_k$ also converges.
	      Furthermore, by (\ref{eq:lambda:bounds}), this implies that the number of positive eigenvalues $K$ is fixed for $L$ large enough.
	      For $k\leq K$, this limit, that we denote by $\lambda_k^\infty$, is positive and we have
	      \begin{equation*}
		      \linf\geq\lambda_2^\infty\geq \lambda_3^\infty\geq...\geq\lambda_K^\infty.
	      \end{equation*}
	      We will prove below that these inequalities are strict inequalities.

\end{itemize}
Throughout the article, the eigenvector  $v_1$ will be chosen  such that $v_1(1)=1$ and $u_1$ in such a way that $u_1'(0)=1$. Note that for $x\in[1,L]$, we have
\begin{equation}\label{eq:v1L}
	v_1(x)=\sinh\left(\sqrt{2\lambda_1}(L-x)\right)/\sinh\left(\sqrt{2\lambda_1}(L-1)\right).
\end{equation}
On $[0,1]$, the eigenvector $u_1$ is the unique solution of the Cauchy problem
\begin{equation}
	u''(x)= \left(2\lambda_1-W(x)\right)u(x),\quad u(0)=0, \quad u'(0)=1. \label{ODEv}
\end{equation}

{\begin{lemma}[\cite{zettl10}, Theorem 4.5.1] Let $a,b,c\in\mathcal{C}([0,1])$. Let $\theta$ and $\rho$ be the solutions of the Cauchy problems
	\begin{equation*}
		\begin{cases} \dot \theta (x)=a(x)\cos(\theta (x))^2+b(x)\sin(\theta(x))^2, \quad \theta(0)=0, \\
			\dot \rho (x) =c(x)\sin(2\theta(x))\rho(x), \quad \rho(0)=1.
		\end{cases}\end{equation*}
	Let $\Phi$ be the application from $(C([0,1])^3 ,||\cdot||_\infty)$ to $(C(0,1)^2,||\cdot||_\infty)$ which maps $(a,b,c)$
	to the solutions $(\rho,\theta)$ of the Cauchy problems. Then $\Phi$ is continuous.
	\label{lem:continuity}
\end{lemma}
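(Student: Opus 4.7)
The statement is a standard continuous-dependence-on-parameters result for an ODE system, so the plan is to reduce it to two successive applications of Gr\"onwall's inequality.

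First I would address the $\theta$-equation. Writing $F(x,\theta;a,b) := a(x)\cos^2\theta + b(x)\sin^2\theta$, the key observation is that $\sin^2$ and $\cos^2$ are $1$-Lipschitz (their derivatives $\pm\sin(2\cdot)$ are bounded by $1$), so for any $\theta,\theta'\in\R$,
$$
|F(x,\theta;a,b)-F(x,\theta';a,b)|\leq \bigl(\|a\|_\infty+\|b\|_\infty\bigr)|\theta-\theta'|,
$$
while for any two coefficient pairs
$$
|F(x,\theta;a_1,b_1)-F(x,\theta;a_2,b_2)|\leq \|a_1-a_2\|_\infty+\|b_1-b_2\|_\infty.
$$
Global existence of $\theta$ on $[0,1]$ is free because the RHS of the ODE is uniformly bounded by $\|a\|_\infty+\|b\|_\infty$. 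Let $\theta_i$ be the solution associated to $(a_i,b_i)$, with $i=1,2$. Integrating the ODE for $\theta_1-\theta_2$ and using the two estimates above yields, for $x\in[0,1]$,
$$
|\theta_1(x)-\theta_2(x)|\leq \|a_1-a_2\|_\infty+\|b_1-b_2\|_\infty + C\int_0^x |\theta_1(y)-\theta_2(y)|\,dy,
$$
with $C:=\|a_1\|_\infty+\|b_1\|_\infty$. Gr\"onwall's lemma then gives
$$
\sup_{x\in[0,1]}|\theta_1(x)-\theta_2(x)|\leq e^{C}\bigl(\|a_1-a_2\|_\infty+\|b_1-b_2\|_\infty\bigr),
$$
which is the desired continuity of the map $(a,b)\mapsto \theta$.

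Second, I would handle $\rho$. Once $\theta$ is fixed, the $\rho$-equation is linear, so it can be solved explicitly:
$$
\rho(x)=\exp\!\left(\int_0^x c(y)\sin(2\theta(y))\,dy\right),\qquad x\in[0,1].
$$
Since $\sin(2\cdot)$ is $2$-Lipschitz and bounded by $1$, one immediately obtains, for two triples $(a_i,b_i,c_i)$ producing $(\rho_i,\theta_i)$,
$$
\left|\int_0^x c_1\sin(2\theta_1)-\int_0^x c_2\sin(2\theta_2)\right|\leq \|c_1-c_2\|_\infty+2\|c_2\|_\infty\|\theta_1-\theta_2\|_\infty,
$$
and because $\exp$ is locally Lipschitz, this controls $\|\rho_1-\rho_2\|_\infty$ in terms of $\|c_1-c_2\|_\infty$ and $\|\theta_1-\theta_2\|_\infty$. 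Combining with the estimate from the first step yields continuity of $(a,b,c)\mapsto(\rho,\theta)$.

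There is no real obstacle here: the statement is a routine Picard--Lindel\"of-style fact and the only mild care needed is to track the dependence of the Lipschitz constants on $\|a\|_\infty+\|b\|_\infty+\|c\|_\infty$, which is harmless because any convergent sequence in $C([0,1])^3$ is uniformly bounded, so the argument can be carried out on a bounded neighbourhood of any given $(a,b,c)$.
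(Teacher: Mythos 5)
The paper does not actually prove this lemma: it is cited directly from Zettl's book (Theorem~4.5.1 there) and invoked as a black box, so there is no in-paper proof to compare against. Your argument is a correct and complete derivation of the statement. The two successive Gr\"onwall steps are the standard continuous-dependence argument: the Lipschitz constants for $\cos^2$, $\sin^2$, and $\sin(2\,\cdot)$ are correctly identified, the splitting of the difference $F(\cdot,\theta_1;a_1,b_1)-F(\cdot,\theta_2;a_2,b_2)$ into a ``same-coefficients, different-$\theta$'' term and a ``same-$\theta$, different-coefficients'' term is exactly what the integral inequality requires, and the explicit exponential representation of $\rho$ reduces the second component to the continuity of $\theta$ plus the local Lipschitzness of $\exp$. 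You also handle the only point that could otherwise be glossed over, namely that the Gr\"onwall constants depend on $\|a\|_\infty+\|b\|_\infty$, by restricting to a bounded neighbourhood in $C([0,1])^3$, which is legitimate since convergent sequences there are bounded. This is the natural proof of the cited result and there is nothing to object to.
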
}

The next result is a generalisation of Proposition \ref{prop:first}.
\begin{lemma}\label{lem:spectral}
	Assume  that \eqref{hpushed} holds. Then
	\begin{equation}
		\linf-\lambda_1= \frac{\beta e^{2\beta}}{\|v_1^\infty\|^2}e^{-2\beta L}+o\left(e^{-2\beta L}\right) = O\left(\frac{1}{N^{\alpha-1}}\right) \label{cv:lambda}.
	\end{equation}
	As a consequence, for all $0<t\lesssim N$,
	\begin{equation}
		h(t,x)=\left(1+O\left(e^{(\mu-3\beta)L}\right)\right)h(0,x), \quad x\in[0,L]. \label{approx:h}
	\end{equation}
	In addition,
	\begin{equation} \label{exp:d:v1}
		v_1(x)\asymp (1\wedge x\wedge (L-x))e^{-\beta x}, \quad x\in[0,L],
	\end{equation}
	and we have
	$v_1\mathbf{1}_{[0,L]}\to v_1^\infty$ as $L\to\infty$ pointwise and in $\mathrm{L}^2(\mathbb{R}^+)$. The function $v_1$ also converges uniformly on compact sets and in ${\cal C}^1([0,1])$. Moreover, $$v_1^\infty(x)=e^{-\beta (x-1)}, \quad x\geq 1.$$

\end{lemma}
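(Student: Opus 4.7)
The plan is to exploit a Green/Lagrange identity for the two Sturm--Liouville problems. Multiplying \eqref{SLP} for $v_1$ by $v_1^\infty$, the analogous $\linf$-equation for $v_1^\infty$ by $v_1$, subtracting and integrating over $[0,L]$ (using $v_1(0)=v_1(L)=v_1^\infty(0)=0$) yields
\begin{equation*}
(\lambda_1-\linf)\int_0^L v_1(x)\, v_1^\infty(x)\,dx \;=\; \tfrac{1}{2}\bigl[v_1' v_1^\infty - v_1 (v_1^\infty)'\bigr]_0^L \;=\; \tfrac{1}{2}\,v_1'(L)\,v_1^\infty(L).
\end{equation*}
The explicit formula \eqref{eq:v1L}, valid since $W \equiv 0$ on $[1,L]$, gives $v_1'(L) = -\sqrt{2\lambda_1}/\sinh(\sqrt{2\lambda_1}(L-1))$, while $v_1^\infty(L) = e^{-\beta(L-1)}$. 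Combined with $\lambda_1 \to \linf$ (property (viii) in Section~\ref{sec:spectral}), this yields $-\tfrac{1}{2}v_1'(L)v_1^\infty(L) \sim \beta e^{2\beta} e^{-2\beta L}$. Once I know $\int_0^L v_1 v_1^\infty \to \|v_1^\infty\|^2$, \eqref{cv:lambda} follows; the second equality uses $L = \log N / (\mu - \beta)$ together with $2\beta/(\mu - \beta) = \alpha - 1$.

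Next I would establish the two-sided bound \eqref{exp:d:v1}. On $[1,L]$, combining \eqref{eq:v1L} with $\sinh y \asymp y \wedge (e^y/2)$ and a uniform bound $\lambda_1 \in [\linf/2, \linf]$ (valid for $L$ large, by monotonicity) gives $v_1(x) \asymp (1 \wedge (L-x))\, e^{-\beta x}$ on $[1,L]$ with $L$-independent constants. On $[0,1]$, $v_1$ coincides up to a positive scalar with the solution $u_1$ of the Cauchy problem \eqref{ODEv}. Applying Lemma~\ref{lem:continuity} to the Prüfer pair \eqref{def:uk}, both $u_1$ and $u_1'$ converge uniformly on $[0,1]$ as $\lambda_1 \to \linf$; since $u_1^\infty$ vanishes only at $0$, has $u_1^{\infty\,\prime}(0) = 1 > 0$ and is positive on $(0,1]$, this gives $v_1(x) \asymp x$ on $[0,1]$, uniformly in $L$. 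Combining yields \eqref{exp:d:v1}, and incidentally the $\mathcal{C}^1([0,1])$ convergence.

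With \eqref{exp:d:v1} in hand, $v_1 \mathbf{1}_{[0,L]}$ is pointwise dominated by a multiple of $e^{-\beta x/2}$, an $L^2(\mathbb{R}_+)$ function. Pointwise convergence $v_1(x)\to v_1^\infty(x)$ is immediate on $[0,1]$ from the $\mathcal{C}^1$ convergence of the previous step and on $[1,\infty)$ from \eqref{eq:v1L}, which also identifies $v_1^\infty(x) = e^{-\beta(x-1)}$ for $x \geq 1$. Dominated convergence then gives $\mathrm{L}^2(\mathbb{R}_+)$ convergence and, in particular, $\int_0^L v_1 v_1^\infty \to \|v_1^\infty\|^2$, closing the loop for \eqref{cv:lambda}. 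Uniform convergence on compact sets follows from \eqref{eq:v1L} together with the joint continuity of $(\lambda, L, x) \mapsto \sinh(\sqrt{2\lambda}(L-x))$ on any compact $x$-interval.

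For \eqref{approx:h}, the definition \eqref{def:hL} gives $h(t,x)/h(0,x) = e^{(\linf - \lambda_1)t}$. For $t \lesssim N$, \eqref{cv:lambda} yields $(\linf - \lambda_1)t \lesssim N \cdot N^{-(\alpha-1)} = N^{2-\alpha} = e^{(\mu - 3\beta)L}$ (using $(\mu - 3\beta)/(\mu - \beta) = 2 - \alpha$), and a first-order expansion of the exponential produces \eqref{approx:h}. The only mildly delicate point in the whole argument is the small bootstrap between the bound \eqref{exp:d:v1} and the convergence of $\int_0^L v_1 v_1^\infty$: I would resolve it by deriving \eqref{exp:d:v1} using only the qualitative fact $\lambda_1 \to \linf$, and only then invoking dominated convergence to pin down the precise leading constant $\|v_1^\infty\|^2$ in \eqref{cv:lambda}.
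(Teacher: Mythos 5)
Your argument is correct and, for the central estimate \eqref{cv:lambda}, takes a genuinely different route from the paper. The paper differentiates the eigenvalue with respect to the endpoint, invoking Zettl's formula $\lambda_1'(L)=\tfrac{1}{2}|u_1'(L)|^2/\|u_1\|^2$, bounds $u_1'(L)$ from the explicit expression on $[1,L]$, and integrates from $L$ to $\infty$. You instead apply a Lagrange/Green identity directly between the two eigenfunctions $v_1$ (eigenvalue $\lambda_1$ on $[0,L]$) and $v_1^\infty$ (eigenvalue $\linf$ on $[0,\infty)$), reducing the gap $\linf-\lambda_1$ to a boundary term $-\tfrac12 v_1'(L)v_1^\infty(L)$. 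This is more self-contained — no citation of the endpoint-differentiability theorem — and expresses the rate directly as a boundary flux rather than an integrated derivative; the cost is that you must have $v_1^\infty$ in hand and know it solves the $\linf$-equation, which the paper establishes anyway via the Cauchy problem \eqref{Cauchypb}. The remaining parts of your argument (Prüfer continuity for $\mathcal{C}^1([0,1])$-convergence, the two-sided bound on $[1,L]$ via \eqref{eq:v1L}, dominated convergence for $\mathrm{L}^2$, the translation of \eqref{cv:lambda} into \eqref{approx:h}) follow the paper closely.

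One small sharpening of your own caveat: the bootstrap is slightly bigger than "pin down the constant $\|v_1^\infty\|^2$". To replace $\sinh(\sqrt{2\lambda_1}(L-1))$ by $\tfrac12 e^{\beta(L-1)}$ you need $(\sqrt{2\lambda_1}-\beta)L\to 0$, i.e.\ $\linf-\lambda_1=o(1/L)$, which does not follow from $\lambda_1\to\linf$ alone. But your own identity delivers this in a first rough pass: with $\int_0^L v_1 v_1^\infty\gtrsim 1$ (from \eqref{exp:d:v1} restricted to, say, $[1,2]$) and $\sinh(\sqrt{2\lambda_1}(L-1))\gtrsim e^{cL}$ for any $c<\beta$ and $L$ large, you get $\linf-\lambda_1=O(e^{-(\beta+c)L})\ll 1/L$, after which the refined asymptotic follows exactly as you describe. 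You should also supply a short argument that $u_1^\infty$ has no zeros in $(0,1]$ — it is a nonnegative limit, so could in principle touch zero; the paper rules this out via ODE uniqueness (a zero with zero derivative forces $u_1^\infty\equiv 0$). With those two small additions, the sketch is complete.
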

\begin{proof}
	{We first prove the convergence of $u_1$. Recall from \eqref{def:uk} that
		\begin{equation*}
			\forall x\in(0,L), \quad u_1(x) = \rho_{\lambda_1}(x)  \sin(\theta_{\lambda_1}(x)), \quad u'_1(x)=  \rho_{\lambda_1}(x) \cos(\theta_{\lambda_1}(x)).
		\end{equation*}
		By Lemma \ref{lem:continuity}, $u_1$ converges to $u_1^\infty=\rho_{\linf}\sin(\theta_{\linf})$ in $\mathcal{C}^1([0,1])$ as $L\to\infty$.
		The pointwise convergence on $[0,\infty]$ follows from the fact (see (iv) and \eqref{eq:v1L}) that
		\begin{equation}\label{eq:u1L}
			u_1(x) = u_1(1)\frac{\sinh( \sqrt{2\lambda_1}(L-x) )}{\sinh( \sqrt{2\lambda_1}(L-1) )}, \quad \forall x\in[1,L].
		\end{equation}
		The convergence in $\mathrm{L}^2$ follows from the uniform convergence on $[0,1]$ and the explicit form of $u_1$ on $[1,L]$.} The same arguments show that $u_1$ converges uniformly on compact sets.

	It follows from the integrated version of \eqref{ODEv} that $u_1''$ converges to $(u_1^\infty)''$ in $\mathcal{C}([0,1])$ and that $u_1^\infty$ is the unique solution of the Cauchy problem
	\begin{equation}\label{Cauchypb}
		\frac{1}{2}u''(x)+\frac{1}{2}u(x)=\linf u(x), \quad x\in(0,1), \quad u_1(0)=0,\;u_1'(0)=1.
	\end{equation}
	As a consequence, $u_1^\infty(x)\neq 0$ for every $x\in(0,1)$. Indeed, $u_1^\infty\geq 0$ so that if $u_1^\infty(x_0)=0$ for some $x_0\in(0,1)$ then $u_1^\infty$ reaches a minimum at $x_0$ and   $(u_1^\infty)'(x_0)=0$. Since $0$ is a solution of the Cauchy problem with initial condition $(u_1(x_0),u_1'(x_0))=(0,0)$, this implies that $u_1^\infty\equiv 0$ on $[0,L]$, which contradicts the fact that $(u_1^\infty)'(0)=1$. It easily follows that
	$ u_1^\infty(x) \asymp x$ on $[0,1]$. This implies that $ u_1(x) \asymp x$ for $L$ large enough. Indeed, if we consider $x_0^L:=\inf\{x\in[0,1]:u_1'(x)<\frac{1}{2}\}$ (resp.~$x_0$ for $u_1^\infty$) and $w^L:=\min\{u_1(x):x\in [x_0^L,1]\}$ (resp.~$w$ for $u_1^\infty$), we see  (using the uniform convergence of $u_1$ and $u_1'$ on $[0,1]$) that $x_0^L\to x_0$ and $w^L\to w$ as $L\to\infty$.  One can then deduce that for $L$ large enough, $u_1(x)\geq \tfrac{w\wedge 1}{2} x$ for all $x\in[0,1]$. Similarly, one can show that $u_1(x)\lesssim x$.

	The convergence of $v_1$ then follows from the relation $v_1(x) = u_1(x)/u_1(1)$ (see (iv)) and the convergence of $u_1$. Finally,
	Equation (\ref{exp:d:v1})  stems from our bounds on $u_1$ in [0,1] and from (\ref{eq:u1L}).

	Let us now prove the first part of the lemma. According to \cite[Theorem 4.4.4]{zettl10}, $L\mapsto \lambda_1(L)$ is differentiable on $(1,+\infty)$ and we have
	\begin{equation}\label{eq:diff:lambda}
		\lambda_1'(L)=\frac{1}{2}\frac{|u_1'(L)|^2}{\|u_1\|^2}.
	\end{equation}
	Yet,
	\begin{equation*}
		u_1'(L)=u_1(1)v'_1(L)=-u_1(1)\sqrt{2\lambda_1}\left(\sinh{\sqrt{2\lambda_1}(L-1)}\right)^{-1}=O(e^{-\sqrt{2\lambda_1}L}).
	\end{equation*}
	Thus,
	\begin{equation*}
		\lambda_1'(L)=O(e^{-2\sqrt{2\lambda_1}L}).
	\end{equation*}
	Define $\tilde L$ such that $\lambda_1>\frac{\linf}{2}$ for all $L>\tilde L$. Hence, for $L>\tilde L$
	\begin{equation*}
		\linf-\lambda_1\leqslant C\int_L^\infty e^{-2\sqrt{\linf}L'}dL'\leqslant Ce^{-2\sqrt{\linf}L}.
	\end{equation*}
	This implies that $\linf-\lambda_1\ll\frac{1}{L}$ so that
	\begin{equation*}
		\sinh(\sqrt{2\lambda_1}(L-1))\sim \frac{1}{2}e^{\beta(L-1)}, \quad L\to\infty.
	\end{equation*}
	We then see from \eqref{eq:diff:lambda} that
	\begin{equation*}
		\lambda_1'(L)=\frac{\lambda_1}{\|u_1\|^2}u_1(1)\sinh(\sqrt{2\lambda_1}(L-1))^{-2}\sim \frac{2\beta ^2}{\|v_1^\infty\|^2}e^{-2\beta (L-1)}.
	\end{equation*}
	Integrating this on $L$, we get that
	\begin{equation*}
		\linf-\lambda_1\sim \frac{\beta e^{2\beta}}{\|v_1^\infty\|^2}e^{-2\beta L}, \quad L\to\infty.
	\end{equation*}
	Finally, Equation \eqref{approx:h} follows from \eqref{def:hL}.

\end{proof}

\begin{cor}\label{cor:cv:st:multivar} Assume that \eqref{hpushed} holds.
	Let $k\in\mathbb{N}$. Let $g:[0,\infty)^k\to\mathbb{R}$ and $f:[0,\infty)^{2k}\to\mathbb{R}$ be continuous bounded test functions. Let $t>0$. We have
	\begin{equation*}
		\int g(x_1,...,x_k) \prod_{i=1}^k\Pi(dx_i)\to \int g(x_1,...,x_k)\prod_{i=1}^k\Pi^\infty(dx_i), \quad \text{as} \quad L\to\infty,
	\end{equation*}
	and uniformly in $(t_1,...,t_k)\in[0,T]^k$, as $N$ goes to $\infty$, we have
	\begin{equation*}
		\int f(x_1,...,x_k,h(t_1N,x_1),...,h(t_kN,x_k))
		\prod_{i=1}^k\Pi(dx_i) \to \int f(x_1,...,x_k,h^\infty(x_1),...,h^\infty(x_k))
		\prod_{i=1}^k\Pi^\infty(dx_i).
	\end{equation*}
\end{cor}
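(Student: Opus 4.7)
The plan is to handle both statements in a unified way by reducing them to a total variation convergence of $\Pi^{\otimes k}$ to $(\Pi^\infty)^{\otimes k}$, combined with a uniform-on-compacts approximation of $h(t_iN,\cdot)$ by $h^\infty(\cdot)$. For the first statement, Lemma \ref{lem:spectral} gives $v_1\mathbf{1}_{[0,L]}\to v_1^\infty$ in $\mathrm{L}^2(\R_+)$, hence $v_1^2\mathbf{1}_{[0,L]}\to (v_1^\infty)^2$ in $\mathrm{L}^1$ and $\|v_1\|\to\|v_1^\infty\|$. So the Lebesgue density of $\Pi$ converges in $\mathrm{L}^1$ to that of $\Pi^\infty$, which by Scheff\'e's lemma is equivalent to convergence in total variation. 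Tensorising, $\Pi^{\otimes k}\to (\Pi^\infty)^{\otimes k}$ in total variation on $[0,\infty)^k$, and since $g$ is bounded measurable the first claim follows at once.

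For the second statement, I would first obtain a uniform-on-compacts approximation of the rescaled $h$'s. By \eqref{approx:h}, $h(tN,x)=(1+o(1))\,h(0,x)$ uniformly in $t\in[0,T]$ and $x\in[0,L]$ as $N\to\infty$. Combined with Lemma \ref{lem:spectral}, which provides the uniform convergence of $v_1$ to $v_1^\infty$ on compact subsets of $[0,\infty)$ and $\|v_1\|\to\|v_1^\infty\|$, this yields $h(t_iN,\cdot)\to h^\infty(\cdot)$ uniformly for $(t_i,x_i)\in[0,T]\times[0,A]$, for any fixed $A>0$. Since $f$ is continuous and bounded, it is uniformly continuous on the compact set $[0,A]^k\times[0,M]^k$ with $M:=\sup_{x\in[0,A]}h^\infty(x)$, so that the composed integrand converges uniformly on $[0,A]^k$, uniformly in the $t_i$.

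To close the argument, the tail bound \eqref{exp:d:v1} gives that the Lebesgue densities of both $\Pi$ and $\Pi^\infty$ are dominated by $C e^{-2\beta x}$ for $x\geq 1$, uniformly in $L$. Combined with the boundedness of $f$, the contribution to either integral from $\{\exists i:\ x_i>A\}$ is bounded by $C k\|f\|_\infty e^{-2\beta A}$, uniformly in $N$ and $(t_1,\ldots,t_k)\in[0,T]^k$. Choosing $A=A(\varepsilon)$ large, then applying the uniform convergence on the compact $[0,A]^k$ together with the first statement (restricted to $[0,A]^k$) to control the bulk and the tail bound to control the remainder gives the claim.

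The main obstacle is the exponential blow-up $h^\infty(x)\sim e^{(\mu-\beta)x}$ at infinity, which prevents a one-line invocation of weak convergence: the integrand $f(x_1,\ldots,x_k,h^\infty(x_1),\ldots,h^\infty(x_k))$ is continuous and bounded only because $f$ itself is bounded, but is not uniformly continuous on $[0,\infty)^{2k}$. The tail-decay estimate \eqref{exp:d:v1} is what bypasses this issue; a secondary technical point is that \eqref{approx:h} genuinely requires $e^{(\mu-3\beta)L}\to 0$, so that \eqref{hfp} is implicitly needed at this step.
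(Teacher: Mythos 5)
Your proof is correct and goes in essentially the same direction as the paper's, but is more explicit. For the first claim, the paper restricts to $k=1$ (stating $k>1$ is similar), and proves $\int g\,v_1^2\to\int g\,(v_1^\infty)^2$ by dominated convergence, with domination supplied by the explicit comparison $v_1(x)\leq v_1^\infty(x)$ on $[1,L]$ derived from \eqref{eq:v1L}, together with uniform convergence on $[0,1]$. Your Scheff\'e argument deduces $L^1$-convergence of the one-dimensional densities directly from the $L^2$-convergence of $v_1$ given in Lemma~\ref{lem:spectral}; this avoids the pointwise inequality and, by tensorising the resulting total-variation bound, handles all $k$ in one stroke rather than by analogy. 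For the second claim the paper's proof is a single sentence pointing to \eqref{approx:h}, dominated convergence, and a truncation argument for large values of $h$; your cutoff to $[0,A]^k$ via the uniform tail bound $\Pi(dx)\lesssim e^{-2\beta x}\,dx$ from \eqref{exp:d:v1}, combined with the uniform-on-compacts convergence $h(t_iN,\cdot)\to h^\infty(\cdot)$ coming from \eqref{approx:h} and Lemma~\ref{lem:spectral}, supplies exactly that truncation and makes the uniformity in $(t_1,\ldots,t_k)\in[0,T]^k$ explicit.

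You are also right to flag that for the second claim \eqref{approx:h} is only useful when $e^{(\mu-3\beta)L}\to 0$, which under the cutoff $L=\frac{1}{\mu-\beta}\log N$ of \eqref{def:L} requires $\mu<3\beta$, i.e.\ \eqref{hfp}. The corollary is stated under \eqref{hpushed} alone, consistent with the preamble of Section~\ref{sect:preliminaries} which asserts these results do not need \eqref{hfp}; that is accurate for the first (pure $L\to\infty$) claim, but the second, accelerated claim does implicitly use \eqref{hfp}. This is a small inaccuracy in the paper's stated hypothesis, not in your argument, and is harmless since the corollary is only applied in the fully pushed regime.
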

\begin{proof}
	We restrict ourself to the case $k=1$.
	The case $k>1$ can be proved along the same lines.
	We start with the first limit. We know from Lemma \ref{lem:spectral} that $||v_1||\to ||v_1^\infty||$.
	Hence, it is sufficient to prove that, as $L\to\infty$,
	\begin{equation*}
		\int g(x_1,...,x_k)\prod_{i=1}^k v_1^2(x_i) dx_i \to \int g(x_1,...,x_k)\prod_{i=1}^k (v_1^\infty)^2(x_i) dx_i.
	\end{equation*}
	In addition, we remark from \eqref{eq:v1L} (and a direct calculation) that
	$$
		v_1(x) \leq v_1^\infty(x) \quad \forall x\in [1,L].
	$$
	The result then follows from the dominated convergence theorem and the fact that $v_1$ converges to $v_1^\infty$ uniformly on $[0,1]$. {The second limit follows from similar arguments, using \eqref{approx:h} and the dominated convergence theorem combined with a truncation argument for large values of $h$.}
\end{proof}

\begin{lemma}\label{lem:vp}  Assume that \eqref{hpushed} holds.
	For all $k\in\mathbb{N}$, we have
	\begin{equation*}
		|u_k(x)|\leqslant e^{3\bar c}e^{|\lambda_k|}(1+2|\lambda_k|)x, \quad x\in[0,1],
	\end{equation*}
	with $\bar c=1+\|W\|_\infty$.
\end{lemma}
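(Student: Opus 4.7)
The plan is to exploit the Prüfer representation \eqref{def:uk} recalled in item (vi). Writing $u_k(x) = \rho_{\lambda_k}(x)\sin(\theta_{\lambda_k}(x))$, I will bound the two factors separately on $[0,1]$: the factor of $x$ in the target estimate is to be extracted from $|\sin\theta_{\lambda_k}(x)|$ (which vanishes linearly at the origin since $\theta_{\lambda_k}(0)=0$ and $\dot\theta_{\lambda_k}(0)=1$), while the factor $e^{|\lambda_k|}$ is to come from the amplitude $\rho_{\lambda_k}$.

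For the phase, I would use \eqref{cp:theta} together with the identity $\cos^2 + \sin^2 = 1$ to obtain
$$|\dot\theta_{\lambda_k}(x)| \leq 1 + |W(x) - 2\lambda_k| \leq \bar c + 2|\lambda_k|.$$
Since $\theta_{\lambda_k}(0) = 0$ and $\dot\theta_{\lambda_k} = 1$ whenever $\theta_{\lambda_k}$ is a multiple of $\pi$ (so the phase cannot decrease through such a value), one has $0 \leq \theta_{\lambda_k}(x) \leq (\bar c + 2|\lambda_k|)x$, and the elementary bound $|\sin y| \leq |y|$ yields
$$|\sin\theta_{\lambda_k}(x)| \leq (\bar c + 2|\lambda_k|)\,x, \qquad x\in[0,1].$$

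For the amplitude, \eqref{cp:rho} together with $|\sin(2\theta)| \leq 1$ and $W\geq 0$ gives the bound $|\dot\rho_{\lambda_k}/\rho_{\lambda_k}| \leq \tfrac{1+\|W\|_\infty}{2} + |\lambda_k| = \bar c/2 + |\lambda_k|$, so integrating from $0$ (where $\rho_{\lambda_k}(0)=1$) and restricting to $x\in[0,1]$ produces
$$\rho_{\lambda_k}(x) \leq e^{\bar c/2 + |\lambda_k|}.$$

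Multiplying the two inequalities gives $|u_k(x)| \leq (\bar c + 2|\lambda_k|)\,e^{\bar c/2}\,e^{|\lambda_k|}\,x$. Using $\bar c \geq 1$, one has $\bar c + 2|\lambda_k| \leq \bar c\,(1+2|\lambda_k|)$ together with $\bar c \, e^{\bar c/2} \leq e^{\bar c}\cdot e^{\bar c/2} = e^{3\bar c/2} \leq e^{3\bar c}$, which yields the claimed inequality. I do not expect any genuine obstacle in this argument; the only point of care is bookkeeping of constants so as to arrive precisely at the form $e^{3\bar c}e^{|\lambda_k|}(1+2|\lambda_k|)x$.
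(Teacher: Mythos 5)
Your proof is correct and follows the paper's approach: both use the Prüfer representation $u_k = \rho_{\lambda_k}\sin\theta_{\lambda_k}$ from \eqref{def:uk} and bound the amplitude $\rho_{\lambda_k}$ and the phase $\theta_{\lambda_k}$ separately on $[0,1]$, then multiply and use $|\sin y|\leq |y|$. The only small deviation is in the phase estimate — you integrate the pointwise bound $|\dot\theta_{\lambda_k}| \leq \bar c + 2|\lambda_k|$ directly, while the paper compares $\theta_{\lambda_k}$ to $\theta_0$ via monotonicity in $\lambda$ and Grönwall's lemma to get $\theta_{\lambda_k}(x)\leq(\bar c+2|\lambda_k|e^{2\bar c})x$ — which is a valid, slightly cleaner shortcut that produces a marginally sharper constant before the final bookkeeping.
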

\begin{proof}
	First, note that for all $\lambda\in\mathbb{R}$, the solution  $\rho_\lambda$  of the Cauchy problem \eqref{cp:rho} can be expressed as
	\begin{equation*}
		\rho_\lambda(x)=\rho_{\lambda}(0)\exp\left(\int_{y=0}^x\left(\frac{1-W(y)}{2}+\lambda\right)\sin(2\theta_\lambda(y))dy\right),\quad  \quad x\in[0,1].
	\end{equation*}
	Therefore,
	\begin{equation}\label{ub:rho}
		0\leqslant \rho_\lambda(x)\leqslant \exp(\bar c+|\lambda|), \quad x\in[0,1].
	\end{equation}
	On the other hand, note that for all $\lambda\in\mathbb{R}$, the unique solution $\theta_\lambda$ of \eqref{cp:theta} is such that
	\begin{equation*}
		\dot \theta_\lambda(x) -\dot \theta_0(x)\leqslant |\dot \theta_\lambda(x) -\dot \theta_0(x)|\leqslant 2\bar c|\theta_\lambda(x)-\theta_0(x)|+2|\lambda|, \quad x\in[0,1],
	\end{equation*}
	where we use that $\cos^2$ and $\sin^2$ are $2$-Lipschitz.
	Moreover, recall from \eqref{eq:crth} that for all $\lambda\leq 0$, $\theta_\lambda\geq \theta_0$  on $[0,L]$. Hence, for all $\lambda<0$, we have
	\begin{equation*}
		\theta_\lambda(x)-\theta_0(x)\leqslant -2\lambda x+\int_{y=0}^x   2\bar c(\theta_\lambda(y)-\theta_0(y))dy, \quad x\in[0,1],
	\end{equation*}
	and Grönwall's lemma yields
	\begin{equation}\label{eq:dist:theta}
		\theta_\lambda(x)\leqslant \theta_0(x)-2\lambda x \exp(2\bar c x), \quad x\in[0,1].
	\end{equation}
	In addition, we see from \eqref{cp:theta} that  $0\leq\theta_0(x)\leq \bar cx$ for all $x\in[0,L]$, so that
	\begin{equation}\label{ub:theta}
		\theta_\lambda(x)\leqslant (\bar c+2|\lambda|e^{2\bar c})x, \quad x\in[0,1],
	\end{equation}
	for all $\lambda<0$. For $\lambda>0$, we use  that $0\leq\theta_\lambda(x)\leq \theta_0(x)$ for all $x\in[0,L]$ (see \eqref{eq:crth}) so that the last inequality still holds for positive $\lambda$. We finally get the result by combining \eqref{def:uk}, \eqref{ub:rho}, \eqref{ub:theta} and the facts that $|\sin(x)|\leqslant x$ for all $x\geq 0$ and that $\bar c\leqslant e^{2\bar c}$.
\end{proof}

\begin{lemma}\label{lem:K}
	There exists a constant $c_{\ref{lem:K}}>0$ such that for sufficiently large $L$, we have $\sqrt{-\lambda_{K+1}}L>c_{\ref{lem:K}}$.
\end{lemma}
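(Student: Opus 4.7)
The plan is to invert the Prüfer ODE on $[1,L]$ in a quantitative way. First, by the comparison \eqref{eq:lambda:bounds} with the Laplacian eigenvalues together with \eqref{eq:Laplacian}, one has $\lambda_{K+1} \geq -(K+1)^2\pi^2/(2L^2)$, which combined with point (viii) (so that for $L$ large $K$ is constant and $\lambda_{K+1} < 0$) forces $\lambda_{K+1} \to 0^-$ as $L \to \infty$. Set $\beta_L := \sqrt{-2\lambda_{K+1}} \to 0^+$; the goal reduces to showing $\beta_L L$ stays bounded below by a positive constant.

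Since $W\equiv 0$ on $[1,L]$, the Prüfer ODE \eqref{cp:theta} on this interval simplifies to $\dot\theta_\lambda = \cos^2\theta_\lambda + 2|\lambda|\sin^2\theta_\lambda$, and the eigenvalue characterisation $\theta_{\lambda_{K+1}}(L) = (K+1)\pi$ rewrites, after separation of variables, as
\[
L - 1 \;=\; \int_{\tau_L}^{(K+1)\pi} \frac{d\tilde\theta}{\cos^2\tilde\theta + \beta_L^2\sin^2\tilde\theta}, \qquad \tau_L := \theta_{\lambda_{K+1}}(1).
\]
Using $\cos^2\tilde\theta = \sin^2(\tilde\theta - K\pi - \pi/2) \leq (\tilde\theta - K\pi - \pi/2)^2$ and $\sin^2\tilde\theta \leq 1$ yields the key lower bound
\[
L - 1 \;\geq\; \int_{\tau_L - K\pi - \pi/2}^{\pi/2} \frac{d\psi}{\psi^2 + \beta_L^2} \;=\; \frac{1}{\beta_L}\Bigl[\arctan\!\bigl(\tfrac{\pi}{2\beta_L}\bigr) - \arctan(\xi_L)\Bigr], \quad \xi_L := \tfrac{\tau_L - K\pi - \pi/2}{\beta_L}.
\]
Since $\arctan(\pi/(2\beta_L)) \to \pi/2$, it suffices to establish $\limsup_{L\to\infty} \xi_L < \infty$: this will make the bracket bounded below by a positive constant.

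To control $\xi_L$, apply Lemma \ref{lem:continuity} on $[0,1]$ with parameter $b = W - 2\lambda$ to obtain $\tau_L \to \theta_0(1)$ as $L \to \infty$. Next, one has $\theta_0(1) \leq K\pi + \pi/2$: indeed, on $[1,\infty)$ the equation $\dot\theta_0 = \cos^2\theta_0$ has fixed points exactly at $\pi/2 + k\pi$, and $\theta_0$ monotonically approaches the first such point strictly above $\theta_0(1)$; since \eqref{eq:crth} and the signs of $\lambda_K, \lambda_{K+1}$ force $\theta_0(L) \in (K\pi,(K+1)\pi)$ for $L$ large, that asymptotic equilibrium must be $K\pi + \pi/2$, pinning $\theta_0(1) \in (K\pi - \pi/2, K\pi + \pi/2]$. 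If $\theta_0(1) < K\pi + \pi/2$, then $\tau_L - K\pi - \pi/2 \to \theta_0(1) - K\pi - \pi/2 < 0$, so $\xi_L < 0$ for large $L$. If $\theta_0(1) = K\pi + \pi/2$, then smoothness of the Prüfer ODE in $\lambda$ (a standard Grönwall-type perturbation argument) gives $|\tau_L - \theta_0(1)| = O(|\lambda_{K+1}|) = O(\beta_L^2)$, so $\xi_L = O(\beta_L) \to 0$.

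In both cases $\limsup\xi_L \leq 0$, so $\arctan(\xi_L)$ is eventually bounded above by some $M < \pi/2$; the bracket in the integral estimate is then bounded below by a positive constant $c$, giving $\beta_L(L-1) \geq c$, and hence $\sqrt{-\lambda_{K+1}}\,L = \beta_L L/\sqrt{2} \geq c/\sqrt{2}$, yielding the required $c_{\ref{lem:K}}$. The main subtlety is the degenerate case $\theta_0(1) = K\pi + \pi/2$: the qualitative continuity provided by Lemma \ref{lem:continuity} is not enough there, and one needs the quantitative Lipschitz dependence of $\tau_L$ on $\lambda$ to ensure $\xi_L \to 0$ at the correct rate relative to $\beta_L$.
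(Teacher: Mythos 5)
Your proof is correct, and it takes a genuinely different technical route from the paper's. The paper argues by contradiction: assuming $\sqrt{-2\lambda_{K+1}}\,L\to0$ it tracks $D(x)=\theta_{\lambda_{K+1}}(x)-(K+\tfrac12)\pi$ on the subinterval $I_\vep$ where $D\in[0,\vep]$, shows $|\dot D|\leq C\vep^2$ there, and derives $\vep/2\leq C\vep^2L$, a contradiction. You instead integrate the Prüfer ODE on $[1,L]$ exactly (separation of variables is available because $W\equiv0$ there), obtaining the closed-form lower bound $L-1\geq \beta_L^{-1}\bigl[\arctan(\pi/(2\beta_L))-\arctan(\xi_L)\bigr]$; you then show $\limsup\xi_L\leq0$, which immediately gives $\beta_L(L-1)\geq c>0$. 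Both proofs hinge on the same two ingredients: the localization $\theta_0(1)\in((K-\tfrac12)\pi,(K+\tfrac12)\pi]$, and the Grönwall estimate \eqref{eq:dist:theta} giving $|\theta_{\lambda_{K+1}}(1)-\theta_0(1)|=O(|\lambda_{K+1}|)=O(\beta_L^2)$, which handles the degenerate case $\theta_0(1)=(K+\tfrac12)\pi$. Your version is direct rather than by contradiction and makes the geometric content explicit: the lower bound on the travel ``time'' $L-1$ is driven by the time the Prüfer angle spends near the unstable equilibrium $(K+\tfrac12)\pi$, which scales like $\beta_L^{-1}$. The crude $|\dot D|\leq C\vep^2$ bound in the paper captures exactly this phenomenon, but less transparently; your approach would also straightforwardly yield the sharp constant if one wanted it.
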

\begin{proof} Recall that $\lambda_{K+1}\leq 0\leq \lambda_K$, $\theta_0$ and $\theta_{\lambda_{K+1}}$ are non-decreasing functions, $\theta_{\lambda_{K}}(L) = K \pi$ and that $\theta_{\lambda_{K+1}}(L) = (K+1) \pi$.
	Moreover, we see from \eqref{eq:crth} that
	$$
		\theta_{\lambda_K} \leq \theta_0 \leq \theta_{\lambda_{K+1}} \quad \text{on} \quad [0,L].
	$$
	Hence, $\theta_0(L)$ converges to $(K+\frac{1}{2})\pi$ (i.e., the unique fixed point of the ODE \eqref{cp:theta} belonging to $[K\pi,(K+1)\pi]$). It follows that $\theta_0(1)\in((K-\frac{1}{2})\pi,(K+\frac{1}{2})\pi]$ since $(K-\frac{1}{2})$ is also a fixed point for the dynamics of $\theta_0$.

	We now argue by contradiction and assume that $\lim_{L\to\infty} \sqrt{-2\lambda_{K+1}} L=0$. Let $\vep:=\sqrt{-\lambda_{k+1}}$.
	We see from \eqref{eq:dist:theta} that for $L$ large enough
	\begin{equation}\label{eq:40}
		0 \leq \theta_{\lambda_{K+1}}(1)-\theta_0(1)\leq C|\lambda_{K+1}|\leq C\vep^2.
	\end{equation}
	Let $I_\vep:=\theta_{\lambda_{K+1}}^{-1}((K+\frac{1}{2})\pi,(K+\frac{1}{2})\pi+\vep))\cap[1,L]=[x_0,x_\vep]$. Recalling that $\theta_0(1)\leq (K+\frac{1}{2})\pi$ and using \eqref{eq:40}, we see that $x_\vep>1$. Define
	$
		D(x):=\theta_{\lambda_{K+1}}(x)-\left(K+\frac{1}{2}\right)\pi.
	$ Note that $D(x_0)\in [0, C \vep^2]$. In order see this, we distinguish two cases (1) if $\theta_0(1)=(K+\frac{1}{2})\pi$ then $x_0=1$. Using the fact that $\theta_{\lambda_{K+1}} > \theta_0$, and (\ref{eq:40}) entails the result; (2) if $\theta_0(1)<(K+\frac{1}{2})\pi$ then $x_0>1$ (for $L$ large enough) and $D(x_0)=0$.  Since $D$ is non-decreasing, we have $|D(x)|\leq \eps$ for $x\in[x_0,x_\eps]$. Furthermore,
	\begin{equation*}
		|\dot D(x)|\leqslant \sin^2(D(x))+2|\lambda_{K+1}|\cos^2(D(x))\leqslant C\vep^2.
	\end{equation*}
	Hence, for $L$ large enough
	\begin{equation*}
		\frac{\vep}{2}\leq\vep-C\vep^2\leq D(x_\eps)-D(x_0)\leqslant C\vep^2 (x_\eps-x_0)\leq C\vep^2L,
	\end{equation*}
	which leads to a contradiction since $\vep L \to 0$ as $L\to\infty$.

\end{proof}

\begin{lemma}\label{lem:l2} Assume that \eqref{hpushed} holds.
	There exists $c_{\ref{lem:l2}}>0$ such that for all $k\in\mathbb{N}$,
	\begin{equation*}
		\|u_k\|^2\geqslant c_{\ref{lem:l2}}\left(1\wedge \frac{1}{|\lambda_k|}\right).
	\end{equation*}
\end{lemma}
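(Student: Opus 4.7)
The plan is to deduce the lower bound from a Rellich-type conservation law for an energy functional. Starting from the eigenvalue equation $u_k'' = (2\lambda_k - W) u_k$, multiplying by $u_k'$ and rearranging yields
\[
\frac{d}{dx}\Big[(u_k'(x))^2 + (W(x) - 2\lambda_k)\, u_k(x)^2\Big] = W'(x)\, u_k(x)^2.
\]
Letting $E(x)$ denote the bracketed expression, integrating from $0$ to $x$ and using $u_k(0) = 0$, $u_k'(0) = 1$ gives
\[
E(x) = 1 + \int_0^x W'(y)\, u_k(y)^2\, dy, \qquad x \in [0, L],
\]
so that $|E(x) - 1| \leq \|W'\|_\infty \|u_k\|^2$ on $[0, L]$. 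Observe that $\|W'\|_\infty > 0$ since under \eqref{hpushed} the perturbation $W$ cannot be identically zero.

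The argument then proceeds by dichotomy. If $\|u_k\|^2 \geq (2\|W'\|_\infty)^{-1}$, the desired lower bound is immediate, since $1 \wedge |\lambda_k|^{-1} \leq 1$. Otherwise $\|u_k\|^2 < (2\|W'\|_\infty)^{-1}$, which forces $E(x) \geq 1/2$ for all $x \in [0, L]$. Integrating this inequality and using the Dirichlet boundary conditions $u_k(0) = u_k(L) = 0$ — which via integration by parts give $\int_0^L (u_k')^2\, dx = \int_0^L (W - 2\lambda_k)\, u_k^2\, dx$ — we obtain
\[
\frac{L}{2} \leq \int_0^L E(x)\, dx = 2\int_0^L (W(x) - 2\lambda_k)\, u_k(x)^2\, dx \leq 2\big(\|W\|_\infty + 2|\lambda_k|\big) \|u_k\|^2.
\]
Rearranging gives $\|u_k\|^2 \geq L \big/ \big(4(\|W\|_\infty + 2|\lambda_k|)\big)$, which for $L \geq 1$ (which holds for $L$ sufficiently large) is at least $c(1 \wedge |\lambda_k|^{-1})$ for a constant $c$ depending only on $\|W\|_\infty$.

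The crux of the argument is identifying the correct energy $E(x)$, whose approximate conservation propagates the initial data $u_k'(0)^2 = 1$ throughout $[0, L]$, up to a term controlled by $\|u_k\|^2$ itself. Once this is in place, the analysis is completely uniform in $k$ and in $L$, with no need to distinguish between positive and negative eigenvalues or between small and large values of $k$. I do not anticipate a significant technical obstacle beyond the identification of the energy functional.
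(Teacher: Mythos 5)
Your argument is correct and takes a genuinely different route from the paper's. The paper's proof goes through the Pr\"ufer transformation: it lower-bounds $\|u_k\|^2$ by $\int_0^1 \rho_{\lambda_k}^2\sin^2(\theta_{\lambda_k})\,dx$, invokes continuity of $\lambda\mapsto(\rho_\lambda,\theta_\lambda)$ (Lemma~\ref{lem:continuity}) to argue that this quantity attains a positive minimum on the compact interval $[-1,\lambda_1^\infty]$, and handles $\lambda<-1$ by a separate rescaling $x\mapsto x/|\lambda|$. You instead conserve the first integral $E(x)=(u_k')^2+(W-2\lambda_k)u_k^2$ of the equation $u_k''=(2\lambda_k-W)u_k$, propagate $E(0)=1$ across $[0,L]$ via $|E(x)-1|\le\|W'\|_\infty\|u_k\|^2$, and close with a dichotomy on $\|W'\|_\infty\|u_k\|^2$ plus a single integration by parts. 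Your approach is more elementary and more self-contained: it bypasses the Pr\"ufer machinery entirely, treats positive and negative eigenvalues and small and large $|\lambda_k|$ uniformly, makes no appeal to compactness, and produces an explicit constant $c=\min\bigl\{(2\|W'\|_\infty)^{-1},\,(4(\|W\|_\infty+2))^{-1}\bigr\}$, whereas the paper's constant is an abstract minimum. Both arguments exploit the normalization $u_k'(0)=1$ as the uniform anchor across $k$ and $L$. Two small points: the condition $L\geq 1$ is automatic in the paper's setting (where $L>1$ throughout), and $\|W'\|_\infty>0$ indeed holds under \eqref{hpushed} because a nonzero, continuously differentiable, compactly supported $W$ cannot be constant.
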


\begin{proof}
	Define $g_1:[-1,\linf]\to[0,+\infty),\lambda \mapsto \int_{x=0}^1\rho_\lambda(x)^2\sin^2(\theta_\lambda(x))dx.$
	By Lemma \ref{lem:continuity}. the integrand is continuous in $x$ for every $\lambda$. Using (\ref{ub:rho}), it can be bounded by $e^{\bar{c}+1\vee|\linf|}$ for  $\lambda\in[-1,\linf]$.
	By a standard continuity theorem,
	the function $g_1$ is continuous and it attains a positive minimum at some $\lambda_0$.

	Let us now consider $\lambda<-1$ and remark that
	\begin{align}\label{eq:lb:norm}
		\int_{x=0}^1 \rho_{\lambda}(x)^2\sin(\theta_\lambda(x))^2dx & \geqslant \int_{x=0}^{\frac{1}{|\lambda|}} \rho_{\lambda}(x)^2\sin(\theta_\lambda(x))^2dx                            \\
		                                                            & \geqslant\frac{1}{|\lambda|}\int_{x=0}^1\rho_{\lambda}(x/|\lambda|)^2\sin(\theta_\lambda(x/|\lambda|))^2dx.\nonumber
	\end{align}
	For all $x\in[0,1]$, define $\tilde \rho_\lambda (x)=\rho_{\lambda}(x/|\lambda|)$ and $\tilde\theta _\lambda (x)=\theta_{\lambda}(x/|\lambda|)$. These functions correspond to the unique solutions of the Cauchy problems
	\begin{equation*}
		\dot{ \tilde\theta}_\lambda(x)=\frac{1}{|\lambda|}\left(\cos^2(\tilde\theta_\lambda(x))+(W(x/|\lambda|)-2\lambda)\sin^2(\tilde \theta_\lambda(x))\right), \quad \tilde\theta_\lambda (0)=0,
	\end{equation*}
	and
	\begin{equation*}
		\dot{ \tilde\rho}_\lambda(x)=\frac{1}{|\lambda|}\left(\left(\frac{1-W(x/|\lambda|)}{2}+\lambda\right)\sin(2\tilde\theta_\lambda(x))\tilde\rho_\lambda(x)
		\right), \quad \tilde\rho_\lambda(0)=1,
	\end{equation*}
	on $[0,1]$.
	As before, $g_2:[-1,0]\to[0,+\infty), \lambda\mapsto \int_{x=0}^{1}\tilde{\rho}_{1/\lambda}(x)^2\sin^2(\tilde \theta_{1/\lambda}(x))dx$
				attains its minimum $m$ at some positive value. This combined with \eqref{eq:lb:norm} shows that $g_3:(-\infty,-1]\to[0,+\infty),\lambda\mapsto \int_{x=0}^1 \rho_{\lambda}(x)^2\sin(\theta_\lambda(x))^2dx$ satisfies $g_3(\lambda)\geq \frac{m}{|\lambda|}$. We conclude the proof of the lemma by recalling from \eqref{def:uk} that $$\forall k\geq 1;\quad \|u_k\|^2=\int_0^L\rho_{\lambda_k}(x)^2\sin(\theta_{\lambda_k}(x))^2dx\geq\int_0^1\rho_{\lambda_k}(x)^2\sin(\theta_{\lambda_k}(x))^2dx.$$

\end{proof}

\begin{proposition}\label{prop:ratio-u_k} Assume that \eqref{hpushed} holds.
	There exists a constant $c_{\ref{prop:ratio-u_k}}$ such that for $L$ large enough,  for all $x\in[0,L]$,
	\begin{eqnarray*}
		\forall k\ \leq K, & \frac{1}{\|u_k\|} \frac{|u_k(x)|}{u_{1}(x)} & \leqslant c_{\ref{prop:ratio-u_k}} \ \ e^{\beta x},\\
		\forall k> K, & \ \frac{1}{\|u_k\|}\frac{|u_k(x)|}{u_1(x)} & \leqslant c_{\ref{prop:ratio-u_k}} e^{\beta x} \  e^{|\lambda_k|}(1+2|\lambda_k|)^2.
	\end{eqnarray*}
\end{proposition}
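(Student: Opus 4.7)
The strategy is to split the interval $[0,L]$ into $[0,1]$ and $[1,L]$, and bound the target ratio separately on each region. The split is natural because $W\equiv 0$ on $[1,L]$, so $u_k$ has an explicit representation as a hyperbolic sine (for $k\leq K$) or an ordinary sine (for $k>K$) on that sub-interval.

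On $[0,1]$, Lemma \ref{lem:vp} gives the pointwise bound $|u_k(x)|\lesssim e^{|\lambda_k|}(1+2|\lambda_k|)\,x$, and from the proof of Lemma \ref{lem:spectral} one has $u_1(x)\asymp x$ for $L$ large enough. Combined with the bound $\|u_k\|^{-1}\lesssim 1\vee\sqrt{|\lambda_k|}$ from Lemma \ref{lem:l2}, this yields $|u_k(x)|/(u_1(x)\|u_k\|)\lesssim e^{|\lambda_k|}(1+2|\lambda_k|)^2$ on $[0,1]$. Since $e^{\beta x}\geq 1$, this establishes the claimed bound on $[0,1]$ in both regimes (for $k\leq K$ note that $K$ is fixed for $L$ large by item (viii), so $|\lambda_k|\leq\lambda_1^\infty$).

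On $[1,L]$ for $k\leq K$: as $\lambda_k>0$, the eigenfunctions $u_k$ and $u_1$ are proportional to $\sinh(\beta_k(L-\cdot))$ and $\sinh(\beta_1(L-\cdot))$ respectively, where $\beta_k:=\sqrt{2\lambda_k}\leq \beta_1\leq\beta$. Direct analysis of
\[
\frac{|u_k(x)|}{u_1(x)} \;=\; \frac{|u_k(1)|}{u_1(1)}\cdot \frac{\sinh(\beta_k(L-x))\sinh(\beta_1(L-1))}{\sinh(\beta_k(L-1))\sinh(\beta_1(L-x))},
\]
obtained for instance by checking that the second factor is monotone in $x$ with explicit asymptotics $\sim e^{(\beta_1-\beta_k)(x-1)}$ for $L-x\gg 1$, yields a uniform bound of order $e^{(\beta_1-\beta_k)(x-1)}\leq e^{\beta x}$. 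Since $|\lambda_k|$ is bounded, the prefactors $|u_k(1)|$ (via Lemma \ref{lem:vp}), $u_1(1)$ (via Lemma \ref{lem:spectral}), and $\|u_k\|^{-1}$ (via Lemma \ref{lem:l2}) are all uniformly bounded.

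On $[1,L]$ for $k>K$: now $\lambda_k<0$ and $u_k(x)=A_k\sin(\eta_k(L-x))$ with $\eta_k=\sqrt{-2\lambda_k}$ and $A_k^2 = u_k(1)^2+u_k'(1)^2/\eta_k^2$. Bounding $|u_k(1)|$ through Lemma \ref{lem:vp} and $|u_k'(1)|\leq \rho_{\lambda_k}(1)\leq e^{\bar c+|\lambda_k|}$ via \eqref{ub:rho}, and combining the elementary inequality $|u_k(x)|\leq |A_k|\min(1,\eta_k(L-x))$ with $u_1(x)\asymp (1\wedge(L-x))e^{-\beta x}$ from Lemma \ref{lem:spectral}, one obtains
\[
\frac{|u_k(x)|}{u_1(x)}\;\lesssim\; |A_k|(1+\eta_k)\,e^{\beta x}.
\]
For large $|\lambda_k|$, combining this with $\|u_k\|^{-1}\lesssim\sqrt{|\lambda_k|}$ from Lemma \ref{lem:l2} readily produces the target bound $e^{|\lambda_k|}(1+2|\lambda_k|)^2\,e^{\beta x}$. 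The \textbf{main obstacle} is the case $|\lambda_{K+1}|$ small (possibly as small as $1/L^2$), where a naive use of Lemma \ref{lem:l2} leaves a stray factor of $L$ (coming from $1/\eta_k$ inside $|A_k|$). To resolve this, one invokes Lemma \ref{lem:K} to ensure $\eta_k(L-1)\gtrsim 1$, and then directly lower-bounds $\|u_k\|^2\geq \int_1^L u_k(x)^2\,dx\gtrsim A_k^2\,L$ by integrating $\sin^2$. The resulting compensating factor $1/\sqrt{L}$ exactly absorbs the polynomial-in-$L$ growth of $|A_k|$, and the bound closes since $e^{|\lambda_k|}(1+2|\lambda_k|)^2\gtrsim 1$.
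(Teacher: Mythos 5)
Your proof is correct and follows essentially the same strategy as the paper: the split at $x=1$, the use of Lemma~\ref{lem:vp} together with Lemma~\ref{lem:l2} to handle $[0,1]$, the explicit $\sinh$/$\sin$ representation of $u_k$ on $[1,L]$, and the invocation of Lemma~\ref{lem:K} to extract a lower bound on $\|u_k\|^2$ of order $A_k^2\,L$ from the integral of $\sin^2$ over $[1,L]$. There are two points where your route diverges slightly, both benign. First, for $k\leq K$ you analyse the ratio of hyperbolic sines directly rather than passing to the limiting eigenfunction $u_k^\infty$ as the paper does; your bound $f(x)/f(1)\lesssim e^{(\beta_1-\beta_k)(x-1)}$ does hold (you need $\beta_k$ bounded away from zero for $L$ large, which follows from item (viii), and one should verify it separately on $[1,L-1]$ and $[L-1,L]$ rather than appealing to monotonicity alone, since monotonicity only yields the end-point bound), and the prefactor $|u_k(1)|/u_1(1)$ is harmlessly bounded. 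Second, for $k>K$ you estimate $|A_k|$ and $\|u_k\|^{-1}$ separately, which forces the case distinction between $|\lambda_k|$ large and small; the paper instead never estimates $d_k$ on its own, bounding the ratio $|d_k|/\|u_k\|\leq 1/\sqrt{c_3(L-1)}$ uniformly in $k$ directly from $\|u_k\|^2\geq c_3 d_k^2(L-1)$, which avoids the case split. Both versions close correctly; the paper's formulation is a shade cleaner because it sidesteps the amplitude estimate entirely on $[1,L]$.
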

\begin{proof}
	{ Let us first consider the case $k\leq K$.
		In this case, one can show that $u_k$ converges to a limiting function $u_k^\infty$ in $\mathrm{L}^2$ and that
		\begin{equation*}
			|u_k(x)| \asymp  (x\wedge 1\wedge (L-x)) e^{-\sqrt{2 \lambda^\infty_k} x}, \ \ \ \mbox{on $[0,L]$,}
		\end{equation*}
		by applying similar arguments to those used to prove \eqref{exp:d:v1}.
		This shows the result for $k\leq K$.}

	We now turn to the case $k>K$. First, we use Lemma \ref{lem:vp} along with Lemma \ref{lem:l2} to bound $u_k/\|u_k\|$ on $[0,1]$: there exists $c_2>0$ such that
	\begin{equation}\label{U1}
		\frac{|u_k(x)|}{\|u_k\|}\leqslant c_2 e^{|\lambda_k|}(1+2|\lambda_k|)(1\vee |\lambda_k|)^{1/2}x\leqslant c_2 e^{|\lambda_k|}(1+2|\lambda_k|)^{2}x, \quad \forall x\in[0,1], \; k>K.
	\end{equation}
	On $[1,L]$, we  have $u_k(x)=d_k\sin(\sqrt{-2\lambda_k}(L-x))$ for some $d_k\in\mathbb{R}$ (see \eqref{ODEv}). Hence,
	\begin{equation*}
		\|u_k\|^2\geqslant \int_{x=1}^Lu_k(x)^2dx =d_k^2(L-1)\left(1-\frac{\sin(2\sqrt{-2\lambda_k}(L-1))}{2\sqrt{-2\lambda_k}(L-1)}\right).
	\end{equation*}
	Recall from Lemma \ref{lem:K} that there exists $c_3>0$ such that
	\begin{equation*}
		\left(1-\frac{\sin(2\sqrt{-2\lambda_k}(L-1))}{2\sqrt{-2\lambda_k}(L-1)}\right)>c_3, \quad \forall k>K.
	\end{equation*}
	Using that $|\sin(x)|\leqslant 1\wedge x$ for all $x\geq 0$, we get that for sufficiently large $L$,
	\begin{equation}\label{U2}
		\frac{|u_k(x)|}{\|u_k\|}\leqslant \frac{1}{\sqrt{c_3(L-1)}}\leqslant 1,\quad \forall x\in[1,L-1],
	\end{equation}
	and for all $x\in[L-1,L]$,
	\begin{equation}\label{U3}
		\frac{|u_k(x)|}{\|u_k\|}\leqslant \frac{1}{\sqrt{c_3(L-1)}}\sqrt{-2\lambda_k}(L-x)\leqslant \sqrt{-2\lambda_k}(L-x)\leqslant (1+2|\lambda_k|)^2(L-x).
	\end{equation}
	Equations \eqref{U1}, \eqref{U2} and \eqref{U3} and Lemma \ref{lem:spectral}
	yield the result for ${k> K}$.
\end{proof}

\begin{proposition}[Spectral gap] Assume that \eqref{hpushed} holds.
	\label{lem:spectral:gap}
	Then,  $\linf>\lambda_2^\infty$.
\end{proposition}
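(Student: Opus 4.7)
The plan is to argue by contradiction, exploiting the standard Sturmian fact that $v_1$ has no interior zero while $v_2$ has exactly one zero in $(0,L)$ (item (v) of the preliminaries). Suppose for contradiction that $\lambda_2^\infty=\lambda_1^\infty$; note that this common value is then strictly positive by \eqref{hpushed}. Normalize $u_1^L,u_2^L$ by $u_k(0)=0,\ u_k'(0)=1$, so that on $[0,1]$ the function $u_k$ is the unique solution of the Cauchy problem $u''=(2\lambda_k-W)u$, $u(0)=0$, $u'(0)=1$. Since $\lambda_2^L\to\lambda_1^\infty$ by assumption, continuous dependence of ODE solutions on parameters (as already used for $u_1$ via Lemma~\ref{lem:continuity} in the proof of Lemma~\ref{lem:spectral}) yields $u_2^L\to u_1^\infty$ in $\mathcal{C}^1([0,1])$ as $L\to\infty$. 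Recall also from that proof that $u_1^\infty>0$ on $(0,1]$.

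The heart of the argument is now to locate the unique zero $z_L\in(0,L)$ of $u_2^L$ and derive a contradiction from its behaviour as $L\to\infty$. Since $\lambda_2^\infty>0$, for $L$ large enough $\lambda_2^L>0$, and on $[1,L]$ the eigenfunction must take the form
\[
u_2^L(x)=A_L\,\sinh\bigl(\sqrt{2\lambda_2^L}\,(L-x)\bigr), \qquad x\in[1,L],
\]
with $A_L\neq0$ (otherwise $u_2^L$ would vanish identically on $[1,L]$, hence everywhere by uniqueness). In particular $u_2^L$ has no zero in $[1,L)$, so necessarily $z_L\in(0,1)$. Extract a subsequence along which $z_L\to z^{\star}\in[0,1]$ and distinguish two cases. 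If $z^{\star}\in(0,1]$, then passing to the limit in $u_2^L(z_L)=0$ and using $\mathcal{C}^1$-convergence gives $u_1^\infty(z^{\star})=0$, contradicting the positivity of $u_1^\infty$ on $(0,1]$. If $z^{\star}=0$, then since $(u_2^L)'(0)=1>0$ one has $u_2^L>0$ on $(0,z_L)$ and $u_2^L<0$ on $(z_L,1]$ (as $z_L$ is the unique zero in $(0,L)$); fixing any $x\in(0,1]$, for $L$ large $z_L<x$ and hence $u_2^L(x)<0$. Passing to the limit yields $u_1^\infty(x)\leq 0$, again contradicting $u_1^\infty>0$ on $(0,1]$. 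In either case we reach a contradiction, so $\lambda_2^\infty<\lambda_1^\infty$.

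The main obstacle I anticipate is the borderline case $z^{\star}=0$, which is where one could naively fear that $u_2^L$ could ``hide'' its sign change at the boundary and still converge to the positive function $u_1^\infty$; the sign-tracking argument above rules this out cleanly because the common normalization $u_k'(0)=1$ pins the sign of $u_2^L$ on both sides of $z_L$. An alternative, more abstract route would be to invoke simplicity of the principal eigenvalue of the limiting half-line Schrödinger operator $\tfrac12\partial_{xx}+\tfrac12 W$ (whose essential spectrum is $(-\infty,0]$ since $W$ is compactly supported), but the ODE-level argument above is more self-contained and fits the Prüfer framework already developed in this section.
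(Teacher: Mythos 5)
Your proposal is correct and follows essentially the same route as the paper: argue by contradiction, use Lemma~\ref{lem:continuity} to get $u_2\to u_1^\infty$ in $\mathcal{C}^1([0,1])$, observe that the unique interior zero of $u_2$ lies in $(0,1)$ and exploit the sign change there against the positivity of $u_1^\infty$. The only difference is cosmetic: the paper finishes by sending the zero $x_L\to 0$ and applying the mean value theorem to the divided difference $u_2(2x_L)/(2x_L)$ to contradict $(u_1^\infty)'(0)>0$, whereas you finish with a dichotomy on $\lim z_L$ and a direct sign comparison at a fixed $x$, which is equivalent and, if anything, a touch cleaner.
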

\begin{proof}

	Assume, for the sake of contradiction, that $\linf=\lambda_2^\infty$.
	Then, it follows from Lemma \ref{lem:continuity}  that $u_2$ converges  to $u_1^\infty$ (see \eqref{Cauchypb}) in $\mathcal{C}^1([0,1])$. Recall from Lemma \ref{lem:spectral} that $u_1^\infty(x)>0$ for all $x\in(0,1]$. On the other hand, we know from (v) that $u_2$ has a single $0$, located in $(0,1]$ (it can not be located in $[1,L)$ since $u_2(x)$ is a multiple of $\sinh(\sqrt{2\lambda_2}(L-x))$ in this interval). We denote by $x_L$ the position of this $0$ and remark that $u_2(2x_L)<0$. This stems from (v), the fact that $u_2\geq 0$ on $[0,x_L]$  and that $u_2'(x_L)\neq 0$ (here we use the same argument as in the proof of Lemma \ref{lem:spectral} to show that $u_1^\infty>0$). Hence, since $u_2$ converges to a positive function, $x_L\to 0$ as $L\to\infty$, and one can show that  $u_2(2x_L)/(2x_L)<0$ converges to $(u_1^\infty)'(0)>0$ (e.g. using the mean value theorem). This leads to a contradiction.

\end{proof}

\subsection{Heat kernel estimate}\label{sec:heat:kernel}
\begin{proposition}\label{lem:hk}
	Assume that \eqref{hpushed} holds. There exists a constant $c_{\ref{lem:hk}}>0$ such that  for $L$ large enough and $t>c_{\ref{lem:hk}}L$, we have
	\begin{equation*}
		\left|p_t(x,y)-h(0,x)\tilde h(t,y)\right|\leqslant e^{-\beta L}h(0,x)\tilde h(t,y), \quad x,y\in[0,L].
	\end{equation*}
	Recalling from Lemma \ref{lem:many-to-one0} and Proposition \ref{prop:invariant-1-spine} that $q_t(x,y)=\tfrac{h(t,y)}{h(0,x)}p_t(x,y)$ and $\Pi(x)=h(t,x)\tilde h(t,x)$, this implies that  for $L$ large enough and $t>c_{\ref{lem:hk}}L$,\begin{equation*}
		\left|q_t(x,y)-\Pi(y) \right|\leqslant e^{-\beta L}\Pi(y), \quad x,y\in[0,L].
	\end{equation*}
\end{proposition}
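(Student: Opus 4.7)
The starting point is the spectral representation of the self-adjoint kernel $g_t$ in \eqref{def:qt1}. Using $\mu^2=1+2\linf$ to invert the definition \eqref{def:q} of $g_t$, one obtains
\[
p_t(x,y) \;=\; e^{\mu(x-y)}\,e^{-\linf t}\sum_{k=1}^{\infty} e^{\lambda_k t}\,\frac{u_k(x)u_k(y)}{\|u_k\|^2}.
\]
A direct comparison with \eqref{def:h} and \eqref{def:hL} identifies the $k=1$ contribution with exactly $h(0,x)\tilde h(t,y)$ (the constants involving $u_1(1)$ and $\tilde c$ cancel since $v_1=u_1/u_1(1)$). The proposition therefore reduces to the pointwise spectral tail bound
\[
\sum_{k\geq 2} e^{(\lambda_k-\lambda_1)t}\,\frac{|u_k(x)u_k(y)|}{\|u_k\|^2} \;\leq\; e^{-\beta L}\,\frac{u_1(x)u_1(y)}{\|u_1\|^2}, \qquad x,y\in[0,L],
\]
valid for $t\geq c_{\ref{lem:hk}} L$ with $c_{\ref{lem:hk}}$ to be chosen below.

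Applying Proposition \ref{prop:ratio-u_k} gives $|u_k(x)u_k(y)|/\|u_k\|^2 \leq c^2\,u_1(x)u_1(y)\,e^{\beta(x+y)} B_k^2$, with $B_k=1$ for $k\leq K$ and $B_k = e^{|\lambda_k|}(1+2|\lambda_k|)^2$ for $k>K$. Since $\|u_1\|^2$ converges to a finite positive limit by Lemma \ref{lem:spectral} and $e^{\beta(x+y)}\leq e^{2\beta L}$, it suffices to establish
\[
S(t) := \sum_{k\geq 2} e^{(\lambda_k-\lambda_1)t}\,B_k^2 \;\lesssim\; e^{-3\beta L}.
\]
I would split $S(t)$ into the finite part $2\leq k\leq K$ and the tail $k>K$. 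The finite part is handled by the spectral gap Proposition \ref{lem:spectral:gap}, which together with Lemma \ref{lem:spectral} ensures $\lambda_1-\lambda_k \geq \tfrac{1}{2}(\linf-\lambda_2^\infty)=:\gamma>0$ uniformly for $L$ large; this contribution is bounded by $K e^{-\gamma t}$. For the tail, one factors out $e^{-\lambda_1 t}$ and controls
\[
\sum_{k>K} e^{-(t-2)|\lambda_k|}(1+2|\lambda_k|)^4
\]
using a Weyl-type lower bound $|\lambda_k|\gtrsim k^2/L^2$ obtained from the comparison principle \eqref{eq:lambda:bounds} (supplemented by Lemma \ref{lem:K} for the indices just above $K$). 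This yields a Gaussian-type bound of order $\sqrt{L}$, so the tail is $\lesssim e^{-\lambda_1 t}\sqrt{L}$. Both contributions are $O(e^{-3\beta L})$ as soon as $c_{\ref{lem:hk}}$ is taken large enough compared to $\beta/\gamma$ and $\beta/\linf$.

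The statement on $q_t$ is then an immediate consequence: by Lemma \ref{lem:many-to-one0} one has $q_t(x,y) = p_t(x,y)\,h(t,y)/h(0,x)$, and Proposition \ref{prop:invariant-1-spine} gives $\Pi(y) = h(t,y)\tilde h(t,y)$; multiplying the $p_t$ estimate by $h(t,y)/h(0,x)$ therefore yields $|q_t(x,y)-\Pi(y)|\leq e^{-\beta L}\Pi(y)$. The main technical obstacle is the high-frequency tail: the explosive factor $e^{2|\lambda_k|}$ coming from Proposition \ref{prop:ratio-u_k} must be absorbed by the spectral weight $e^{(\lambda_k-\lambda_1)t}$, which is precisely why the condition $t\gtrsim L$ is needed and why a quantitative Weyl-type lower bound on $|\lambda_k|$, rather than the mere negativity $\lambda_k<0$, plays a crucial role.
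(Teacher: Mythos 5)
Your proposal is correct and follows essentially the same route as the paper: expand $p_t$ via the eigenfunction series of $g_t$, isolate the $k=1$ term as $h(0,x)\tilde h(t,y)$, and bound the remainder using Proposition~\ref{prop:ratio-u_k} together with the spectral gap (Proposition~\ref{lem:spectral:gap}) for the indices $2\le k\le K$ and eigenvalue-counting/comparison (Lemmas~\ref{lem:K} and the comparison principle \eqref{eq:lambda:bounds}) for the tail $k>K$. The paper makes the eigenvalue counting explicit via the $A_i,N_i$ construction and settles for the cruder tail bound $O(L)$ rather than your $O(\sqrt L)$, but both suffice once $t\gtrsim L$ with a large enough constant.
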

\begin{proof}

	By Proposition \ref{prop:ratio-u_k}, for $L$ large enough and $t>2$,
	\begin{equation*}
		\sum_{k=2}^{K} e^{\lambda_k t}\frac{1}{\|u_k\|^2}\frac{u_k(x)u_k(y)}{v_1(x)v_1(y)}\leqslant  K (c_{\ref{prop:ratio-u_k}})^2e^{2 \beta L+\lambda_2^\infty t},
	\end{equation*}
	and
	\begin{align}
		\label{eq:ineqw}
		\sum_{k=K+1}^{\infty} e^{\lambda_k t}\frac{1}{\|u_k\|^2}\frac{u_k(x)u_k(y)}{v_1(x)v_1(y)}\leqslant (c_{\ref{prop:ratio-u_k}})^2e^{2\beta L}\sum_{k=K+1}^{\infty}(1+2|\lambda_k|)^4e^{\lambda_k(t-2)}.\end{align}
	In order to evaluate the latter sum, we rely on the comparison principle \eqref{eq:lambda:bounds}. First, assume that $\|W\|_\infty\neq(k-\frac{1}{2})^2\pi^2$, $\forall k\in\mathbb{N}$. Define
	\begin{equation*}
		A_i=\frac{1}{2}\left(\left(\bar K +\frac{1}{2}+i\right)^2-\|W\|_\infty\right), \ \ \ N_i=\left\lfloor\frac{(L-1)}{\pi}\sqrt{2A_i}+\frac{1}{2}\right\rfloor+i,
	\end{equation*}
	for all $i\geq 0$ and $A_{-1}=N_{-1}=0$. One can show by an explicit calculation that for all $j\in\mathbb{N}$ such that $N_{i-1}<j\leqslant N_i$, we have
	\begin{equation}\label{eq:distrib:lambar}
		\lambda_{\bar K+j}\in(-A_i,-A_{i-1}).
	\end{equation}
	We refer to \cite[Lemma 2.1]{tourniaire21} for further details on this calculation. If $\|W\|_\infty=(k-\frac{1}{2})^2\pi^2$ for some $k\in\mathbb{N}$, one can replace $\|W\|_\infty$ by $\|W\|_\infty+\vep$ for some small $\vep>0$ and get similar estimates.
	We now use  \eqref{eq:lambda:bounds}, \eqref{eq:Laplacian} and \eqref{eq:distrib:lambar} to bound the sum on the RHS of (\ref{eq:ineqw}). We get that for $L$ large enough,
	\begin{align*}
		 & \sum_{k=K+1}^{\infty}(1+2|\lambda_k|)^4e^{\lambda_k(t-2)}                                                                                                                              \\
		 & \leqslant \sum_{k=K+1}^{\bar K}\left(1+\frac{k^2\pi^2}{L^2}\right)^4+\sum_{j=1}^{\infty}\left(1+\frac{(\bar K+j)^2\pi^2}{L^2}\right)^4e^{\bar \lambda_{\bar K+j } (t-2)}               \\
		 & \leqslant \bar K \left(1+\frac{\bar K^2\pi^2}{L^2}\right)^4+\sum_{i=0}^\infty \sum_{j=N_{i-1}+1}^{N_i}\left(1+\frac{(\bar K+j)^2\pi^2}{L^2}\right)^4e^{\bar \lambda_{\bar K+j } (t-2)} \\
		 & \leqslant 2\bar K+\sum_{i=0}^{\infty}(N_i-N_{i-1})\left(1+\frac{(\bar K+N_i)^2\pi^2}{L^2}\right)^4e^{-A_{i-1}  (t-2)}.
	\end{align*}
	One can easily show that, for $i\in\mathbb{N}$, $\frac{i^2}{2} \lesssim A_i  $, $N_i\lesssim iL $ and  $N_i-N_{i-1}\lesssim iL$. Therefore, there exists $c_0,c_1>0$ such that
	\begin{equation*}
		\sum_{k=K+1}^{\infty}(1+2|\lambda_k|)^4e^{\lambda_k(t-2)}\leqslant c_0\left(1+L\sum_{i=0}^\infty i(1+i^2)^{4}e^{-c_1(i-1)^2(t-2)}\right).
	\end{equation*}
	Note that $\sum_{i=0}^\infty i(1+i^2)^4e^{-c_1(i-1)^2(t-2)}<\sum_{i=0}^\infty i(1+i^2)^4e^{-c_1(i-1)^2}<\infty$ for $t>3$. Thus, there exists
	$c_2>0$ such that, for all $L$ large enough, for all $t>3$,
	\begin{equation*}
		\sum_{k=K+1}^{\infty}(1+2|\lambda_k|)^4e^{\lambda_k(t-2)}\leqslant c_2 L.
	\end{equation*}
	Putting all of this together, we see that, for all $L$ large enough, for all $t>3$,
	\begin{equation*}
		\sum_{k=2}^\infty e^{\lambda_k t}\frac{|v_k(x)v_k(y)|}{\|v_k\|^2}\leqslant (c_{\ref{prop:ratio-u_k}})^2 v_1(x)v_1(y)e^{2\beta L}\left(K e^{\lambda_2^\infty t}+c_2L\right).
	\end{equation*}
	Recalling that $p_t$ can be written as
	\begin{equation*}
		p_t(x,y)=e^{\mu(x-y)}e^{-\linf t}\sum_{k=1}^\infty e^{\lambda_k}\frac{|v_k(x)v_k(y)|}{\|v_k\|^2},
	\end{equation*}
	(see \eqref{def:q} and \eqref{def:qt1}), that  $||v_1||\to ||v_1^\infty||$ (see  Lemma \ref{lem:spectral})  and that $\linf-\lambda_2^\infty>0$ under \eqref{hpushed} (see Proposition \ref{lem:spectral:gap}), we obtain
	\begin{align*}
		 & \left|p_t(x,y)-\frac{1}{\|v_1\|^2}e^{(\lambda_1-\linf)t}e^{\mu(x-y)}v_1(x)v_1(y)\right|               \\
		 & \leqslant CLe^{2\beta L-(\lambda_1-\lambda_2^\infty)t}e^{(\lambda_1-\linf)t}e^{\mu(x-y)}v_1(x)v_1(y),
		\\&\leqslant CLe^{2\beta L-\tfrac{\linf-\lambda_2^\infty}{2}t}e^{(\lambda_1-\linf)t}e^{\mu(x-y)}v_1(x)v_1(y)
	\end{align*}
	for all $t>3$ and $L$ large enough. This together with \eqref{def:hL} concludes the proof of  Proposition~\ref{lem:hk}.
\end{proof}

\subsection{Green's function}\label{sec:green}
The Green's function  can be expressed thanks to the \textit{fundamental solutions} of the ODE
\begin{equation}\label{ODE:G}
	\frac{1}{2}u''+\frac{1}{2}W(x)u=\lambda u.
\end{equation}
Let $\varphi_\lambda$ (resp.~$\psi_\lambda$) be a solution of \eqref{ODE:G} such that $\varphi_\lambda(0)=0$ (resp.~$\psi_\lambda(L)=0$).  Define
the Wronskian as
\begin{equation*}
	\omega_\lambda=\psi_\lambda(1)\varphi_\lambda'(1)-\psi_\lambda'(1)\varphi_\lambda(1).
\end{equation*}
Note that $\varphi_\lambda$ and $\psi_\lambda$ are unique up to constant multiplies. Without loss of generality, we can set $\psi_\lambda$ and $\varphi_\lambda$ so that
\begin{equation}\label{eq:norm}
	\psi_\lambda(x)=\frac{\sinh(\sqrt{2\lambda}(L-x))}{\sinh(\sqrt{2\lambda_1}(L-1))}, \qquad \text{and} \qquad \varphi_\lambda'(0)=v_1'(0).
\end{equation}

\begin{proposition}\label{prop:Green}
	Let $\xi>0$ and $\lambda(\xi)\equiv \lambda :=\linf+\xi$. Define
	$$
		G_{\xi}(x,y) \ = \ \int_0^\infty e^{-\xi t} p_{t}(x,y) dt.
	$$
	Then
	\begin{equation}
		G_\xi(x,y)=\begin{cases}
			(\omega_{\lambda(\xi)})^{-1}e^{\mu(x-y)}\psi_{\lambda(\xi)}(x)\varphi_{\lambda(\xi)}(y), & 0\leq y\leq x \leq L, \\
			(\omega_{\lambda(\xi)})^{-1}e^{\mu(x-y)}\varphi_{\lambda(\xi)(x)}\psi_{\lambda(\xi)}(y), & 0\leq x\leq y\leq L.
		\end{cases}\label{def:green}
	\end{equation}
\end{proposition}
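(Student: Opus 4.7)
The plan is to reduce the problem to a self-adjoint resolvent computation via the change of variable $g_t=e^{\mu(y-x)}e^{\linf t}p_t$ (recalling that $(\mu^2-1)/2=\linf$ by \eqref{def:mu}), and then identify the resulting Green's function as the standard patching of the fundamental solutions $\varphi_\lambda$ and $\psi_\lambda$.

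First, I would use the defining relation \eqref{def:q} to write
\begin{equation*}
p_t(x,y)=e^{\mu(x-y)}e^{-\linf t}g_t(x,y),
\end{equation*}
so that, for $\xi>0$ and $\lambda=\linf+\xi$,
\begin{equation*}
G_\xi(x,y)=e^{\mu(x-y)}\int_0^\infty e^{-\lambda t}g_t(x,y)\,dt=:e^{\mu(x-y)}R_\lambda(x,y).
\end{equation*}
Absolute convergence of this integral is not an issue: from \eqref{def:qt1} one has $g_t(x,y)=\sum_k e^{\lambda_k t}v_k(x)v_k(y)/\|v_k\|^2$, and since $\lambda_k\le \lambda_1<\linf<\lambda$ (by monotonicity in $L$, cf.\ point (viii) in Section~\ref{sec:spectral}), each term is exponentially integrable. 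One can exchange sum and integral and conclude that $R_\lambda(x,y)=\sum_k(\lambda-\lambda_k)^{-1}v_k(x)v_k(y)/\|v_k\|^2$ is the resolvent kernel of the self-adjoint operator $A=\tfrac12\partial_{xx}+\tfrac12W$ on $[0,L]$ with Dirichlet boundary conditions.

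Next, I would characterise $R_\lambda(\cdot,y)$ as the solution of the distributional problem $(\lambda-A)u=\delta_y$, i.e.\
\begin{equation*}
\tfrac12 u''(x)+\tfrac12 W(x)u(x)-\lambda u(x)=-\delta_y(x),\qquad u(0)=u(L)=0,
\end{equation*}
which is our ODE \eqref{ODE:G} off the diagonal. On $[0,y)$ the solution must vanish at $0$ and solve \eqref{ODE:G}, so it is a multiple $A(y)\varphi_\lambda(x)$; on $(y,L]$ it must vanish at $L$, so it is a multiple $B(y)\psi_\lambda(x)$. Continuity at $x=y$ and the jump condition on $u'$ (obtained by integrating across $y$) give a $2\times 2$ linear system for $A(y),B(y)$ whose determinant is, up to sign, the Wronskian $W(\varphi_\lambda,\psi_\lambda)$. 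Since the ODE \eqref{ODE:G} has no first-order term, this Wronskian is constant in $x$, and with the sign convention in the statement it coincides with $\omega_\lambda$ evaluated at any point in $[0,L]$; in particular, at $x=1$, which matches the displayed definition of $\omega_\lambda$.

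Solving the system yields $A(y)\propto \psi_\lambda(y)/\omega_\lambda$ and $B(y)\propto\varphi_\lambda(y)/\omega_\lambda$, hence
\begin{equation*}
R_\lambda(x,y)=\frac{\varphi_\lambda(x\wedge y)\psi_\lambda(x\vee y)}{\omega_\lambda}\cdot(\text{constant from the jump}),
\end{equation*}
and multiplying by $e^{\mu(x-y)}$ gives exactly the formula of Proposition~\ref{prop:Green}. The only point requiring some care is the bookkeeping of the multiplicative constant coming from the $\tfrac12$ in front of $u''$ in the jump condition and the sign in the definition of $\omega_\lambda$; the specific normalisations $\psi_\lambda(x)=\sinh(\sqrt{2\lambda}(L-x))/\sinh(\sqrt{2\lambda_1}(L-1))$ and $\varphi_\lambda'(0)=v_1'(0)$ play no role in the argument beyond fixing representatives of the one-dimensional solution spaces, and the final expression is invariant under consistent rescalings since $\omega_\lambda$ is bilinear in $(\varphi_\lambda,\psi_\lambda)$. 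There is no real obstacle; the whole proof is a standard Sturm--Liouville Green's function computation once the passage from $p_t$ to $g_t$ is made.
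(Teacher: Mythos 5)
Your approach is essentially the one the paper takes: both pass from $p_t$ to the symmetrised kernel $g_t$ via \eqref{def:q}, then identify the Laplace transform of $g_t$ as the Green's function of the self-adjoint Sturm--Liouville operator $\tfrac12\partial_{xx}+\tfrac12 W$ with Dirichlet conditions. The only difference is that the paper dispatches the Green's-function step by a citation to Borodin--Salminen (p.~19), whereas you reconstruct it from scratch by the classical patching argument (solve $(\lambda-A)u=\delta_y$, impose continuity and a derivative jump, recognise the determinant as the constant Wronskian). That is a legitimate and slightly more self-contained route. One remark on the point you flag as ``requiring care'': the jump condition for the operator $\tfrac12\partial_{xx}+\tfrac12W-\lambda$ is $u'(y^+)-u'(y^-)=-2$, not $-1$, so a direct from-scratch derivation produces a factor of $2$ in front of $\omega_\lambda^{-1}$; Borodin--Salminen's Green's function is the kernel with respect to the speed measure $m(dx)=2\,dx$ of a unit-diffusion, which accounts for exactly this factor when translating to the Lebesgue density $g_t$. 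Whichever convention one fixes, the point is that it must be tracked consistently through Lemma~\ref{lem:wronsk} and the subsequent estimates (where only ratios and asymptotics in $\xi$ matter, so the constant is ultimately harmless), and your instinct to keep that bookkeeping explicit is the right one.
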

\begin{proof}

	Recall the definition of $g$ from \eqref{def:q} and define
	\begin{equation*}
		H_\lambda(x,y):=\int_0^\infty e^{-\lambda t} g_t(x,y)\,dt.
	\end{equation*}
	By \cite[p.19]{Borodin12}, $H_\lambda$ is given by
	\begin{equation*}
		H_\lambda(x,y)=\begin{cases}
			(\omega_\lambda)^{-1}\psi_\lambda(x)\varphi_\lambda(y), & 0\leq y\leq x \leq L, \\
			(\omega_\lambda)^{-1}\varphi_\lambda(x)\psi_\lambda(y), & 0\leq x\leq y\leq L.
		\end{cases}
	\end{equation*}
	On the other hand, we see from \eqref{def:q} that
	\begin{equation*}
		H_\lambda(x,y)=\int_0^\infty e^{-\lambda t} g_t(x,y)\,dt=e^{\mu(y-x)}\int_0^\infty e^{(\linf-\lambda) t}p_t(x,y)dt.
	\end{equation*}
	This yields the result.
\end{proof}

\begin{lemma}
	Assume that \eqref{hfp} holds and that $\frac{1}{N}\lesssim \xi \ll \frac{1}{L}$. Let $\lambda(\xi)$ be as in Proposition \ref{prop:Green}. Then,
	\begin{equation}
		\varphi_{\lambda(\xi)}(x)=v_1(x)+O(\xi)(1\wedge x)\left(e^{\beta x}+Le^{-\beta x}\right) \label{est:phi1},
	\end{equation}
	and
	\begin{equation}
		\psi_{\lambda(\xi)}(x)=v_1(x)+O(\xi L)(1\wedge(L-x))e^{-\beta x}.
	\end{equation}
	In addition, we also have
	\begin{align}
		\vl(x) & =(1\wedge x)\left(O(1)e^{-\beta x}+O(\xi)e^{\beta x}\right)\label{est:phi2}, \\
		\pl(x) & =O(1)(1\wedge(L-x)){e^{-\beta x}}.
	\end{align}
	\label{lem:phipsi}
\end{lemma}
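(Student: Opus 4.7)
The plan is to view $\varphi_{\lambda(\xi)}$ and $\psi_{\lambda(\xi)}$ as perturbations of $v_1$. Indeed, the normalizations \eqref{eq:norm} force $\varphi_{\lambda_1} = \psi_{\lambda_1} = v_1$, so with $\eta := \lambda(\xi) - \lambda_1 = \xi + O(e^{-2\beta L})$ the whole problem is a standard perturbation in $\eta$. Note that by Lemma~\ref{lem:spectral} $e^{-2\beta L} = N^{-(\alpha-1)} \ll N^{-1} \leq \xi$ under \eqref{hfp}, so $\eta \asymp \xi$, and by hypothesis $\eta L \ll 1$.

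For $\psi_{\lambda(\xi)}$ the situation is largely explicit. On $[1,L]$, the potential $W$ vanishes and \eqref{eq:norm} reads
\[
\psi_\lambda(x) = \frac{\sinh(\sqrt{2\lambda}(L-x))}{\sinh(\sqrt{2\lambda_1}(L-1))}.
\]
A direct Taylor expansion in $\lambda$ (using $\sqrt{2\lambda} - \sqrt{2\lambda_1} = O(\eta)$ and $\sinh(\sqrt{2\lambda_1}(L-1)) \asymp e^{\beta L}$) yields $|\psi_\lambda(x) - v_1(x)| \lesssim \eta(L-x) e^{-\beta x}$ on $[1,L]$, which is stronger than the claim. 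On $[0,1]$, both $\psi_\lambda$ and $v_1$ solve the full ODE backwards from $x=1$ with terminal data differing by $O(\eta L)$; Lemma~\ref{lem:continuity} applied to the Prüfer variables then gives $\|\psi_\lambda - v_1\|_{\mathcal{C}^1([0,1])} \lesssim \eta L$, matching the claim since $(1\wedge(L-x))e^{-\beta x} \asymp 1$ there.

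For $\varphi_{\lambda(\xi)}$, no closed form is available, so I would set up a Volterra integral equation. Let $\tilde u_1$ be the second fundamental solution of $\tfrac12 u'' + \tfrac12 W u = \lambda_1 u$ characterized by $\tilde u_1(0) = 1$, $\tilde u_1'(0) = 0$; the Wronskian $W[u_1, \tilde u_1] \equiv -1$ is constant, and $|\tilde u_1(x)| \lesssim e^{\beta x}$ throughout $[0, L]$ (bounded on $[0,1]$ by Grönwall, and on $[1, L]$ a linear combination of $e^{\pm\sqrt{2\lambda_1}(x-1)}$ with $O(1)$ coefficients). The function $w := \varphi_\lambda - v_1$ satisfies $\tfrac12 w'' + \tfrac12 W w - \lambda_1 w = \eta\, \varphi_\lambda$ with $w(0) = w'(0) = 0$; variation of parameters gives
\[
\varphi_\lambda(x) = v_1(x) + 2\eta\, u_1(x)\int_0^x \tilde u_1(y)\varphi_\lambda(y)\,dy \;-\; 2\eta\, \tilde u_1(x)\int_0^x u_1(y)\varphi_\lambda(y)\,dy.
\]
Using the a priori bound $|\varphi_\lambda(y)| \lesssim (1\wedge y)e^{\beta y}$ (obtained from the same ODE analysis, for $L$ large) together with $|u_1(y)| \lesssim (1\wedge y)(1\wedge(L-y))e^{-\beta y}$ from Lemma~\ref{lem:spectral}, I would bound the two integrals by splitting into the regions $[0,1]$ (where they contribute at most $O(x^3)$ and $O(x^2)$ respectively) and $[1,L]$ (where they contribute $O(1)$ and $O(x)$). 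This yields $|\varphi_\lambda - v_1|(x) \lesssim \eta(1\wedge x)(e^{\beta x} + L e^{-\beta x})$ at the first iteration step. Since $\eta L \ll 1$, Picard iteration in a weighted sup-norm converges and only modifies the implicit constants, proving \eqref{est:phi1}.

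The coarser estimates \eqref{est:phi2} and $\pl(x) = O(1)(1\wedge(L-x))e^{-\beta x}$ follow from the first two estimates by inserting the bound $v_1(x) \asymp (1\wedge x\wedge(L-x))e^{-\beta x}$ and absorbing the $O(\xi L) = o(1)$ contributions into the implicit $O(1)$ constants. The main technical obstacle is the careful tracking of the $(1\wedge x)$ and $(1\wedge(L-x))$ factors through the two integrals in the Volterra equation, which forces the domain splitting into the "potential" region $[0,1]$ and the "free" region $[1,L]$ where the exponential rates $\pm\beta$ govern both $v_1$ and $\tilde u_1$.
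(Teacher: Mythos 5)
Your proposal is correct in substance, and for $\varphi_{\lambda(\xi)}$ it takes a genuinely different route from the paper. The paper treats $[0,1]$ and $[1,L]$ separately: on $[0,1]$ it writes $\varphi_{\lambda(\xi)}-v_1$ as a double integral of the ODE identity $\varphi_{\lambda(\xi)}''-v_1'' = (2\lambda(\xi)-W)(\varphi_{\lambda(\xi)}-v_1) + 2(\lambda(\xi)-\lambda_1)v_1$ and applies Grönwall (twice, for the value and the derivative), while on $[1,L]$ it uses the explicit $\sinh$/$e^{\pm\sqrt{2\lambda}x}$ representation and determines the coefficients by matching at $x=1$. Your Volterra formulation
\[
w = 2\eta\, u_1 \int_0^\cdot \tilde u_1\,\varphi_\lambda - 2\eta\, \tilde u_1 \int_0^\cdot u_1\,\varphi_\lambda, \qquad w:=\varphi_\lambda - v_1,\ \eta:=\lambda(\xi)-\lambda_1,
\]
unifies the two regions and dispenses with the matching step. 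This is attractive, and it does close: starting the iteration from $v_1$ (not from a crude a priori bound on $\varphi_\lambda$), a direct computation gives $|Tv_1(x)| \lesssim \eta(1\wedge x)(e^{\beta x}+Le^{-\beta x})$, and the operator $T$ is a contraction of norm $O(\eta L)$ in the sup-norm weighted by $(1\wedge x)(e^{\beta x}+Le^{-\beta x})$ — crucially the weight must be the one appearing in the target estimate for the \emph{error}, not a bound on $\varphi_\lambda$ itself. Your invocation of the a priori bound $|\varphi_\lambda(y)|\lesssim (1\wedge y)e^{\beta y}$ is a red herring: plugging it into the second integral yields an extra factor $x$ (giving $\eta x e^{\beta x}$ rather than $\eta e^{\beta x}$), which does not improve under a single iteration. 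The cleaner formulation is to iterate $w^{(n)} = Tv_1 + Tw^{(n-1)}$ from $w^{(0)}=0$, where the geometric ratio $\eta L\ll 1$ is manifest.

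One secondary gap: for $\psi_{\lambda(\xi)}$ on $[0,1]$ you appeal to Lemma~\ref{lem:continuity}, but that lemma only asserts \emph{continuity} of the Prüfer map, not a quantitative Lipschitz bound, so it cannot by itself produce the $O(\eta L)$ rate in $\mathcal{C}^1([0,1])$. The paper instead runs a Grönwall argument backwards from $x=1$ using the terminal data $\psi_{\lambda(\xi)}(1)-v_1(1)=O(\xi L)$ and $\psi_{\lambda(\xi)}'(1)-v_1'(1)=O(\xi L)$; you would need either to do the same or to upgrade Lemma~\ref{lem:continuity} to a Lipschitz statement (itself a Grönwall estimate), so there is no real savings here.
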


\begin{proof}
	We know from Lemma \ref{lem:spectral} that  $e^{-2\beta L}\ll N^{-1}\lesssim \xi$ under \eqref{hfp} so that $\lambda(\xi)>\la_1$ for sufficiently large $L$.

	We have
	\begin{equation}\label{eq:phiv}
		\vl(x)-v_1(x)=\int_{z=0}^x\int_{y=0}^z \left(\vl''(y)-v_1''(y)\right)dy\,dz, \quad x\in[0,1].
	\end{equation}
	We know (see \eqref{SLP} and \eqref{ODE:G}) that
	\begin{equation}
		\vl''(y)-v_1''(y)= (2\lambda(\xi)-W(y))(\vl(y)-v_1(y))+2(\la-\la_1)v_1(y).\label{eq:phiv2}
	\end{equation} Besides, we see from Lemma \ref{lem:spectral} that $2(\lambda(\xi)-\lambda_1)v_1=O(\xi)$.
	Hence, we obtain that for all $x\in[0,1]$,
	\begin{align*}
		|\vl(x)-v_1(x)| & \leqslant \int_{z=0}^1\int_{y=0}^x |2\lambda(\xi)-W(y)||\vl(y)-v_1(y)| dy\; dz +C\xi \\
		                & \leqslant \int_{y=0}^x (2\linf+\|W\|_\infty)|\vl(y)-v_1(y)| dy + C\xi.
	\end{align*}
	Grönwall's lemma then yields
	\begin{equation*}
		\vl(x)-v_1(x)=O(\xi), \quad x\in[0,1].
	\end{equation*}
	Putting this together with  \eqref{eq:phiv} and \eqref{eq:phiv2}, we get that
	\begin{equation}\label{eq:int1}
		\vl(x)-v_1(x)=O(\xi)x=O(\xi)(1\wedge x)e^{\beta x},\quad x\in[0,1].
	\end{equation}
	Using a similar argument, one can easily show that
	\begin{equation}
		\vl'(1)-v_1'(1)=O(\xi). \label{eq:phiv3}
	\end{equation}
	Then, we see from \eqref{ODE:G} that, on $[1,L]$, $\vl$ can be written as
	\begin{equation*}
		\varphi_{\lambda(\xi)}(x)=\vl(1)\frac{\sinh(\sqrt{2\lambda(\xi)}(L-x))}{\sinh(\sqrt{2\lambda(\xi)}(L-1))}+ Ae^{\sqrt{2\la(\xi)}(x-1)}+Be^{-\sqrt{2\lambda(\xi)}(x-1)},
	\end{equation*} for some $A,B\in\mathbb{R}$. Applying this equality to $x=1$, we see that $A=-B$.
	In addition, we know that $\vl(1)=v_1(1)+O(\xi)=1+O(\xi)$  and one can show by an explicit computation (e.g.~using the mean value theorem on $[1,L-1]$ and expanding $\sinh(\sqrt{2\lambda(\xi)}(L-x))=\sinh((\sqrt{2\lambda_1}+O(\xi))(L-x))$ on $[L-1,L]$)  that
	\begin{equation}\label{est:xiL}
		\frac{\sinh(\sqrt{2\lambda(\xi)}(L-x))}{\sinh(\sqrt{2\la(\xi)}(L-1))}=(1+O(\xi L))v_1(x), \quad x\in[1,L].
	\end{equation}
	Besides, a direct calculation shows that
	\begin{equation*}
		\vl'(1)=\sqrt{2\la}\left(2A-\vl(1)\frac{\cosh(\sqrt{2\lambda(\xi)}(L-1))}{\sinh(\sqrt{2\lambda(\xi)}(L-1))}\right).
	\end{equation*}
	Recalling from \eqref{eq:phiv3} that $\vl'(1)=v'_1(1)+O(\xi)$ with $v_1'(1)=-\sqrt{2\la_1}/\tanh(\sqrt{2\lambda_1}(L-1))$ and remarking that $\sqrt{2\lambda(\xi)} =\sqrt{2\linf}(1+O(\xi))$, $\sqrt{2\lambda_1}=\sqrt{2\linf}(1+O(e^{-2\beta L})),$ $\tanh(\sqrt{2\lambda_1}(L-1))^{-1}=1+O(e^{-2\beta L})$, $\tanh(\sqrt{2\lambda(\xi)}(L-1))^{-1}=1+O(e^{-2\beta L})$ and that $e^{-2\beta L}=O(\xi)$ under \eqref{hfp}, we get that
	\begin{equation*}
		A=O(\xi).
	\end{equation*}
	Therefore,
	\begin{align*}
		\vl(x) & =(1+O(\xi L))v_1(x)+O(\xi)\sinh(\sqrt{2\lambda(\xi)} (x-1))                           \\
		       & =v_1(x)+O(\xi L)(1\wedge x)e^{-\beta x}+O(\xi)(1\wedge x)e^{\beta x},\quad x\in[1,L].
	\end{align*}
	Putting this together with \eqref{eq:int1} yields \eqref{est:phi1}. Equation \eqref{est:phi2} can then be deduced from the first part of the above: one can easily show that $\sinh(\sqrt{2\lambda(\xi)}(x-1))=O(1)(1\wedge (x-1))e^{\beta x}$ for all $x\in[1,L]$. We then use \eqref{eq:int1} to conclude.

	We now move to the estimate on $\pl$. We recall from \eqref{eq:norm} that
	\begin{equation*}
		\pl(x)=\frac{\sinh(\sqrt{2\lambda(\xi)}(L-x))}{\sinh(\sqrt{2\lambda_1}(L-1))}, \quad x\in[1,L].
	\end{equation*}
	As for \eqref{est:xiL}, one can show that
	\begin{equation}
		\label{eq:int0}
		\pl(x)=(1+O(\xi L))v_1(x), \quad x\in[1,L].
	\end{equation}
	The same argument also yields that $\pl'(1)=v_1'(1)+O(\xi L)$.

	We now prove that this bound also holds on $[0,1]$.
	On [0,1], the function $\pl$ also satisfies \eqref{eq:phiv2}. Hence, we get that for all $x\in[0,1]$,
	\begin{align*}
		\pl(x)-v_1(x) & ={\pl(1)-v_1(1)}+\int_{z=1}^x \pl'(z)-v_1'(z)dz                                                                                                    \\
		              & =\underbrace{\pl(1)-v_1(1)}_{=O(\xi L)}+\int_{z=1}^x \left(\underbrace{\pl'(1)-v_1'(1)}_{=O(\xi L)}+\int_{y=1}^z (\pl''(y)-v_1''(y)) \;dy\right)dz \\
		              & =O(\xi L)+\int_{z=1}^x\int_{y=1}^z (2\lambda(\xi) - W(y))(\pl(y)-v_1(y)) dy\,dz.
	\end{align*}
	Applying Grönwall's inequality in the same way as for $\vl$ we get that
	\begin{equation*}
		\pl(x)=v_1(x)+O(\xi L), \quad x\in[0,1].
	\end{equation*}
	This concludes the proof of the lemma.

\end{proof}

\begin{lemma}\label{lem:wronsk}
	Assume that \eqref{hfp} holds and that $\frac{1}{N}\lesssim \xi \ll \frac{1}{L}$. Let $\lambda(\xi)$ be as in Proposition \ref{prop:Green}.
	Then,
	\begin{equation*}
		\omega_{\lambda(\xi)}=\xi\left(\int_{y=0}^Lv_1(y)^2dy+O(\xi L)\right).
	\end{equation*}
\end{lemma}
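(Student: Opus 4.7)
The core identity is that $\omega_{\lambda_1}=0$: indeed, at $\lambda=\lambda_1$ both $\varphi_{\lambda_1}$ and $\psi_{\lambda_1}$ solve \eqref{ODE:G} with $\lambda=\lambda_1$ and satisfy \emph{both} boundary conditions $v(0)=v(L)=0$ (by the normalisation \eqref{eq:norm}, $\varphi_{\lambda_1}=\psi_{\lambda_1}=v_1$). Since the Wronskian is constant in $x$, evaluating at $x=0$ and using $\varphi_{\lambda(\xi)}(0)=0$ and $\varphi_{\lambda(\xi)}'(0)=v_1'(0)$ yields
\[
\omega_{\lambda(\xi)} \;=\; \psi_{\lambda(\xi)}(0)\,v_1'(0).
\]
So the task reduces to estimating $\psi_{\lambda(\xi)}(0)$, and the plan is to do this via a Green-type identity that converts a boundary value into an integral against $v_1$.

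Concretely, write $\tilde\xi:=\lambda(\xi)-\lambda_1$. Since $u:=\psi_{\lambda(\xi)}$ and $v:=v_1$ satisfy $u''=(2\lambda(\xi)-W)u$ and $v''=(2\lambda_1-W)v$ respectively, one verifies
\[
\frac{d}{dx}\bigl[u(x)v'(x)-u'(x)v(x)\bigr] \;=\; u(x)v''(x)-u''(x)v(x) \;=\; -2\tilde\xi\, u(x)v(x).
\]
Integrating on $[0,L]$, the bracket vanishes at $L$ (both $\psi_{\lambda(\xi)}(L)=0$ and $v_1(L)=0$) and equals $\psi_{\lambda(\xi)}(0)\,v_1'(0)$ at $0$ (using $v_1(0)=0$). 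This gives
\[
\omega_{\lambda(\xi)} \;=\; \psi_{\lambda(\xi)}(0)\,v_1'(0) \;=\; 2\tilde\xi\int_0^L \psi_{\lambda(\xi)}(y)\,v_1(y)\,dy,
\]
which is the desired integral representation (up to an absolute constant that should match the claimed formula).

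Next, substitute the approximation $\psi_{\lambda(\xi)}(y)=v_1(y)+O(\xi L)(1\wedge(L-y))e^{-\beta y}$ from Lemma \ref{lem:phipsi}. Combined with the decay bound $v_1(y)\lesssim e^{-\beta y}$ from \eqref{exp:d:v1}, the cross term contributes
\[
O(\xi L)\int_0^L e^{-2\beta y}\,dy \;=\; O(\xi L),
\]
so $\int_0^L \psi_{\lambda(\xi)} v_1\,dy = \int_0^L v_1^2 \,dy + O(\xi L)$. Finally, $\tilde\xi = \xi + (\lambda_1^\infty-\lambda_1)$, and Lemma \ref{lem:spectral} gives $\lambda_1^\infty-\lambda_1 = O(N^{-(\alpha-1)})$; under \eqref{hfp} we have $\alpha>2$, so this correction is negligible relative to $\xi \gtrsim 1/N$ and also against $\xi\cdot(\xi L)$ in the range $\xi\gtrsim 1/N$, $\xi\ll 1/L$. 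Multiplying through yields the stated expansion.

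The main obstacle is bookkeeping: one has to check that the error $\lambda_1^\infty-\lambda_1$ can indeed be absorbed into the $O(\xi\cdot \xi L)$ remainder for the full range of $\xi$ allowed by the hypotheses, and that the contribution of the non-leading piece of $\psi_{\lambda(\xi)}$, weighted by $v_1$, truly integrates to $O(\xi L)$ rather than something larger such as $O(\xi L^2)$. Both reductions come down to using the exponential decay rate $\beta$ of $v_1$ to control integrals of the form $\int_0^L e^{-2\beta y}(1\wedge(L-y))\,dy$, which is $O(1)$ and independent of $L$. Apart from these quantitative checks, the proof is a standard Wronskian-expansion argument near a simple eigenvalue.
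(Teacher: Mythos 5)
Your proof takes a genuinely different and arguably cleaner route than the paper's. The paper starts from the probabilistic normalisation $\int_0^L q_t(x,y)\,dy = 1$, Laplace-transforms it in $t$ into $\tilde\xi^{-1}=h(0,x)^{-1}\int_0^L h(0,y)G_\xi(x,y)\,dy$, then substitutes the explicit Green's function from \eqref{def:green} and expands using Lemma~\ref{lem:phipsi}. You instead evaluate the (constant-in-$x$) Wronskian at $x=0$ to reduce $\omega_{\lambda(\xi)}$ to $\psi_{\lambda(\xi)}(0)\,v_1'(0)$, and obtain the integral representation directly from the Lagrange identity $\frac{d}{dx}(uv'-u'v)=-2(\lambda(\xi)-\lambda_1)uv$ with $u=\psi_{\lambda(\xi)}$, $v=v_1$. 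Both approaches ultimately need the same estimates from Lemma~\ref{lem:phipsi}; yours avoids the Green's function and the probabilistic input entirely, makes the mechanism (the simple zero of the Wronskian at the eigenvalue $\lambda_1$) transparent, and yields an exact rather than merely asymptotic integral formula.

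Two remarks. First, your exact identity $\omega_{\lambda(\xi)}=2\tilde\xi\int_0^L\psi_{\lambda(\xi)}v_1$ disagrees with the stated lemma by a factor of $2$. This is not an error in your argument: the Laplace transform $H_\lambda(x,y)=\int_0^\infty e^{-\lambda t}g_t(x,y)\,dt$ of a Lebesgue-density heat kernel of $\partial_t=\tfrac12\partial_{yy}+\tfrac12W$ equals $2\,\omega_\lambda^{-1}\varphi_\lambda(x\wedge y)\psi_\lambda(x\vee y)$, since the jump condition at $y=x$ reads $\tfrac12[H_\lambda']=-1$ (check with $W\equiv0$: the transform of $(2\pi t)^{-1/2}e^{-|x-y|^2/2t}$ is $(2\lambda)^{-1/2}e^{-\sqrt{2\lambda}|x-y|}$, while $\omega_\lambda^{-1}\varphi\psi=\tfrac12(2\lambda)^{-1/2}e^{-\sqrt{2\lambda}|x-y|}$). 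So \eqref{def:green}, and hence the constant here, is missing a factor of $2$; this does not propagate to the uses of the lemma (Lemmas~\ref{lem:I} and \ref{lem:R}), which only invoke $(\omega_{\lambda(\xi)})^{-1}=O(N)$. Second, your final absorption of $\lambda_1^\infty-\lambda_1$ into the $O(\xi^2L)$ remainder does not hold uniformly on $\tfrac1N\lesssim\xi\ll\tfrac1L$: when $\xi\asymp 1/N$ and $2<\alpha<3$ one has $\lambda_1^\infty-\lambda_1\asymp N^{-(\alpha-1)}\gg \xi^2L\asymp N^{-2}\log N$. You rightly flag this as the bookkeeping obstacle, but then assert it closes, which is a small overstatement; the clean conclusion (which is what the paper's own proof produces just before its final sentence, and which suffices for every downstream use) is $\omega_{\lambda(\xi)}=\tilde\xi\bigl(\int_0^Lv_1^2+O(\xi L)\bigr)$ with $\tilde\xi=\xi+(\lambda_1^\infty-\lambda_1)$.
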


\begin{proof}
	Recall that $q_t(x,\cdot)=\tfrac{h(t,\cdot)}{h(0,x)}p_t(x,\cdot)$ is a probability density function and that $h(t,y)=e^{(\linf-\lambda_1)t}h(0,y)$. Thus, by definition of $G$ (see Proposition \ref{prop:Green}),
	\begin{equation}
		\label{green:spine}
		\int_{0}^\infty e^{-(\xi+(\linf-\lambda_1)) t}q_t(x,y)dt=\frac{h(0,y)}{h(0,x)}\int_0^\infty e^{-\xi t}p_t(x,y)dt=\frac{h(0,y)}{h(0,x)}G_{\xi}(x,y),
	\end{equation}
	so that for $\tilde \xi=\xi+(\linf-\lambda_1)$, we have
	\begin{align*}
		\frac{1}{\tilde \xi} & =\int_{0}^\infty e^{-\tilde{\xi} t}\overbrace{\int_0^L q_t(x,y)dy}^{=1}\ dt=  \frac{1}{h(0,x)}\int_0^Lh(0,y)G_{ \xi}(x,y)dy.
	\end{align*}
	We see then from \eqref{def:green} that
	\begin{equation*}
		\int_{y=0}^L h(0,y)G_{\xi} (x,y)dy
		=\frac{e^{\mu x}}{\omega_{\lambda( \xi)}}\left(\psi_{\lambda( \xi)}(x)\int_{y=0}^x\varphi_{\lambda( \xi)}(y)v_1(y)dy+\varphi_{\lambda( \xi)}(x)\int_{y=x}^L\psi_{\lambda( \xi)}(y)v_1(y)dy\right).
	\end{equation*}
	Using Lemma \ref{lem:spectral} and Lemma \ref{lem:phipsi}, and noting that for $0<x_1<x_2<L$, we have $$\int_{x_1}^{x_2}e^{-2\beta y}dy=O(1) (1\wedge(x_2-x_1)),$$ we get that
	\begin{align*}
		\psi_{\lambda( \xi)}(x)\int_{y=0}^x\varphi_{\lambda( \xi)}(y)v_1(y)dy & = \psi_{\lambda( \xi)}(x)\int_{y=0}^x\left(v_1(y)^2+O(\xi)(1+Le^{-2\beta y})\right)dy \\
		                                                                      & =\psi_{\lambda( \xi)}(x)\left(\int_{y=0}^x v_1(y)^2dy+ O(\xi L)(1\wedge x)\right)     \\
		                                                                      & =v_1(x)\int_{y=0}^x v_1(y)^2dy+ O(\xi L)(1\wedge x \wedge (L-x))e^{-\beta x}          \\
		                                                                      & = v_1(x)\left(\int_{y=0}^x v_1(y)^2dy+ O(\xi L)\right),
	\end{align*}
	and
	\begin{align*}
		\varphi_{\lambda( \xi)}(x)\int_{y=x}^L\psi_{\lambda( \xi)}(y)v_1(y)dy & =\varphi_{\lambda( \xi)}(x)\int_{y=x}^L\left(v_1(y)^2+O(\xi L)e^{-2\beta y}\right)dy                \\
		                                                                      & = \varphi_{\lambda( \xi)}(x)\left(\int_{y=x}^Lv_1(y)^2dy+O(\xi L)(1\wedge(L-x)e^{-2\beta x})\right) \\
		                                                                      & =v_1(x)\int_{y=x}^Lv_1(y)^2dy+O(\xi L)(x\wedge1\wedge(L-x))e^{-\beta x}                             \\
		                                                                      & =v_1(x)\left(\int_{y=x}^Lv_1(y)^2dy+O(\xi L)\right).
	\end{align*}
	Putting all of this together, we get that
	\begin{equation*}
		\omega_\lambda=\tilde{\xi}\left(\int_0^Lv_1(y)^2dy+O(\xi L)\right).
	\end{equation*}
	We conclude the proof by remarking that under \eqref{hfp}, $\tilde \xi =O(\xi)$.
\end{proof}

\begin{rem}\label{rem:green}
	For all $t>0$,
	\begin{equation*}
		\int_0^t p_s(x,y)ds\leqslant eG_{\frac{1}{t}}(x,y).
	\end{equation*}
	This follows from the fact that $\mathbf{1}_{s\in[0,t]}\leqslant e^{\frac{t-s}{t}}$ for all $0<s<t$.
\end{rem}

\subsection{The number of particles escaping the bulk}\label{sec:bulk}
{ In this section, $\gamma$ denotes a real number in $(0,1]$. We are interested in the expected number of particles reaching the level $\gamma L$ during the time interval $[0,tN]$. The following lemma allows to prove that, for a suitable choice of $\gamma$, this number  is exponentially small (in $L$).

	For $t>0$, we denote by  $R^\gamma([0,tN])$ the number of particles whose ancestors did not reach the level $\gamma L$ before time $tN$. We also define $p_t^\gamma (x,y)$, $v_1^\gamma$, $\lambda_1^\gamma$ and $G_{\xi}^\gamma(x,y)$ (as well as $\vl^\gamma$, $\pl^\gamma$ and $\omega_{\lambda(\xi)}^\gamma$) in the same way as $p_t(x,y)$, $v_1$, $\lambda_1$ and $G_{\xi}(x,y)$ but with $\gamma L$ instead of $L$.

	\begin{lemma}\label{lem:R} Assume that \eqref{hfp} holds.
		Let $T>0$ and $\gamma\in(0,1]$. For $N$ large enough, for all $x\in[0,\gamma L]$ and $t\in[0,T]$,
		\begin{equation*}
			\mathbb{E}_x\left[R^\gamma([0,tN])\right]=
			O(1)(1\wedge x)\left[e^{(\mu-\beta)x}e^{\left[(\mu-\beta)-\gamma(\mu+\beta)\right]L}+e^{(\mu+\beta)(x-\gamma L)}\right],\end{equation*}

		where the $O(\cdot)$ may depend on $T$ but not on $x$ nor on $t$.
	\end{lemma}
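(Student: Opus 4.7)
The quantity $R^\gamma([0,tN])$ counts the first-passage events at level $\gamma L$ during $[0,tN]$, i.e., the particles killed at the right boundary of $\mathbf{X}^{\gamma L}$. My plan is to identify its expectation with the time-integrated probability flux of $p^\gamma_s(x,\cdot)$ at $\gamma L$ and then apply the Green's function machinery of Section~\ref{sec:green}.

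The flux identity follows by computing $\partial_s\int_0^{\gamma L} p^\gamma_s(x,y)\,dy$ in two ways: direct integration of the PDE~\eqref{PDE:A} over $[0,\gamma L]$ using that $p^\gamma_s$ vanishes at both endpoints, and balancing the branching gain against the rate of particles killed at $0$ and at $\gamma L$ (the underlying branching/conservation argument). Matching the two expressions and integrating in $s$ yields
\[
\mathbb{E}_x[R^\gamma([0,tN])] \;=\; -\tfrac{1}{2}\int_0^{tN}\partial_y p^\gamma_s(x,y)\big|_{y=\gamma L}\,ds.
\]
By Remark~\ref{rem:green}, $\int_0^{tN}p^\gamma_s(x,\cdot)\,ds\leq e\,G^\gamma_{1/(tN)}(x,\cdot)$; differentiating in $y$ at $\gamma L$ (where $\psi^\gamma_{\lambda(\xi)}(\gamma L)=0$) and using Proposition~\ref{prop:Green} adapted to $[0,\gamma L]$ gives
\[
\mathbb{E}_x[R^\gamma([0,tN])] \;\lesssim\; \frac{e^{\mu(x-\gamma L)}\,\varphi^\gamma_{\lambda(\xi)}(x)\,\bigl[-(\psi^\gamma_{\lambda(\xi)})'(\gamma L)\bigr]}{\omega^\gamma_{\lambda(\xi)}},\qquad \xi=\tfrac{1}{tN}.
\]

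I would then estimate the three factors using the tools of Sections~\ref{sec:spectral}--\ref{sec:green} applied to the interval $[0,\gamma L]$. Lemma~\ref{lem:wronsk} gives $\omega^\gamma_{\lambda(\xi)}\asymp\xi$; Lemma~\ref{lem:phipsi} combined with \eqref{exp:d:v1} gives $\varphi^\gamma_{\lambda(\xi)}(x)\lesssim(1\wedge x)\bigl[e^{-\beta x}+\xi(e^{\beta x}+Le^{-\beta x})\bigr]$; and the explicit form \eqref{eq:norm} yields $-(\psi^\gamma_{\lambda(\xi)})'(\gamma L)=\sqrt{2\lambda(\xi)}/\sinh\bigl(\sqrt{2\lambda_1^\gamma}(\gamma L-1)\bigr)\asymp e^{-\beta\gamma L}$. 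Substituting $\xi=1/(tN)$ and $N=e^{(\mu-\beta)L}$, the leading piece $v_1^\gamma(x)$ of $\varphi^\gamma_{\lambda(\xi)}$ produces the first term $(1\wedge x)e^{(\mu-\beta)x}e^{[(\mu-\beta)-\gamma(\mu+\beta)]L}$ (the factor $t\in[0,T]$ being absorbed into $O(1)$), the correction $O(\xi)e^{\beta x}$ cancels the $1/\xi$ from the Wronskian to yield the second term $(1\wedge x)e^{(\mu+\beta)(x-\gamma L)}$, and the residual $O(\xi)Le^{-\beta x}$ is dominated by the first term since $tN\gg L$.

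The main obstacle will be the flux identity of Step~1: one must rigorously justify that the instantaneous rate of particles being killed at $\gamma L$ equals $-\tfrac{1}{2}\partial_y p^\gamma_s|_{y=\gamma L}$ via a clean branching/conservation argument, with enough regularity of $p^\gamma_s$ for $s>0$ to legitimately exchange $\partial_y$ and $\int_0^{tN}ds$ in the Green's function step. A secondary technical point is that Lemmas~\ref{lem:phipsi} and \ref{lem:wronsk} were stated with parameter $L$; one must inspect their proofs to verify that the estimates transfer to $[0,\gamma L]$ for our choice $\xi=1/(tN)$, which is automatic in the regime $\gamma\gtrsim 1/(\alpha-1)$ where the stated exponential bound is most informative.
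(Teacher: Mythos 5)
Your approach is essentially the paper's: identify $\mathbb{E}_x[R^\gamma([0,tN])]$ with the time-integrated flux $-\tfrac12\int_0^{tN}\partial_y p^\gamma_s(x,y)|_{y=\gamma L}\,ds$, bound it by the derivative of the Green's function via Remark~\ref{rem:green}, and then feed in the estimates from Proposition~\ref{prop:Green}, Lemma~\ref{lem:phipsi} and Lemma~\ref{lem:wronsk} applied on $[0,\gamma L]$. The paper simply cites the flux identity from the literature rather than re-deriving it, which is a matter of presentation rather than substance.

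There is, however, one real gap in the way you invoke the Green's function estimates. You set $\xi = 1/(tN)$ and then apply Lemmas~\ref{lem:phipsi} and \ref{lem:wronsk}, but those lemmas require $\tfrac1N \lesssim \xi \ll \tfrac1L$. For $t$ small (say $t\lesssim L/N$), $\xi=1/(tN)$ is of order $1/L$ or larger and lies outside the range of validity, so the estimates cannot be applied uniformly over $t\in[0,T]$. Your remark that ``the factor $t\in[0,T]$ is absorbed into $O(1)$'' addresses the conclusion but not the hypothesis. The paper sidesteps this by first observing that $t\mapsto\mathbb{E}_x[R^\gamma([0,tN])]$ is non-decreasing (the integrand of the flux identity is non-negative), so $\mathbb{E}_x[R^\gamma([0,tN])]\le \mathbb{E}_x[R^\gamma([0,TN])]$, and then applying the Green's function bound with the fixed resolvent parameter $\xi=1/(TN)$, which is safely in the required regime. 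Adding this one line of monotonicity before invoking Remark~\ref{rem:green} closes the gap and the rest of your computation goes through as written.
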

	\begin{proof}
		It is known (see e.g.~\cite[Lemma 5.7]{maillard20}) that
		\begin{equation*}
			\mathbb{E}_{x}\left[R^{\gamma }([0,tN])\right]=-\frac{1}{2}\int_{s=0}^{tN}\frac{\partial}{\partial y}p_s^{\gamma}(x,y)|_{y=\gamma L}ds.
		\end{equation*}
		In words, this means \cite{maillard20} that the expected number of particles ``killed'' at $\gamma L$ between times $0$ and $tN$ is equal to the integral of the heat flow out of the boundary $\gamma L$. Given the boundary condition in \eqref{PDE:A}, the flow out of $\gamma L$ is exactly $-\frac{1}{2}\frac{\partial}{\partial y}p_s^{\gamma}(x,y)|_{y=\gamma L}$ .

		Hence, $\mathbb{E}_x\left[R^\gamma([0,tN])\right]$ is an increasing function of $t$ (as expected) and a bound similar to Remark \ref{rem:green} yields that
		\begin{equation*}
			\mathbb{E}_{x}\left[R^{\gamma }([0,tN])\right]\leqslant \mathbb{E}_{x}\left[R^{\gamma}([0,TN])\right]\leqslant -\frac{1}{2}e\frac{\partial}{\partial y}G^\gamma_{(TN)^{-1}}(x,y)_{|y=\gamma L}.
		\end{equation*}
		Let $\xi=\frac{1}{TN}$ and $\lambda(\xi)=\linf +\xi$.  We know from \eqref{def:green} that
		\begin{equation*}
			\frac{\partial}{\partial y}G^{\gamma}_{(TN)^{-1}}(x,y)_{|y=\gamma L^{}}=(\omega_{\lambda(\xi)}^\gamma)^{-1}e^{\mu(x-\gamma L)}\vl^\gamma(x)(\pl^\gamma)'(\gamma L).
		\end{equation*}
		We then see from  \eqref{eq:norm} and Lemma \ref{lem:spectral} that
		\begin{equation*}
			(\pl^\gamma)'(\gamma L)=\sqrt{2\lambda_1^\gamma}\sinh\left(\sqrt{2\lambda_1^\gamma}(\gamma L -1)\right)^{-1}=O(e^{-\beta \gamma L}).
		\end{equation*}
		By Lemmas \ref{lem:phipsi}  and \ref{lem:wronsk}, we have
		\begin{equation*}
			(\omega_{\lambda(\xi)}^\gamma)^{-1}\vl^{{\gamma}}(x)\\=(\omega_{\lambda(\xi)}^\gamma)^{-1}(1\wedge x)\left(O(1)e^{-\beta x}+O(\xi)e^{\beta x}\right)=(1\wedge x)\left(O(N)e^{-\beta x}+O(1)e^{\beta x}\right).
		\end{equation*}
		Putting all these estimates together, we see that
		\begin{equation*}
			-\frac{1}{2}\frac{\partial}{\partial y} G^{\gamma}_{(TN)^{-1}}(x,y)_{|y=\gamma L}=O(1)(1\wedge x)e^{-(\mu+\beta)\gamma L}\left[e^{(\mu-\beta) L}e^{(\mu-\beta)x}+e^{(\mu+\beta)x}\right],
		\end{equation*}
		which concludes the proof of the lemma.
	\end{proof}
	\begin{cor}\label{cor:R} Assume that \eqref{hfp} holds.
		Let $T>0$ and $\gamma\in\left(0,\frac{\mu-\beta}{2\beta}\right]$. For $N$ large enough, for all $x\in[0,\gamma L]$ and $t\in[0,T]$,
		\begin{equation*}
			\mathbb{E}_x\left[R^\gamma([0,tN])\right]=
			O(1)(1\wedge x)e^{(\mu-\beta)x}e^{\left[(\mu-\beta)-\gamma(\mu+\beta)\right]L}.\end{equation*}
	\end{cor}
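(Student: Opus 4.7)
The plan is to derive this corollary as a direct simplification of Lemma~\ref{lem:R}, the point being simply that under the additional constraint $\gamma \leq \frac{\mu-\beta}{2\beta}$ the second term in the two-term upper bound is absorbed by the first. Concretely, I would start from
\[
\mathbb{E}_x\!\left[R^\gamma([0,tN])\right] = O(1)(1\wedge x)\left[e^{(\mu-\beta)x}e^{[(\mu-\beta)-\gamma(\mu+\beta)]L} + e^{(\mu+\beta)(x-\gamma L)}\right],
\]
which is the content of Lemma~\ref{lem:R}, and reduce everything to comparing the two exponential terms inside the brackets.

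The key computation is to show that, for $x \in [0,\gamma L]$,
\[
e^{(\mu+\beta)(x-\gamma L)} \leq e^{(\mu-\beta)x}\,e^{[(\mu-\beta)-\gamma(\mu+\beta)]L}.
\]
Taking logarithms, this is equivalent to $(\mu+\beta)x - (\mu-\beta)x \leq (\mu-\beta)L$, i.e.\ $2\beta x \leq (\mu-\beta)L$, which holds as soon as $x \leq \frac{\mu-\beta}{2\beta}L$. Under the hypothesis $\gamma \leq \frac{\mu-\beta}{2\beta}$, the bound $x \leq \gamma L$ then forces $x \leq \frac{\mu-\beta}{2\beta}L$, so the inequality is satisfied on the whole range of $x$ under consideration.

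Plugging this comparison back into the bound of Lemma~\ref{lem:R} absorbs the second term into (a constant multiple of) the first, giving exactly
\[
\mathbb{E}_x\!\left[R^\gamma([0,tN])\right] = O(1)(1\wedge x)e^{(\mu-\beta)x}e^{[(\mu-\beta)-\gamma(\mu+\beta)]L},
\]
with the implicit constant depending on $T$ but uniform in $t \in [0,T]$ and $x \in [0,\gamma L]$. There is no real obstacle here: the result is a bookkeeping consequence of Lemma~\ref{lem:R} together with the explicit restriction on $\gamma$, which was precisely chosen to make the ``deep bulk'' exponential $e^{(\mu+\beta)(x-\gamma L)}$ harmless.
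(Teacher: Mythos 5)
Your proof is correct and follows essentially the same route as the paper: both arguments start from Lemma~\ref{lem:R} and use the constraint $x \leq \gamma L \leq \frac{\mu-\beta}{2\beta}L$ to show $2\beta x \leq (\mu-\beta)L$, which absorbs the second exponential term into the first. The paper phrases this as $e^{(\mu+\beta)x} = e^{(\mu-\beta)x}e^{2\beta x} = O(1)e^{(\mu-\beta)L}e^{(\mu-\beta)x}$, but this is the same inequality you obtain by comparing logarithms.
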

	\begin{proof}
		Note that for
		$\gamma\leq\frac{\mu-\beta}{2\beta}$ and $x\leq \gamma L$,  we have
		\begin{equation*}
			e^{(\mu+\beta)x}=e^{(\mu-\beta)x}e^{2\beta x}=O(1)e^{2\beta\gamma L}e^{(\mu-\beta)x}=O(1)e^{(\mu-\beta)L}e^{(\mu-\beta)x}.
		\end{equation*}
	\end{proof}
	\begin{cor}\label{cor:bulk}
		Let $0<\delta<\left(\frac{\mu-\beta}{2\beta}\right)\wedge \left(\frac{1}{2\beta}\frac{(\mu-\beta)^2}{\mu+\beta}\right)$ and $\gamma=\frac{\mu-\beta}{2\beta}-\delta$. Then,
		$$\mathbb{P}_x(R^\gamma ([0,tN])>0)=O\left(N^{-\left(\frac{\mu-\beta}{2\beta}-\delta \frac{\mu+\beta}{\mu-\beta}\right)}\right)(1\wedge x)e^{(\mu-\beta)x}, \quad \forall x\in[0,\gamma L], \; t\in[0,T].$$
	\end{cor}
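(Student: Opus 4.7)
The plan is essentially a one-line reduction to Corollary \ref{cor:R} via Markov's inequality, followed by a direct computation of the resulting exponent using the defining relation $L = \frac{1}{\mu-\beta}\log N$.

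\textbf{Step 1: Markov's inequality.} Since $R^\gamma([0,tN])$ is a non-negative integer-valued random variable, we have
\[
\mathbb{P}_x\bigl( R^\gamma([0,tN]) > 0 \bigr) \;=\; \mathbb{P}_x\bigl( R^\gamma([0,tN]) \geq 1 \bigr) \;\leq\; \mathbb{E}_x\bigl[ R^\gamma([0,tN]) \bigr].
\]
First, I would check that the hypothesis $\gamma = \frac{\mu-\beta}{2\beta} - \delta \in \bigl(0, \frac{\mu-\beta}{2\beta}\bigr]$ allows us to invoke Corollary \ref{cor:R}. The upper bound $\gamma \leq \frac{\mu-\beta}{2\beta}$ is immediate. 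Moreover under \eqref{hfp} we have $\mu < 3\beta$, hence $\gamma < 1$ so that $[0,\gamma L] \subset [0,L]$ makes sense. Applying Corollary~\ref{cor:R} yields
\[
\mathbb{P}_x\bigl( R^\gamma([0,tN]) > 0 \bigr) \;\leq\; O(1)(1\wedge x)\, e^{(\mu-\beta)x}\, \exp\Bigl( \bigl[ (\mu-\beta) - \gamma(\mu+\beta) \bigr] L \Bigr).
\]

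\textbf{Step 2: Computation of the exponent.} Plugging in $\gamma = \frac{\mu-\beta}{2\beta} - \delta$:
\begin{align*}
(\mu-\beta) - \gamma(\mu+\beta)
&= (\mu-\beta) - \frac{(\mu-\beta)(\mu+\beta)}{2\beta} + \delta(\mu+\beta)\\
&= (\mu-\beta)\cdot\frac{2\beta - (\mu+\beta)}{2\beta} + \delta(\mu+\beta)\\
&= -\frac{(\mu-\beta)^2}{2\beta} + \delta(\mu+\beta).
\end{align*}
Using $L = \frac{1}{\mu-\beta}\log N$ (see \eqref{def:L}), the exponential factor rewrites as
\[
\exp\Bigl( \bigl[ (\mu-\beta) - \gamma(\mu+\beta) \bigr] L \Bigr) \;=\; N^{-\bigl( \frac{\mu-\beta}{2\beta} - \delta\frac{\mu+\beta}{\mu-\beta} \bigr)},
\]
which is exactly the prefactor appearing in the statement. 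Combining Steps 1 and 2 gives the desired estimate.

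\textbf{Remark on the hypothesis on $\delta$.} The additional constraint $\delta < \frac{1}{2\beta}\frac{(\mu-\beta)^2}{\mu+\beta}$ is precisely equivalent to $\frac{\mu-\beta}{2\beta} - \delta\frac{\mu+\beta}{\mu-\beta} > 0$, i.e.\ it ensures that the exponent of $N$ is strictly negative, so that the bound is genuinely decaying. No step in the argument presents any real obstacle beyond the arithmetic bookkeeping; the substance of the proof is contained in Corollary~\ref{cor:R}, which itself relied on the delicate Green's function estimates of Lemmas~\ref{lem:phipsi}--\ref{lem:wronsk}.
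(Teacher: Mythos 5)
Your proof is correct and matches the paper's argument exactly: Markov's inequality applied to the bound from Corollary~\ref{cor:R}, followed by the algebraic simplification of the exponent $(\mu-\beta)-\gamma(\mu+\beta)$ and substitution of $L=\tfrac{1}{\mu-\beta}\log N$. The paper's proof is just a terser version of the same computation, so there is nothing to add.
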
}
\begin{proof}
	This follows from Markov's inequality and the fact that $(\mu-\beta)-\gamma(\mu+\beta)=(\mu-\beta)\left(-\frac{\mu-\beta}{2\beta}+\delta \frac{\mu+\beta}{\mu-\beta}\right).$
\end{proof}
\begin{cor}\label{cor:pb:survie}
	Assume  that \eqref{hfp} holds and let $x>0$. Then
	\begin{equation*}
		\frac{1}{h(0,x)}N\mathbb{P}_x(R^1([0,tN])>0)\to 0, \quad N\to \infty,
	\end{equation*}
	where $h$ is as in \eqref{def:hL}.
\end{cor}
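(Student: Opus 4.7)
The plan is to reduce the claim directly to Lemma~\ref{lem:R} specialized at the endpoint $\gamma=1$, which is permitted since Lemma~\ref{lem:R} is stated for all $\gamma\in(0,1]$. Note that the subsequent Corollaries~\ref{cor:R} and~\ref{cor:bulk} cannot be invoked here, because they require $\gamma\leq(\mu-\beta)/(2\beta)$, a quantity that is strictly less than $1$ precisely under \eqref{hfp} (since $\mu<3\beta$ forces $(\mu-\beta)/(2\beta)<1$). So I must revert to the raw bound from Lemma~\ref{lem:R} and dominate the full boundary term $e^{(\mu+\beta)(x-L)}$ directly, rather than absorbing it into the other term as is done in Corollary~\ref{cor:R}.

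First I would apply Markov's inequality, $\mathbb{P}_x(R^1([0,tN])>0)\leq \mathbb{E}_x[R^1([0,tN])]$, and then invoke Lemma~\ref{lem:R} with $\gamma=1$ to obtain
\begin{equation*}
\mathbb{P}_x\bigl(R^1([0,tN])>0\bigr) = O(1)(1\wedge x)\bigl[e^{(\mu-\beta)x}e^{-2\beta L}+e^{(\mu+\beta)(x-L)}\bigr].
\end{equation*}
Next, I would substitute the definition $L=\log(N)/(\mu-\beta)$ from \eqref{def:L} together with $\alpha=(\mu+\beta)/(\mu-\beta)$ from \eqref{def:beta}, which translates the two exponentials in $L$ into the powers $e^{-2\beta L}=N^{-(\alpha-1)}$ and $e^{-(\mu+\beta)L}=N^{-\alpha}$. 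Multiplying by $N$ then gives
\begin{equation*}
N\,\mathbb{P}_x\bigl(R^1([0,tN])>0\bigr) = O(1)(1\wedge x)\bigl[e^{(\mu-\beta)x}N^{2-\alpha}+e^{(\mu+\beta)x}N^{1-\alpha}\bigr].
\end{equation*}

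Finally, under the fully pushed assumption \eqref{hfp} one has $\alpha>2$, so both $N^{2-\alpha}$ and $N^{1-\alpha}$ tend to $0$ as $N\to\infty$. For fixed $x>0$, Lemma~\ref{lem:spectral} ensures $h(0,x)\to h^\infty(x)>0$, hence $1/h(0,x)$ stays bounded and the claim follows. There is no real obstacle beyond the work already carried out in Lemma~\ref{lem:R}: the present corollary is essentially a one-line calculation that shows the cutoff $L=\log(N)/(\mu-\beta)$ has been chosen precisely so that both contributions to the boundary hitting probability are of smaller order than the survival probability $1/N$, a cancellation that is available only in the fully pushed regime $\alpha>2$.
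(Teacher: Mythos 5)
Your proof is correct and follows essentially the same route as the paper: Markov's inequality, then Lemma~\ref{lem:R} at $\gamma=1$, then the substitution $L=\log(N)/(\mu-\beta)$ so that both boundary contributions become $N^{2-\alpha}$ and $N^{1-\alpha}$, both vanishing under $\alpha>2$, with $1/h(0,x)$ bounded for fixed $x>0$ by Lemma~\ref{lem:spectral}. Your preliminary remark that Corollaries~\ref{cor:R} and~\ref{cor:bulk} cannot be used (because under \eqref{hfp} one has $(\mu-\beta)/(2\beta)<1$, excluding $\gamma=1$) is a correct and worthwhile clarification, but otherwise the argument coincides with the paper's.
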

\begin{proof}
	According to Lemma \ref{lem:R}, Lemma \ref{lem:spectral} and Markov's inequality, it is enough to prove that
	\begin{equation*}
		Ne^{-2\beta L}\to 0, \quad \text{and}, \quad Ne^{2\beta x}e^{-(\mu+\beta)L}\to 0,
	\end{equation*}
	as $N\to\infty$. The first assertion is a direct consequence of \eqref{hfp}. The second convergence is clear when $x$ is fixed.
\end{proof}

\section{Convergence of moments}

From now on, we will assume that \eqref{hfp} holds. In this section, we will make heavy use of the notations defined in Section \ref{sect:k-spine-CV}.

\subsection{$k$-Mixing}

\begin{proposition}\label{prop:cvloi}
	Let $(\varphi_i; i\in[k])$ be bounded measurable functions and $(X_i)$ be  a sequence of i.i.d.~random variables with density $\Pi^\infty(dx) = \tfrac{v_1^\infty(x)^2}{||v_1^\infty||^2}dx$.
	Under $\bar Q_{x}^{k,t}$, define
	\begin{equation*}
		\bar R^N:=\prod_{v\in \cal B} r\left(\bar \zeta_v\right) h\left(|v|N,\bar \zeta_v\right)\prod_{v\in \cal L} \varphi_i(\bar \zeta_v).
	\end{equation*}
	Then as $N\to\infty$,
	\begin{equation}
		\bar R^N \Rightarrow \left(\prod_{i=1}^{k-1}r\left(X_i\right)h^{{ \infty}}\left(X_i\right)\right)\left(\prod_{i=1}^k \varphi_i(X^{i+k-1}) \right)
	\end{equation}
	in distribution.
\end{proposition}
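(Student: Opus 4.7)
The plan is to reduce the convergence in distribution of $\bar R^N$ to two ingredients: (i) the asymptotic independence of the marks $(\bar\zeta_v)_{v\in\mathcal{B}\cup\mathcal{L}}$ under $\bar Q_x^{k,t}$, and (ii) the locally uniform convergence $h(|v|N,\cdot)\to h^\infty(\cdot)$. Throughout, denote the sought limit by
\[
\bar R^\infty := \Bigl(\prod_{i=1}^{k-1} r(X_i) h^\infty(X_i)\Bigr)\Bigl(\prod_{i=1}^k \varphi_i(X_{i+k-1})\Bigr),
\]
with $(X_i)_{1\le i\le 2k-1}$ i.i.d.~of law $\Pi^\infty$.

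\textbf{Step 1 (asymptotic independence of the marks).} Fix $\eta>0$ and let $G_\eta$ be the event that every edge of the $k$-spine tree $\mathbb{T}$ has time-length at least $\eta$. Since $G_\eta$ depends only on the uniform variables $(U_i)$, its probability is independent of $N$ and $\mathbb{P}(G_\eta)\to 1$ as $\eta\to 0$. Conditional on $\mathbb{T}$, the marks form a tree-indexed Markov chain whose transition along an edge of time-length $s$ is the accelerated $1$-spine kernel $\bar q_s(x,y)=q^L_{sN}(x,y)$. By \eqref{def:L}, $L=(\mu-\beta)^{-1}\log N\ll \eta N$, so on $G_\eta$ every accelerated edge length exceeds $c_{\ref{lem:hk}}L$ for $N$ large. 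Proposition~\ref{lem:hk} then gives $\|\bar q_s(x,\cdot)-\Pi\|_{\mathrm{TV}}=O(e^{-\beta L})$ uniformly in $x\in[0,L]$. Iterating this bound along the $2k-1$ edges of $\mathbb{T}$ shows that, conditionally on $G_\eta$, the joint law of $(\bar\zeta_v)_{v\in\mathcal{B}\cup\mathcal{L}}$ differs from $\Pi^{\otimes(2k-1)}$ by at most $O(k e^{-\beta L})$ in total variation. Corollary~\ref{cor:cv:st:multivar} then yields convergence in distribution to $(\Pi^\infty)^{\otimes(2k-1)}$.

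\textbf{Step 2 (passing $h$ to the limit).} By \eqref{approx:h}, $h(|v|N,y)=(1+O(e^{(\mu-3\beta)L}))\,h(0,y)$ uniformly in $|v|\in[0,t]$ and $y\in[0,L]$, and the error vanishes under \eqref{hfp}; by Lemma~\ref{lem:spectral}, $h(0,\cdot)\to h^\infty(\cdot)$ locally uniformly on $[0,\infty)$. Applying Skorohod's representation to the (Polish) space $[0,\infty)^{2k-1}$, I may realise, conditionally on $G_\eta$, coupled versions $(\tilde\zeta_v^N)_v$ of $(\bar\zeta_v)_v$ converging a.s.~to i.i.d.~variables $(X_v)_v$ of law $\Pi^\infty$. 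Continuity of $r$ and of $h^\infty$ then yields $\bar R^N\to\bar R^\infty$ a.s.~on this coupling, hence convergence in distribution of $\bar R^N$ conditional on $G_\eta$. To remove the conditioning, for any bounded continuous $f:\mathbb{R}\to\mathbb{R}$,
\[
\bigl|\mathbb{E}_{\bar Q_x^{k,t}}[f(\bar R^N)]-\mathbb{E}[f(\bar R^\infty)]\bigr|
\le \bigl|\mathbb{E}[f(\bar R^N)\mathbf{1}_{G_\eta}]-\mathbb{E}[f(\bar R^\infty)\mathbf{1}_{G_\eta}]\bigr| + 2\|f\|_\infty\,\mathbb{P}(G_\eta^c),
\]
the first term vanishing as $N\to\infty$ for each fixed $\eta>0$ and the second, uniform in $N$, as $\eta\to 0$.

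\textbf{Main obstacle.} The key technical difficulty is that the reproductive value $h^\infty(x)\asymp e^{(\mu-\beta)x}$ is \emph{unbounded}, so one cannot apply the continuous mapping theorem directly to the joint weak convergence of the marks. Skorohod's representation circumvents this by trading weak convergence for a.s.~convergence, after which continuity of $h^\infty$ alone suffices to pass to the limit in the product. The integrability $\int h^\infty\,d\Pi^\infty<\infty$, which holds precisely under \eqref{hfp} by Proposition~\ref{prop:first}, ensures $\bar R^\infty$ is a.s.~finite, and testing against bounded continuous functions eliminates the remaining concern about the small-probability event $G_\eta^c$ where some spine-tree edge is short and $\bar R^N$ could be very large.
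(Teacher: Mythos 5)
Your proof follows essentially the same approach as the paper's: condition on the branching time structure so that all (accelerated) edge lengths exceed the mixing scale $c_{\ref{lem:hk}}L$, use Proposition~\ref{lem:hk} to reduce the joint law of the marks to an i.i.d.\ $\Pi$-sample up to a vanishing total-variation error, appeal to Corollary~\ref{cor:cv:st:multivar} to pass $\Pi\to\Pi^\infty$ and $h\to h^\infty$, and remove the conditioning by a dominated-convergence argument. The only differences are cosmetic: the paper conditions on a separation event whose probability tends to $1$ as $N\to\infty$ (since $L=O(\log N)\ll N$), so a single limit suffices rather than your double limit in $N$ then $\eta$, and the paper handles the unboundedness of $h$ directly through Corollary~\ref{cor:cv:st:multivar} (whose proof uses a truncation plus dominated convergence) rather than through a Skorohod coupling combined with the locally uniform convergence $h(0,\cdot)\to h^\infty$.
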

\begin{proof}
	Let $f:[0,\infty)\to\mathbb{R}$ be a continuous bounded function.
	Let $(u_1,...,u_{k-1})\in(0,t)^{k-1}$ be such that
	$$\forall i,j\in[k-1],\quad  N|u_i-u_j|>c_{\ref{lem:hk}}L \quad \text{and} \quad  N(u_i\wedge(t-u_i))>c_{\ref{lem:hk}}L.$$
	It then follows from Proposition \ref{lem:hk} that
	\begin{equation*}
		\forall i,j\in[k-1]: u_i<u_j, \quad |q_{u_j-u_i}(x,y)-\Pi(y)|\leq e^{-\beta L}\Pi(y),
	\end{equation*} and that
	\begin{equation*}
		\forall i\in[k-1],  \quad |q_{u_i}(x,y)-\Pi(y)|\leq e^{-\beta L}\Pi(y) \quad \text{and} \quad|q_{t-u_i}(x,y)-\Pi(y)|\leq e^{-\beta L}\Pi(y).
	\end{equation*}
	We now condition $\bar R^N$ on the tree structure of the $k$-spine (see Section \ref{sect:k-spine-CV}). Corollary \ref{cor:cv:st:multivar} shows that
	\begin{equation*}
		\lim\limits_{N \rightarrow +\infty}\bar Q^{k,t}_x\left(f(\bar R^N)|(U_1,...U_{k-1})=(u_1,...,u_{k-1})\right)
		=\int_{} f\left(\prod_{i=1}^{k-1}r\left(x_i\right)h^\infty\left(x_i\right)\prod_{i=1}^k\varphi_i(x_{i+k-1})\right)\left(\prod_{i=1}^{2K-1}\Pi^\infty(d x_i)\right).
	\end{equation*}
	Yet, the random variables $(U_i)$ are independent uniforms on $[0,t]$ so that
	\begin{equation*}
		\forall i,j\in[k-1], \quad \mathbb{P}(N|U_i-U_j|>c_{\ref{lem:hk}}L)\to 0, \quad \mathbb{P}(N(U_i\wedge(t-U_i))>c_{\ref{lem:hk}}L)\to 0,
	\end{equation*}
	as $N$ tends to $\infty$.
	A union bound and the dominated  convergence theorem then yield
	\begin{align*}
		\lim\limits_{N \rightarrow +\infty} \bar Q^{k,t}_x\left(f\left(\bar R^N\right)\right) & =\mathbb{E}\left[\lim\limits_{N \rightarrow +\infty} \bar Q^{k,t}_x\left(f\left(\bar R^N\right)\bigg|U_1,...,U_{k-1}\right)\right]                                  \\
		                                                                                      & =\int_{} f\left(\prod_{i=1}^{k-1}r\left(x_i\right)h^\infty\left(x_i\right)\prod_{i=1}^k\varphi_i(x_{i+k-1})\right)\left(\prod_{i=1}^{2K-1}\Pi^\infty(d x_i)\right),
	\end{align*}
	which concludes the proof of the proposition.

\end{proof}

\subsection{Uniform integrability}
\begin{lemma}\label{lem:I}
	Assume that \eqref{hfp} holds. Let $T>0$, $n\in \mathbb{N}$ and let
	\begin{equation}
		\label{eq:eps}
		0 \leq \vep<\frac{3\beta -\mu}{n(\mu-\beta)}.
	\end{equation}
	There exists a constant $C_{\ref{lem:I}}=C_{\ref{lem:I}}(T,\vep,n)$ such that for sufficiently large $N$,
	\begin{equation*}
		I_{n-1,\vep,t}(x):=\int_{s=0}^{t}\int_{y=0}^Lh(sN,y)^{1+\vep}e^{\vep(n-1)(\mu-\beta)y}\bar q_{s}(x,y) \,dy\, ds\leqslant C_{\ref{lem:I}}e^{\vep n(\mu-\beta)x},
	\end{equation*}
	for all  $x\in[0,L]$, $t\in[0,T].$\end{lemma}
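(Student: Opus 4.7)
The plan is to reduce the time integral to a resolvent and then apply the Green's function estimates of Section~\ref{sec:green}. Using Lemma~\ref{lem:many-to-one0}, write $\bar q_s(x,y) = \tfrac{h(sN,y)}{h(0,x)}p_{sN}(x,y)$ so that
\[
I_{n-1,\vep,t}(x) = \frac{1}{h(0,x)}\int_0^t\int_0^L h(sN,y)^{2+\vep}\,e^{\vep(n-1)(\mu-\beta)y}\,p_{sN}(x,y)\,dy\,ds.
\]
Since $(\linf-\lambda_1)sN\to 0$ uniformly in $s\leq T$ by Lemma~\ref{lem:spectral} and $v_1(y)\lesssim e^{-\beta y}$ by~\eqref{exp:d:v1}, one has $h(sN,y)\lesssim e^{(\mu-\beta)y}$, so the integrand factor is $\lesssim e^{ay}$ with $a := (2+\vep n)(\mu-\beta)$. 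The condition~\eqref{eq:eps} is precisely $a<\mu+\beta$, which will make the $y$-integrals convergent.

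Next, since the integrand is non-negative, $I_{n-1,\vep,t}(x)\leq I_{n-1,\vep,T}(x)$, so it suffices to treat the case $t=T$. Fubini and Remark~\ref{rem:green} then give $\int_0^T p_{sN}(x,y)\,ds \leq \tfrac{e}{N}G_{\xi}(x,y)$ with $\xi := 1/(TN)$. Since $L\asymp \log N$, we have $\tfrac{1}{N}\lesssim \xi\ll \tfrac{1}{L}$ for $N$ large, so Proposition~\ref{prop:Green}, Lemma~\ref{lem:phipsi} and Lemma~\ref{lem:wronsk} are all available. Writing $b := a-\mu$, the inequality $a<\mu+\beta$ translates to $b-\beta<0$, while $b+\beta=(1+\vep n)(\mu-\beta)>0$.

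The computation then reduces to estimating the two $y$-integrals in
\[
\int_0^L e^{ay}G_\xi(x,y)\,dy \,=\, \frac{e^{\mu x}}{\omega_{\lambda(\xi)}}\bigg[\pl(x)\int_0^x e^{by}\vl(y)\,dy + \vl(x)\int_x^L e^{by}\pl(y)\,dy\bigg],
\]
using Lemma~\ref{lem:phipsi} (giving $\vl(y)\lesssim(1\wedge y)(e^{-\beta y}+\xi e^{\beta y})$ and $\pl(y)\lesssim(1\wedge(L-y))e^{-\beta y}$) together with $\omega_{\lambda(\xi)}\asymp \xi$ from Lemma~\ref{lem:wronsk}. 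After careful bookkeeping of the boundary factors, the resulting bound takes the form
\[
\frac{1}{N}\int_0^L e^{ay}G_\xi(x,y)\,dy \;\lesssim\; T\,(1\wedge x)(1\wedge(L-x))\,e^{(\mu-\beta)x} + \frac{1}{N}(1\wedge x)(1\wedge(L-x))\,e^{(b+\mu)x}.
\]
Dividing by $h(0,x)\asymp(1\wedge x)(1\wedge(L-x))e^{(\mu-\beta)x}$ leaves $I_{n-1,\vep,t}(x)\lesssim T + \tfrac{1}{N}e^{(1+\vep n)(\mu-\beta)x}$; the identity $e^{(\mu-\beta)L}=N$ coming from~\eqref{def:L} then gives $\tfrac{1}{N}e^{(1+\vep n)(\mu-\beta)x}\leq e^{\vep n(\mu-\beta)x}$ uniformly for $x\in[0,L]$, which yields the claimed estimate.

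The main obstacle is the bookkeeping of the factors $(1\wedge y)$ in $\vl$ near $0$ and $(1\wedge(L-y))$ in $\pl$ near $L$: a crude bound that drops these factors would produce a spurious $\tfrac{1}{(1\wedge x)(1\wedge(L-x))}$ blow-up after division by $h(0,x)$ and spoil the estimate at both boundaries. One therefore has to split the $y$-integrals according to whether $y$ is of order one, of order $L-y=O(1)$, or in between, and keep the full linear factors throughout.
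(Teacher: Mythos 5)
Your proof is correct and follows essentially the same route as the paper's: conversion of $\bar q$ to $p$, monotonicity in $t$ plus Remark~\ref{rem:green} to reduce the time integral to the Green's function $G_{1/(TN)}$, and then the decomposition into the two $y$-integrals estimated via Lemmas~\ref{lem:phipsi} and~\ref{lem:wronsk}. Your observation that the condition~\eqref{eq:eps} is exactly $a<\mu+\beta$ and that one must keep the $(1\wedge y)$, $(1\wedge(L-y))$ factors so that the final division by $h(0,x)$ does not blow up at the boundaries matches the paper's bookkeeping precisely.
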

\begin{proof} Recall from Lemma \ref{lem:spectral} that  $h(sN,y)\leq 2 h(0,y)$ for $N$ large enough (that only depends on  $T$).
	Using Fubini's theorem along with Remark \ref{rem:green}, we see that
	\begin{align*}
		I_{n,\vep,T}(x) & =\int_{s=0}^{t}\int_{y=0}^Lh(sN,y)^{1+\vep}e^{\vep(n-1)(\mu-\beta)y}\left(\frac{h(sN,y)}{h(0,x)}p_{sN}(x,y)\right) \,dy\, ds                                                           \\
		                & \leqslant 2^{2+\vep}\int_{s=0}^{T}\int_{y=0}^Lh(0,y)^{1+\vep}e^{\vep(n-1)(\mu-\beta)y}\left(\frac{h(0,y)}{h(0,x)}p_{sN}(x,y)\right)\,dy\, ds                                           \\
		                & \leqslant 2^{1+\vep}e\underbrace{\frac{1}{N}\int_{y=0}^{L}h(0,y)^{1+\vep}e^{\vep(n-1)(\mu-\beta)y}\left(\frac{h(0,y)}{h(0,x)}G_{(TN)^{-1}}(x,y)\right)dy}_{ \let\scriptstyle\textstyle
		\substack{=:J_{n,\vep,T}(x)}}.
	\end{align*}
	Let $\xi=\frac{1}{TN}$ and $\lambda(\xi):=\linf+\xi$. By definition of $G_\xi$ (see Section \ref{sec:green}),
	\begin{align*}
		v_1(x)J_{n,\vep,T}(x) & =(N\omega_{\lambda(\xi)})^{-1}\left(\pl(x)\int_{y=0}^xh(0,y)^{1+\vep}e^{\vep(n-1)(\mu-\beta)y}\vl(y)v_1(y)dy\right.     \\
		                      & \qquad \qquad\qquad\qquad \left.+\; \vl(x){ \int_{y=x}^L} h(0,y)^{1+\vep}e^{\vep(n-1)(\mu-\beta)y}\pl(y)v_1(y)dy\right) \\
		                      & =:(N\omega_{\lambda(\xi)})^{-1}(\pl(x)A(x)+\vl(x)B(x)).
	\end{align*}
	Recalling the definition of $h$ from \eqref{def:h}, we see from  \eqref{hfp},  \eqref{eq:eps}, Lemma \ref{lem:spectral} and Lemma \ref{lem:phipsi} that
	\begin{align*}
		A(x) & =\int_{y=0}^xe^{(\mu-2\beta )y}\left(O(1)e^{-\beta y}+O(\xi)e^{\beta y}\right)e^{\vep n(\mu-\beta )y}dy                 \\
		     & =O(1)\int_{y=0}^xe^{(\mu-3\beta)y}e^{\vep n(\mu-\beta )y}dy+O(\xi)\int_{y=0}^xe^{(\mu-\beta)y}e^{\vep n(\mu-\beta )y}dy \\
		     & =O(1)(1\wedge x)+O(\xi)(1\wedge x)e^{(\mu-\beta)x}e^{\vep n(\mu-\beta )x},
	\end{align*}
	and remarking  that $\xi e^{(\mu-\beta)x}=O(1)$ for $x\in[0,L]$, we get that
	\begin{align*}
		\pl(x)A(x)=\left(v_1(x)+O(\xi L)(1\wedge(L-x))e^{-\beta x}\right)A(x)=O(1)v_1(x)e^{\vep n(\mu-\beta) x}.
	\end{align*}
	Similarly,
	\begin{align*}
		B(x)=O(1)\int_{y=x}^L e^{(\mu-3\beta)y}e^{\vep n(\mu-\beta) y}dy=O(1)(1\wedge(L-x))e^{(\mu-3\beta)x}e^{\vep n(\mu-\beta) x},
	\end{align*}
	so that (using that $\xi e^{(\mu-\beta)x}=O(1)$ again)
	\begin{align*}
		\vl(x)B(x) & = (1\wedge x)(O(\xi)e^{\beta x}+O(1)e^{-\beta x})(1\wedge(L-x))e^{(\mu-3\beta)x}e^{\vep n(\mu-\beta) x} \\
		           & = O(1)v_1(x)e^{\vep n(\mu-\beta) x}.
	\end{align*}
	Applying Lemma \ref{lem:wronsk} to $\xi=(TN)^{-1}$, we get that $(\omega_{\lambda(\xi)})^{-1}=O(N)$ so that
	\begin{equation*}
		J_{n,\vep,T}(x)=O(1)e^{\vep n (\mu-\beta)x},
	\end{equation*}
	which concludes the proof of the lemma.
\end{proof}

\begin{lemma}\label{lem:UI}
	Assume that \eqref{hfp} holds. Let $k\geq1$ and  $T>0$. For all
	\begin{equation}
		\label{eq:vep}
		0 \leq \vep<\frac{1}{(k-1)\vee 1}\frac{3\beta-\mu}{\mu-\beta},
	\end{equation}
	there exists a constant $c_{\ref{lem:UI}}=c_{\ref{lem:UI}}(\vep,T,k)>0$ such that
	\begin{equation*}
		\bar Q_x^{k,t}\left(\prod_{v\in \cal B}h(|v| N,\bar \zeta_v)^{1+\vep}\right)\leqslant c_{\ref{lem:UI}} t^{-(k-1)}e^{\vep (k-1) (\mu-\beta) x} , \quad \forall  x \in [0,L], \;\forall t\in(0,T].
	\end{equation*}

\end{lemma}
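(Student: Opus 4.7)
The plan is to proceed by induction on $k$. The base case $k=1$ is immediate: the set $\mathcal{B}$ of branching points is empty, the product equals $1$, and the bound reduces to choosing $c_{\ref{lem:UI}} \geq 1$.

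For the inductive step ($k \geq 2$), I would exploit the recursive construction of the $k$-spine tree already used in Proposition~\ref{cor:M}. Conditioning on the depth $s$ of the deepest branching point (density $(k-1)s^{k-2}/t^{k-1}$ on $[0,t]$), on the index $I \in [k-1]$ splitting the leaves (uniform), and on the location $\bar\zeta_{v_0}$ of that branching (density $\bar q_{t-s}(x,\cdot)$), the two subtrees $\mathcal{T}_0$, $\mathcal{T}_1$ are conditionally independent accelerated $n$- and $(k-n)$-spines of depth $s$ rooted at $\bar\zeta_{v_0}$. After the $(k-1)$ and $1/(k-1)$ factors cancel, this yields
\begin{align*}
\bar Q_x^{k,t}\left( \prod_{v\in\mathcal{B}} h(|v|N, \bar\zeta_v)^{1+\vep} \right)
&= \frac{1}{t^{k-1}} \int_0^t s^{k-2}\, ds \sum_{n=1}^{k-1} \int \bar q_{t-s}(x,y)\, h((t-s)N, y)^{1+\vep}\\
&\qquad\times \bar Q_y^{n,s}\left( \prod_{v'\in\mathcal{B}_0} h((|v'|+(t-s))N, \bar\zeta_{v'})^{1+\vep} \right)\\
&\qquad\times \bar Q_y^{k-n,s}\left( \prod_{v'\in\mathcal{B}_1} h((|v'|+(t-s))N, \bar\zeta_{v'})^{1+\vep} \right) dy,
\end{align*}
where the $|v'|$ inside each subtree refers to the local time from the root of that subtree.

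Every time argument of $h$ lies in $[0, tN] \subset [0, TN]$, so Lemma~\ref{lem:spectral} gives $h(tN, z) = (1 + o(1)) h(0, z)$ uniformly for $t \in [0, T]$ and $z \in [0, L]$. Consequently I can replace $h((|v'|+(t-s))N, \cdot)$ by $h(|v'|N, \cdot)$ in each subtree expectation, and $h((t-s)N, \cdot)$ by $h(0, \cdot)$ outside, at the cost of a multiplicative constant depending only on $T$ and $\vep$. Applying the inductive hypothesis then bounds the two subtree expectations by $C\, s^{-(n-1)} e^{\vep(n-1)(\mu-\beta) y}$ and $C\, s^{-(k-n-1)} e^{\vep(k-n-1)(\mu-\beta)y}$ respectively; the key algebraic cancellation is
\[
s^{k-2}\cdot s^{-(n-1)}\cdot s^{-(k-n-1)} = 1,
\]
which removes all $s$-dependence from the integrand. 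After the change of variable $s \mapsto t-s$ the remaining quantity is exactly $\frac{k-1}{t^{k-1}} I_{k-2, \vep, t}(x)$ in the notation of Lemma~\ref{lem:I}, and the hypothesis on $\vep$ in Lemma~\ref{lem:UI} coincides with the hypothesis on $\vep$ in Lemma~\ref{lem:I} for $n = k-1$. This delivers $I_{k-2, \vep, t}(x) \lesssim e^{\vep(k-1)(\mu-\beta) x}$ and closes the induction.

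The main obstacle is a bookkeeping one: the $|v|$ in the product refers to the global height in the full $k$-spine tree, whereas the inductive hypothesis is naturally phrased for local heights within each subtree. The uniform approximation $h(tN, \cdot) \asymp h(0, \cdot)$ provided by Lemma~\ref{lem:spectral} is precisely what allows this translation without loss in the exponent. The tightness of the $\vep$-regime in Lemma~\ref{lem:UI} is no coincidence either, as it is calibrated exactly to make Lemma~\ref{lem:I} applicable at the terminal step.
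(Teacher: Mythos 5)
Your proof is correct and follows essentially the same route as the paper: strong induction on $k$, decomposition of the $k$-spine at the deepest branching point, cancellation of the $s^{k-2}$ factor against the $s^{-(n-1)}$ and $s^{-(k-n-1)}$ from the inductive bounds on the two subtrees, and finally Lemma~\ref{lem:I} with $n = k-1$ to control the remaining integral under the stated constraint on $\vep$. The paper routes the recursion through the biased spine measure of Proposition~\ref{prop:rec} (which leaves a harmless factor of $r$ and a decaying exponential in the displayed formula), whereas you derive the decomposition directly and spell out the time-shift bookkeeping via Lemma~\ref{lem:spectral}; this is a presentational difference, not a different argument.
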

\begin{proof}We prove by induction that, for every {$n\leq k$},
	there exists a constant $c_n=c_n(\vep,T)$ such that
	\begin{equation}\label{eq:IH}
		\bar Q_x^{l,t}\left(\prod_{v\in \cal B} h(|v|N, \bar \zeta_v)^{1+\vep}\right)\leqslant c_n t^{-(l-1)}e^{\vep (l-1) (\mu-\beta) x }, \quad \forall x \in [0,L], \;\ t\in(0,T], \; \forall l\in[n].
	\end{equation}

	For $l =1$, we know that
	\begin{equation*}
		\bar{Q}_x^{1,t}[1]=1.
	\end{equation*}
	Let us now assume that \eqref{eq:IH} holds for some $n\leq k-1$.

	We claim that
	\begin{align*}
		 & \bar  Q_x^{n+1,t}\left(\prod_{v\in \cal B}h(|v|N,\bar \zeta_v)^{1+\vep}\right) \\
		 & =\int_0^t \frac{s^{n-1}}{t^{n}}e^{-(\linf-\lambda_1)(t-s)}
		\mathbb{E}_x\left[r(\bar \zeta_{t-s})h(0,\bar \zeta_{t-s})^{1+\vep}
			\sum_{n=1}^k\bar Q_{\bar \zeta_{t-s}}^{n,s}\left(
			\prod_{v\in \cal B}h(|v|N,\bar \zeta_v)^{1+\vep}\right)
			\bar Q_{\bar \zeta_{t-s}}^{k+1-n,s}
			\left(\prod_{v\in \cal B}h(|v|N,\bar \zeta_v)^{1+\vep}\right)\right]ds.
	\end{align*}
	This formula is an extension of Proposition \ref{prop:rec}. The proof of this fact is obtained by replacing the functional $F:\mathbb{U}^*_k\to\mathbb{R}$ by a functional $F$ of the $k$-spine tree $\mathcal{T}$ with depth $t$ of the form
	\begin{equation*}
		F(\mathcal{T})=\prod_{v\in\mathcal{B}}h(|v|,\zeta_v)^\vep\prod_{i=1}^k h(t,\zeta_{V_i})
	\end{equation*}
	and by summing over all the possible sizes for the left and right subtrees in the $(n+1)$-spine.  The proof then goes along the exact same lines.

	It  follows by induction (see \eqref{eq:IH}) that
	\begin{equation*}
		\bar Q_x^{n+1,t}\left(\prod_{v\in \cal B} h(|v|N,\bar \zeta_v)^{1+\vep}\right)\leqslant (c_n)^2  t^{-n} I_{n-1,\vep,t}(x).
	\end{equation*}
	Finally, we see from Lemma \ref{lem:I}  that
	\begin{equation*}
		\bar Q_x^{n+1,t}\left(\prod_{v\in \cal B} h(|v|N, \bar \zeta_v)^{1+\vep}\right)\leqslant (c_n)^2  C_{\ref{lem:I}}(n,\vep,T)    t^{-n}  e^{\vep n(\mu-\beta)x}.
	\end{equation*}
	This concludes the proof of \eqref{eq:IH} at rank $n+1$.
\end{proof}

\begin{cor}\label{cor:UI}
	The sequence of r.v.'s
	\begin{equation*}
		\left( \prod_{v\in \cal B} r(\bar {\zeta}_v) {h(|v|N,\bar {\zeta}_{v})} \prod_{i,j}   \psi_{i,j}( U_{\sigma_i,\sigma_j}) \prod_{i} \tilde \phi_{i} (\bar {\zeta}_{V_{\sigma_i}} ){ \frac{h^\infty(\bar \zeta_{V_{\sigma_i}})}{h(tN, \bar \zeta_{V_{\sigma_i}})}} , \ N\in\mathbb{N}\right)
	\end{equation*}
	from Theorem \ref{thm:k-spine-cv} is uniformly integrable under $\bar Q^{k,t}_x$.
\end{cor}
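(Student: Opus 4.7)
The plan is to establish uniform integrability via a standard bounded moment argument: it suffices to show that for some $\vep>0$, the $(1+\vep)$-th moment of the displayed random variable under $\bar Q^{k,t}_x$ is bounded uniformly in $N$. Under \eqref{hfp} one has $\mu<3\beta$, so $\frac{3\beta-\mu}{\mu-\beta}>0$, and the admissible range for $\vep$ in Lemma~\ref{lem:UI} is non-degenerate.

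The first step is to peel off all bounded factors in the random variable. The branching rate $r$ is bounded since $W$ is bounded, and the test functions $\psi_{i,j}$ are bounded by assumption. The factor $\prod_{i}\tilde\phi_{i}(\bar\zeta_{V_{\sigma_i}})h^\infty(\bar\zeta_{V_{\sigma_i}})$ is bounded because each $\tilde\phi_i$ is compactly supported in $(0,\infty)$ while $h^\infty$ is continuous. Finally, the ratio $h^\infty(y)/h(tN,y)$ needs to be controlled only on the (compact) support of $\tilde\phi_i$; on such a compact $K\subset (0,\infty)$, Lemma~\ref{lem:spectral} gives $\|v_1\|^2\to\|v_1^\infty\|^2$, $(\lambda_1^\infty-\lambda_1)tN\to 0$, and $v_1\to v_1^\infty$ uniformly, while $v_1^\infty$ is bounded away from $0$ on $K$. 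Hence $h^\infty/h(tN,\cdot)$ is uniformly bounded on $K$ for $N$ large. Putting these estimates together, there exists a deterministic constant $C<\infty$ (depending on $t,k,(\psi_{i,j}),(\tilde\phi_i)$ but not on $N$) such that, $\bar Q^{k,t}_x$-a.s.,
\[
\Big|\prod_{v\in\mathcal B} r(\bar\zeta_v)h(|v|N,\bar\zeta_v)\prod_{i,j}\psi_{i,j}(U_{\sigma_i,\sigma_j})\prod_{i}\tilde\phi_i(\bar\zeta_{V_{\sigma_i}})\tfrac{h^\infty(\bar\zeta_{V_{\sigma_i}})}{h(tN,\bar\zeta_{V_{\sigma_i}})}\Big|
\;\leq\; C\prod_{v\in\mathcal B} h(|v|N,\bar\zeta_v).
\]

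The second step is to fix $\vep\in\bigl(0,\tfrac{1}{(k-1)\vee 1}\tfrac{3\beta-\mu}{\mu-\beta}\bigr)$ and apply Lemma~\ref{lem:UI}, which yields
\[
\bar Q^{k,t}_x\!\left[C^{1+\vep}\prod_{v\in\mathcal B} h(|v|N,\bar\zeta_v)^{1+\vep}\right]
\;\leq\; C^{1+\vep}\,c_{\ref{lem:UI}}\,t^{-(k-1)}\,e^{\vep(k-1)(\mu-\beta)x},
\]
a bound independent of $N$. This establishes $\sup_N \bar Q^{k,t}_x\bigl[|X_N|^{1+\vep}\bigr]<\infty$ and hence uniform integrability.

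There is no significant obstacle here: all the analytic difficulty has been absorbed in Lemma~\ref{lem:UI}, and what remains is bookkeeping of bounded prefactors. The only point meriting care is the uniform boundedness of $h^\infty/h(tN,\cdot)$ on the support of each $\tilde\phi_i$, which relies on the sharp rate $\lambda_1^\infty-\lambda_1=O(N^{-\alpha})$ from Lemma~\ref{lem:spectral} (together with $\alpha>1$ under \eqref{hfp}) to ensure that the exponential correction $e^{-(\lambda_1^\infty-\lambda_1)tN}$ stays bounded.
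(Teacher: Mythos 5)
Your argument is correct and follows the same route as the paper: peel off the bounded factors (including the leaf contributions, which are controlled on the compact support of $\tilde\phi_i$ via Lemma~\ref{lem:spectral} and~\eqref{approx:h}), reduce to $C\prod_{v\in\mathcal B}h(|v|N,\bar\zeta_v)$, then invoke Lemma~\ref{lem:UI} with $\vep$ in the admissible range to get a bounded $(1+\vep)$-moment. Two small slips worth correcting: Lemma~\ref{lem:spectral} gives $\linf-\lambda_1=O(N^{-(\alpha-1)})$, not $O(N^{-\alpha})$, so boundedness of $e^{-(\linf-\lambda_1)tN}$ requires $\alpha>2$ (which is exactly~\eqref{hfp}), not merely $\alpha>1$; and the leaf factor is $\tilde\phi_i\cdot h^\infty/h(tN,\cdot)$, so you should bound $\tilde\phi_i$ (bounded) and the ratio $h^\infty/h(tN,\cdot)$ on $\mathrm{supp}\,\tilde\phi_i$ separately rather than both $\tilde\phi_i h^\infty$ and the ratio, which double-counts an $h^\infty$.
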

\begin{proof}

	Recall that the functions $\tilde \phi_i$ are compactly supported. Hence, there exists a constant $A>0$ such that $\tilde \phi_i\equiv 0$ on $(A,\infty)$, $\forall i\in[k]$. We know from  Lemma \ref{lem:spectral} that $v_1$ converges uniformly to $v_1^\infty$ on $[0,A]$, $v_1'$ converges uniformly to $(v_1^\infty)'$ on $[0,1]$ and that $v_1(x)\gtrsim x$ on $[0,1]$ for $L$ large enough. This implies that, as $N\to\infty$,
	\begin{equation}
		\label{eq:unif_ratio}
		\sup_{x\in[0,A]}\left|\frac{v_1^\infty(x)}{v_1(x)}-1\right|\to 0
	\end{equation}
	In particular, we see from  \eqref{approx:h} that there exists a constant $C>0$ such that $h^\infty(y)\leq C h(tN,y)$ for all $y\in[0,L]$, so that
	$$
		\bigg| \prod_{v\in \cal B} r(\bar {\zeta}_v) {h(|v|N,\bar {\zeta}_{v})} \prod_{i,j}   \psi_{i,j}( U_{\sigma_i,\sigma_j}) \prod_{ i} \tilde \phi_{i} (\bar {\zeta}_{V_{\sigma_i}} ){ \frac{h^\infty(\bar \zeta_{V_{\sigma_i}})}{h(tN, \bar \zeta_{V_{\sigma_i}})}}   \bigg| \leq
		C \prod_{v\in \cal B} h(|v|N, \bar \zeta_v).
	$$
	The result then follows from Lemma \ref{lem:UI}.
\end{proof}

\begin{proof}[Proof of Theorem \ref{thm:k-spine-cv}] Let $A>0$ be as in the proof of Corollary \ref{cor:UI}.
	Equations \eqref{approx:h} and \eqref{eq:unif_ratio} show that $h^\infty(\cdot)/h(tN, \cdot)\to 1$ uniformly on $[0,A]$ as $N\to\infty$. As a consequence, one can show that the sequence of random variables
	\begin{equation*}
		\left( \prod_{v\in \cal B} r(\bar {\zeta}_v) {h(|v|N,\bar {\zeta}_{v})} \prod_{i} \tilde \phi_{i} (\bar {\zeta}_{V_{\sigma_i}} ){ \frac{h^\infty(\bar \zeta_{V_{\sigma_i}})}{h(tN, \bar \zeta_{V_{\sigma_i}})}} , \ N\in\mathbb{N}\right)
	\end{equation*}
	converges weakly to the same limit as $\bar R^N$ in Proposition \ref{prop:cvloi}. Theorem \ref{thm:k-spine-cv} then follows from  Corollary~\ref{cor:UI}.
\end{proof}

\section{Survival probability ($0^{th}$ moment)}
\label{sect:survival proba}
Let $L$ be as in (\ref{def:L}) and $\tilde c$ as in \eqref{def:h}. Define
\begin{align*}
	u(t,x) & :=\mathbb{P}_x(Z^L_t>0),
\end{align*}
and
\begin{equation*}
	a(t):= \int_0^L\tilde h(0,y)u(t,y)dy= \tilde c\int_{0}^L e^{-\mu y}v_1(y) u(t,y)dy.
\end{equation*}
This section is aimed at proving Theorem \ref{thm:Kolmogorov}. Essentially, we will show that for $N$ large
\begin{equation*}
	u(tN,x) \ \approx h(0,x) a(tN),
\end{equation*}
so that the problem boils down to estimating $a(tN)$.
Fix $0<\eta<T$. The idea is to prove that
for $N$ large enough, $a$ satisfies
\begin{equation}\label{eq:diff-apporx}
	\dot a(tN) \approx -\frac{\Sigma^2}{2} a(tN)^2, \ \ \forall t\in[\eta,T].
\end{equation}
As a consequence,
$$
	a(tN) \approx \frac{1}{\frac{\Sigma^2}{2} (t-\eta)N + \frac{1}{a(\eta N)}}.
$$
Finally, the result will follow provided that
\begin{equation}\label{eq:infinf}
	\liminf_{\eta\to0} \liminf_{N\to\infty} \ \ {\eta N a(\eta N)>0}.
\end{equation}

\subsection{Step 1: Rough bounds}
{In the remainder of this section, we will often make use of a union bound that we describe now.
	We know from the branching property that for all $t>0$,
	\begin{equation}
		u(t N,x)=\mathbb{P}_x\left(\cup_{v\in\mathcal{N}^L_{L^2}}\{Z_{tN-L^2}^{(v)}>0\}\right), \label{br:prop}
	\end{equation}
	where $Z^{(v)}_s$ refers to the number of descendants of the particles $v$ at time $L^2+s>0$ in the particle system ${\bf X}^L$.
	Using a union bound and the  many-to-one lemma (see Lemma \ref{lem:many-to-one}), we see that
	\begin{equation*}
		u(t N,x)\leqslant \int_0^Lp_{L^2}(x,y)u(tN-L^2,y)dy.
	\end{equation*}
	Proposition \ref{lem:hk} along with \eqref{def:hL}  then shows that}
\begin{align}\label{union:bound0}
	\int_0^Lp_{L^2}(x,y)u(tN-L^2,y)dy & =(1+O(e^{-\beta L}))e^{(\lambda_1-\linf)L^2}h(0,x)a(tN-L^2).
\end{align}
We recall from  Lemma \ref{lem:spectral} that $\lambda_1<\linf$. This implies that for $N$ large enough (that does not depend on $t$),
\begin{equation}
	\label{union:bound}
	u(t N,x)\leqslant (1+O(e^{-\beta L}))h(0,x)a(tN-L^2).
\end{equation}

\begin{lemma}[Rough upper bounds]
	\label{lem:rub}
	Let $T>0$. There exist two positive constants $\gamma$ and  $c_{\ref{lem:rest}}=c_{\ref{lem:rest}}(T)$ such that for all $\eta>0$, there exists $\tilde N=\tilde N(T,\eta)$ such that, for all $N\geq \tilde N$, we have
	\begin{equation*}
		a(tN)\leq\frac{c_{\ref{lem:rest}}}{N^{\gamma}}, \quad \forall t\in[\eta,T],
	\end{equation*}
	and
	\begin{equation*}
		u(tN,x)\leqslant c_{\ref{lem:rest}} \frac{h(0,x)}{N^\gamma}, \quad x\in[0,L],  \quad \forall t\in[\eta,T].
	\end{equation*}\label{lem:rest}
\end{lemma}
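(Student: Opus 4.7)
The plan is to first establish the integrated bound $a(tN)\lesssim N^{-\gamma}$ and then deduce the pointwise bound on $u$ from the branching-property estimate \eqref{union:bound}. I would fix $\delta\in\bigl(0,\tfrac{(\mu-\beta)^2}{2\beta(\mu+\beta)}\bigr)$ and set the ``threshold'' $\gamma^*:=\tfrac{\mu-\beta}{2\beta}-\delta\in(0,1)$ (using \eqref{hfp}, i.e.~$\mu<3\beta$, to ensure $\gamma^*<1$) together with the ``exponent'' $\gamma:=\tfrac{\mu-\beta}{2\beta}-\delta\tfrac{\mu+\beta}{\mu-\beta}>0$, which is positive by the upper bound imposed on $\delta$.

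To bound $a(tN)$ I would split the integral $\int_0^L \tilde h(0,y)u(tN,y)\,dy$ at the level $\gamma^* L$. On the tail $[\gamma^* L,L]$, the crude bound $u\leq 1$ combined with $\tilde h(0,y)\lesssim e^{-(\mu+\beta)y}$ yields a contribution of order $e^{-(\mu+\beta)\gamma^* L}=N^{-\gamma-1}$ (an exponent computation shows the tail exponent $(\mu+\beta)\gamma^*/(\mu-\beta)$ exceeds $\gamma$ by exactly one), which is negligible. On $[0,\gamma^* L]$ I would use the dichotomy
\[
u(tN,y)\ \leq\ \mathbb{P}_y\bigl(Z_{tN}^{\gamma^* L}>0\bigr)\ +\ \mathbb{P}_y\bigl(R^{\gamma^*}([0,tN])>0\bigr).
\]
The second term is controlled by Corollary~\ref{cor:bulk} by $N^{-\gamma}(1\wedge y)e^{(\mu-\beta)y}$, and integrating against $\tilde h(0,y)\asymp (1\wedge y)e^{-(\mu+\beta)y}$ (the product is $\asymp (1\wedge y)^2 e^{-2\beta y}$, integrable on $\R_+$) gives a contribution of order $N^{-\gamma}$. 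For the first term, Markov's inequality yields $\mathbb{P}_y(Z_{tN}^{\gamma^* L}>0)\leq \mathbb{E}_y[Z_{tN}^{\gamma^* L}]$, and re-running the spectral decomposition of Section~\ref{sect:preliminaries} on the reduced interval $[0,\gamma^* L]$ produces, for $tN\gg \gamma^* L$,
\[
\mathbb{E}_y[Z_{tN}^{\gamma^* L}]\ \lesssim\ e^{-(\linf-\lambda_1^{\gamma^* L})tN}\,e^{(\mu-\beta)y}.
\]
Lemma~\ref{lem:spectral} applied to the truncated interval gives $\linf-\lambda_1^{\gamma^* L}\asymp N^{-2\beta\gamma^*/(\mu-\beta)}$, and the key observation is that $1-2\beta\gamma^*/(\mu-\beta)=\tfrac{2\beta\delta}{\mu-\beta}>0$ by our choice of $\gamma^*$. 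Consequently $(\linf-\lambda_1^{\gamma^* L})tN$ grows polynomially in $N$ for any $t\geq\eta>0$, producing super-polynomial decay that (after integration against $\tilde h$) is absorbed in the $O(N^{-\gamma})$ estimate.

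Once $a(tN)\lesssim N^{-\gamma}$ has been established uniformly on $[\eta,T]$ for $N$ large, the pointwise bound is immediate from \eqref{union:bound}: for $N$ large enough so that $L^2/N<\eta/2$, one has $tN-L^2\in[\eta N/2,TN]$, and therefore $u(tN,x)\leq (1+o(1))h(0,x)a(tN-L^2)\lesssim h(0,x)N^{-\gamma}$. The main technical obstacle is the Markov step above: one has to port the heat-kernel analysis of Section~\ref{sect:preliminaries} to the reduced domain $[0,\gamma^* L]$, while keeping in mind that the drift $\mu=\sqrt{1+2\linf}$ is \emph{not} the critical drift for the truncated problem (which would be $\sqrt{1+2\lambda_1^{\gamma^* L}}$)—it is precisely this mismatch that produces the exponential-in-time factor $e^{-(\linf-\lambda_1^{\gamma^* L})tN}$. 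The fully pushed hypothesis \eqref{hfp}, equivalent to $\tfrac{\mu-\beta}{2\beta}<1$, is exactly what allows us to choose $\gamma^*<\tfrac{\mu-\beta}{2\beta}$ with $\gamma^* L$ still tending to infinity, so that the spectral gap, multiplied by $tN$, beats every polynomial in $N$.
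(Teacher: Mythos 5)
Your argument is correct and follows the same route as the paper: you use the same dichotomy splitting survival into $\{Z^{\gamma^*L}_{tN}>0\}\cup\{R^{\gamma^*}([0,tN])>0\}$, control the first term via Markov plus the spectral estimate on the truncated interval (the subcriticality mismatch $\linf-\lambda_1^{\gamma^*L}$ producing super-polynomial decay for $tN\gtrsim\eta N$), control the second via Corollary~\ref{cor:bulk}, and then pass to $a$ by integration against $\tilde h$ and back to $u$ on all of $[0,L]$ via \eqref{union:bound}. The only differences are cosmetic: you take $\delta$ up to the Corollary~\ref{cor:bulk} threshold $\tfrac{(\mu-\beta)^2}{2\beta(\mu+\beta)}$ rather than the paper's stricter $\tfrac{1}{4\beta}\tfrac{(\mu-\beta)^2}{\mu+\beta}$ (the paper trades this for the clean exponent $\gamma=\tfrac{\mu-\beta}{4\beta}$), and your observation that the tail contribution has exponent exactly $\gamma+1$ independently of $\delta$ is a slightly tidier piece of bookkeeping than what the paper records.
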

\begin{proof} In this proof, the quantities $O(\cdot)$ can depend on $T$ but not on $\eta$. We will make use of the notation defined in the beginning of Section \ref{sec:bulk}.

	Basically, we prove that, in order to survive a period of time of order $N$, the BBM has to reach the level $\frac{\mu-\beta}{2\beta}L<L$. We established in Corollary \ref{cor:bulk} that the probability to reach this level is of order $(N^{-\tilde \gamma})$ for some $\tilde \gamma>0$.

	Let $0<\delta<\left(\frac{\mu-\beta}{2\beta}\right)\wedge \left(\frac{1}{4\beta}\frac{(\mu-\beta)^2}{\mu+\beta}\right)$ and $\tilde \gamma=\frac{\mu-\beta}{2\beta}-\delta$.
	For all $x\in \left[0,\tilde \gamma L\right]$, we have
	\begin{align*}
		\mathbb{P}_x(Z^{L}_{tN}>0) & =\mathbb{P}_x(Z^{\tilde \gamma L}_{tN}>0,R^{\tilde \gamma}([0,tN])=0)+\mathbb{P}_x(Z^{L}_{tN}>0,R^{\tilde \gamma} ([0,tN])>0) \\
		                           & \leqslant \mathbb{P}_x(Z^{\tilde \gamma L}_{tN}>0)+\mathbb{P}_x(R^{\tilde \gamma} ([0,tN])>0).
	\end{align*}
	The first probability can be bounded by Proposition \ref{lem:hk}. Indeed, for all $x\in[0,{\tilde \gamma }L]$, we have
	\begin{align*}
		\mathbb{P}_x(Z^{{\tilde \gamma }L}_{tN}>0)\leqslant \mathbb{E}_x[Z^{{\tilde \gamma }L}_{tN}] & =\int_{0}^{\tilde \gamma L}p_{tN}^{\tilde \gamma } (x,y)dy                                                                                                                                                \\
		                                                                                             & \leqslant (1+O(e^{-\beta L}))e^{(\lambda_1^{\tilde \gamma }-\linf)tN}e^{\mu x}\frac{v_1^{\tilde \gamma }(x)}{\|v_1^{\tilde \gamma }\|^2}\int_{0}^{{\tilde \gamma }L}e^{-\mu y}v_1^{\tilde \gamma } (y)dy.
	\end{align*}
	 
	Lemma \ref{lem:spectral} yields that $\int_0^{\tilde \gamma L}e^{-\mu y}v_1^{\tilde \gamma }(y)dy<\infty$, $\|v_1^{\tilde \gamma }\|\to\|v_1^\infty\|$ as $N\to\infty$ and that
	\begin{equation}\label{v:delta}
		v_1^{\tilde \gamma } (x)\leqslant C(x\wedge 1\wedge ({\tilde \gamma }L -x))e^{-\beta x} \leqslant C v_1(x), \quad \forall x\in[0,{\tilde \gamma }L],
	\end{equation}
	for $L$ large enough so that $L-{\tilde \gamma }L >1$. In addition, it implies that there exists a positive constant $C=C(\linf)>0$ such that
	\begin{equation*}
		e^{(\lambda_1^{\tilde \gamma }-\linf)tN}\leq\exp(-C t e^{-2\beta {\tilde \gamma }L}e^{(\mu-\beta)L})=\exp(-C t e^{2\beta \delta L}).
	\end{equation*}
	Hence,
	\begin{equation}\label{ub:Z}
		\mathbb{P}_x(Z^{\tilde \gamma L}_{tN}>0)= O(\exp(-C\eta e^{2\beta \delta L}))h(0,x),\quad x\in[0,{\tilde \gamma }L].
	\end{equation}
	It then follows from Corollary \ref{cor:bulk} and \eqref{v:delta} that for all $x\in(0,\tilde \gamma L)$,
	\begin{equation*}
		\mathbb{P}_x(R^{\tilde \gamma} ([0,tN])>0)=O\left(N^{-\left(\frac{\mu-\beta}{2\beta}-\delta \frac{\mu+\beta}{\mu-\beta}\right)}\right)h(0,x).
	\end{equation*}
	Note that the exponential factor in \eqref{ub:Z} is smaller than any power of $N$ (for $N$ large enough) so that
	\begin{equation}\label{eq:ub:int}
		\mathbb{P}_x(Z^L_{tN}>0)=O\left(N^{-\left(\frac{\mu-\beta}{2\beta}-\delta \frac{\mu+\beta}{\mu-\beta}\right)}\right)h(0,x), \quad \forall x\in[0,\tilde \gamma L].
	\end{equation}
	Remark that this proves the second part of the lemma for all $x\leq \tilde \gamma  L<L$. We now establish an upper bound on $a(t)$ to get a control on $u(t,x)$ for larger values of $x$.

	We split the integral $a(tN)$ into two parts: we see from \eqref{eq:ub:int}, \eqref{def:hL} and \eqref{exp:d:v1} \begin{align*}
		a(tN) & =\int_{0}^{\tilde \gamma L}\tilde h(0,x)u(tN,x)dx+\int_{\tilde \gamma L}^L\tilde h(0,x)u(tN,x)dx \\
		      & =O\left(N^{-\left(\frac{\mu-\beta}{2\beta}-\delta \frac{\mu+\beta}{\mu-\beta}\right)}\right)
		\int_0^{\tilde \gamma L}v_1(y)^2dy+O(1)\int_{\tilde \gamma L}^{L}e^{-(\mu+\beta)x}dx,
	\end{align*}
	where we bound $u(tN,x)$ by $1$ in the second integral. Moreover, note that
	\begin{align*}
		\int_{\tilde \gamma L}^{L}e^{-(\mu+\beta)x}dx=O\left(e^{-(\mu+\beta)\tilde \gamma L}\right)=O\left( N^{-\frac{\mu+\beta}{\mu-\beta} \tilde \gamma}\right).
	\end{align*}
	Finally, since we chose $\delta<\frac{1}{2} \frac{(\mu-\beta)^2}{2\beta (\mu+\beta)}$ in the beginning of the proof, we get that for sufficiently large $N$,
	\begin{equation*}
		a(tN)=O\left(N^{-\frac{\mu-\beta}{4\beta}}\right), \quad \forall t\in[\eta,T].
	\end{equation*}
	Note that the constant depends on $T$ but not on $\eta$.

	The upper bound on $u$ follows from \eqref{union:bound} (and the remark following the equation). Indeed, for $N$ large enough, $L^2<\frac{1}{2}\eta N$ and we see from the first part of the result that for $N$ large enough,
	$$a(tN-L^2)\leqslant \frac{c_{\ref{lem:rest}}(\linf,T)}{N^{\frac{\mu-\beta}{4\beta}}}.$$
	
\end{proof}

\begin{lemma}[Rough lower bounds]\label{lem:pbs:lb}Let $0<T_1<T_2$. There exist  two positive constants  $C_1=C_1(\linf)$, $C_2=C_2(\linf)$ and an integer $\tilde N=\tilde N(T_1,T_2)$ such that for all $N\geq \tilde N$, we have
	\begin{equation*}
		u(tN,x)\geqslant C_1 \frac{h(0,x)}{1+{t}N},\quad \forall x\in[0,L],\quad \forall t\in[T_1,T_2],
	\end{equation*}
	and
	\begin{equation*}
		a(tN)\geq\frac{C_2}{1+{t}N}, \quad \forall t\in[T_1,T_2].
	\end{equation*}
\end{lemma}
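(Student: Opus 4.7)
The plan is to apply the Paley--Zygmund (second-moment) inequality
\[
u(tN,x) = \P_x(Z_{tN}^L>0) \geq \frac{(\E_x[Z_{tN}^L])^2}{\E_x[(Z_{tN}^L)^2]},
\]
which reduces the lemma to establishing, uniformly in $x\in[0,L]$ and $t\in[T_1,T_2]$ for $N$ large enough (depending on $T_1,T_2$), the two estimates
\[
\E_x[Z_{tN}^L] \geq c_1\,h(0,x) \quad \text{and} \quad \E_x[(Z_{tN}^L)^2] \leq c_2\,(1+tN)\,h(0,x).
\]
The bound on $a(tN)$ then follows by integrating the resulting pointwise lower bound on $u(tN,\cdot)$ against $\tilde h(0,\cdot)$ and using $\int_0^L h(0,x)\tilde h(0,x)\,dx = \int_0^L \Pi(x)\,dx = 1$ (Proposition~\ref{prop:invariant-1-spine}).

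The first-moment estimate is immediate from the many-to-one formula $\E_x[Z_{tN}^L]=\int_0^L p_{tN}(x,y)\,dy$ (Lemma~\ref{lem:many-to-one0}) and the heat-kernel bound of Proposition~\ref{lem:hk}: for $t\geq T_1$ and $N$ large, $tN > c_{\ref{lem:hk}}L$, so $p_{tN}(x,y)\geq(1-e^{-\beta L})h(0,x)\tilde h(tN,y)$, and integrating in $y$ yields $\E_x[Z_{tN}^L]\geq \tfrac12 h(0,x)$, since $\int_0^L\tilde h(0,y)\,dy\to\int_0^\infty \tilde h^\infty = 1$ and $e^{(\lambda_1-\linf)tN}\to 1$ under \eqref{hfp} (Lemma~\ref{lem:spectral} gives $\linf-\lambda_1=O(N^{-\alpha})$ with $\alpha>2$).

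For the second moment, I apply the many-to-few formula (Corollary~\ref{th:many-to-few}) with $k=2$ and $\psi_{i,j}\equiv\phi_i\equiv 1$. The $2$-spine tree has a single branching point at a depth $\tau$ uniform on $[0,tN]$ followed by two independent $1$-spines, whence
\[
\E_x\bigl[Z_{tN}^L(Z_{tN}^L-1)\bigr] = 2\,h(0,x)\,(tN)\,Q^{2,tN}_x(\Delta), \qquad \Delta = \frac{r(\zeta_B)\,h(\tau,\zeta_B)}{h(tN,\zeta_{V_1})\,h(tN,\zeta_{V_2})}.
\]
I will show $Q^{2,tN}_x(\Delta)\leq C$ uniformly by splitting the $\tau$-integral into the mixing window $[c_{\ref{lem:hk}}L,\,tN-c_{\ref{lem:hk}}L]$ and its two edge complements. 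On the mixing window, Proposition~\ref{lem:hk} replaces each $q$-kernel by the invariant density $\Pi$ (up to $1+O(e^{-\beta L})$); using $h(s,y)=e^{(\linf-\lambda_1)s}h(0,y)$ with the exponential prefactor $\to 1$ under \eqref{hfp}, the ratio simplifies and the integrand is bounded by a constant multiple of $r(y_0)h(0,y_0)\Pi(y_0)\tilde h(0,y_1)\tilde h(0,y_2)$, whose triple integral is comparable to $\Sigma^2/2$ by Corollary~\ref{cor:cv:st:multivar} and the finiteness (granted by \eqref{hfp} and Proposition~\ref{prop:first}) of $\Sigma^2/2=\int r(h^\infty)^2\tilde h^\infty$; since the $\tau$-length $\leq tN$ cancels the prefactor $1/(tN)$, this contributes $O(1)$. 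The two edge windows, of combined $\tau$-length $O(L)=o(tN)$, are controlled by exploiting that $h(0,\cdot)$ is a right eigenfunction of the mean-matrix operator $\mathcal L$ (so $\int p_s(x,y)h(0,y)\,dy=e^{(\lambda_1-\linf)s}h(0,x)$) together with the uniform-integrability estimate of Lemma~\ref{lem:UI} applied to the $2$-spine.

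The main technical obstacle is the control of the edge contributions to $Q^{2,tN}_x(\Delta)$: since $h(0,y)$ can be as large as $e^{(\mu-\beta)L}=N$ in the bulk near the right boundary, naive pointwise bounds on the $2$-spine integrand do not suffice. Exploiting the eigenfunction identity to absorb this factor, and the fact that $\int r(h^\infty)^2\tilde h^\infty<\infty$ precisely under \eqref{hfp}, is what makes the bound $Q^{2,tN}_x(\Delta)\leq C$ possible. Once the two moment estimates are in hand, Paley--Zygmund yields $u(tN,x)\geq C_1\,h(0,x)/(1+tN)$, and integration against $\tilde h(0,\cdot)$ produces $a(tN)\geq C_2/(1+tN)$ as claimed.
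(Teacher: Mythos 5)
Your high-level strategy — a second-moment method — is the same as the paper's, but you apply Paley--Zygmund to the raw count $Z^L_{tN}$, whereas the paper applies it (phrased as a change of measure with Radon--Nikodym derivative $\frac{1}{h(0,x)}\sum_{v\in\mathcal N^L_{tN}}h(tN,x_v)$ followed by Jensen's inequality, which is exactly Paley--Zygmund) to the \emph{weighted} count $Y:=\sum_{v\in\mathcal N^L_{tN}}h(tN,x_v)$. Since $\{Y>0\}=\{Z^L_{tN}>0\}$, both choices give a lower bound on $u(tN,x)$, and $\E_x[Y]=h(0,x)$ exactly, so the estimate reduces in either case to bounding a second moment by $C(1+tN)h(0,x)$.

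The choice of weight is not cosmetic. Applying the many-to-few formula to $Y^2$, the leaf weights $h(tN,\cdot)$ cancel exactly against the factors $\prod_{v\in\mathcal L}h(tN,\zeta_v)^{-1}$ inside $\Delta$, so the off-diagonal term reduces to $2h(0,x)(tN)\,Q^{2,tN}_x\!\left(r(\zeta_v)h(|v|,\zeta_v)\right)$, where the integrand involves only a \emph{single} factor of $h$ at the branching point; this is bounded uniformly by the mixing estimate and Lemma~\ref{lem:UI}. In your version the cancellation does not occur: one must bound $Q^{2,tN}_x(\Delta)$ with $\Delta=\frac{r(\zeta_B)h(\tau,\zeta_B)}{h(tN,\zeta_{V_1})h(tN,\zeta_{V_2})}$, and the factors $h(tN,\zeta_{V_i})^{-1}$ blow up near both boundaries. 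Your sketch of the edge-window contribution does not close this gap: Lemma~\ref{lem:UI} controls expectations of products of $h^{1+\varepsilon}$, not of $1/h$, and the ``right eigenfunction'' identity gives $\int p_s(y,z)h(0,z)\,dz = e^{(\lambda_1-\linf)s}h(0,y)$, which is of no direct use for $\int p_s(y,z)/h(0,z)\,dz$. Concretely, one has
\[
\int_0^L \frac{q_s(y,z)}{h(tN,z)}\,dz \;=\; e^{(\lambda_1-\linf)(tN-s)}\,\frac{\E_y[Z^L_s]}{h(0,y)},
\]
so uniform control of this quantity requires a bound on $\E_y[Z^L_s]$ that beats $e^{\|r\|_\infty s}$ for $s$ in the intermediate range $[O(1),c_{\ref{lem:hk}}L]$, uniformly in $y$. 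That is a nontrivial heat-kernel estimate which the paper does not establish (and is precisely what the $h$-weighting allows one to avoid). In short: your Paley--Zygmund plan is the right idea, but without switching to the $h$-biased sum the second-moment bound is genuinely harder and your proposed resolution of the near-boundary/short-time window does not follow from the cited lemmas.
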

\begin{proof}The idea of the proof is adapted from \cite[Lemma 7.2]{harris21}.
	{ Let $\tilde \P^{t}_{x}$ be the probability measure absolutely continuous w.r.t. to $\P_x$
	with Radon-Nikodym derivative
	$$
		\frac{d \tilde \P^{t}_x}{d \P_x} \ = \ \frac{1}{h(0,x)} \sum_{v \in {\cal N}_{{tN}}^L} h(t, x_v ).
	$$}
	This {change of measure}  combined with Jensen's inequality yields
	\begin{align*}
		\mathbb{P}_x(Z^L_{tN}>0)=\mathbb{E}_x\left[\mathbf{1}_{Z^L_{tN}>0}\right]= \tilde \P_x^{tN}\left[\frac{h(0,x)}{\sum_{v\in\mathcal{N}_{tN}^L}h(tN,x_v)}\right]\geq \frac{h(0,x)}{\tilde  \P_x^{tN}\left[\sum_{v\in\mathcal{N}_{tN}^L}h(tN,x_v)\right]}.\end{align*}
	Yet,
	\begin{equation}\label{chg:m}
		\tilde \P_x^{tN}\left[\sum_{v\in\mathcal{N}_{tN}^L}h(tN,x_v)\right]=\frac{\mathbb{E}_x\left[\left(\sum_{v\in\mathcal{N}_{tN}^L}h(tN,x_v)\right)^2\right]}{h(0,x)}.
	\end{equation}
	{ Corollary \ref{th:many-to-few}} then yields
	\begin{equation*}
		\mathbb{E}_x\left[\left(\sum_{v\in\mathcal{N}_{tN}^L}h(tN,x_v)\right)^2\right]=\mathbb{E}_x\left[\sum_{v\in\mathcal{N}_{tN}^L}h(tN,x_v)^2\right]+{ 2h(0,x)tNQ^{2,tN}_x\left(r(\zeta_v)h(|v|N, \zeta_v)\right)},
	\end{equation*}
	where $v$ refers to the unique branching point in the $2$-spine tree of depth $tN$.
	The first term on the RHS of the above can be calculated thanks to Corollary \ref{th:many-to-few}:
	\begin{equation*}
		\mathbb{E}_x\left[\sum_{v\in\mathcal{N}_{tN}^L}h(tN,x_v)^2\right]=h(0,x) Q_x^{1,{ tN}}(h(tN, \zeta_{\tilde v})),
	\end{equation*}
	where $\tilde v$ is the unique leaf of the $1$-spine at time $tN$.

	One can easily check that
	$Q^{1,tN}_x\left(h(|v|N,\zeta_{\tilde v})\right)$  and $Q^{2,tN}_x\left(r(\zeta_{v})h(|v|N,\zeta_{v})\right)$
	are  uniformly bounded in $x\in[0,L]$  by a constant that does not depend on $T_1$ nor on $T_2$.
	Putting all of this together, we see that there exists a constant $C_1>0$ such that
	\begin{equation*}
		\lim_{N\to\infty} \mathbb{E}_x\left[\left(\sum_{v\in\mathcal{N}_{tN}^L}h(tN,x_v)\right)^2\right]\leq C_1(1+tN)h(0,x),
	\end{equation*}
	and that $C_1$ does not depend on $T_1$ nor on $T_2$. This equation combined with \eqref{chg:m} yields the first part of the lemma. The second part of the result follows from an integration.
\end{proof}

\subsection{Step 2. Comparing $a(t)$ and $u(t,x)$}

\begin{lemma}\label{lem:equadif} For all $t>0$, we have
	\begin{equation*}
		\dot a(t)=-\int_{0}^L \tilde h(0,x)r(x)u(t,x)^2dx+O\left(\frac{1}{N^{\alpha-1}}\right)a(t).
	\end{equation*}
\end{lemma}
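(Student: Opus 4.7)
The plan is to view $a(t)$ as an inner product of $u(t,\cdot)$ against the "left eigenfunction" $\tilde h(0,\cdot)$, differentiate under the integral, and integrate by parts using the adjoint operator. The error term will arise precisely from the fact that $\tilde h(0,\cdot)$ is not an exact harmonic function for the relevant time-dependent operator but only an eigenfunction with eigenvalue $\lambda_1 - \linf$, which is small by Lemma~\ref{lem:spectral}.

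First I would write the Kolmogorov equation satisfied by $u(t,x) = \mathbb{P}_x(Z_t^L > 0)$. By the standard first-step analysis for a binary BBM with branching rate $r(x)$, drift $-\mu$ and killing at $\{0,L\}$, one gets
\begin{equation*}
\partial_t u = \tfrac{1}{2}\partial_{xx}u - \mu \partial_x u + r(x) u - r(x) u^2, \qquad u(t,0) = u(t,L) = 0, \quad u(0,x) = 1.
\end{equation*}
(One derives this by writing $q = 1-u$ for the extinction probability, which satisfies $\partial_t q = \mathcal{L}_0 q + r(q^2 - q)$ where $\mathcal{L}_0 = \tfrac12 \partial_{xx} - \mu \partial_x$, and substituting back.) Differentiating $a(t) = \int_0^L \tilde h(0,y) u(t,y)\,dy$ under the integral and plugging in this PDE gives
\begin{equation*}
\dot a(t) = \int_0^L \tilde h(0,y)\left[\tfrac12 u_{yy} - \mu u_y + r(y) u\right]dy \;-\; \int_0^L \tilde h(0,y) r(y) u(t,y)^2\,dy.
\end{equation*}

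The second step is an integration by parts. Since both $u(t,\cdot)$ and $\tilde h(0,\cdot) = \tilde c\, e^{-\mu y} v_1(y)$ vanish at $0$ and $L$, all boundary terms disappear, and the first integral equals $\int_0^L u(t,y)\,\mathcal{L}^* \tilde h(0,y)\,dy$, where
\begin{equation*}
\mathcal{L}^* \phi := \tfrac{1}{2}\phi'' + \mu \phi' + r(y)\phi.
\end{equation*}
A direct calculation using $v_1'' = (2\lambda_1 - W) v_1$ and $\mu^2 = 1 + 2\linf$ shows
\begin{equation*}
\mathcal{L}^* \tilde h(0,y) = \tilde c\, e^{-\mu y}\left[\tfrac{1}{2}v_1'' + r\, v_1 - \tfrac{\mu^2}{2} v_1\right] = \left(\lambda_1 - \linf\right)\tilde h(0,y),
\end{equation*}
so that $\int_0^L \tilde h(0,y)\,\mathcal{L} u\,dy = (\lambda_1 - \linf)\,a(t)$.

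Combining the two steps gives
\begin{equation*}
\dot a(t) = (\lambda_1 - \linf)\,a(t) - \int_0^L \tilde h(0,y) r(y) u(t,y)^2\,dy.
\end{equation*}
Finally, I invoke Lemma~\ref{lem:spectral}, which gives $\linf - \lambda_1 = O(e^{-2\beta L}) = O(N^{-(\alpha-1)})$ since $2\beta/(\mu-\beta) = \alpha - 1$ and $L = \tfrac{1}{\mu-\beta}\log N$. This yields the claimed identity.

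The only delicate point is justifying differentiation under the integral sign and the integration by parts (e.g.\ smoothness of $u$ up to the boundary and the vanishing of boundary terms). This is standard since $u$ solves a parabolic equation with smooth coefficients and Dirichlet boundary data, and $\tilde h(0,\cdot) \in \mathcal{C}^1([0,L])$ vanishing at the endpoints. No genuine obstacle, so I expect the whole lemma to be a clean computation once the PDE for $u$ and the adjoint eigenvalue relation for $\tilde h$ are in hand.
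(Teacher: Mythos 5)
Your proof is correct and follows essentially the same route as the paper: differentiate $a$ under the integral, use the FKPP equation for $u$, integrate by parts against $\tilde h(0,\cdot)$, invoke the eigenvalue identity $\mathcal{L}^*\tilde h(0,\cdot)=(\lambda_1-\linf)\tilde h(0,\cdot)$, and apply Lemma~\ref{lem:spectral}. You merely make explicit two steps that the paper asserts without detail (the derivation of the FKPP equation for $u$ and the verification of the eigenvalue relation), but the argument is identical in substance.
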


\begin{proof}
	By definition of $a(t)$, we see that
	\begin{equation*}
		\dot a(t)=\int_{0}^L \tilde  h(0,x)\partial_t u(t,x)dx.
	\end{equation*}
	Yet, $u$ is solution of the FKPP equation
	\begin{equation}
		\partial_t u(t,x)=\frac{1}{2}\partial_{xx}u(t,x)-\mu \partial_x u(t,x) +r(x)(u(t,x)-u(t,x)^2), \  \ \ u(t,0)=u(t,L)=0.
	\end{equation}
	On the other hand, note that $x\mapsto \tilde h(0,x)$ is solution to the ODE
	$$\frac{1}{2}y'' +\mu y'+r(x)y=(\lambda_1-\linf) y.$$
	An integration by parts then entails
	\begin{equation*}
		\dot a (t)=(\lambda_1-\linf)a(t) -  \int_0^Lr(x)\tilde h(0,x)u(t,x)^2dx.
	\end{equation*}
	Lemma \ref{lem:spectral} finally yields the result.
\end{proof}

\begin{cor}\label{cor:diff}
	Let $0<T_1<T_2$. Let $\gamma$ be as in Lemma~\ref{lem:rub}. For $N$ large enough,
	\begin{equation*}
		|\dot a(tN)|=O\left(\frac{1}{N^{\gamma}}\right)a(tN), \quad t\in[T_1,T_2].
	\end{equation*}
\end{cor}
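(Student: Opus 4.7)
The plan is to bound each of the two terms on the right-hand side of Lemma~\ref{lem:equadif} separately by $O(a(tN)/N^\gamma)$. The error term $O(1/N^{\alpha-1}) a(tN)$ causes no difficulty: under \eqref{hfp} we have $\alpha > 2$, so $\alpha - 1 > 1 > \gamma$, whence $1/N^{\alpha-1} = o(1/N^\gamma)$ and this contribution is absorbed.

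For the nonlinear term $I_N(t) := \int_0^L \tilde h(0,x) r(x) u(tN,x)^2 \, dx$, I would combine Lemma~\ref{lem:rub} applied to one factor of $u(tN,x)$ with the union bound \eqref{union:bound} applied to the other:
\begin{equation*}
u(tN,x) \leq \frac{c\,h(0,x)}{N^\gamma}, \qquad u(tN,x) \leq (1 + O(e^{-\beta L})) h(0,x) a(tN - L^2).
\end{equation*}
Multiplying these yields $u(tN,x)^2 = O(h(0,x)^2 a(tN - L^2)/N^\gamma)$, and using the identity $\tilde h(0,x) h(0,x)^2 r(x) = r(x) v_1(x)^3 e^{\mu x}/(\tilde c \|v_1\|^4)$ gives
\begin{equation*}
I_N(t) \leq \frac{C\, a(tN - L^2)}{N^\gamma} \int_0^L r(x) \tilde h(0,x) h(0,x)^2 \, dx.
\end{equation*}
By Proposition~\ref{prop:first}, the last integral is bounded uniformly in $L$ under \eqref{hfp} (in the limit it equals $\Sigma^2/2$ from Theorem~\ref{thm:Kolmogorov}).

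The crucial remaining step, and the main obstacle I foresee, is to show that $a(tN - L^2) \leq C' a(tN)$ uniformly in $t \in [T_1, T_2]$ for $N$ large. Heuristically this holds because $L^2 = O((\log N)^2)$ is negligible compared to the natural scale $N$ on which $a$ decays. To make it rigorous, I would first derive the cruder absolute estimate $|\dot a(s)| = O(1/N^{2\gamma})$ by applying Lemma~\ref{lem:rub} to both factors of $u$ in $I_N$ (again using the uniform boundedness of $\int r\, \tilde h\, h^2 dx$ under \eqref{hfp}); integrating over $[tN - L^2, tN]$ and combining with the lower bound $a(s) \gtrsim 1/(1+s)$ from Lemma~\ref{lem:pbs:lb} delivers the ratio bound after a short bootstrap on $s$. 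Substituting back into the estimate for $I_N(t)$ then gives $|\dot a(tN)| = O(a(tN)/N^\gamma)$, as required.
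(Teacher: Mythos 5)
Your reduction of the claim to Lemma~\ref{lem:equadif} is exactly the paper's approach, and your treatment of the linear error term and of the integrand in $I_N(t)$ (combining the crude bound from Lemma~\ref{lem:rub} with the union bound \eqref{union:bound} and the integrability $\int r\,\tilde h\, h^2\,dx<\infty$ from Proposition~\ref{prop:first}) is correct. The genuine gap is in the final step, where you need to replace $a(tN-L^2)$ by $a(tN)$.

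Your proposed mechanism --- derive the \emph{constant} crude bound $|\dot a(s)|=O(1/N^{2\gamma})$, integrate over an interval of length $L^2$ to get $|a(tN)-a(tN-L^2)|=O(L^2/N^{2\gamma})$, and compare with the lower bound $a(tN)\gtrsim 1/N$ --- gives $a(tN-L^2)/a(tN)\leq 1+O(L^2 N^{1-2\gamma})$, which is useful only when $2\gamma>1$. But with $\gamma=\tfrac{\mu-\beta}{4\beta}$ as in the proof of Lemma~\ref{lem:rub}, the fully pushed condition $\mu<3\beta$ forces $\gamma<1/2$, so $L^2 N^{1-2\gamma}\to\infty$ and the one-step comparison fails. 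Moreover, the constant bound $|\dot a|=O(1/N^{2\gamma})$ does not improve under iteration, so the ``short bootstrap'' has no base case to start from. The paper's fix is a genuine $k$-fold iteration of the \emph{sharper} estimate $|\dot a(tN)|=O(1/N^{\gamma})\,a(tN-L^2)$ together with the mean value theorem: each application shifts the argument of $a$ by another $L^2$ while gaining a factor $L^2/N^{\gamma}$, yielding after $k$ steps $|\dot a(tN)|=O(1/N^{\gamma})a(tN)+O(L^{2(k-1)}/N^{k\gamma})a(tN-kL^2)$. Choosing $k$ with $k\gamma>1$ makes the remainder $o(1/N)$, hence $o(a(tN))$ by Lemma~\ref{lem:pbs:lb}. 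Since $\gamma$ can be arbitrarily close to $0$ in the fully pushed regime, the number of iterations $k$ is unbounded over the parameter range, and there is no ``short'' version of the argument.
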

\begin{proof}
	Recall from (\ref{union:bound}) that
	\begin{equation*}
		u(tN,x)=O(1)h(0,x)a(tN-L^2)
	\end{equation*}
	for $N$ large enough.
	In addition, we see from Lemma \ref{lem:rest} that for any $k\in\mathbb{N}$ and $N$ large enough (that only depends on $T_1$, $T_2$ and $k$), we have
	\begin{equation}
		a(tN-kL^2)\leqslant \frac{c_{\ref{lem:rest}}(T_2)}{N^\gamma}. \label{ub:a:gamma}
	\end{equation}
	{Thus,  the union bound (\ref{union:bound})}, Lemma \ref{lem:equadif}, \eqref{def:hL} and Proposition \ref{prop:first} yield
	\begin{equation*}
		|\dot a (tN)|=O\left(\frac{1}{N^{\alpha-1}}\right)a(tN)+O\left(\frac{1}{N^\gamma}\right)a(tN-L^2), \quad t\in[T_1,T_2].
	\end{equation*}
	Without loss of generality, one can assume that $\gamma<\alpha-1$. Hence, using that for $N\in\mathbb{N}$ fixed, the function $t\mapsto a(t)$ is decreasing, we see that
	\begin{equation}\label{eq:rec:a}
		|\dot a (tN)|=O\left(\frac{1}{N^\gamma}\right)a(tN-L^2), \quad t\in[T_1,T_2].
	\end{equation}
	Using the mean value theorem, we then obtain
	\begin{equation*}
		|a(tN)-a(tN-L^2)|=O\left(\frac{L^2}{N^\gamma}\right)a(tN-2L^2), \quad t\in[T_1,T_2],
	\end{equation*}
	so that \eqref{eq:rec:a} implies that
	\begin{equation*}
		|\dot a (tN)|=O\left(\frac{1}{N^\gamma}\right)a(tN)+O\left(\frac{L^2}{N^{2\gamma}}\right)a(tN-2L^2), \quad t\in[T_1,T_2].
	\end{equation*}
	Let $k\in\mathbb{N}$ be such that $k\gamma>1$. Iterating the above estimates, we see that
	\begin{equation*}
		|\dot a(tN)|=O\left(\frac{1}{N^\gamma}\right)a(tN)+O\left(\frac{L^{2(k-1)}}{N^{k\gamma}}\right)a(tN-kL^2), \quad t\in[T_1,T_2].
	\end{equation*}
	Note that it suffices to choose $N$ large enough such that $kL^2/N<T_1/2$. Then, recall from Lemma \ref{lem:pbs:lb} that $\frac{1}{N}\lesssim a(tN)$. This, combined with \eqref{ub:a:gamma} yields the result.
\end{proof}

\begin{cor}
	\label{lem:uvsa}
	Let $\eps>0$ and $0<T_1<T_2$. There exists $\tilde N=\tilde N(\eps,T_1,T_2)$  such that
	for every $N\geq \tilde N$,
	\begin{equation*}
		\forall x\in[0,L], \quad (1-\eps ) h(0,x) a(tN) \leq u(tN,x) \leq (1+\eps) h(0,x) a(tN).
	\end{equation*}
\end{cor}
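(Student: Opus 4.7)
The upper bound is almost immediate from the union bound \eqref{union:bound}. Indeed, under \eqref{hfp} we have $(\linf-\lambda_1)L^2 = O(L^2/N^{\alpha-1}) \to 0$ by Lemma \ref{lem:spectral}, so the factor $e^{(\lambda_1-\linf)L^2}$ appearing in \eqref{union:bound0} tends to $1$; moreover Corollary \ref{cor:diff} and the mean value theorem yield $a(tN-L^2) = a(tN)(1+O(L^2/N^\gamma))$ uniformly in $t\in[T_1,T_2]$. Together these give $u(tN,x) \leq (1+\eps) h(0,x) a(tN)$ uniformly in $x \in [0,L]$ for $N$ large enough.

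The plan for the lower bound is to apply the branching property at time $L^2$ and use the elementary inequality $1 - \prod_i (1-p_i) \geq 1 - \exp(-\sum_i p_i) \geq \sum_i p_i - \tfrac12 (\sum_i p_i)^2$ with $p_v = u(tN-L^2,x_v)$. Writing $S_x := \sum_{v\in\mathcal{N}^L_{L^2}} u(tN-L^2,x_v)$, this yields
\begin{equation*}
u(tN,x) \;\geq\; \E_x[S_x] - \tfrac12 \E_x[S_x^2].
\end{equation*}
The linear term $\E_x[S_x] = \int p_{L^2}(x,y) u(tN-L^2, y)\, dy$ is exactly what was computed for the upper bound and equals $h(0,x) a(tN)(1+o(1))$. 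The task therefore reduces to proving that $\E_x[S_x^2] = o(h(0,x) a(tN))$ uniformly in $x$.

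To control the second moment, I would leverage the upper bound to substitute $u(tN-L^2,y) \leq C h(0,y) a(tN)$ uniformly in $y \in [0,L]$, so that $\E_x[S_x^2] \leq C^2 a(tN)^2 \, \E_x\big[\big(\sum_v h(0,x_v)\big)^2\big]$. The diagonal piece $\E_x[\sum_v h(0,x_v)^2] = \int p_{L^2}(x,y) h(0,y)^2 dy$ is bounded by Proposition \ref{lem:hk} by $2 h(0,x) \int \Pi(y) h(0,y)\, dy$, and this integral is uniformly bounded in $L$ because it converges to $\int \Pi^\infty(y) h^\infty(y)\, dy$, which is proportional to $\int e^{\mu x}(v_1^\infty)^3(x) dx < \infty$ precisely under \eqref{hfp} by Proposition \ref{prop:first}. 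The off-diagonal contribution $\E_x[\sum_{v\neq w} h(0,x_v) h(0,x_w)]$ is handled via the many-to-few formula (Corollary \ref{th:many-to-few}) with $k=2$ and $\phi_i = h(0,\cdot)$; using $h(0,\zeta_{V_i})/h(L^2,\zeta_{V_i}) = e^{-(\linf-\lambda_1)L^2} = 1+o(1)$ and the fact that $r(\zeta_b) h(|b|,\zeta_b)$ is uniformly bounded (since $\mathrm{supp}\,r \subset [0,1]$ and $h(|b|,\cdot)$ is bounded on $[0,1]$), this off-diagonal piece is $O(h(0,x) L^2)$.

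Combining the two contributions, $\E_x[S_x^2] = O(h(0,x)\, a(tN)^2 L^2)$, so that
\begin{equation*}
\frac{\E_x[S_x^2]}{\E_x[S_x]} \;=\; O\big(a(tN) L^2\big) \;=\; O\big(L^2/N^\gamma\big) \;=\; o(1),
\end{equation*}
using Lemma \ref{lem:rest} and $L = O(\log N)$ with $\gamma>0$. This gives the matching lower bound $u(tN,x) \geq (1-\eps)h(0,x) a(tN)$. The main conceptual difficulty is the diagonal estimate: the fully-pushed integrability \eqref{hfp} is precisely what keeps $\int \Pi(y) h(0,y)\, dy$ uniformly finite in $L$, and this is the quantitative incarnation of the finite reproductive variance that distinguishes the fully pushed regime from the semipushed one.
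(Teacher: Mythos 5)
Your overall architecture mirrors the paper's: bound $u(tN,x)$ via the branching property at time $L^2$, match the first moment $\E_x[S_x]$ to $h(0,x)a(tN)$, and show the quadratic correction is negligible, using $a(tN)=O(N^{-\gamma})$ and $a(tN-L^2)\approx a(tN)$ via Corollary~\ref{cor:diff}. Your elementary inequality $1-\prod(1-p_i)\geq \sum p_i - \tfrac12(\sum p_i)^2$ is a slightly weaker substitute for the Bonferroni bound the paper uses (which subtracts only $\tfrac12\sum_{v\neq w}p_vp_w$, not the full square), so you additionally have to control a diagonal term; your treatment of it via Proposition~\ref{lem:hk} and the integrability $\int e^{\mu x}(v_1^\infty)^3\,dx<\infty$ is fine and is a genuinely clean way to see where \eqref{hfp} enters.

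However, there is a real error in the off-diagonal estimate. You justify the uniform boundedness of $r(\zeta_b)\,h(|b|,\zeta_b)$ by asserting $\mathrm{supp}\,r\subset[0,1]$. This is false: Assumption (A2) states that the support of $W$ lies in $[0,1]$, but $r(x)=\tfrac12 W(x)+\tfrac12\geq\tfrac12$ everywhere, so $r$ is \emph{not} compactly supported. Consequently $r(x)\,h(|b|,x)\asymp e^{(\mu-\beta)x}$ for $x\geq 1$ and blows up as $x\to L$. What is true — and what the argument actually requires — is that the \emph{expectation} $Q^{2,L^2}_x\bigl(r(\zeta_b)\,h(|b|,\zeta_b)\bigr)$ is uniformly bounded in $N$ and $x\in[0,L]$. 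This is not a pointwise boundedness fact but an integrability statement: the spine's occupation density decays like $e^{-2\beta y}$, so the integrand $e^{(\mu-\beta)y}e^{-2\beta y}=e^{(\mu-3\beta)y}$ is integrable precisely under \eqref{hfp}, and turning this into a bound that is uniform in the starting point $x$ (which may be close to $L$) requires the Green's function/heat-kernel machinery. The paper handles this by invoking the same techniques as in Lemma~\ref{lem:UI}; as written, your proof skips this step with a false justification, which is a genuine gap rather than a stylistic shortcut.
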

\begin{proof}
	It follows from \eqref{br:prop} along with Bonferroni inequalities that
	\begin{align}
		u(tN,x) & \geq \mathbb{E}_x\left[\sum_{v\in\mathcal{N}^L_{L^2}}\mathbb{P}_{x_v}\left(Z^{(v)}_{tN-L^2}>0\right)\right]                                                                  
		-\frac{1}{2} \mathbb{E}_x\left[\sum_{v\neq w\in\mathcal{N}^L_{L^2}}\mathbb{P}_{x_v}\left(Z^{(v)}_{tN-L^2}>0\right)\mathbb{P}_{x_w}\left(Z^{(w)}_{tN-L^2}>0\right)\right]\label{bonferroni}                       \\
		        & \geq \mathbb{E}_x\left[\sum_{v\in\mathcal{N}^L_{L^2}}\mathbb{P}_{x_v}\left(Z^{(v)}_{tN}>0\right)\right] -\frac{1}{2} \mathbb{E}_x\left[\sum_{v\neq w\in\mathcal{N}^L_{L^2}}\mathbb{P}_{x_v}\left(Z^{(v)}_{tN-L^2}>0\right)\mathbb{P}_{x_w}\left(Z^{(w)}_{tN-L^2}>0\right)\right].\nonumber
	\end{align}
	Using a similar argument to that used in \eqref{union:bound0} and \eqref{union:bound}, we see that
	\begin{align*}
		\mathbb{E}_x\left[\sum_{v\in\mathcal{N}^L_{L^2}}\mathbb{P}_{x_v}\left(Z^{(v)}_{tN}>0\right)\right] & =\int_0^Lp_{L^2}(x,y)u(tN,y)dy=\left(1+O\left(e^{-\beta L}\right)\right)h(0,x)a(tN).
	\end{align*}
	Hence, for $N$ large enough (that only depends on $\vep$), we have
	\begin{equation}\label{eq:lb1moment}
		\mathbb{E}_x\left[\sum_{v\in\mathcal{N}^L_{L^2}}\mathbb{P}_{x_v}\left(Z^{(v)}_{tN}>0\right)\right]\geqslant (1-\vep)h(0,x)a(tN).
	\end{equation}
	Corollary \ref{th:many-to-few} then yields
	\begin{multline}\label{eq:ut1}
		\mathbb{E}_x\left[\sum_{v\neq w\in\mathcal{N}^L_{L^2}}\mathbb{P}_{x_v}
		\left(Z^{(v)}_{tN-L^2}>0\right)\mathbb{P}_{x_w}\left(Z^{(w)}_{tN-L^2}>0\right)\right]
		=2h(0,x)L^2Q_x^{2,L^2}\left[r(\zeta_v)h({|v|},\zeta_v)\prod_{i=1,2}
		\frac{u(tN-L^2,\zeta_{v_i})}{h(L^2,{\zeta_{v_i}})}\right],
	\end{multline}
	where $v$ is the unique branching point of the $2$-spine tree of depth $L^2$ and $v_1,v_2$ are its two leaves.
	Using \eqref{union:bound0} and \eqref{union:bound} again, we get that for $N$ large enough,
	\begin{align*}
		u(tN-L^2,y)\leqslant & 2h(0,y)a(tN-2L^2).
	\end{align*}
	On the other hand, $h(L^2,y)\geq\frac{1}{2}h(0,y)$ for $N$ large enough (see Lemma \ref{lem:spectral}).
	
	These upper and lower bounds, combined with \eqref{eq:ut1} yield
	\begin{equation*}
		\mathbb{E}_x\left[\sum_{v\neq w\in\mathcal{N}^L_{L^2}}\mathbb{P}_{x_v}\left(N^{(v)}_{tN-L^2}>0\right)
		\mathbb{P}_{x_w}\left(N^{(w)}_{tN-L^2}>0\right)\right]
		\leqslant 16  C L^2h(0,x)a(tN-2L^2)^2 { Q_{x}^{2,L^2}(r(\zeta_v) h(|v|, \zeta_v) ),}
	\end{equation*}
	for some $ C= C(\linf)$ and $N$ large enough. The term
	$Q_{x}^{2,L^2}( r(\zeta_v)h({|v|, \zeta_v}) )$ can be shown to be uniformly bounded in $N$ and $x$
	using the same techniques as in  Lemma \ref{lem:UI}.

	We know from { Corollary} \ref{cor:diff} and the mean value theorem that
	\begin{equation*}
		a(tN)=\left(1+O\left(\frac{L^2}{N^\gamma}\right)\right)a(tN-2L^2),
	\end{equation*} so that, for $N$ large enough,
	\begin{equation}\label{eq:sim:a}
		{(1-\vep)}a(tN)\leq a(tN-2L^2)\leq {(1+\vep)}a(tN).
	\end{equation}
	Putting this together with  \eqref{bonferroni}, \eqref{eq:lb1moment} and  Lemma \ref{lem:rest}, we get that for sufficiently large $N$,
	\begin{equation}\label{lbp:u}
		u(tN,x)\geq(1-2\vep)a(tN)h(0,x).
	\end{equation}
	The upper-bound of the lemma can be obtained in a similar manner, using only the many-to-one lemma and \eqref{eq:sim:a}.
\end{proof}

\subsection{Step 3. Kolmogorov estimates}

\begin{lemma}\label{lem:ineq}Let $\vep>0$ and $0<T_1<T_2$. For $N$ large enough, we have
	\begin{align*}
		-\frac{\Sigma^2}{2}(1+\vep)a(tN)^2\leqslant \dot a(tN)\leqslant -\frac{\Sigma^2}{2}(1-\vep)a(tN)^2, \quad  T_1<t<T_2,
	\end{align*}
	where $\Sigma$ is as in Theorem \ref{thm:Kolmogorov}.
\end{lemma}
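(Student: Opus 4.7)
The plan is to combine Lemma \ref{lem:equadif} with the pointwise comparison between $u(tN,\cdot)$ and $h(0,\cdot)a(tN)$ provided by Corollary \ref{lem:uvsa}. First I would fix $\vep'>0$ small enough that $(1\pm\vep')^2 \in [1-\vep/2,\,1+\vep/2]$ and apply Corollary \ref{lem:uvsa} to get, for $N$ large and uniformly in $x\in[0,L]$ and $t\in[T_1,T_2]$,
\[
(1-\vep/2)\,h(0,x)^2 a(tN)^2 \ \leq \ u(tN,x)^2 \ \leq \ (1+\vep/2)\,h(0,x)^2 a(tN)^2.
\]
Substituting into Lemma \ref{lem:equadif} then yields
\[
\dot a(tN) \ = \ -\bigl(1+O(\vep)\bigr)\,a(tN)^2 \int_0^L r(x)\,\tilde h(0,x)\,h(0,x)^2\,dx \ + \ O\!\left(N^{-(\alpha-1)}\right) a(tN).
\]

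The second step is to identify the prefactor in the limit $N\to\infty$. Using $\tilde h(0,x)h(0,x)^2 = h(0,x)\Pi(x)$ together with the explicit form of $h(0,\cdot)$ and $\Pi$, one has
\[
\int_0^L r(x)\,\tilde h(0,x)\,h(0,x)^2\,dx \ = \ \frac{1}{\tilde c\,\|v_1\|^4}\int_0^L r(x)\,e^{\mu x}\,v_1(x)^3\,dx.
\]
By Lemma \ref{lem:spectral} one has $v_1\to v_1^\infty$ pointwise, $\|v_1\|\to\|v_1^\infty\|$, and the bound $v_1(x)\leq v_1^\infty(x)$ on $[1,L]$ (from the explicit $\sinh$-formula \eqref{eq:v1L}); combined with the uniform convergence on $[0,1]$, this yields a domination of the integrand by $r(x)e^{\mu x}(v_1^\infty)^3(x)$, which is integrable under \eqref{hfp} thanks to Proposition \ref{prop:first}. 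Dominated convergence then gives
\[
\int_0^L r(x)\,\tilde h(0,x)\,h(0,x)^2\,dx \ \longrightarrow \ \frac{1}{\tilde c\,\|v_1^\infty\|^4}\int_0^\infty r(x)\,e^{\mu x}\,(v_1^\infty)^3(x)\,dx \ = \ \frac{\Sigma^2}{2}.
\]

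Finally I would control the residual error term using Lemma \ref{lem:pbs:lb}: on $[T_1,T_2]$ we have $a(tN)\geq C_2/(1+tN) \gtrsim 1/N$, hence
\[
O\!\left(N^{-(\alpha-1)}\right) a(tN) \ = \ O\!\left(N^{-(\alpha-2)}\right) a(tN)^2 \ = \ o\bigl(a(tN)^2\bigr),
\]
since $\alpha>2$ under \eqref{hfp}. Putting the three steps together,
\[
\dot a(tN) \ = \ -\frac{\Sigma^2}{2}\bigl(1+O(\vep)+o(1)\bigr)a(tN)^2,
\]
so for $N$ large enough the derivative is pinched between $-\frac{\Sigma^2}{2}(1\pm\vep)a(tN)^2$. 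The only mildly delicate point — and the place where \eqref{hfp} is essential — is the dominated convergence step for the cubic integral of $v_1$, since without $\mu<3\beta$ the bound $e^{(\mu-3\beta)x}$ at infinity would not be integrable and $\Sigma^2$ itself would be infinite.
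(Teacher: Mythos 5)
Your proof is correct and takes essentially the same approach as the paper: Lemma \ref{lem:equadif} gives the differential inequality, Corollary \ref{lem:uvsa} replaces $u(tN,\cdot)$ by $h(0,\cdot)a(tN)$ inside the integral, the limit of $\int_0^L r\,\tilde h(0,\cdot)h(0,\cdot)^2$ is identified as $\Sigma^2/2$ (the paper leaves this implicit, relying on Corollary \ref{cor:cv:st:multivar} and Proposition \ref{prop:first}), and the residual $O(N^{-(\alpha-1)})a(tN)$ is absorbed via the lower bound $a(tN)\gtrsim 1/N$ from Lemma \ref{lem:pbs:lb} together with $\alpha>2$. Your version is more detailed than the paper's two-line proof but uses the same ingredients.
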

\begin{proof}

	We see from \eqref{hfp} and Lemma \ref{lem:pbs:lb} that there exists a small $\delta>0$ such that
	\begin{equation}\label{cp:a:N}
		\frac{1}{N^{\alpha-1}}=O\left(\frac{1}{N^\delta}\right)a(tN).
	\end{equation}
	The result follows from a direct application of Lemma \ref{lem:equadif}
	and Corollary \ref{lem:uvsa}.
\end{proof}

\begin{proof}[{ Proof of Theorem \ref{thm:Kolmogorov}}]
	Integrating the inequality in Lemma \ref{lem:ineq}  allows to approximate
	$a(tN)$ by the solution of (\ref{eq:diff-apporx}) on $[\eta N,  T N]$.
	On the other hand,
	Lemma \ref{lem:pbs:lb} yields that
	(\ref{eq:infinf}) is satisfied. It then follows from Lemma \ref{lem:uvsa} that
	for all $\vep>0$ and $x>0$, there exists $\tilde N=\tilde N(\vep,t)$ such that for all $N\geq \tilde N$,
	\begin{equation*}
		(1-\vep)\frac{2h(0,x)}{\Sigma^2 t}\leq N\mathbb{P}_x(Z_{tN}^L>0) \leq(1+\vep)\frac{2h(0,x)}{\Sigma^2 t}.
	\end{equation*}
	It remains to prove that, as $N\to\infty$,
	\begin{equation*}
		N\left(\mathbb{P}_x(Z_{tN}^L>0)-\mathbb{P}_x(Z_{tN}>0)\right)\to 0.
	\end{equation*}
	This follows from Corollary \ref{cor:pb:survie} remarking that $\{Z_{tN}^L>0\}=\{Z_{tN}>0\}$ on the event $\{R^1([0,TN])=0\}$.
\end{proof}

\section{Convergence of metric spaces}\label{sect:final-cutoff}

\subsection{Truncation of marked metric spaces} Recall from Section \ref{sect:mmm} that $\mathbb{M}$ refers to the set of equivalence classes of mmm-spaces.
Let $M=[X,d,\nu] \in {\mathbb M}$. For any measurable set $X'\subset X$, we write $|X'|= \nu( X' \times E)$.
\begin{definition}\label{def:MGP}
	The Marked Gromov Prokhorov distance between two elements of ${\mathbb M}$
	is defined as
	$$
		\forall [X_i,d_i,\nu_i] \in {\mathbb M}, \quad   d_{MGP}\left([X_1,d_1,\nu_1],[X_2,d_2,\nu_2]\right) \ = \ \inf_{Z,\phi_1,\phi_2}d_{Pr}(\phi_1\star \nu_1, \phi_2\star \nu_2),
	$$
	where the infimum is taken over all complete metric spaces $(Z, d_Z)$ and over all isometric embeddings $\phi_i$
	from $X_i$ to $Z$, $i=1,2$. Finally, $d_{Pr}$ is the standard Prokhorov distance between measures.
\end{definition}
It is now a standard result that the Gromov weak topology is metrisable by the metric $d_{MGP}$ (see e.g.~\cite{Greven2009}).

\begin{lemma}\label{cor:pGP}
	Let $\left(M_n=[X_n, d_n, \nu_n]; n\geq0\right)$ be a sequence of random mmm-spaces
	and for every $n$, let $X_n'$ be a closed measurable subset of $X_n$. Assume that  $\E[|X_n| - |X_n'|]\to 0$ as $n\to\infty$ and that $[X_n',d_n,\nu_n]$ converges to $[X_\infty, d_\infty, \nu_\infty]$ in distribution. Then
	$[X_n,d_n,\nu_n]$ converges in distribution to the same limit.
\end{lemma}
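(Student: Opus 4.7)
The overall strategy is a Slutsky-type argument in the metric space $(\mathbb{M}, d_{MGP})$: I will show that $d_{MGP}(M_n, M_n') \to 0$ in probability, where $M_n' := [X_n', d_n, \nu_n]$ (with the distance and measure tacitly restricted to $X_n'$), and then combine this with the hypothesised convergence $M_n' \Rightarrow [X_\infty, d_\infty, \nu_\infty]$.

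The first and main step is a deterministic bound. Fix $n$ and view $X_n'$ as isometrically embedded in $X_n$ via the identity map. Let $\nu_n$ denote the original measure on $X_n \times E$ and let $\tilde\nu_n$ denote the pushforward of the restriction $\nu_n|_{X_n' \times E}$ under this embedding; the latter is a measure on $X_n \times E$ supported in $X_n' \times E$, and satisfies $\tilde\nu_n \le \nu_n$ as measures, with $\nu_n - \tilde\nu_n$ a positive measure of total mass $|X_n| - |X_n'|$. For any closed set $A \subset X_n \times E$ and any $\varepsilon > 0$,
\begin{equation*}
\nu_n(A) \;=\; \tilde\nu_n(A) + (\nu_n - \tilde\nu_n)(A) \;\le\; \tilde\nu_n(A^\varepsilon) + \big(|X_n| - |X_n'|\big),
\end{equation*}
and trivially $\tilde\nu_n(A) \le \nu_n(A) \le \nu_n(A^\varepsilon) + \varepsilon$. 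Choosing $\varepsilon = |X_n| - |X_n'|$ (if this is $0$ the claim is trivial), both Prokhorov inequalities hold, so the Prokhorov distance between $\nu_n$ and $\tilde\nu_n$ on $X_n \times E$ is at most $|X_n| - |X_n'|$. Taking $Z = X_n$ and $\phi_1, \phi_2$ the canonical embeddings in Definition \ref{def:MGP} yields
\begin{equation*}
d_{MGP}(M_n, M_n') \;\le\; |X_n| - |X_n'|.
\end{equation*}

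The second step is immediate: by hypothesis $\E[|X_n|-|X_n'|] \to 0$, hence $|X_n|-|X_n'| \to 0$ in $L^1$ and a fortiori in probability, so $d_{MGP}(M_n, M_n') \to 0$ in probability.

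Finally, I apply the following standard fact: if $Y_n$ and $\tilde Y_n$ are random variables in a separable metric space with $d(Y_n, \tilde Y_n) \to 0$ in probability and $\tilde Y_n \Rightarrow Y_\infty$, then $Y_n \Rightarrow Y_\infty$ (this is proved via bounded Lipschitz test functions: $|\E f(Y_n) - \E f(\tilde Y_n)| \le \|f\|_{BL}\, \E[1 \wedge d(Y_n,\tilde Y_n)] \to 0$, so $\E f(Y_n) \to \E f(Y_\infty)$). Applying this with $Y_n = M_n$, $\tilde Y_n = M_n'$ in $(\mathbb{M}, d_{MGP})$ gives $M_n \Rightarrow [X_\infty, d_\infty, \nu_\infty]$, as required. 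No step should present a real obstacle; the only mild subtlety is ensuring the Prokhorov distance bound is applied to finite (not necessarily probability) measures, which is the standard extension used implicitly throughout the paper.
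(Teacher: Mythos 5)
Your proof is correct and follows essentially the same route as the paper: embed both spaces in $X_n$, bound $d_{MGP}(M_n,M_n')$ by the Prokhorov distance between $\nu_n$ and its restriction to $X_n'\times E$, bound that by the mass defect $|X_n|-|X_n'|$, and conclude via Markov's inequality and a Slutsky-type argument. You simply spell out the Prokhorov estimate and the final weak-convergence step more explicitly than the paper does.
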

\begin{proof} Let $\nu_n'$ be the restriction of $\nu_n$ to $X_n'\times E$.
	It follows from Definition \ref{def:MGP} that
	\begin{equation*}
		d_{MGP}([X_n',d_n,\nu'_n],[X_n,d_n,\nu_n])\leqslant d_{Pr}(\nu_n,\nu_n').\end{equation*}
	By definition of the Prokhorov distance,
	\begin{equation*}
		d_{Pr}(\nu_n,\nu_n')\leq ||X_n|-|X_n'||.
	\end{equation*}
	The result then follows from Markov's inequality.

\end{proof}
In the following, we restrict ourself to the case where the mark space is an open set $E\subseteq \R_+$.
We also consider an increasing sequence of closed finite sets $(E_{\eps},\eps>0)$
such that
$
	\bigcup_{\eps>0} E_{\eps} = E.
$
For every finite mmm space $M=[X, d, \nu]$, we define
$$
	X^\eps  :=  \bigg\{x:(x,m)\in  X \times E_\eps\bigg\}, \quad  \text{and } \quad M^\eps :=  [X^\eps, d, \nu ].
$$
\begin{cor}\label{cor:approx-mmm2}
	Consider a sequence of finite random mmm-spaces $(M_n=[X_n, d_n, \nu_n]; n\geq0)$. Assume that
	$$\limsup_{\eps\to0} \limsup_{n\to\infty}  \E\left[| X_n| - |X_n^\eps |\right]=0.$$
	Then
	$$
		\limsup_{\eps\to 0} \limsup_{n\to\infty}  \ d_{MGP}\left( M_n^\eps, M_n \right) \ = \ 0 \ \ \mbox{in probability.}
	$$
\end{cor}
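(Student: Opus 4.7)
The plan is to reduce the statement directly to Lemma \ref{cor:pGP} by observing that $M_n^\eps$ is precisely $[X_n^\eps, d_n, \nu_n^\eps]$ where $\nu_n^\eps$ is the restriction of $\nu_n$ to $X_n^\eps \times E$ (equivalently, to $X_n \times E_\eps$). Indeed, since every point of $X_n^\eps$ already carries a mark in $E_\eps$, writing the mmm-triplet $[X_n^\eps, d_n, \nu_n]$ is the same as using the restricted measure $\nu_n^\eps$. In particular $|X_n^\eps| = \nu_n(X_n \times E_\eps)$, and $E_\eps$ being closed makes $X_n^\eps$ a measurable subset of $X_n$.

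Next, I would reuse the key estimate from the proof of Lemma \ref{cor:pGP}. From Definition \ref{def:MGP}, taking $Z = X_n$ with the identity embedding on both sides,
\[
d_{MGP}\bigl(M_n^\eps, M_n\bigr) \;\le\; d_{Pr}\bigl(\nu_n^\eps,\nu_n\bigr).
\]
Since $\nu_n^\eps \le \nu_n$ with total-mass difference $|X_n|-|X_n^\eps|$, a standard fact about the Prokhorov distance between dominated finite measures on a metric space gives
\[
d_{Pr}\bigl(\nu_n^\eps,\nu_n\bigr) \;\le\; |X_n|-|X_n^\eps|.
\]
(For any closed $A$, $\nu_n(A)\le \nu_n^\eps(A) + (|X_n|-|X_n^\eps|)$ and similarly in the other direction, so the Prokhorov radius is at most $|X_n|-|X_n^\eps|$.)

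Finally, I would apply the hypothesis. For every $\delta>0$, Markov's inequality yields
\[
\P\!\left(d_{MGP}(M_n^\eps,M_n)>\delta\right)\;\le\; \frac{1}{\delta}\,\E\!\left[|X_n|-|X_n^\eps|\right].
\]
Taking $\limsup_{n\to\infty}$ and then $\limsup_{\eps\to 0}$ of the right-hand side gives $0$ by assumption, so
\[
\limsup_{\eps\to 0}\;\limsup_{n\to\infty}\;\P\bigl(d_{MGP}(M_n^\eps,M_n)>\delta\bigr)=0,
\]
for every $\delta>0$. This is precisely the stated convergence to $0$ in probability. There is no real obstacle here: the only point requiring care is the identification of $M_n^\eps$ with the restricted-measure mmm-space and the elementary bound $d_{Pr}(\nu,\nu') \le |\nu|-|\nu'|$ when $\nu'\le\nu$, both of which are standard once pointed out.
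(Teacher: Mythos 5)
Your proof is correct and follows essentially the same route as the paper: the paper's own proof simply says the argument is the same as for Lemma \ref{cor:pGP}, and you carry that out faithfully — identify $M_n^\eps$ with the restricted-measure mmm-space, bound $d_{MGP}$ by $d_{Pr}$ via the identity embedding, bound $d_{Pr}$ by the mass defect, and finish with Markov's inequality.
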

\begin{proof}
	The argument goes along the same lines as the proof of Lemma \ref{cor:pGP}.
\end{proof}

\subsection{Proof of Theorem \ref{thm:Yaglom} and \ref{thm:main-theorem}}

For  $\eps\in(0,1)$, set $E_{\eps}:=[\eps,\frac{1}{\eps}]$ and define $M_{t}^{L,\eps}$ out of $M_{t}^{L}$ by killing all the particles which do not belong to the interval $E_\eps$ at time $t$. In other words, we only keep the particles in $E_\eps$ at time $t$, with no ancestor outside of $[0,L]$ on the time interval $[0,{t}]$. We write $\nu^{L,\vep}_t$ for the restriction of the sampling measure $\nu$ to $\mathcal{N}^L_{tN}\times E_\vep$.
Finally, $\bar M_{t}^{L,\eps}$ is defined analogously to $\bar M^L_t$, by rescaling the total mass of the space and by accelerating time by $N$.

\begin{proposition}\label{prop:conv-truncation}
	Fix $\eps>0$ and define
	$$
		\tilde h^\infty_\eps(x) := \ \mathbf{1}_{x\in E_\eps}\tilde h^\infty(x).
	$$
	Conditional on $\{Z_{{tN}}>0\}$, $\left(\bar M_{t}^{L,\eps}, N\geq0\right)$ converges in distribution to a marked Brownian CPP
	with parameters $(t, \frac{\Sigma^2}{2}\tilde h^\infty_\eps(x)dx)$.
\end{proposition}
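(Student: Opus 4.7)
My plan is to verify the hypotheses of Proposition~\ref{lem:convDetermining}: first that the candidate limit satisfies the exponential moment condition \eqref{eq:momentCondition}, and second that all polynomial moments of $\bar M_t^{L,\eps}$ converge to those of the marked CPP $M_{\text{CPP}_t}$ with intensity $m = (\Sigma^2/2)\tilde h_\eps^\infty(x)\,dx$. The moment condition is immediate from Remark~\ref{rem:exp}: $Y_t |m|$ is exponential with mean $t|m|$, so $\E[|X|^p]^{1/p}/p \to t|m|/e$. It therefore suffices to prove that, for every product polynomial $\Psi$ built from continuous bounded functions $(\psi_{i,j})$ and $(\phi_i)$,
\[
\E_x\!\left[\Psi(\bar M_t^{L,\eps}) \mid Z_{tN}>0\right] \longrightarrow \E[\Psi(M_{\text{CPP}_t})].
\]
Since $\P_x(R^1([0,tN])>0) = o(1/N)$ by Corollary~\ref{cor:pb:survie}, conditioning on $\{Z_{tN}>0\}$ is asymptotically equivalent to conditioning on $\{Z_{tN}^L>0\}$, and the two survival probabilities share the common asymptotic $2h^\infty(x)/(\Sigma^2 tN)$ from Theorem~\ref{thm:Kolmogorov}.

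For the unconditional numerator I would apply the many-to-few formula (Proposition~\ref{th:many-to-few2}) with the bounded measurable test function $\phi_i\mathbf{1}_{E_\eps}$, which exactly encodes the truncation:
\[
\E_x[\Psi(\bar M_t^{L,\eps})] = \frac{k!\, h(0,x)\, t^{k-1}}{N}\, \bar Q_x^{k,t}\!\left(\bar\Delta \prod_{i,j}\psi_{i,j}(U_{\sigma_i,\sigma_j}) \prod_i \phi_i(\bar\zeta_{V_{\sigma_i}})\mathbf{1}_{\{\bar\zeta_{V_{\sigma_i}}\in E_\eps\}}\right).
\]
Factoring $\phi_i\mathbf{1}_{E_\eps} = \tilde\phi_i\, h^\infty$ with $\tilde\phi_i := \phi_i\mathbf{1}_{E_\eps}/h^\infty$ produces a function that is bounded and compactly supported in $(0,\infty)$, since $E_\eps=[\eps,1/\eps]$ is compact and $h^\infty$ is continuous and bounded below on $E_\eps$. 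The sole obstacle to a direct application of Theorem~\ref{thm:k-spine-cv} is that $\tilde\phi_i$ is discontinuous at $\{\eps,1/\eps\}$. The hard part of the proof is exactly this sandwich step: for each $\delta>0$ I would choose continuous compactly supported $\tilde\phi_i^{-,\delta}\leq\tilde\phi_i\leq\tilde\phi_i^{+,\delta}$ differing from $\tilde\phi_i$ only on $\delta$-neighborhoods of $\{\eps,1/\eps\}$, apply Theorem~\ref{thm:k-spine-cv} to each of the $\tilde\phi_i^{\pm,\delta}$, and control the gap by $\int|\tilde\phi_i^{+,\delta}-\tilde\phi_i^{-,\delta}|\,d\Pi^\infty = O(\delta)$ using the absolute continuity of $\Pi^\infty$. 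The uniform integrability of Corollary~\ref{cor:UI} ensures this $O(\delta)$ bound transfers back to the pre-limit expectations; letting $\delta\to 0$ and using $\Pi^\infty/h^\infty = \tilde h^\infty$ yields
\[
\bar Q_x^{k,t}(\cdots) \longrightarrow \left(\frac{\Sigma^2}{2}\right)^{k-1} \E\!\left[\prod_{i,j}\psi_{i,j}(U_{\sigma_i,\sigma_j})\right]\prod_i \int \phi_i(x)\,\tilde h_\eps^\infty(x)\,dx.
\]

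Combining the above with $h(0,x)\to h^\infty(x)$ (Lemma~\ref{lem:spectral}) and dividing by the Kolmogorov estimate, the $h^\infty(x)/N$ factors cancel and an elementary rearrangement gives
\[
\E_x[\Psi(\bar M_t^{L,\eps}) \mid Z_{tN}>0] \longrightarrow k!\, t^k \left(\frac{\Sigma^2}{2}\right)^k \E\!\left[\prod_{i,j}\psi_{i,j}(U_{\sigma_i,\sigma_j})\right] \prod_i \int \phi_i(x)\,\tilde h_\eps^\infty(x)\,dx,
\]
which is exactly $\E[\Psi(M_{\text{CPP}_t})]$ by Proposition~\ref{moments:CPP} applied with $T=t$ and $m = (\Sigma^2/2)\tilde h_\eps^\infty\,dx$. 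This matches the target CPP moments and closes the proof via Proposition~\ref{lem:convDetermining}. The only genuinely new ingredient beyond previously established material is the sandwich argument for the discontinuous truncation indicator; the rest is a bookkeeping synthesis of the many-to-few formula, the $k$-spine convergence, and the Kolmogorov asymptotic.
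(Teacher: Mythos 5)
Your proposal follows the same overall strategy as the paper: verify the moment condition of Proposition~\ref{lem:convDetermining}, apply the many-to-few formula (Proposition~\ref{th:many-to-few2}) with test functions factored as $\phi_i\mathbf{1}_{E_\eps} = \tilde\phi_i\, h^\infty$, invoke Theorem~\ref{thm:k-spine-cv} for the rescaled $k$-spine expectation, divide by the Kolmogorov asymptotic, and match the result against Proposition~\ref{moments:CPP}. Your sandwich argument for handling the discontinuity of $\tilde\phi_i = \phi_i\mathbf{1}_{E_\eps}/h^\infty$ at $\{\eps,1/\eps\}$ is more explicit than the paper's terse remark that $\tilde\phi_i$ ``is also continuous on the support of the measure $\bar\nu_t^\eps$''; both ultimately rest on the absolute continuity of $\Pi^\infty$ together with the uniform bounds, and your version is arguably clearer. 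One small caveat: to transfer the $O(\delta)$ bound to the pre-limit $\bar Q_x^{k,t}$-expectations you cannot quite get away with Corollary~\ref{cor:UI} alone. The uniform integrability controls the product of $r(\bar\zeta_v)h(|v|N,\bar\zeta_v)$ factors, but to bound $\bar Q_x^{k,t}(\prod_v r h \cdot \mathbf{1}_{\{\bar\zeta_{V_i}\in \text{small set}\}})$ uniformly in $N$ you additionally need a uniform-in-$N$ control on the marginal law of $\bar\zeta_{V_i}$ near $\{\eps,1/\eps\}$, which comes from the heat-kernel estimate of Proposition~\ref{lem:hk} (Hölder plus the relaxation of the spine to $\Pi$). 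This is not hard, but it should be said.

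There is, however, a genuine gap. The many-to-few formula returns the expectation of the \emph{restricted} polynomial carrying the factor $\prod_{i,j}\mathbf{1}_{\{v_i\neq v_j\}}$ (a factorial-type moment over distinct $k$-tuples), whereas both the convergence-determining class of Proposition~\ref{lem:convDetermining} and the CPP moment identity of Proposition~\ref{moments:CPP} are stated for the unrestricted polynomial with no such indicator. For the CPP the two coincide because $\mathrm{Leb}\otimes m$ is diffuse, but for the atomic measure $\bar\nu_t^{L,\eps}$ they do not, and the vanishing of the diagonal contribution
\[
\E_x\!\Big[\int \mathbf{1}_{\cup_{i<j}\{v_i=v_j\}}\prod_{n=1}^k \bar\nu_t^{L,\eps}(dv_n\otimes dx_n)\ \Big|\ Z_{tN}>0\Big] \longrightarrow 0
\]
is not automatic: each factor has an atom of mass $1/N$ at every particle, and the conditioned total mass is $O(1)$, so this needs an argument. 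The paper closes this gap with an explicit induction on $k$, bounding the diagonal term by $\frac{1}{N}$ times a $(k-1)$-fold moment and using the already-established convergence of the $(k-1)$-th factorial moment. Your proposal conflates the two polynomials throughout (the display following ``I would apply the many-to-few formula'' already omits the indicator on the left while the right-hand side implicitly carries it), so it never proves that diagonals are negligible. You need to insert this step before invoking Proposition~\ref{lem:convDetermining}.
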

\begin{proof}
	{ We follow the heuristics of Section \ref{sect:lim-moments}.}
	Let $(\tilde \phi_i, i \in[k])$ and $(\psi_{i,j}, i,j\in[k])$ be bounded continuous functions.
	Assume that  the $\tilde \phi_i$'s are compactly supported on $(0,\infty)$.
	Define the polynomial
	$$
		\forall M = [X,d,\nu], \quad \tilde \Psi( M ) \ = \ \int \prod_{i,j} \mathbf{1}_{\{v_i\neq v_j\}} \psi_{i,j}(d({v_i,v_j})) \prod_i \tilde \phi_i(x_i) h^\infty(x_i) \nu(dv_i \otimes d x_i).
	$$
	From Theorem \ref{thm:k-spine-cv}, our Kolmogorov estimate (see Theorem \ref{thm:Kolmogorov}) and the many-to-few formula (see Proposition \ref{th:many-to-few2}),
	we get that
	$$
		\lim_{ N\to\infty} \E_x\left[\tilde \Psi( \bar M_{t}^{L} ) \big| Z_{tN} >0\right]  \ = \ k! \left(\frac{\Sigma^2}{2} t\right)^k \E\left[ \prod_{i,j} \psi_{i,j}(U_{\sigma_i,\sigma_j})\right] \prod_{i} \int \Pi^\infty(x) \tilde \phi_i(x) dx.
	$$
	Let $\phi_i$ be a bounded measurable function and consider
	$$
		\tilde \phi_i(x) \ = \  \frac{\phi_i(x) \mathbf{1}_{x\in E_\eps}}{h^\infty(x)}.
	$$
	Note that $\tilde \phi_i$ is now bounded and compactly supported. It is also continuous on the support of the measure $\bar{\nu}^\vep_t$.
	Define
	$$
		\forall M = [X,d,\nu], \quad \Psi'( M ) \ := \ \int \prod_{i,j} \mathbf{1}_{\{v_i\neq v_j\}}\psi_{i,j}(d({v_i,v_j})) \prod_i \phi_i(x_i) \nu( dv_i \otimes d x_i).
	$$
	Recall that $\Pi^\infty(dx) = h^\infty(x) \tilde h^\infty(x) dx$. The previous limit translates into
	\begin{align*}
		\lim_{N\to\infty} \E_x\left[\Psi'(\bar M_{t}^{L,\eps} ) \big| Z_{tN}>0\right]
		 & =    k!  \E\left[ \prod_{i,j} \psi_{i,j}(U_{\sigma_i,\sigma_j})\right] \prod_{i} \int \frac{t \Sigma^2}{2}\tilde h^\infty_\eps(x) \phi_i(x) dx.
	\end{align*}
	{Let us now define
	$$
		\forall M = [X,d,\nu], \quad \Psi( M ) \ := \ \int \prod_{i,j} \psi_{i,j}(d({v_i,v_j})) \prod_i \phi_i(x_i) \nu( dv_i \otimes d x_i).
	$$
	From Proposition \ref{lem:convDetermining} and  Proposition \ref{moments:CPP}, it remains to show that
	$$
		\lim_{N\to\infty} \bigg|\E_x\left[\Psi(\bar M_t^{L,\eps}) - \Psi'(\bar M_t^{L,\eps})  \big| Z_{tN}>0 \right]  \bigg| \ = \ 0.
	$$
	Hence, it is sufficient to show by induction on $k$ that
	$$
		\lim_{N\to\infty} \E_{x}\bigg( \int \mathbf{1}_{\cup_{1\leq i<j\leq k} \{v_i = v_j\}} \prod_{n=1}^{k} \bar   \nu_t^{L,\eps}( dv_n \otimes d x_n)  \big|  Z_{tN}>0 \bigg)  \ = \ 0.
	$$
	On the one hand,
	\begin{eqnarray*}
		&&\E_{x}\bigg[ \int \mathbf{1}_{\cup_{1\leq i<j\leq k} \{v_i = v_j\}}  \prod_{n=1}^{k} \bar   \nu_t^{L,\eps}( dv_n \otimes d x_n) \ \big| \  Z_{tN} >0  \bigg]\\ && \leq  \sum_{1 \leq i<j \leq k } \frac{1}{N} \E_{x}\left[ \prod_{n=1}^{k-1} \bar   \nu_t^{L,\eps}( dv_n \otimes d x_n) \ \big| \  Z_{tN} >0  \right].
	\end{eqnarray*}
	On the other hand, the RHS vanishes since by induction and the first part of the proof
	\begin{eqnarray*}
		&&\lim_{N\to\infty} \E_{x}\left[ \prod_{n=1}^{k-1} \bar  \nu_t^{L,\eps}( dv_n \otimes d x_n) \ \big| \  Z_{tN} >0   \right]\\
		&& =\lim_{N\to\infty} \E_{x}\left[ \int \mathbf{1}_{\cup_{1\leq i<j\leq k-1} \{v_i \neq v_j\}}\prod_{n=1}^{k-1}\bar   \nu_t^{L,\eps}( dv_n \otimes d x_n) \ \big| \  Z_{tN} >0   \right] \\ & &= \ k!\left(\frac{\Sigma^2t}{2}  \int_{E_\eps} \tilde h^\infty(x)dx \right)^k.
	\end{eqnarray*}
	This completes the proof of the proposition.}
\end{proof}

\begin{proposition}\label{prop:cv-truncated-mm}
	Conditional on the event $\{Z_{tN}>0\}$,
	$\bar M_{t}^L$ converges in distribution for the Gromov weak topology to the marked Brownian CPP
	with parameters $(t, \frac{\Sigma^2}2\tilde h^\infty(x)dx)$.
\end{proposition}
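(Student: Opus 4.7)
The strategy is to deduce the convergence of $\bar M_t^L$ from that of $\bar M_t^{L,\eps}$ (Proposition~\ref{prop:conv-truncation}) by letting $\eps \to 0$, using the general truncation principle of Corollary~\ref{cor:approx-mmm2}. More precisely, I would establish the following three ingredients and then combine them.

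First, I would control the mass of the removed particles. Conditional on $\{Z_{tN}>0\}$, one has
\[
\E_x\bigl[\, |\bar M_t^L| - |\bar M_t^{L,\eps}| \,\big|\, Z_{tN}>0\bigr]
\;=\; \frac{1}{N\,\P_x(Z_{tN}>0)}\,\E_x\!\Big[\sum_{v\in\mathcal{N}_{tN}^L} \mathbf{1}_{x_v\notin E_\eps}\Big].
\]
By the many-to-one formula (Lemma~\ref{lem:many-to-one}) and the heat-kernel estimate of Proposition~\ref{lem:hk} (applicable since $tN \gg L$), the inner expectation equals $\int_0^L p_{tN}(x,y)\mathbf{1}_{y\notin E_\eps}\,dy$, which, up to a $(1+o(1))$ factor, equals
\[
h(0,x)\cdot e^{(\lambda_1-\lambda_1^\infty)tN}\int_0^L \tilde h(tN,y)\mathbf{1}_{y\notin E_\eps}\,dy.
\]
Combined with the Kolmogorov estimate (Theorem~\ref{thm:Kolmogorov}) and with $\tilde h(tN,\cdot)\to \tilde h^\infty$ pointwise with the domination $\tilde h \le C e^{-(\mu+\beta)\cdot}\mathbf{1}_{[\cdot\ge 1]}+C\mathbf{1}_{[0,1]}$ provided by Lemma~\ref{lem:spectral}, this yields
\[
\limsup_{\eps\to 0}\limsup_{N\to\infty} \E_x\bigl[\, |\bar M_t^L| - |\bar M_t^{L,\eps}| \,\big|\, Z_{tN}>0\bigr]
\;\leq\; \frac{\Sigma^2 t}{2h^\infty(x)}\cdot h^\infty(x)\cdot \lim_{\eps\to 0}\!\int_0^\infty \tilde h^\infty(y)\mathbf{1}_{y\notin E_\eps}\,dy \;=\; 0,
\]
where I use that $\tilde h^\infty\in L^1(\R_+)$ under~\eqref{hfp} (Proposition~\ref{prop:first} and~\eqref{def:h}).

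Second, with this truncation estimate in hand, Corollary~\ref{cor:approx-mmm2} (applied to the law of $\bar M_t^L$ conditional on $\{Z_{tN}>0\}$) implies that $d_{MGP}(\bar M_t^{L,\eps},\bar M_t^L)$ converges to $0$ in probability as $N\to\infty$ and then $\eps\to 0$. Third, I would verify that the marked Brownian CPP with parameter $(t,\tfrac{\Sigma^2}{2}\tilde h^\infty_\eps)$ converges, as $\eps\to 0$, to the CPP with parameter $(t,\tfrac{\Sigma^2}{2}\tilde h^\infty)$ for the Gromov-weak topology. This is a direct consequence of Proposition~\ref{moments:CPP}: each product polynomial of the truncated CPP is an integral against $\tilde h^\infty_\eps$, which converges by dominated convergence (using $\tilde h^\infty\in L^1$) to the corresponding polynomial of the full CPP; since the CPP satisfies the moment condition~\eqref{eq:momentCondition} (its mass is exponential, Remark~\ref{rem:exp}), Proposition~\ref{lem:convDetermining} gives convergence in distribution.

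Combining these three ingredients via a standard $\eps$-$\delta$ argument (first choose $\eps$ small so that the two CPPs are close and the truncation error is small for $N$ large; then use Proposition~\ref{prop:conv-truncation} for convergence of $\bar M_t^{L,\eps}$) yields Proposition~\ref{prop:cv-truncated-mm}. The main technical point is the first ingredient: uniformly in $N$, one must control both the contribution near the killing boundary $0$ (handled by the factor $x\wedge 1$ in $v_1$, which makes $\tilde h$ integrable near $0$) and the contribution near $L$ (handled by the exponential decay $\tilde h^\infty(y)\asymp e^{-(\mu+\beta)y}$ together with the heat-kernel approximation $\tilde h(tN,\cdot)\approx \tilde h^\infty$ valid for $tN\gtrsim L$).
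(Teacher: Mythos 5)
Your proposal follows essentially the same route as the paper's proof: triangulate via Proposition~\ref{prop:conv-truncation}, control $\E_x[Z_{tN}^L - Z_{tN}^{L,\eps} \mid Z_{tN}>0]$ using the many-to-one formula, the heat-kernel estimate of Proposition~\ref{lem:hk} and the Kolmogorov estimate, invoke Corollary~\ref{cor:approx-mmm2} for the Gromov--Prokhorov distance, and finally observe from Proposition~\ref{moments:CPP} that the truncated CPP converges to the full one as $\eps\to 0$. (There is a harmless slip in the displayed formula for the inner expectation, which has a spurious extra factor $e^{(\lambda_1-\lambda_1^\infty)tN}$ alongside $\tilde h(tN,\cdot)$ --- it should appear with $\tilde h(0,\cdot)$, not both --- but since this factor is $1+o(1)$ under~\eqref{hfp} the argument is unaffected.)
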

\begin{proof}
	By observing the moments of Brownian CPPs in Proposition \ref{moments:CPP},
	it is clear that
	a marked Brownian CPP
	with parameters $(t,\frac{\Sigma^2}{2}\tilde h^\infty_\eps(x)dx)$ converges to a marked CPP with parameters $(t, \frac{\Sigma^2}{2}\tilde h^\infty(x)dx)$
	as $\eps\to 0$. Next, a triangular inequality shows that our proposition boils down to proving that
	$$
		\mbox{Conditional on $\{Z_{tN}>0\}$},   \ \ \ \ \   \lim_{\eps\to0} \lim_{N\to\infty} d_{MGP}(\bar M^{L,\eps}_{t}, \bar M_{t}^L) = 0 \ \mbox{in probability.}
	$$
	We know from Proposition \ref{prop:conv-truncation} that, conditional on $\{Z_{tN}>0\}$, $\frac{1}{N}Z_{tN}^{L,\eps}$ converges to an exponential random variable with mean $\frac{t\Sigma^2}{2} \int_{E_\eps} \tilde h^\infty(x)dx$ and by Corollary \ref{cor:approx-mmm2}, it remains to show that
	$$
		\limsup_{\eps\to0} \limsup_{N\to\infty} \frac{1}{N}\E_x\left[Z_{tN}^L - Z^{L,\eps}_{tN} \ \big| \ Z_{tN} >0 \right] \ = \ 0.
	$$
	Note that
	\begin{align*}
		\frac{1}{N}\E_x\left[Z_{tN}^L - Z_{tN}^{L,\eps} \ \big| \  Z_{tN} >0 \right]
		= \frac{1}{N \P_x(Z_{tN}>0)} \int_{y\notin E_\eps} p_{tN}(x,y) dy.
	\end{align*}
	Finally, Proposition \ref{lem:hk} and Theorem \ref{thm:Kolmogorov} imply that for $N$ large enough, we have
	\begin{equation*}
		\frac{1}{N}\E_x\left[Z_{tN}^L - Z_{tN}^{L,\eps} \ \big| \  Z_{tN} >0 \right]\leqslant   C\left(\frac{h(0,x)}{h^\infty(x)}\right)\int_{y\notin E_\vep}\tilde h(0,y)dy.
	\end{equation*}
	Equations \eqref{def:hL} and \eqref{exp:d:v1} yield the result, letting first $N\to \infty$, then $\vep\to0$.
\end{proof}

\begin{proof}[Proof of Theorem \ref{thm:main-theorem}.]
	By Proposition \ref{prop:cv-truncated-mm}, it is enough to prove that conditional on $\{Z_{{tN}}>0\}$,
	$M_{{tN}}$ and $M_{{tN}}^L$ are coupled in such a way that they coincide with a probability going to $1$
	as $N\to\infty$.
	In light of Lemma \ref{cor:pGP} and of our Kolmogorov estimate, it is sufficient to show that $N\mathbb{E}_x[R^1([0,tN])]\to 0$ as $N\to\infty$. {Corollary \ref{cor:pb:survie} then yields the result.}

\end{proof}

\begin{proof}[Proof of Theorem \ref{thm:Yaglom}]
	This is a  corollary of Theorem \ref{thm:main-theorem}.
	The proof goes along the exact same lines as Theorem 2 in \cite{boenkost2022genealogy},
	where the convergence of the population size and of the genealogy is deduced from
	the convergence of the mmm-space to the Brownian CPP. We recall the main steps of the argument for completeness.

	Both maps
	\[
		[X,d,\nu] \mapsto |X|,\qquad [X,d,\nu] \mapsto
		\left[X,d,\tfrac{\nu}{|X|}\right]
	\]
	are continuous w.r.t.~the marked Gromov-weak topology. Recall that  conditional on survival, $\bar M_{{t}}$  converges in the marked Gromov-weak topology. Hence, (i) readily follows from the fact that the limiting CPP
	has a total mass exponentially distributed with mean $\frac{\Sigma^2 t}2$ (see Remark~\ref{rem:exp}).

	Let us now prove (ii).  Let $[X,d,\nu]$ be a general random mmm-space. Sample $k$
	points  $(v_1,\cdots,v_k)$	 uniformly at random with replacement.
	Let $(x_{v_1},\cdots, x_{v_k})$ be the types of the sampled individuals.
	Then
	$\E\big[
			\psi\big( \big(d(v_i, v_j)\big), \ (x_{v_i}) \big) \big]$
	is
	nothing but the moment of order $k$ of $[X,d,\tfrac{\nu}{|X|}]$.
	Since, conditional on survival, $\bar M_{{t}}$  converges to a Brownian CPP, (ii) follows from Proposition~\ref{SAmpling-CPP}.

\end{proof}

\section*{Acknowledgements}
This project has received funding from the European Union’s Horizon 2020 research and innovation programme under the Marie Skłodowska-Curie grant agreement No 101034413.

\bibliographystyle{plain} 
\bibliography{genealogy_fullypushed.bib}

\end{document}